\numberwithin{equation}{subsection}
\theoremstyle{plain}
\newtheorem*{theorem*}{Theorem} 
\newtheorem*{proposition*}{Proposition} \newtheorem*{lemma*}{Lemma}
\newtheorem*{assumption*}{Assumption}
\newtheorem*{conjecture*}{Conjecture}
\newtheorem{theorem}[equation]{Theorem} 
\newtheorem{lemma}[equation]{Lemma}
\newtheorem{corollary}[equation]{Corollary}
\newtheorem{proposition}[equation]{Proposition}
\theoremstyle{definition}
\newtheorem{definition}[equation]{Definition}
\newtheorem{remark}[equation]{Remark}
\newtheorem*{remark*}{Remark}
\newtheorem*{definition*}{Definition}
\newtheorem*{remarks*}{Remarks}
\newtheorem*{observation*}{Observation}
\theoremstyle{remark}
\newcommand{\Compact}{\mathfrak{K}}
\newcommand{\R}{\mathbb{R}}
\newcommand{\C}{\mathbb{C}}
\newcommand{\Z}{\mathbb{Z}}
\newcommand{\GG}{\pmb{G}}
\newcommand{\KK}{\pmb{K}}
\newcommand{\tempiric}{\mathrm{tempiric}}
\newcommand{\PSDO}{\mathsf{P}}
\newcommand{\CC}{\mathsf{C}}
\newcommand{\Symb}{\mathsf{S}}
\newcommand{\Rep}{\mathsf{Rep}}
\newcommand{\Fin}{\mathsf{Fin}}
 \DeclareMathOperator{\Hom}{Hom}
\DeclareMathOperator{\Supp}{Supp}
\DeclareMathOperator{\End}{End}
\newcommand{\CoKa}{{\textsc{\textsf{ck}}}\,}
\newcommand{\mult}{{\textsc{\textsf{mult}}}}
\newcommand{\proj}{{\textsc{\textsf{proj}}}}
\begin{document}

\title[Pseudodifferential Operators and Connes-Kasparov]{Pseudodifferential Operators and the Connes-Kasparov Isomorphism} 
\author{Peter Debello and Nigel Higson}
\address{Department of Mathematics, Penn State University, University Park, PA 16802, USA}

\date{\today}

\begin{abstract}
We compute the $K$-theory of the $C^*$-category generated by order zero, equivariant, properly supported, classical  pseudodifferential  operators acting on sections of homogeneous bundles over the symmetric space  of a real reductive Lie group $G$. Our result uses the Connes-Kasparov isomorphism for $G$, and in fact it is equivalent to the Connes-Kasparov isomorphism.  We relate our computation to David Vogan's well-known param\-etrization of the tempered irreducible representations of $G$ with real infinitesimal character.  When the reductive group $G$ has real rank one, we formulate and prove a Fourier isomorphism theorem for equivariant order zero pseudodifferential operators on the symmetric space, and use it to prove a $K$-theoretic version of Vogan's theorem. 
\end{abstract}

\maketitle

\section{Introduction} 
In the 1980's Alain Connes (see \cite[Conj.~4.1]{Rosenberg84}) and Gennadi Kasparov \cite[Sec.~5, Conj.~1] {KasparovICM83}  suggested a means of describing the $K$-theory groups of the reduced $C^*$-algebra of an almost-connected Lie group using the index theory of Dirac-type operators.  
Their conjecture, now verified, is called the \emph{Connes-Kasparov isomorphism} in $C^*$-algebra $K$-theory.  A proof of the Connes-Kasparov isomorphism for real reductive groups that borrows heavily from tempered representation theory  was  announced by Wassermann in \cite{Wassermann87}. See \cite{ClareHigsonSongTang24,ClareHigsonSong24} for a full account of this, as well as for some remarks on the history of the Connes-Kasp\-arov isomorphism.  A second proof that uses  only operator $K$-theory ideas was given by Lafforgue \cite{Lafforgue02InventMath}. The general case of all almost-con\-nected Lie groups was settled by Chabert, Echterhoff and Nest \cite{ChabertEchterhoffNest03}.

If $G$ is a real reductive group with maximal compact subgroup $K$, and if we make the simplifying assumptions (for this introduction only) that $G$ is connected, and that the symmetric space $G/K$ carries a $G$-equivariant spin structure,  then the Connes-Kasparov isomorphism may be cast as an isomorphism of abelian groups 
\[
R(K) \stackrel \cong \longrightarrow K_{d}(C^*_r(G)) 
\]
from the representation ring of the maximal compact  subgroup $K$   to the  degree  $d{=}\dim(G/K)$ $K$-theory group   of the reduced   $C^*$-algebra of $G$ (the conjecture also asserts that the other $K$-theory group of $C^*_r(G)$ is zero).
The isomorphism maps the class of an irreducible unitary representation $\tau$  to the index (in a sense made precise by Kasparov \cite{Kasparov83}) of the Dirac-type operator on $G/K$ that is obtained by coupling the spinor Dirac operator to $\tau$.  

Now, when $G$ is reductive, the $K$-theory of the reduced group $C^*$-algebra decomposes as a direct sum of infinite cyclic groups, each labeled by a distinct component of the tempered dual of $G$; see \cite[Thm.~4.9]
{ClareHigsonSongTang24}. Most, but not all, components occur as labels, and the index of any indecomposable Dirac operator   is always a generator of one of the cyclic summands \cite[Thm.~8.8]{ClareHigsonSongTang24}.  So the Connes-Kasparov isomorphism sets up a map from the irreducible representations of $K$   to the set of components of the tempered dual of $G$ that is one-to-one and mostly onto. See Figure~\ref{fig:c-k} for an instance of this near-bijection. 
\begin{figure}[ht]
    \centering 
\includegraphics[width=0.35\linewidth]{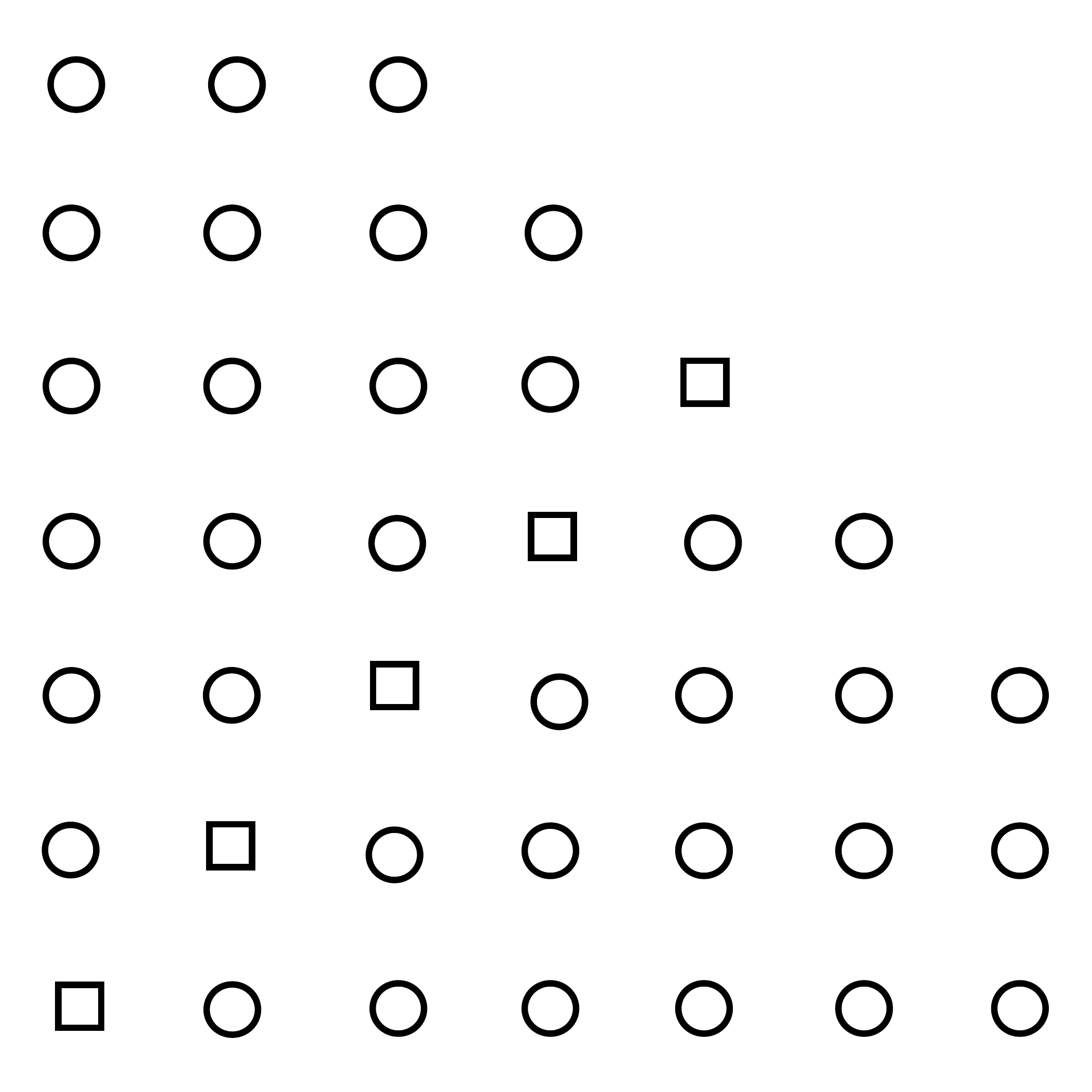}
\caption{
The Connes-Kasparov labeling of most of the components of the tempered dual of $G=Sp(1,1)$ by irreducible representations of the maximal compact subgroup  $K \cong SU(2)\times SU(2)$. The  irreducible representations of $K$ may be labeled by their highest weights, which in turn may be identified with ordered pairs of nonnegative integers. These are the nodes in the diagram.  The circles indicate that the index of the corresponding Dirac operator  is a discrete series representation; each discrete series occurs exactly once. The  squares indicate that the index of the corresponding Dirac  operator is supported on a  principal series component; the components in question are precisely those that possess two minimal $K$-types (compare Figure~\ref{fig:vogan}), and each such component occurs precisely once.   The principal series components with a single minimal $K$-type (compare Figure~\ref{fig:vogan} again) do not contribute to $K$-theory. These components are not in the range of the near-bijection mentioned in the text.
}    
\label{fig:c-k}
\end{figure}

This is strongly reminiscent of, but not the same as, Vogan's theory of minimal $K$-types for representations of a real reductive group \cite{VoganGreenBook}. According to that theory, an irreducible representation of $K$ occurs as a minimal $K$-type in representations in a unique component of the tempered dual; and in most components a single minimal $K$-type occurs. So, Vogan's theory provides a map from the  irreducible representations of $K$ to the components of the tempered dual of $G$ that is surjective and mostly one-to-one.  See Figure~\ref{fig:vogan} for an example. 
\begin{figure}[ht]
    \centering 
\includegraphics[width=0.35\linewidth]{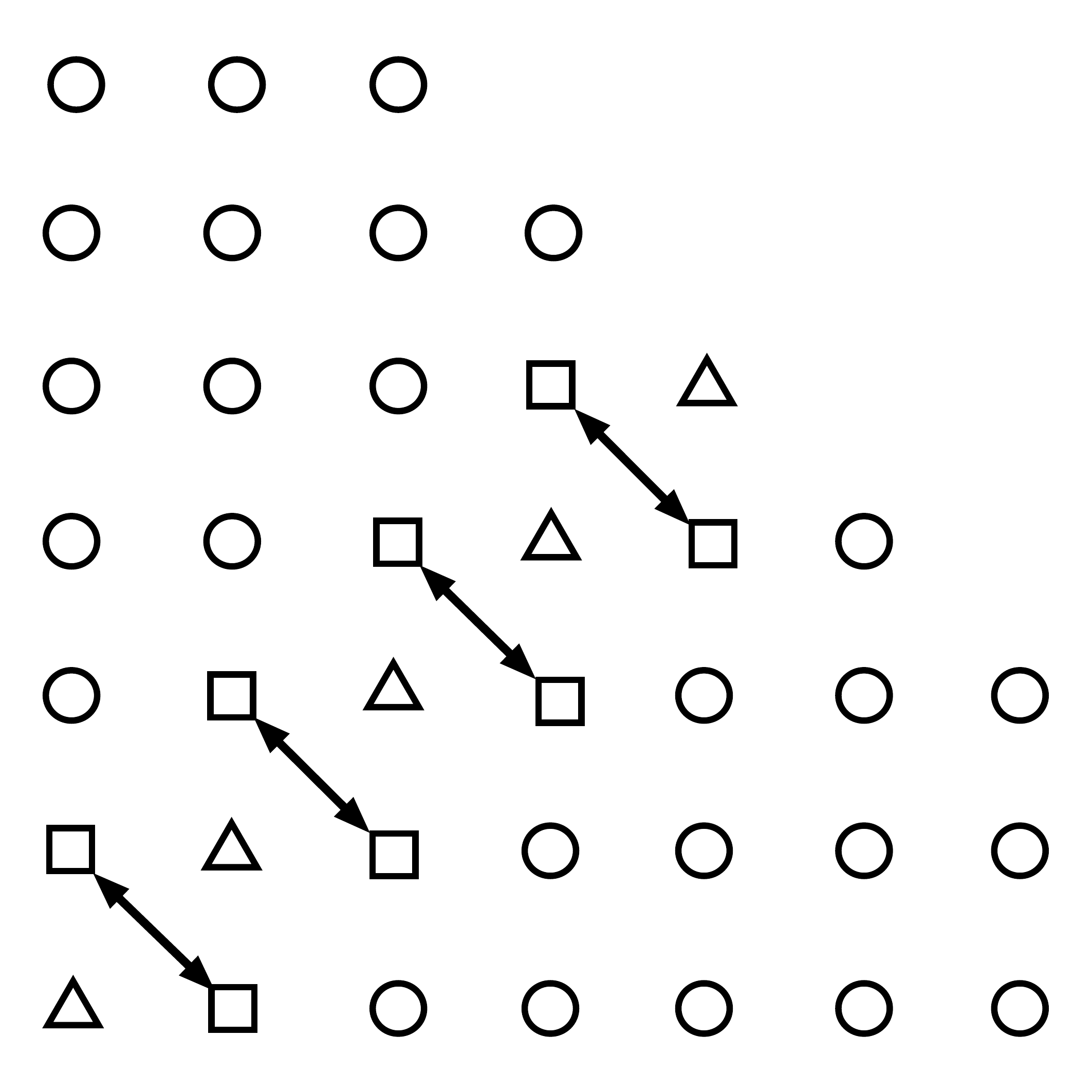}
\caption{
Vogan's labeling of the  tempered dual by minimal $K$-types in the case of $G=Sp(1,1)$, where $K = SU(2)\times SU(2)$.  The nodes in the diagram are the irreducible representations of $K$, as in Figure 1. The circles indicate that a given irreducible representation of $K$ occurs as the unique minimal $K$-type of a discrete series representation. The paired squares indicate pairs of irreducible representations of $K$ that occur as minimal $K$-types in the same principal series component of the tempered dual, while the triangles indicate irreducible representations of $K$ that occur as the unique minimal $K$-type in a principal series component of the tempered dual.  Every component of the tempered dual is listed in the diagram exactly once, except for the indicated pairings.
}    
\label{fig:vogan}
\end{figure}

The purpose of this paper is to present a new view of the Connes-Kas\-parov isomorphism that brings the two pictures of the tempered dual still closer together. 

Following a suggestion of  Vogan (see the acknowledgments below), we shall  consider a sort of enlargement of the reduced group $C^*$-algebra, although it  is  a bit more natural to formulate this enlargement using $C^*$-cat\-egories, rather than $C^*$-algebras (this is  a minor point,  and $C^*$-categories play a  limited role in what follows; see Section~\ref{subsec-category-of-psdos} for  further discussion).  

Let $G$ be an  almost-connected Lie group (meaning that $G$ is a Lie group with only  finitely many path components), and let $K$ be a maximal compact subgroup of $G$. Form   a $C^*$-category $\PSDO^*_{G,K}$ as follows: 
\begin{enumerate}[\rm (i)]

\item The objects of the category are the finite-dimensional, unitary representations of the Lie group  $K$. 

\item If $V_1$ and $V_2$ are finite-dimensional unitary representations of $K$, then the Banach space  $\PSDO^*_{G,K} (V_1,V_2)$ of morphisms from $V_1$ to $V_2$ in our $C^*$-category  is the completion, in the Hilbert space operator norm,  of the space of equivariant, properly supported,  order zero classical pseudodifferential operators 
acting between the sections of the homogeneous vector bundles over $G/K$ associated to $V_1$ and $V_2$.
\end{enumerate}
The above $C^*$-category includes the $C^*$-category generated by smoothing operators as a subcategory and an ideal, and this ideal is a $C^*$-categorical variation on the reduced group $C^*$-algebra of $G$. Now, $K$-theory groups may be defined for any $C^*$-category in a way that closely mimics the definition for $C^*$-algebras (see Section~\ref{subsec-category-of-psdos} again), and  our reworked version of the Connes-Kasparov isomorphism is as follows:

\begin{theorem*}[Theorem~\ref{thm-k-theory-of-the-c-star-category} below]
Let $G$ be an almost-connected Lie group and let $K$ be a maximal compact subgroup of $G$.
Denote by $R(K)$ the representation ring of the compact group $K$. There is an isomorphism of abelian groups 
\[
\CoKa\colon R(K) \stackrel \cong \longrightarrow K_0 (\PSDO^*_{G,K}) ,
\]
under which the class  in $R(K)$ of a finite-dimensional   unitary representation $V$ of $K$ corresponds to the class   of the identity operator on the bundle over $G/K$ with fiber $V$. In addition, $K_1(\PSDO^*_{G,K}) =0$.
\end{theorem*}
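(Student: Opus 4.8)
The plan is to exploit the structure of $\PSDO^*_{G,K}$ as an extension of the smoothing-operator ideal — which is Morita equivalent to $C^*_r(G)$, suitably bundled over $R(K)$ — by the quotient category built from principal symbols, and then run the six-term exact sequence in $K$-theory. First I would make precise the short exact sequence of $C^*$-categories
\[
0 \longrightarrow \mathscr{K}_{G,K} \longrightarrow \PSDO^*_{G,K} \longrightarrow \Symb_{G,K} \longrightarrow 0,
\]
where $\mathscr{K}_{G,K}$ is the ideal generated by smoothing operators and $\Symb_{G,K}$ is the quotient $C^*$-category of principal symbols (order zero, so functions on the unit cosphere bundle $S^*(G/K)$, equivariantly, valued in homomorphism bundles). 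The key input from representation theory is that $K_*(\mathscr{K}_{G,K})$ is computed by the Connes-Kasparov isomorphism: each object $V$ contributes a copy of $K_*(C^*_r(G))$, and passing to the $C^*$-category absorbs the representation-theoretic multiplicities so that, after taking into account the Dirac-induction picture, one gets a description of $K_0(\mathscr{K}_{G,K})$ as a free abelian group on the components of the tempered dual with $K_1 = 0$.

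The next step is to compute $K_*(\Symb_{G,K})$. Here the cosphere bundle $S^*(G/K)$ is $G$-equivariantly a homogeneous space: since $G/K$ carries the $G$-invariant metric, $S^*(G/K) \cong G \times_K S(\lie{p}^*)$ where $\lie{p}$ is the $-1$ eigenspace of the Cartan involution, so $S^*(G/K) \simeq G/M$ for $M$ the stabilizer of a unit covector (up to the sphere bundle structure). Thus $\Symb_{G,K}$ is a $C^*$-category of continuous sections of $\Hom$-bundles over a $G$-space that is $G$-homotopy equivalent to $G/M$; its $K$-theory should be computable by the Green–Julg / Mackey-analogue, or more directly by recognizing it as (a $C^*$-category version of) $C_0$ of the cosphere bundle crossed by $G$, which by the contractibility of $G/K$ reduces to $R(M)$-type data. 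The cleanest route I would take is to identify $K_*(\Symb_{G,K})$ with $K_*^G(S^*(G/K))$ via equivariant Bott/Thom arguments, and then with $K^*_{M}(\mathrm{pt})$-flavored groups.

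With both ends of the sequence understood, I would compute the boundary (index) map $K_1(\Symb_{G,K}) \to K_0(\mathscr{K}_{G,K})$, which is the analytic index of elliptic symbols. The map $\CoKa$ in the statement is then constructed directly: send $[V] \in R(K)$ to the class of the identity operator $1_V \in \PSDO^*_{G,K}(V,V)$, which is evidently a projection and so defines an element of $K_0$; additivity in $V$ and the fact that equivalent representations give homotopic identity operators show this is a well-defined homomorphism on $R(K)$. Injectivity and surjectivity of $\CoKa$, together with $K_1 = 0$, follow by diagram-chasing the six-term sequence once the index map is known: the point is that the identity operators map onto generators modulo the ideal, the ideal's $K_0$ is generated by Dirac indices, and every such index is hit by the boundary map applied to a suitable symbol. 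Equivalently — and this is the remark that the theorem "is equivalent to Connes-Kasparov" — the vanishing of $K_1$ and the isomorphism from $R(K)$ are simultaneously a consequence of and a reformulation of the surjectivity/injectivity of Dirac induction, so one can also argue in the reverse direction.

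The main obstacle I anticipate is the computation of the boundary map and the verification that it is an isomorphism onto the relevant summand — in other words, showing that the analytic index of equivariant elliptic symbols on $S^*(G/K)$ exhausts $K_0(\mathscr{K}_{G,K})$ and does so injectively. This is exactly where the real content of Connes-Kasparov enters, since identifying these indices with Dirac-induced classes requires the clutching-construction/Thom-isomorphism identification of a general symbol class with a symbol coupled to a spinor bundle, plus the Atiyah–Singer-style reduction that reads off the index as an element of $K_*(C^*_r(G))$. Handling the bundle coefficients over all of $R(K)$ simultaneously, rather than one representation at a time, is the technical heart: one wants the $C^*$-category framework precisely so that the symbol calculus "sees" all homogeneous bundles at once and the resulting $K$-theory is the global object $K_0(\PSDO^*_{G,K})$ rather than a single summand. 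A secondary, more bookkeeping-level difficulty is setting up $K$-theory for $C^*$-categories and the associated six-term sequence carefully enough that Morita-type reductions (to ordinary $C^*$-algebras) are legitimate; I expect this to be routine but it must be stated.
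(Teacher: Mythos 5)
Your plan takes a genuinely different route from the paper, and it has gaps that the paper's argument is designed specifically to avoid. You propose to run the six-term exact sequence for
\[
0 \longrightarrow \CC^*_{G,K} \longrightarrow \PSDO^*_{G,K} \longrightarrow \Symb^*_{G,K} \longrightarrow 0
\]
directly on the single group $G$, which then forces you to (a) compute $K_*(\Symb^*_{G,K})$, and (b) identify the connecting homomorphism as an equivariant analytic index and prove that it has exactly the cokernel and kernel required to produce $R(K)$ from $K_*(C^*_r(G))$. You flag (b) yourself as ``the main obstacle,'' but you never overcome it; as written, the injectivity and surjectivity of $\CoKa$ are asserted to follow from a diagram chase ``once the index map is known,'' which is precisely what is not known. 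This is not a small bookkeeping issue: the connecting map here packages the equivariant index theorem on $G/K$ in the Kasparov formalism, and carrying it out would be at least as hard as the target theorem. There is also a concrete error in (a): you claim $S^*(G/K)\simeq G/M$, but $K$ acts transitively on the unit sphere in $\mathfrak{g}/\mathfrak{k}$ only when $G$ has real rank one; in higher rank the sphere stratifies into many $K$-orbits and $\Symb^*_{G,K}$ has no such homogeneous description, so your proposed reduction of its $K$-theory to ``$R(M)$-type data'' is not available in the generality of the theorem.

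The paper's proof sidesteps both difficulties by never computing $K_*(\Symb^*_{G,K})$ or the boundary map at all. It passes to the deformation to the normal cone $\GG=\{G_t\}$ and builds a $C^*$-category $\PSDO^*_{\GG,\KK}$ of \emph{families} of order-zero operators fitting into an extension by $\CC^*_{\GG,\KK}$ with quotient the \emph{same} symbol category $\Symb^*_{G,K}$ (for every $t$ the cotangent fiber at the basepoint is $(\mathfrak{g}/\mathfrak{k})^*$). Because the right-hand column in the map of extensions from $\PSDO^*_{\GG,\KK}$ to $\PSDO^*_{G_t,K}$ is the identity, the five lemma converts the statement about $K_*(\PSDO^*)$-evaluations into the statement about $K_*(\CC^*)$-evaluations, i.e.\ into Connes--Kasparov in its deformation form (Theorem~\ref{thm-dnc-formulation-of-connes-kasparov}/\ref{thm-c-k-isomorphism-stated-using-c-star-categories}). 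The $t{=}0$ fiber (the motion group) is then computed directly: by Fourier transform $\PSDO^*_{G_0,K}(V_1,V_2) \cong C\bigl(\overline{(\mathfrak{g}/\mathfrak{k})^*},\Hom(V_1,V_2)\bigr)^K$, and the equivariant contraction of the compactified vector space deforms this to $\Rep^*_K$ by restriction to constant functions, giving $K_0\cong R(K)$ and $K_1=0$ with no index theory at all. If you want to salvage your approach, you would essentially need to reprove the equivariant index theorem in the $C^*$-category language and handle the higher-rank sphere; the deformation argument is the shortcut that makes the theorem accessible.
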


We shall prove the theorem above using the Connes-Kasparov isomorphism for $G$. Conversely, the theorem implies the Connes-Kasp\-arov isomorphism. Note   that the statement involves no Dirac operators, and indeed the isomorphism in the theorem is defined using only identity operators. As we shall see, this makes it possible to calculate  the isomorphism in simple, representation-theoretic terms.

For the rest of this introduction, we shall specialize from almost-connected Lie groups to the case of a real reductive group $G$.  Denote by $\Fin^*$ the $C^*$-category of finite-dimensional Hilbert spaces. If $\pi\colon G \to U(H_\pi)$ is an  irreducible, tempered unitary representation of $G$, then there is a functor
\[
\mult_{\pi} \colon \PSDO^*_{G,K} \longrightarrow \Fin^* 
\]
that, on objects, maps a representation $V$ of $K$ to the space 
$ [ H_\pi \otimes V]^K$.
Taking $K$-theory, the functor induces a morphism of abelian groups
\[
\mult_{\pi}\colon K_0( \PSDO^*_{G,K}) \longrightarrow  \Z.
\]
Let us now introduce the following terminology, suggested to us by Alexandre Afgoustidis. 

\begin{definition*}
A unitary representation of $G$ is \emph{tempiric} if it is tempered, irreducible, and has real infinitesimal character, as in \cite{Vogan00}, for example.  Denote by $R(G)_{\mathrm{tempiric}}$ the free abelian group on the set unitary equivalence classes of tempiric representations of $G$.
\end{definition*}

A well-known and important theorem of David Vogan  provides a bijection from irreducible unitary representations of $K$ to tempiric representations of  $G$, given by minimal $K$-type (this follows from Vogan's algebraic classification of the admissible dual in \cite{VoganGreenBook}, but for an explicit statement of the result see \cite[Thm.~1.2]{VoganBranching07}). The bijection is illustrated in Figure~\ref{fig:vogan}, where the tempiric representations are precisely the discrete series and the irreducible constituents of the base principal series representations.  

Now, define a homomorphism of abelian groups 
\[
 \mult\colon K_0(\PSDO^*_{G,K})  \longrightarrow R(G)_{\mathrm{tempiric}}
\]
by means of the formula 
\[
\mult \colon c \longmapsto \sum_{[\pi]} \mult_{\pi}(c)[\pi] ,
\]
where the sum is over representatives of the unitary equivalence classes of tempiric representations of $G$ (it  is actually a finite sum, since all but finitely many of the integer multiplicities are zero).  Using our first theorem,  we may prove as a consequence of Vogan's theorem that:

\begin{theorem*}[Compare  \eqref{eq-k-theory-and-tempirics} below]
 The  above group homomorphism
$
 \mult $
 is an isomorphism of abelian groups.
\end{theorem*}

Indeed, Vogan's theorem implies that the composition
\begin{equation}
\tag{$*$} \xymatrix{
R(K) \ar[r]^-{\CoKa}_-{\cong} &  K_0(\PSDO^*_{G,K}) \ar[r]^-{\mult} &  R(G)_{\mathrm{tempiric}}
}
\end{equation}
is an isomorphism of abelian groups; in effect, using Vogan's bijection, the matrix of the composite homomorphism is lower-triangular with all diagonal entries equal to $1$.

Our third theorem gives an explicit description of the  category $\PSDO^*_{G,K}$ in representation-theoretic terms, at least when $G$ has real rank one.  In the paper \cite{ClareCrispHigson16}, an extensive amount of information from tempered representation theory, most of it due to Harish-Chandra, was combined to produce a $C^*$-algebra isomorphism
\[
\tag{$**$}
C^*_r (G) \stackrel \cong \longrightarrow \bigoplus _{[P,\sigma]}
C_0\bigl  (\mathfrak{a}^*_P, \mathfrak{K}(H_\sigma  )\bigr ) ^{W_\sigma}
\]
that describes the reduced $C^*$-algebra of any real reductive group in rep\-resent\-ation-theoretic terms. The notation is described in more detail in Section~\ref{subsec-fourier-transform-of-c-star-algebra}, at least in the cases of interest in this paper,   but in brief, $\mathfrak{a}_P^*$ is a finite-dimensional real vector space, $H_\sigma$ is a Hilbert space, and $W_\sigma$ is a finite group that acts on the bundle $\mathfrak{a}_P^* \times H_\sigma$.  Now if $\mathfrak{v}$ is a finite-dimensional real vector space, then denote by $\overline{\mathfrak{v}}$ the compactification of $\mathfrak{v}$ that adds a sphere at infinity.

\begin{theorem*}[See Theorem~\ref{thm-fourier-isomorphism-in-real-rank-one} below] 
If $G$ has real rank one, then the   isomorphism \textup{($**$)} determines isomorphisms
\[
\PSDO^*_{G,K} (V_1, V_2) \stackrel \cong \longrightarrow \bigoplus_{[P,\sigma]} C \bigl (\overline{\mathfrak{a}^*_P}, \Compact ([V_1\otimes H_\sigma]^K,[V_2\otimes H_\sigma]^K )\bigr )^{W_\sigma} .
\]
\end{theorem*}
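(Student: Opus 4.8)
The plan is to analyse $\PSDO^*_{G,K}(V_1,V_2)$ through its principal symbol and to match the resulting $C^*$-extension with an analogous extension built from the Plancherel isomorphism \textup{($**$)}. Write $\mathfrak g = \mathfrak k\oplus\mathfrak p$ for the Cartan decomposition, so that $\mathfrak p\cong T_{eK}(G/K)$, fix $\mathfrak a\subset\mathfrak p$ maximal abelian, put $M = Z_K(\mathfrak a)$, and let $E_V$ denote the homogeneous bundle over $G/K$ with fibre $V$. First I would recall the symbol sequence: the morphism space sits in a short exact sequence of $C^*$-algebras
\[
0 \longrightarrow \mathfrak S^*_{G,K}(V_1,V_2) \longrightarrow \PSDO^*_{G,K}(V_1,V_2) \xrightarrow{\ \mathrm{symb}\ } \Symb(V_1,V_2) \longrightarrow 0 ,
\]
where $\mathfrak S^*_{G,K}$ is the ideal of smoothing operators and $\Symb(V_1,V_2)$ is the $C^*$-algebra of $G$-equivariant continuous sections of $\Hom(E_{V_1},E_{V_2})$ pulled back to the cosphere bundle $S^*(G/K)$. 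Since $S^*(G/K)\cong G\times_K S(\mathfrak p^*)$, such sections are the $K$-equivariant continuous maps $S(\mathfrak p^*)\to\Hom(V_1,V_2)$. Here the real rank one hypothesis enters decisively: for every rank one $G$ the isotropy representation of $K$ on $\mathfrak p$ is transitive on the unit sphere, with stabiliser exactly $M$ (the infinitesimal form of two-point homogeneity of rank one symmetric spaces), so $S(\mathfrak p^*)\cong K/M$ and hence $\Symb(V_1,V_2)\cong\Hom(V_1,V_2)^M=\Hom_M(V_1,V_2)$.

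Next I would analyse the smoothing ideal. A $G$-equivariant, properly supported, smoothing operator $E_{V_1}\to E_{V_2}$ is convolution with a compactly supported smooth $\Hom(V_1,V_2)$-valued function on $G$ that is $K\times K$-equivariant, and the closure of these operators is the $K\times K$-invariant subspace $\bigl(C^*_r(G)\otimes\Hom(V_1,V_2)\bigr)^{K\times K}$. Passing this through \textup{($**$)} and computing $K\times K$-invariants fibrewise — writing $\Compact(H_\sigma)\otimes\Hom(V_1,V_2)\cong\Hom(H_\sigma\otimes V_1,H_\sigma\otimes V_2)$ and noting that its $K\times K$-invariants are $\Hom([V_1\otimes H_\sigma]^K,[V_2\otimes H_\sigma]^K)$ — identifies
\[
\mathfrak S^*_{G,K}(V_1,V_2)\ \cong\ \bigoplus_{[P,\sigma]} C_0\bigl(\mathfrak a^*_P,\ \Compact([V_1\otimes H_\sigma]^K,[V_2\otimes H_\sigma]^K)\bigr)^{W_\sigma}.
\]
On the other side, the target algebra $\bigoplus_{[P,\sigma]}C(\overline{\mathfrak a^*_P},\Compact(\cdots))^{W_\sigma}$ is itself an extension of its values on the sphere at infinity by exactly this same ideal. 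In real rank one $\overline{\mathfrak a^*_P}$ is a point (for $P=G$) or a closed interval (for $P$ minimal), and a short count over the two endpoints, the reflection $\nu\mapsto -\nu$, and Peter–Weyl for $M$ shows that the corresponding quotient of values at infinity is canonically $\bigoplus_{\sigma\in\widehat M}\Hom(\mathrm{mult}_\sigma V_1,\mathrm{mult}_\sigma V_2)=\Hom_M(V_1,V_2)$ — the very same algebra as the symbol algebra of the first step (the discrete series terms $P=G$ contribute nothing here, consistently with the fact that symbols do not see discrete series).

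Finally I would construct the isomorphism. Decomposing $L^2(E_{V_i})$ along the Plancherel decomposition of $L^2(G)$ exhibits every $G$-equivariant bounded operator $A\colon E_{V_1}\to E_{V_2}$ as a field $[P,\sigma]\mapsto\bigl(\nu\mapsto\pi_{P,\sigma,\nu}(A)\bigr)$ of operators $[V_1\otimes H_\sigma]^K\to[V_2\otimes H_\sigma]^K$, and on the smoothing ideal this recovers \textup{($**$)}. The crux of the theorem — and the step I expect to be essentially all the work — is to prove that \emph{when $A$ is a classical, order zero, equivariant, properly supported pseudodifferential operator this field is norm continuous on $\mathfrak a^*_P$ and extends continuously to $\overline{\mathfrak a^*_P}$, with boundary value the principal symbol of $A$ under the identification of the first step}. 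Granting this, $A\mapsto(\pi_{P,\sigma,\nu}(A))$ is a $*$-functor from $\PSDO^*_{G,K}$ into the target algebra which restricts to \textup{($**$)} on the smoothing ideal and induces on quotients the identification $\Symb(V_1,V_2)=\Hom_M(V_1,V_2)=(\text{values at infinity})$; a five lemma argument for the two short exact sequences then gives the isomorphism. I would prove the asymptotic claim by reducing to a generating set: the smoothing ideal (handled by \textup{($**$)}), the identity operators (whose Fourier transforms are the identities of the multiplicity spaces, independent of $\nu$), and a finite family of explicit model order zero operators — for instance bounded functions of an equivariant Dirac-type operator coupled to $\Hom(V_1,V_2)$, or Riesz-type transforms on $G/K$ — for which $\pi_{P,\sigma,\nu}$ can be written in closed form using the Poisson transform and Harish-Chandra's $c$-function for the rank one space $G/K$. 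The geometric content of the estimate is that as $|\nu|\to\infty$ the Poisson transform $\mathcal P_{\sigma,\nu}$ concentrates on the boundary $K/M$ of $G/K$, so that conjugation by it carries a pseudodifferential operator to multiplication by its principal symbol evaluated in the limiting boundary direction; in real rank one this is a stationary-phase computation that can be carried through explicitly.
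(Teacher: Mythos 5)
Your overall architecture matches the paper's exactly: compare the symbol exact sequence $0\to\CC^*_{G,K}\to\PSDO^*_{G,K}\to\Symb^*_{G,K}\to 0$ with its Fourier-side analogue, identify the endpoints, and conclude by a five-lemma-type argument. Your identifications of all three corners are also the ones the paper uses: $\Symb^*_{G,K}(V_1,V_2)\cong\Hom(V_1,V_2)^M$ via transitivity of the isotropy action of $K$ on the sphere in $\mathfrak p^*$ with stabilizer $M$ (this is exactly where real rank one enters), the smoothing ideal as $[C^*_r(G)\otimes\Hom(V_1,V_2)]^{K\times K}$ carried through \textup{($**$)}, and the target quotient computed by the three-case discussion of $\partial\mathfrak a^*_P$ and $W_\sigma$ plus Peter--Weyl for $M$. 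So far you are aligned with the paper.

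Where you diverge is in the proof of the pivotal analytical claim, namely that $\nu\mapsto\pi_{\sigma,i\nu}(A)$ extends continuously to $\overline{\mathfrak a^*_P}$ with boundary value the principal symbol. You propose to establish this by reducing to a generating set of model operators (Riesz transforms, bounded functions of Dirac operators) and then computing $\pi_{\sigma,i\nu}$ for those in closed form via the Poisson transform, Harish-Chandra's $c$-function, and a stationary-phase estimate. The paper's Theorem~5.10 does none of this. Instead it observes that under the identification of the multiplicity space $[H_{\sigma,i\nu}\otimes V]^K$ with $[C^\infty(G)\otimes V\otimes L_\sigma]^{K\times P}$, the action of $A$ becomes $A\otimes\mathrm{id}_{L_\sigma}$ and the determination of $\pi_{\sigma,i\nu}(A)$ reduces to a single evaluation $(Af_{\nu,u})(e)$, where $f_{\nu,u}(kan)=e^{-(i\nu+\rho)(\log a)}(k\otimes\mathrm{id})u$. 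After a proper-support cutoff this has the form $e^{i\nu(\log a)}(Ae^{-i\nu(\log a)}s)(e)$ for a fixed smooth compactly supported section $s$, and H\"ormander's coordinate-free characterization of the principal symbol, formula \eqref{eq-limit-formula-for-principal-symbol}, gives the limit as $|\nu|\to\infty$ immediately, uniformly, for \emph{every} classical order zero properly supported equivariant $A$ at once, with no appeal to Poisson transforms or $c$-function asymptotics. This argument also works for all real reductive groups, not only rank one.

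Two concrete concerns about your route. First, you would need to verify that your "finite family of explicit model order zero operators," together with the identity operators and the smoothing ideal, actually generates $\PSDO^*_{G,K}(V,V)$ as a $C^*$-algebra (equivalently, that the principal symbols of your model operators generate the symbol algebra $\Hom(V,V)^M$); Riesz transforms and functions of a coupled Dirac operator give Clifford-type symbols, and it is not obvious that these exhaust $\Hom(V,V)^M$ for general $V$. One can circumvent this by using the Stetkaer averaging construction to produce an operator with any prescribed $K$-equivariant symbol, but then the claimed closed-form Poisson-transform computation is no longer available, and you are back to an argument resembling the paper's. Second, even granting the generating set, the explicit stationary-phase analysis is genuinely heavy, and essentially reproduces (in the special case of those operators) the single uniform limit that H\"ormander's characterization hands you for free. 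So your proposal is structurally correct, but the central analytic step is both more work and, as written, has an unverified generation hypothesis; the paper's direct use of the coordinate-free symbol limit is the simpler and more robust mechanism.
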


The theorem allow us to \emph{directly} compute the $K$-theory of $\PSDO^*_{G,K}$ in terms of tempiric representations (without reference to either the Connes-Kasparov isomorphism or Vogan's theorem). Indeed, by composing the isomor\-phisms in the theorem with evaluation at $0$ in each $\mathfrak{a}_P^*$ we obtain a homotopy equivalence from $\PSDO^*_{G,K}$ to the $C^*$-category whose objects are the finite-dimensional unitary representations of $K$ and whose Hom-sets are the finite-dimensional spaces 
\[
\bigoplus_{[P,\sigma]}  \Compact ([V_1\otimes H_\sigma]^K,[V_2\otimes H_\sigma]^K )\bigr )^{W_\sigma},
\]
and the $K$-theory of the latter may be  identified with $R(G)_{\mathrm{tempiric}}$.
This gives an independent proof of our second  theorem above, for real rank-one groups, that does not refer  to  Vogan's theorem. 

Let us remark, finally, that  for any real reductive group  $G$, the map ($*$) is readily computed to be
\[
\begin{gathered} 
R(K)\longrightarrow R(G)_{\mathrm{tempiric}}
\\
[\tau]\longmapsto \sum _{\pi\,\mathrm{tempiric}} \operatorname{mult}(\tau; \pi) [\pi] ,
\end{gathered}
\]
where $\operatorname{mult}(\tau, \pi)\in \{0,1,2,\dots\}$ is the multiplicity with which $\tau$ occurs in  $\pi$. Our first and third theorems imply that this map is an isomorphism of abelian groups (at least for real rank-one groups);  the isomorphism can be viewed as a $K$-theoretic version of Vogan's theorem about minimal $K$-types. In a separate work that developed from this paper, this last statement is proved to be a consequence of the Connes-Kasparov isomorphism for \emph{all} real reductive  groups \cite{BraddHigsonYuncken24}.
One can speculate that  by changing  the  type of pseudodifferential operators used,  the method of pseudodifferential operators developed here might be  extendable to new cases. But we shall not pursue that in this paper.

 {\sc Acknowledgements.} 
Work on this project was supported by  the NSF grant DMS-1952669, and  was carried out, in part, within the online Research Community on Representation Theory and Noncommutative Geometry (RTNCG),  sponsored by the American Institute of Mathematics.  The project was prompted by a question asked in RTNCG by David Vogan during  lectures given by the second author; see \cite{HigsonRTNCG22}. We are very grateful to Vogan  for his interest. We are also very grateful to the referee, who provided us with a great many illuminating comments and suggestions.

\section{Pseudodifferential Operators}
\label{sec-pseudodifferential-ops}

In this section we shall rapidly review most of the facts about pseudodifferential operators that we shall need in the paper.

\subsection{Pseudodifferential operators on Euclidean space} 
\label{sec-psdos-on-r-n}
For our purposes, a convenient general reference for pseudodifferential operators is Chapter 18 of the treatise \cite{HormanderVolIII} of H\"ormander.  The pseudodifferential operators of order $m\in \Z$ on $\R^n$ are precisely the   operators 
\begin{equation}
    \label{eq-an-operator-on-r-n}
A \colon C_c^\infty (\R^n) \longrightarrow C^\infty (\R^n)
\end{equation}
of the form
\begin{equation}
\label{eq-fmla-for-psdo}
(A\varphi )(x) =  (2\pi)^{-n} \int _{\R^n} e^{i\xi\cdot x} a(x,\xi) \hat \varphi (\xi)\, d\xi ,
\end{equation}
for which  the \emph{complete symbol function} $a(x,\xi)$ belongs to the \emph{symbol class} $S^m (\R^n{\times} \R^n)$ defined in  \cite[Def.\ 18.1.1]{HormanderVolIII}. Here $\hat \varphi$ is the Fourier transform of $\varphi$ \cite[Def.\,7.1.1]{HormanderVolI}, normalized so that if $a(x,\xi)\equiv 1$, then $A\varphi = \varphi$. Among other things, the conditions on the symbol class imply that the integrand is an integrable function. The operator $A$   is continuous for the usual  topologies on $C_c^\infty (\R^n)$ and $C^\infty (\R^n)$ \cite[Thm.\ 18.1.6]{HormanderVolIII}.

When the complete symbol function admits an asymptotic expansion in homogeneous functions of strictly decreasing integer order, as detailed in \cite[Prop.\ 18.1.3 and Def.\ 18.1.5]{HormanderVolIII}, the operator $A$ will be said to be \emph{classical}.  We shall be exclusively concerned with classical operators in what follows.

Every pseudodifferential operator $A$  on $\R^n$  is \emph{adjointable}, in the sense that there is a linear operator 
\[
A^* \colon C_c^\infty (\R^n) \longrightarrow C^\infty (\R^n) 
\]
such that 
\[
\langle A\varphi,\psi \rangle_{L^2} = \langle \varphi, A^* \psi\rangle_{L^2} \qquad \forall \varphi,\psi\in C_c^\infty (\R^n) .
\]
If $A$ is a classical  pseudodifferential operator, then the adjoint is a classical pseudodifferential operator, too, and it has the same order as $A$. See \cite[Thm.~18.1.7]{HormanderVolIII}.  

The composition of properly supported,\footnote{For the definition of \emph{properly supported}, see \cite[Def.\ 18.1.21]{HormanderVolIII} or Section~\ref{subsec-properly-supported} below.}  pseudodifferential operators is again pseudodifferential (and classical if the factors are classical), and the order of the composite operator is the sum of the orders of the factors \cite[Thm.~18.1.8]{HormanderVolIII}.

\subsection{Pseudodifferential operators on smooth manifolds} 
\label{sec-psdos-on-manifolds}
The space of all classical pseudodifferential operators is  invariant under  diffeomorphisms \cite[Thm.~18.1.17]{HormanderVolIII}. Because of this, one can transport the definitions of  pseudodifferential and classical pseudodifferential operators to any smooth manifold, as in  \cite[Def.~18.1.20]{HormanderVolIII}.  

H\"ormander has given  a coordinate-free definition of classical pseudodifferential operators that is conceptually  helpful \cite{Hormander65}.  If $M$ is a smooth manifold without boundary, then a continuous linear operator
\[
A \colon C_c^\infty (M) \longrightarrow C^\infty (M)
\]
is a classical pseudodifferential operator of order $m$ (according to the  coor\-dinate-free definition of  H\"ormander) if and only if for every $\varphi \in C_c^\infty (M) $ and every bounded set of functions $\ell \in C^\infty (M)$ with $d\ell$ nowhere zero throughout $\Supp(\varphi)$, there is a uniform  asymptotic expansion
\begin{equation}
    \label{eq-coordinate-free-asymptotic-expansion}
e^{-i t \ell } \cdot A(e^{i t\ell }\varphi )
\sim 
\sum_{k=0}^\infty \alpha_k(\varphi ,\ell ) t ^{m-k}\qquad\text{as}\quad  t \to +\infty,
\end{equation}
with $\alpha_k(\varphi ,\ell )\in C^\infty (M)$, meaning that for every $N$,  the set of all  functions 
\begin{equation}
    \label{eq-coordinate-free-asymptotic-expansion-uniformity-condition}
t ^{N-m}\cdot \bigl (
e^{-i t \ell } \cdot A(e^{i t\ell }\varphi )
-  \sum_{k=0}^{N-1} \alpha_k(\varphi ,\ell ) t ^{m-k}\bigr ) \in C^\infty(M),
\end{equation}
for all $\ell $ and all $t \ge 1$, is a bounded set in $C^\infty (M)$.  See \cite[Sec.\ 2]{Hormander65} for further details and discussion.

H\"ormander proves that when $M {=} \R^n$, the compactly supported operators of the above type, meaning those for which there exists $\varphi  \in C_c^\infty(M)$ with 
\begin{equation}
    \label{eq-cutoff-function-phi}
A \varphi\psi   = A\psi = \varphi  A\psi\qquad \forall \psi \in C^\infty _c(M)    ,
\end{equation}
are precisely the compactly supported pseudodifferential operators of the type considered in the previous section. The complete symbol function is characterized by 
\[
a(x,\xi) = \bigl ( e^{-i \xi } \cdot A( e^{i\xi}\varphi )\bigr ) (x) 
\]
where $\xi\in \R^n$ is regarded as a linear function on $\R^n$ using the standard dot product on $\R^n$, and $\varphi$ is as in \eqref{eq-cutoff-function-phi} above.

It follows that the pseudodifferential operators in H\"ormander's coord\-inate-free definition are precisely the operators with the following property: for every $\varphi  \in C^\infty _c(M)$ and every diffeomorphism from a neighborhood of the support of $\varphi $ to an open subset of $\R^n$, the operator $\varphi  A \varphi $ transfers, via that diffeomorphism, to a (compactly supported) pseudodifferential operator on $\R^n$ in the sense of the previous section.  These are the operators that we shall be considering from this point onward; they coincide with the operators on manifolds  defined in \cite[Def.\ 18.1.20]{HormanderVolIII}.

We shall work in  slightly greater generality.  If $M$ is a smooth manifold,  and if $E$ and $F$ are smooth vector bundles over $M$, then using local coordinates on $M$ and local frames for $E$ and $F$, one can define pseudodifferential and classical pseudodifferential operators
\begin{equation}
    \label{eq-psdo-on-sections-of-bundles}
A \colon C_c^\infty (M;E)\longrightarrow C^\infty (M;F)
\end{equation}
that act between spaces of smooth sections of $E$ and $F$; this is done in   \cite[Def.~18.1.32]{HormanderVolIII}.  The operators  are characterized by the property that for any local frames of $E_1$ and $E_2$ over an open set $U\subseteq M$,   the entries of the matrix of operators determined  by $\varphi A \varphi $ and   the frames are classical pseudodifferential operators on $U$.

\subsection{The principal symbol}

Let $A$ be a pseudodifferential operator of order $m$ on a smooth manifold $M$. The value of the leading coefficient function 
\[
\alpha_0(\varphi ,\ell)\colon M \longrightarrow \C
\] 
in the asymptotic expansion \eqref{eq-coordinate-free-asymptotic-expansion} at a point $x\in M$ depends only on the value of  $\varphi  $ at $x$  and the value  of the differential $d\ell$ at $x$.  So from $\alpha_0$ we obtain a function 
\[
a_0 \colon T^*M\setminus M  \longrightarrow \C ,
\]
that is characterized by 
\[
a_0(x,\xi)\varphi (x) = \alpha_0(\varphi ,\ell)(x)\quad\text{if \ $(d\ell)(x) = \xi $.}
\]
This is the \emph{principal symbol} of $A$. It is a smooth  function on $T^*M\setminus M$ that is  homogeneous of degree $m$ on each cotangent fiber. 
 See \cite[Lemmas 2.2 \& 2.3]{Hormander65} or \cite[pp.~82-83]{HormanderVolIII}.  
 
It follows from H\"ormander's coordinate-free definition of classical pseudodifferential operators that the principal symbol may be characterized in terms of the operator $A$ by the formula  
\[
a_0(x,\xi)\varphi (x)  = \lim_{t\to +\infty} t^{-m} e^{-i\ell(x)} (A e^{it \ell}\varphi)  (x)
\quad \text{when $(d\ell)(x)=\xi$,}
\]
and that the limit is uniform over bounded families of smooth functions $\ell$.

In the case of a pseudodifferential operator 
\[
A \colon C_c^\infty (M;E) \longrightarrow C^\infty (M;F) ,
\]
acting on bundle sections, the principal symbol of $A$ is a smooth section  of the pullback of the bundle $\Hom (E,F)$ to $T^*M\setminus M$ that is homogeneous of order $m$ in each fiber \cite[p.~92]{HormanderVolIII}.  It is characterized by the formula 
\begin{equation}
    \label{eq-limit-formula-for-principal-symbol}
a_0(x,\xi)s(x) = \lim_{t\to +\infty} t^{-m}e^{-i\ell(x)}( A  e^{it \ell}s)(x)
\quad \text{when $(d\ell)(x)=\xi$,}
\end{equation}
for all smooth, compactly supported sections $s$. Once again the limit is 
uniform over bounded families of smooth functions $\ell$.

For various purposes below, it  will be convenient to use the following notion of compactification of a   finite-dimensional real vector space, which adds to the vector space a ``sphere at infinity.''

\begin{definition}
\label{def-spherical-compactification}
  If $\mathfrak{v}$ is any finite-dimensional  vector  space, then we shall denote by $\overline{\mathfrak{v}}$ the topological compactification of $\mathfrak{v}$ (meaning a compact Hausdorff space including $\mathfrak{v}$ as a dense open subset) for which the continuous map 
  \[
 \mathfrak{v} \rightarrow  \mathfrak{v} , \qquad v \mapsto (1{+}\|v\|) ^{-1}  v
  \]
  extends to a homeomorphism from $\overline {\mathfrak{v}} $ to the closed unit ball in $\mathfrak{v}$.  Here $\|\,\cdot \, \|$ is any norm on $\mathfrak{v}$ (the compactification is independent of the choice of norm).
\end{definition}

The action of $GL(\mathfrak{v})$ on $\mathfrak{v}$ extends to a continuous action on the compactification above.  With this, if $\mathfrak{V}$ is any real vector bundle, then we may define a space $\overline{\mathfrak{V}}$ by applying the compactification construction fiberwise (compare \cite[Sec.\ 1.2]{Atiyah67}). We shall write the added boundary as $\partial \mathfrak{V}$ below.

The  principal symbol  of an order zero pseudodifferential operator on a smooth manifold $M$, being homogeneous of order zero on each fiber of $T^*M\setminus M$, extends to a continuous function (or section)  on $\overline{T^*M}   \setminus M$, and is determined by the restriction of this extension to $\partial T^*M$:
\[
a_0 \colon  \partial T^*M  \longrightarrow \C .
\]
If the fiberwise compactification is constructed using a Riemannian metric on $M$, then $\partial T^*M$ identifies with the unit sphere bundle in $T^*M$ and therefore carries a canonical smooth structure, independent of the choice of metric. 
The principal symbol is then a smooth function. Moreover, it is evident from the definitions that if the principal symbol of a  classical pseudodifferential operator of order $0$ is identically zero, then $ A$ is in fact of order ${-}1$. We therefore obtain a short exact sequence of algebras and algebra morphisms
 \[
0 \longrightarrow \Psi^{-1}_c (M) \longrightarrow  \Psi^0_c(M) \longrightarrow 
C^\infty _c(\partial T^*M)
\longrightarrow 0 ,
\]
where $\Psi_c^{-1}(M)$ and $\Psi^0_c(M)$ are the algebras of compactly supported order $-1$ and order $0$ classical pseudodifferential operators, respectively.

\subsection{The pseudodifferential extension of C*-algebras}

It will be important for our purposes that compactly supported   classical pseudodifferential operators of order $0$ extend to  bounded Hilbert space operators: 

\begin{theorem}[See for instance {\cite[Thm.\ 18.1.1]{HormanderVolIII}}]
\label{thm-l-2-boundedness-of-order-zero-psdo}
If $A$ is a compactly supported   classical pseudodifferential operator of order $0$ on a smooth manifold $M$, then $A$ is $L^2$-bounded:  for any choice of smooth measure on $M$, there is a constant $C\ge 0$ such that 
\[
\| A \varphi  \|_{L^2(M)} \le  C \| \varphi \|_{L^2(M)} \qquad \forall \varphi \in C_c^\infty (M).
\]
\end{theorem}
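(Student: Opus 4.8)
The plan is to reduce the statement to the Euclidean case of Section~\ref{sec-psdos-on-r-n}, where it follows from H\"ormander's symbolic ``square-root trick,'' and to deal with the passage to manifolds and bundles by localisation.

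For the reductions: working in local trivialisations of the bundles, an operator as in \eqref{eq-psdo-on-sections-of-bundles} is represented by a finite matrix of scalar classical pseudodifferential operators, and a matrix of bounded Hilbert space operators is bounded, so it suffices to treat the scalar case. Since $A$ is compactly supported there is $\varphi_0\in C_c^\infty(M)$ with $A=\varphi_0 A\varphi_0$; I would cover $\Supp(\varphi_0)$ by coordinate charts, chosen fine enough that any two members of a subordinate partition of unity $\{\psi_i\}$ whose supports meet have those supports contained in a common chart, and expand $A=\sum_{i,j}\psi_i A\psi_j$. The Schwartz kernel of $A$ is smooth off the diagonal, so each term with $\Supp(\psi_i)\cap\Supp(\psi_j)=\emptyset$ has a smooth compactly supported kernel, hence is Hilbert--Schmidt and a fortiori $L^2$-bounded; each of the remaining finitely many terms transfers, through a chart, to a compactly supported classical order-zero operator on $\R^n$. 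The choice of smooth measure is immaterial here, since only test functions supported in the fixed compact set $\Supp(\varphi_0)$ matter and on that set any two smooth measures induce comparable $L^2$-norms.

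On $\R^n$ the substance of the argument is as follows. The adjoint $A^*$ is again classical of order $0$, so $A^*A$ is too, with non-negative principal symbol $|a_0|^2$ and bounded complete symbol $\sigma(A^*A)$. I would fix $\lambda>0$ large enough that $\lambda^2-\sigma(A^*A)$ has real part bounded below by a positive constant on $\R^n\times\R^n$ (possible since $\sigma(A^*A)$ is a bounded symbol); then its principal-branch square root is again a classical symbol of order $0$, and the step-by-step symbolic calculus produces, for any prescribed $N$, a formally self-adjoint operator $B$ — the sum of $\lambda I$ and a compactly supported classical pseudodifferential operator of order $0$ — with
\[
B^2=\lambda^2 I-A^*A+R ,
\]
where $R$ is a compactly supported classical pseudodifferential operator of order $-N$. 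Choosing $N>n/2$ makes the kernel of $R$ square-integrable, so $R$ is Hilbert--Schmidt and hence $L^2$-bounded. Then, using $\langle B^2\varphi,\varphi\rangle=\|B\varphi\|_{L^2}^2$, for every $\varphi\in C_c^\infty(\R^n)$ one obtains
\[
\|A\varphi\|_{L^2}^2=\langle A^*A\varphi,\varphi\rangle=\lambda^2\|\varphi\|_{L^2}^2-\|B\varphi\|_{L^2}^2+\langle R\varphi,\varphi\rangle\le\bigl(\lambda^2+\|R\|\bigr)\|\varphi\|_{L^2}^2 ,
\]
which is the asserted estimate with $C=(\lambda^2+\|R\|)^{1/2}$.

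The one genuinely analytic ingredient, and the step I expect to require the most care, is the construction of $B$: one must verify that $\sqrt{\lambda^2-\sigma(A^*A)}$ and each of its correction terms satisfies the required $S^0$ seminorm bounds (including the behaviour near $\xi=0$, where one inserts a cut-off), that the resulting asymptotic series can be summed to an honest symbol, and that $B$ can be arranged to be formally self-adjoint. This is entirely standard symbolic calculus, and everything else in the argument is bookkeeping; alternatively, one may simply cite the corresponding $L^2$-continuity statements in \cite{HormanderVolIII}, both for $\R^n$ and for operators on manifolds and bundles.
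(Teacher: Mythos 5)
The paper itself offers no proof of this theorem: it cites H\"ormander and adds a one-line remark that, although H\"ormander works on $\R^n$, the result is local so the manifold case follows. Your proposal supplies the actual argument that the citation refers to, and it is correct: the reduction to the scalar compactly supported case on $\R^n$ via local trivialisations, the diagonal decomposition with the off-diagonal terms handled by pseudo-locality (smooth compactly supported kernels, hence Hilbert--Schmidt), the observation that the choice of smooth measure is immaterial on a fixed compact set, and finally H\"ormander's square-root trick on $\R^n$ are exactly the ingredients the reference uses. You also note at the end the option of simply citing the relevant statements in H\"ormander, which is precisely what the paper does.

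One small refinement on the step you flag as needing the most care: rather than arranging $B=\lambda I+B_0$ to be formally self-adjoint \emph{and} to satisfy $B^2=\lambda^2 I-A^*A+R$ simultaneously (which works, but requires checking at each stage of the iteration that the error term stays self-adjoint so its symbol is real to leading order), it is cleaner to solve $B^*B=\lambda^2 I-A^*A+R$ with no self-adjointness requirement on $B$, since $\langle B^*B\varphi,\varphi\rangle=\|B\varphi\|_{L^2}^2\ge 0$ holds for any $B$. The iteration then only needs the leading symbol $\sqrt{\lambda^2-|a_0|^2}$ to be real and bounded below, which is automatic. Either route is standard, and the remainder of your argument (choosing $N>n/2$ so $R$ is Hilbert--Schmidt, and the final chain of estimates) is fine.
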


\begin{remark} 
H\"ormander deals with the case of $M{=}\R^n$ in the reference cited, but the result is  local in nature, so this suffices.
\end{remark}

\begin{theorem}
\label{thm-negative-order-operators-compact}
  If $A$ is a compactly supported   classical pseudodifferential operator of order $-1$, then $A$ extends to  a compact Hilbert space operator on $L^2 (M)$.
\end{theorem}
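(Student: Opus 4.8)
The plan is to reduce the statement to the case $M = \R^n$ and then to realize $A$ as a norm limit of operators that are visibly compact. Since $A$ is compactly supported there is $\varphi \in C_c^\infty(M)$ with $A = \varphi A \varphi$; choosing a partition of unity subordinate to a coordinate atlas covering $\Supp(\varphi)$ and using that classical pseudodifferential operators are pseudolocal (smoothing away from the diagonal), one writes $A$ as a finite sum of terms, each of which is either a compactly supported smoothing operator or the transfer to $\R^n$, via a coordinate chart, of a compactly supported classical pseudodifferential operator of order $-1$. A compactly supported smoothing operator has a smooth, compactly supported Schwartz kernel, hence is Hilbert--Schmidt, and in particular compact; so it suffices to treat an operator $A = \mathrm{Op}(a)$ on $\R^n$ whose complete symbol $a \in S^{-1}(\R^n\times\R^n)$ vanishes for $x$ outside a fixed compact set. (That the reduced symbols may be taken compactly supported in $x$ is immediate, since on $\R^n$ multiplying an operator on the left by a compactly supported function multiplies its complete symbol by that function.)

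Next, fix $\chi \in C_c^\infty(\R^n)$ with $\chi \equiv 1$ near the origin, and for $\epsilon > 0$ split the symbol into low- and high-frequency parts:
\[
A \;=\; \mathrm{Op}\bigl(a(x,\xi)\,\chi(\epsilon\xi)\bigr) \;+\; \mathrm{Op}\bigl(a(x,\xi)\,(1 - \chi(\epsilon\xi))\bigr) \;=:\; A'_\epsilon + A''_\epsilon .
\]
For each fixed $\epsilon$ the symbol $a(x,\xi)\,\chi(\epsilon\xi)$ is smooth, compactly supported in $x$, and compactly supported in $\xi$; its Schwartz kernel is therefore smooth with compact $x$-support and Schwartz in $y$, so it lies in $L^2(\R^n\times\R^n)$. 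Hence $A'_\epsilon$ is Hilbert--Schmidt, and in particular compact.

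The remaining point is to show that $\|A''_\epsilon\| \to 0$ as $\epsilon \to 0$. The symbol $a''_\epsilon(x,\xi) := a(x,\xi)(1 - \chi(\epsilon\xi))$ is supported in a region $|\xi| \ge c\,\epsilon^{-1}$, on which $|a(x,\xi)| \le C(1+|\xi|)^{-1} \le C'\epsilon$; moreover each $\xi$-derivative landing on $1-\chi(\epsilon\xi)$ produces a factor $\epsilon^{|\gamma|}(\partial^\gamma\chi)(\epsilon\xi)$ supported where $|\xi|$ is comparable to $\epsilon^{-1}$, contributing a bound of size $(1+|\xi|)^{-|\gamma|}$ there. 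A routine Leibniz estimate then gives $|\partial_x^\alpha \partial_\xi^\beta a''_\epsilon(x,\xi)| \le C_{\alpha\beta}\,\epsilon\,(1+|\xi|)^{-|\beta|}$, i.e. $a''_\epsilon \in S^0(\R^n\times\R^n)$ with all relevant $S^0$-seminorms of size $O(\epsilon)$. Since the $L^2$-operator-norm bound furnished by Theorem~\ref{thm-l-2-boundedness-of-order-zero-psdo} may be taken to depend on only finitely many symbol seminorms --- which is how that boundedness theorem is proved --- it follows that $\|A''_\epsilon\| = O(\epsilon)$. Hence $\|A - A'_\epsilon\| = \|A''_\epsilon\| \to 0$; as $A$ is then a norm limit of compact operators and the compact operators form a norm-closed ideal, $A$ is compact.

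Two remarks. First, the case of operators acting on sections of vector bundles reduces to the scalar case by passing to local frames, where the operator becomes a matrix of scalar operators of the above type, and a matrix of compact operators is compact. Second, an alternative route avoiding explicit seminorm control is available: an order $-1$ operator maps $L^2$ into the Sobolev space $H^1$ locally, Rellich's lemma makes the inclusion into $L^2$ of the $H^1$-functions supported in a fixed compact set compact, and the compact support of $A$ then yields the conclusion. In either approach the only step requiring genuine care is the quantitative one --- the uniform $S^0$-seminorm estimate for $a''_\epsilon$ coupled with the seminorm-explicit form of Theorem~\ref{thm-l-2-boundedness-of-order-zero-psdo} (respectively, the $H^1$-mapping property and Rellich's lemma); everything else is bookkeeping.
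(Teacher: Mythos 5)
Your proposal is correct, but your primary argument takes a genuinely different route from the paper's. The paper also reduces to $\R^n$, but then invokes H\"ormander's Theorem~18.1.1 directly: a compactly supported operator of order $-1$ maps $L^2(\R^n)$ boundedly into the Sobolev space $H^1(\R^n)$, after which compactness is an immediate consequence of the Rellich lemma --- this is exactly the ``alternative route'' you sketch in your closing remark. Your main argument instead splits the complete symbol into a low-frequency piece (compactly supported in both $x$ and $\xi$, so the kernel is smooth, compactly supported in $x$ and Schwartz in $y$, hence Hilbert--Schmidt) and a high-frequency tail $a''_\epsilon$, and you verify by a Leibniz computation that the $S^0$-seminorms of $a''_\epsilon$ are $O(\epsilon)$, so that the operator norm of $A''_\epsilon$ vanishes by a Calder\'on--Vaillancourt-type bound; $A$ is then a norm limit of compact operators. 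Both arguments are sound. The paper's is shorter and delegates all the analysis to pre-packaged Sobolev theory; yours is more hands-on and exhibits explicit Hilbert--Schmidt approximants, at the cost of needing the seminorm-explicit form of the $L^2$-boundedness theorem. That quantitative dependence is true and is indeed how the boundedness theorem is proved, but it is worth flagging that Theorem~\ref{thm-l-2-boundedness-of-order-zero-psdo} as stated in the paper records only qualitative boundedness, so your argument relies on slightly more information than the paper's stated hypotheses provide --- a point you yourself note honestly.
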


\begin{proof} 
This is again a local result, and we may assume that $M=\R^n$.  In \cite[Thm.\ 18.1.1]{HormanderVolIII} is is shown that in this case, if $A$ is compactly supported and has order $-1$, then $A$ in fact maps $L^2 (\R^n)$ into the Sobolev space $H^1 (\R^n)$.
The proof follows from this and the Rellich lemma in Sobolev space theory (see for instance \cite[Thm.\ 5.2.8]{Zimmer90}). 
\end{proof} 

We shall also require the following slightly more specialized results, related to $C^*$-algebra theory. 

\begin{theorem}[{\cite[Lem.\
 7.2]{Seeley65}}; see also {\cite[Thm.\ 5]{KohnNirenberg65}}] 
\label{thm-symbol-map-is-continuous}
If $A$ is a compactly supported,  classical pseudodifferential operator of order zero on a smooth manifold $M$, then 
\[
\| A\|_{L^2(M)\to L^2 (M)} \ge \sup \bigl \{\, |a_0(m,\xi)| : m\in M,\,\, \xi\in T^*_mM \setminus \{ 0\}\,\bigr \}
\]
\textup{(}the $L^2$-operator norm of $A$ is of course the best possible constant in Theorem~\textup{\ref{thm-l-2-boundedness-of-order-zero-psdo}}; the supremum above is finite because $a_0$ is homogeneous in $\xi$ of order $0$\textup{)}.    
\end{theorem}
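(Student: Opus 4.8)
The plan is to show that for every point $(m_0,\xi_0)\in T^*M\setminus M$ with $\xi_0\ne 0$ and every $\varepsilon>0$ there is a nonzero test function $\varphi\in C_c^\infty(M)$ satisfying $\|A\varphi\|_{L^2}\ge(|a_0(m_0,\xi_0)|-\varepsilon)\,\|\varphi\|_{L^2}$; letting $\varepsilon\to 0$ then gives $\|A\|_{L^2\to L^2}\ge|a_0(m_0,\xi_0)|$, and taking the supremum over $(m_0,\xi_0)$ yields the theorem. The test functions will be highly oscillatory bumps concentrated near $m_0$ in position and near $\xi_0$ in frequency, and $\|A\varphi\|$ will be computed directly from the limit formula \eqref{eq-limit-formula-for-principal-symbol} for the principal symbol.

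So fix $(m_0,\xi_0)$ and $\varepsilon>0$. Working in a coordinate chart about $m_0$ and multiplying by a cutoff, choose $\ell\in C^\infty(M)$ with $(d\ell)(m_0)=\xi_0$. The point $(m_0,(d\ell)(m_0))=(m_0,\xi_0)$ lies off the zero section of $T^*M$, where $a_0$ is continuous, so $x\mapsto a_0(x,(d\ell)(x))$ is continuous near $m_0$; shrinking if necessary, fix a neighborhood $U$ of $m_0$ on which $d\ell$ is nowhere zero and $|a_0(x,(d\ell)(x))|>|a_0(m_0,\xi_0)|-\varepsilon$. Let $\chi\in C_c^\infty(M)$ be nonzero with $\Supp(\chi)\subseteq U$, and set $\varphi_t:=e^{it\ell}\chi\in C_c^\infty(M)$; since multiplication by $e^{it\ell}$ is unitary on $L^2$, one has $\|\varphi_t\|_{L^2}=\|\chi\|_{L^2}$ for all $t$.

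Now apply \eqref{eq-limit-formula-for-principal-symbol} (with order $0$) to the operator $A$ and the function $\chi$: for each $x$,
\[
e^{-it\ell(x)}\bigl(A(e^{it\ell}\chi)\bigr)(x)\ \xrightarrow{\ t\to+\infty\ }\ a_0\bigl(x,(d\ell)(x)\bigr)\,\chi(x).
\]
In fact the asymptotic expansion \eqref{eq-coordinate-free-asymptotic-expansion}--\eqref{eq-coordinate-free-asymptotic-expansion-uniformity-condition}, taken with $N=1$ and order $0$ and recalling that $\alpha_0(\chi,\ell)(x)=a_0(x,(d\ell)(x))\chi(x)$, shows this convergence is \emph{uniform} in $x$, with error $O(t^{-1})$. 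Since $A$ is compactly supported, there is $\psi\in C_c^\infty(M)$ with $A=\psi A$, so every function $A(e^{it\ell}\chi)$, as well as the limit, is supported in the fixed compact set $\Supp(\psi)$; uniform convergence on a set of finite measure therefore gives
\[
\|A\varphi_t\|_{L^2}^2=\bigl\|e^{-it\ell}A(e^{it\ell}\chi)\bigr\|_{L^2}^2\ \xrightarrow{\ t\to+\infty\ }\ \int_U|a_0(x,(d\ell)(x))|^2\,|\chi(x)|^2\,dx\ \ge\ \bigl(|a_0(m_0,\xi_0)|-\varepsilon\bigr)^2\|\chi\|_{L^2}^2,
\]
where in the last step we used $\Supp(\chi)\subseteq U$ and (harmlessly) assumed $\varepsilon<|a_0(m_0,\xi_0)|$. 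Hence $\|A\|_{L^2\to L^2}\ge\limsup_{t\to+\infty}\|A\varphi_t\|_{L^2}/\|\varphi_t\|_{L^2}\ge|a_0(m_0,\xi_0)|-\varepsilon$, as required.

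The one step needing genuine care is the passage from the pointwise symbol limit to convergence of the numbers $\|A\varphi_t\|_{L^2}^2$: this is precisely where the compact-support hypothesis is used, to confine all the relevant functions to a fixed set of finite measure, and where one exploits the uniformity built into H\"ormander's coordinate-free description of classical pseudodifferential operators rather than merely the pointwise existence of the symbol. An alternative, support-free packaging of the same idea: $\varphi\mapsto e^{-it\ell}A(e^{it\ell}\varphi)$ is a unitary conjugate of $A$, hence has operator norm $\|A\|$; by the uniform convergence above together with a density argument based on Theorem~\ref{thm-l-2-boundedness-of-order-zero-psdo} it converges, in the strong operator topology on $L^2(M)$, to multiplication by $x\mapsto a_0(x,(d\ell)(x))$ near $m_0$; and since the operator norm is lower semicontinuous under strong limits, $\|A\|$ is bounded below by $\sup|a_0(\,\cdot\,,(d\ell)(\,\cdot\,))|$ near $m_0$, which can be made as close as desired to the asserted supremum by choosing $\ell$ suitably. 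The whole argument goes through verbatim for operators between sections of vector bundles, with $|a_0(m,\xi)|$ replaced by the operator norm of $a_0(m,\xi)\colon E_m\to F_m$ and $\chi$ taken to be a local section nearly aligned with a near-maximizing vector of $a_0(m_0,\xi_0)$.
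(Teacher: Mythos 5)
The paper does not prove this theorem itself --- it cites Seeley and Kohn--Nirenberg and states the result without an internal argument --- so there is no in-paper proof to compare against. Your proof is correct and self-contained. You use the classical device of oscillatory test functions $\varphi_t = e^{it\ell}\chi$ concentrated near a cotangent direction, but you package it via H\"ormander's coordinate-free asymptotic expansion \eqref{eq-coordinate-free-asymptotic-expansion}--\eqref{eq-coordinate-free-asymptotic-expansion-uniformity-condition} (which the paper itself sets up in Section~\ref{sec-psdos-on-manifolds}), whose boundedness clause with $N=1$, $m=0$ gives exactly the locally uniform $O(t^{-1})$ convergence you need to upgrade the pointwise symbol formula \eqref{eq-limit-formula-for-principal-symbol} to convergence of $L^2$ norms. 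The compact-support hypothesis is used correctly to confine everything to a fixed finite-measure set. The original arguments in Seeley and Kohn--Nirenberg rest on the same oscillatory idea but are carried out in local coordinates with the complete symbol; your version leans instead on the built-in uniformity of H\"ormander's intrinsic definition, which is arguably cleaner given the paper's setup. One tiny wording slip: the pointwise limit $\alpha_0(\chi,\ell)$ is supported in $\Supp(\chi)$ a priori (and in $\Supp(\psi)$ only a posteriori, by the uniform convergence of functions supported there), but this does not affect the estimate, since $\Supp(\chi)\cup\Supp(\psi)$ still has finite measure.
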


It follows from Theorem~\ref{thm-symbol-map-is-continuous} that the principal symbol gives rise to a continuous homomorphism of Banach algebras, and in fact $C^*$-algebras\footnote{We have already noted that adjoint of a compactly supported, classical pseudodifferential operator $A$ of order zero is another compactly supported, classical pseudodifferential operator of order zero. The adjoint operation on $C_0(\partial T^*M)$ is the pointwise complex conjugate, and the principal symbol of $A^*$ is the adjoint of  the principal symbol of $A$ \cite[(18.1.11)]{HormanderVolIII}.}
\[
\overline {\Psi^0_c(M)} \longrightarrow C_0(\partial T^*M),
\]
where the overbar denotes the $C^*$-algebra completion in the operator norm.

The following result, or rather the equivariant version   that we shall derive from it in Section~\ref{subsec-psdo-extension-in-equivariant-context}, will play a crucial role in the paper.

\begin{theorem}[{\cite[Thm.\ 14.1]{Seeley65}};  see also {\cite[Cor.\ 6.1]{KohnNirenberg65}}]
The above extended principal symbol map fits into a short exact sequence of $C^*$-algebras
\[
0 \longrightarrow \mathfrak{K}(L^2 (M)) \longrightarrow \overline{\Psi^0_c(M)} \longrightarrow 
C_0(\partial T^*M)
\longrightarrow 0 .
\]
\end{theorem}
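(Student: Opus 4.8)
The plan is to prove exactness at each of the two nontrivial positions of the sequence, using the algebraic symbol sequence $0\to\Psi^{-1}_c(M)\to\Psi^0_c(M)\to C^\infty_c(\partial T^*M)\to0$ recorded above, Theorem~\ref{thm-negative-order-operators-compact} (compactly supported operators of negative order are compact), and Theorem~\ref{thm-symbol-map-is-continuous} (the principal symbol is dominated by the $L^2$-operator norm). First one records the structural facts: the compactly supported, classical, order-zero operators form a $*$-algebra by the composition and adjoint statements of Section~\ref{sec-pseudodifferential-ops}, so $\overline{\Psi^0_c(M)}$ is a $C^*$-algebra; $\mathfrak{K}(L^2(M))$ sits inside it as a closed two-sided ideal; and, as already observed, the principal symbol extends to a $*$-homomorphism $\sigma\colon\overline{\Psi^0_c(M)}\to C_0(\partial T^*M)$. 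It then remains to show (a) $\sigma$ is surjective, and (b) $\ker\sigma=\mathfrak{K}(L^2(M))$.

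Part (a) and the inclusion $\mathfrak{K}(L^2(M))\subseteq\ker\sigma$ are formal. For (a): by the algebraic sequence the image of $\sigma$ contains $C^\infty_c(\partial T^*M)$, which is dense in $C_0(\partial T^*M)$; since the image of a $*$-homomorphism of $C^*$-algebras is closed, $\sigma$ is onto. For the inclusion: a rank-one operator $u\mapsto\langle u,\psi\rangle_{L^2}\,\varphi$ with $\varphi,\psi\in C^\infty_c(M)$ has the smooth, compactly supported Schwartz kernel $\varphi(x)\overline{\psi(y)}$, hence is a compactly supported classical operator of order $-\infty$, lies in $\overline{\Psi^0_c(M)}$, and has vanishing principal symbol; such operators span a dense subspace of $\mathfrak{K}(L^2(M))$, and $\ker\sigma$ is closed.

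The content of the theorem is the reverse inclusion $\ker\sigma\subseteq\mathfrak{K}(L^2(M))$, which I would extract from the sharpened estimate
\[
\operatorname{dist}\bigl(A,\mathfrak{K}(L^2(M))\bigr)\;=\;\sup\nolimits_{\partial T^*M}|a_0|\qquad\text{for every }A\in\Psi^0_c(M),
\]
$a_0$ being the principal symbol of $A$. Granting it, the inclusion follows: if $T\in\ker\sigma$, choose $A_n\in\Psi^0_c(M)$ with $A_n\to T$; continuity of $\sigma$ gives $\sup|a_0(A_n)|=\|\sigma(A_n)\|\to\|\sigma(T)\|=0$, so $\operatorname{dist}(T,\mathfrak{K})\le\operatorname{dist}(A_n,\mathfrak{K})+\|T-A_n\|\to0$ and $T\in\mathfrak{K}(L^2(M))$, that ideal being closed. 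As for the estimate, ``$\ge$'' is immediate --- for any compact $K$, $A+K$ lies in $\overline{\Psi^0_c(M)}$ with principal symbol $a_0$, so Theorem~\ref{thm-symbol-map-is-continuous} applies to it --- and ``$\le$'', the substantial half, is equivalent to the sharp G\r{a}rding inequality at order zero. I would prove ``$\le$'' by Friedrichs' device: set $c=\sup|a_0|$, pick $\chi\in C^\infty_c(M)$ with $0\le\chi\le1$ equal to $1$ on a compact set large enough that $\chi A=A=A\chi$ and $a_0$ vanishes in $x$ off $\{\chi=1\}$, so that $(c^2{+}\epsilon)\chi(x)^2-|a_0(x,\xi)|^2$ is, for each $\epsilon>0$, a non-negative classical symbol of order $0$ compactly supported in $x$; quantize its smooth square root to $G_\epsilon\in\Psi^0_c(M)$. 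Then, by multiplicativity of principal symbols under composition, $A^*A+G_\epsilon^*G_\epsilon-(c^2{+}\epsilon)\chi(x)^2$ has vanishing principal symbol, hence is of order $-1$ and compactly supported, hence compact by Theorem~\ref{thm-negative-order-operators-compact}. Passing to the quotient $\pi$ of $L^2$-bounded operators modulo $\mathfrak{K}(L^2(M))$, for which $\operatorname{dist}(A,\mathfrak{K})=\|\pi(A)\|$ and $\|\pi(A)\|^2=\|\pi(A^*A)\|$, one gets $\pi(A^*A)=(c^2{+}\epsilon)\pi(\chi^2)-\pi(G_\epsilon)^*\pi(G_\epsilon)\le(c^2{+}\epsilon)\pi(\chi^2)\le c^2{+}\epsilon$, using $0\le\chi^2\le1$; letting $\epsilon\downarrow0$ gives $\|\pi(A)\|^2\le c^2$, which is ``$\le$''.

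The main obstacle is precisely this inequality $\operatorname{dist}(A,\mathfrak{K})\le\sup|a_0|$: a sharp, ``error-free'' refinement of the $L^2$-boundedness theorem (Theorem~\ref{thm-l-2-boundedness-of-order-zero-psdo}), equivalent to the order-zero sharp G\r{a}rding inequality. Everything else is formal $C^*$-algebra manipulation together with the elementary symbol calculus; this one estimate is the genuinely analytic ingredient, and its proof --- the approximate-square-root argument above, or a dyadic phase-space partition of unity combined with the Cotlar--Stein almost-orthogonality lemma --- is classical, and is what the cited works of Seeley and of Kohn--Nirenberg carry out.
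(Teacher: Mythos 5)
The paper does not prove this statement---it cites Seeley \cite[Thm.\ 14.1]{Seeley65} and Kohn--Nirenberg \cite[Cor.\ 6.1]{KohnNirenberg65}---so there is no in-paper proof to compare against. Your reconstruction is correct and follows the classical argument found in those sources: reduce everything to the sharp norm estimate $\operatorname{dist}(A,\mathfrak{K})=\sup|a_0|$ and prove the hard half by a Friedrichs approximate--square--root device, passing to the Calkin algebra.

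Two points are worth tightening. First, in the inequality ``$\ge$'' you write that Theorem~\ref{thm-symbol-map-is-continuous} applies to $A+K$; it does not, since $A+K$ is generally not a pseudodifferential operator. The clean route is what you in effect already have: since rank-one operators with smooth compactly supported kernels lie in $\Psi^{-\infty}_c(M)$ and are dense in $\mathfrak{K}(L^2(M))$, one has $\mathfrak{K}(L^2(M))\subseteq\overline{\Psi^0_c(M)}$; the extended symbol map $\sigma$ is a $*$-homomorphism, hence contractive, and kills $\mathfrak{K}$, so $\|A+K\|\ge\|\sigma(A+K)\|=\|a_0\|_\infty$ for every compact $K$. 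Second, the phrase ``quantize its smooth square root'' conceals the only real subtlety in the Friedrichs step: a nonnegative smooth function need not have a smooth square root. Here it does, and you should say why: with $\chi$ chosen so that $\chi\equiv1$ on a neighborhood of $\operatorname{Supp}\varphi$ (where $\varphi A=A=A\varphi$), the principal symbol satisfies $a_0=a_0\varphi$, so $a_0$ vanishes off $\{\chi=1\}$; hence $(c^2{+}\epsilon)\chi^2-|a_0|^2$ is $\ge\epsilon>0$ on $\{\chi=1\}$, equals $(c^2{+}\epsilon)\chi^2>0$ on $\{0<\chi<1\}$, and equals $(c^2{+}\epsilon)\chi^2$ identically on a neighborhood of $\{\chi=0\}$, so its square root is $\sqrt{c^2{+}\epsilon}\,\chi$ there and is smooth and strictly positive elsewhere on $\{\chi>0\}$; in particular it lies in $C^\infty_c(\partial T^*M)$ and can be quantized. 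With those two clarifications the proof is complete and matches the argument in the cited references.
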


\section{Operators on Proper Homogeneous Spaces}

We shall begin this section by considering    any smooth manifold  $M$ that is equipped with a smooth and  proper   \emph{right} action of a Lie group $G$.    Later in the section we shall specialize to the case where $M=K \backslash G $ and  $K$ is a  compact subgroup of $G$. 

\subsection{Sections of equivariant vector bundles}
Let $E$   be a  smooth, right-$G$-equi\-variant, hermitian vector bundle over $M$, and denote by $C^\infty (M;E)$ and $C_c^\infty (M;E)$ its spaces of smooth  sections, and smooth, compactly supported sections, respectively.  Both spaces carry right actions of the group $G$ that are defined by 
\[
(s \cdot g) (m) =   s(mg^{-1})\cdot g \qquad \forall g \in G,\,\,\, \forall m \in M .
\]
The right-hand side of the formula involves the  action 
\[
E_{mg^{-1}} \to E_m, \quad e\mapsto e \cdot g
\]
of the group element $g$ between the fibers of $E$.

Now fix a $G$-invariant smooth measure on $M$ and a {left}-invariant Haar measure on $G$.   Denote by $C_c^\infty (G)$ the space of smooth, compactly supported complex functions on $G$, equipped with the standard   convolution multiplication and the involution that is used in groupoid $C^*$-algebra theory \cite[Sec.\,II.1]{RenaultGroupoidApproach80}:
\begin{equation}
\label{eq-convolution-for-left-haar-measure}
(f_1*f_2)(g) = \int _G f_1(gg_1)f_2(g_1^{-1} ) \, dg_1\quad\text{and}\quad 
f^*(g) = \overline{f(g^{-1})}  .
\end{equation}
The spaces  $C^\infty (M;E)$ and $C_c^\infty (M;E)$ carry right module actions of the algebra $C_c^\infty (G)$ defined by the formula 
\begin{equation}
    \label{eq-right-algebra-action}
(s* f) (m) = \int _G  ( sg_1^{-1}) (m) f(g_1^{-1}) \, dg_1 \quad 
\forall s \in C^\infty (M;E)\,\,\,\forall f \in C_c^\infty (G) .
\end{equation}
Here the scalar $f(g)$ has been written on the right-hand side of the vector $(sg)(m)$ in an effort to make the formula easier to parse.  When $M{=}G$ and when $E$ is trivial, so that $C_c^\infty(M,E) = C_c^\infty(G)$, \eqref{eq-right-algebra-action} agrees with \eqref{eq-convolution-for-left-haar-measure}.

The space $C^\infty (M;E)$  also carries the obvious $L^2$-inner product, and indeed a $C_c^\infty (G)$-valued inner product
\begin{equation}
    \label{eq-inner-product}
\langle\,\,\,\,,\,\,\,\rangle_{C^\infty _c (G)} : C_c^\infty (M;E) \times C_c^\infty (M;E) \longrightarrow C_c^\infty (G)
\end{equation}
that is defined by 
\[
\langle s_1, s_2 \rangle _{C^\infty _c (G)} : g \longmapsto    \langle s_1,  s_2 g^{-1} \rangle_{L^2(M;E)} .
\]
When $C_c^\infty (M,E) = C_c^\infty (G)$, this is simply $s_1^**s_2\in C_c^\infty (G)$, where the involution and product are as in \eqref{eq-convolution-for-left-haar-measure}.  In general, the inner product is complex-sesquilinear and satisfies 
\[
\begin{aligned} 
\langle s_1, s_2* f \rangle _{C^\infty _c (G)} & = \langle s_1, s_2 \rangle _{C^\infty _c (G)}* f\\
\langle s_1, s_2 \rangle _{C^\infty _c (G)} & = \langle s_2, s_1 \rangle _{C^\infty _c (G)}^*
\end{aligned}
\]
for all $s_1,s_2\in C_c^\infty (M;E)$ and all $f\in C_c^\infty (G)$.

We shall use the realization of the reduced $C^*$-algebra of $G$ from groupoid $C^*$-algebra theory; see \cite[Sec.\,II.1]{RenaultGroupoidApproach80} again. The $*$-algebra $C^\infty _c (G)$ is represented faithfully by left-convolution (using the left Haar measure) as bounded operators on the Hilbert space $L^2 (G,dg^{-1})$ (this is a $*$-represen\-tation), and $C^*_r(G)$ is by definition the completion in the operator norm. When $G$ is unimodular, which is our main interest, this is the usual definition of the reduced $C^*$-algebra \cite[Sec.\,7.2]{PedersenCstarBook}. In general, pointwise multiplication by $\Delta(g)^{-1/2}$, where $\Delta$ is the modular function of $G$, as in \cite[Sec.\,7.1]{PedersenCstarBook}, is an algebra automorphism of $C_c^\infty(G)$ that extends to a $*$-isomorphism from  $C^*_r(G)$ as we have defined it to the standard version of the reduced $C^*$-algebra.

The inner product \eqref{eq-inner-product} has the additional properties that 
\[
\langle s,s\rangle _{C^\infty_c (G)} \ge 0 \quad \text{and} \quad 
\langle s,s\rangle _{C^\infty_c (G)} = 0 \,\, \Rightarrow \,\, s =0 ,
\]
where the inequality indicates positivity in the $C^*$-algebra $C^*_r (G)$. The formula 
\[
\| s\|^2   = \| \langle s,s\rangle_{C^\infty _c (G)} \|_{C^*_r (G)} 
\]
defines a norm on the complex vector space $C_c^\infty (M;E)$.  We shall denote by $C^*_r(M;E)$ the Banach-space completion of $C^\infty _c (M;E)$ in this norm.  The right action in \eqref{eq-right-algebra-action} and inner product in \eqref{eq-inner-product} extend to completions to give $C^*_r (M;E)$ the structure of a \emph{Hilbert $C^*$-module} over $C^*_r (G)$. See \cite{Lance95} for background information on Hilbert $C^*$-modules. The above constructions are adapted from   \cite[Sec.~3]{BaumConnesHigson93}, but they are  due to Kasparov \cite{Kasparov88}.

For the rest of this section we shall specialize to the situation where    $K$ is a compact subgroup of $G$ and   $M = K \backslash G$ (the space of right $K$-cosets).  Each   finite-dimensional unitary representation $V$ of $K$ determines an \emph{induced}   $G$-equivariant   Hermitian vector bundle  
\[
  K \backslash ( V \times G) \to K \backslash G.
\]
Its fiber  over $Kg\in K\backslash G$ is the set of all equivalence classes $[v,h]$ of elements in $ V\times Kg$ under the equivalence relation 
\[
(v,h)\sim (kv,kh).
\]
If $C_c^\infty (G)$ is equipped with the usual left-translation action of $K$ (arranged to be a left action), then     there is a unique  isomorphism of $G$-vector spaces 
\begin{equation}
    \label{eq-basic-uncompleted-module-isomorphism}
\bigl [  V \otimes C_c^\infty (G) \bigr ]^K \stackrel \cong \longrightarrow C_c^\infty \bigl (K \backslash G; K \backslash ( V {\times} G)
\bigr)  
\end{equation}
 such that 
\[
 \sum_j  v_j \otimes f_j   \longmapsto \Bigl [ Kg \mapsto  \sum_j   f_j(g) [ v_j,g]\Bigr].
\]
We shall mostly use this view of the sections of the induced bundle from now on. 

Under the isomorphism \eqref{eq-basic-uncompleted-module-isomorphism}, if $E$ is the Hermitian vector bundle over $M$ induced from $V$, then the right action of $C_c^\infty (G)$ on $ C_c^\infty (M;E)  $ given by \eqref{eq-right-algebra-action} corresponds to the right action on the left-hand side in \eqref{eq-basic-uncompleted-module-isomorphism}  given by the simple formula
\[
\bigl (  \sum_j v_j \otimes f_j    \bigr)  * f =  \sum_j v_j \otimes (f_j* f)     ,
\]
while the inner product \eqref{eq-inner-product} corresponds to   
\[
\bigl \langle   \sum_j  v_j\otimes f_j   ,  
 \sum_k w_k\otimes h_k      \bigr \rangle_{C_c^\infty (G)} =
 \sum_{j,k}  \langle v_j,w_k\rangle_{V} \ ( f_j{}^* * h_k ) .
\]
The isomorphism \eqref{eq-basic-uncompleted-module-isomorphism}  extends to a unitary isomorphism 
\begin{equation}
    \label{eq-basic-hilbert-module-isomorphism}
 \bigl [ C^*_r (G) \otimes V\bigr]^K \stackrel \cong\longrightarrow  C^*_r \bigl (K \backslash G, K \backslash ( V {\times} G) \bigr )
\end{equation}
between right Hilbert $C^*$-modules over $C^*_r (G)$.

\subsection{Properly supported operators}
\label{subsec-properly-supported}
Let $M$ be   a general smooth manifold (for a moment) and let  $E_1$ and $E_2$ be  smooth   vector bundles over $M$.    A complex-linear operator
\[
T \colon C_c^\infty (M;E_1) \longrightarrow C^\infty (M;E_2)
\]
is \emph{properly supported} if for every   $\varphi\in C_c^\infty (M)$ there exists some $\psi \in C_c^\infty(M)$ such that 
\[
(1-\psi) T \varphi = 0 \quad\text{and} \quad \varphi T (1 - \psi) = 0.
\]
In this case $T$ actually maps smooth, compactly supported sections of $E_1$ to smooth, compactly supported sections of $E_2$.  Moreover there is a unique extension of $T$  to a linear operator 
\[
T \colon C^\infty (M;E_1) \longrightarrow C^\infty (M;E_2)
\]
with the property that if $\varphi$ and $\psi$ are smooth compactly supported functions, and if $\varphi T (1 {-} \psi) = 0$, then 
\[
\varphi T s = \varphi T \psi s \quad \forall s \in C^\infty (M;E_1).
\]

Now assume that $E_1$ and $E_2$ are equipped with Hermitian structures, and that $M$ is equipped with a smooth measure. The operator $T$ is  \emph{adjointable} if there exists  an operator 
\[
T^* \colon C_c^\infty (M;E_2) \longrightarrow C^\infty (M;E_1)
\]
such that 
\[
\langle Ts_1,s_2\rangle _{L^2(M;E_2)} = \langle s_1,T^*s_2\rangle _{L^2(M;E_1)} 
\]
for all $s_1\in C_c^\infty (M;E_1)$ and $s_2\in C_c^\infty (M;E_2)$.  The property of being  adjointable is independent of the choice of measure (but the adjoint operator depends on the choice).

\begin{proposition} 
Let $K$ be a compact subgroup of $G$ and let $V_1$ and $V_2 $ be  finite-dimensional unitary representations of $K$.   
If $T\colon  [C_c^\infty (G)\otimes V_1]^K \to [C_c^\infty (G)\otimes V_2]^K $ is properly supported, adjointable and equivariant, then $T$ extends to a bounded operator 
\[
T\colon  [L^2 (G)\otimes V_1]^K \to [L^2 (G)\otimes V_2]^K,
\]
if and only if it  extends to a bounded and adjointable operator between Hilbert $C^*$-modules over $C^*_r (G)$,
\[
T\colon  [C^*_r (G)\otimes V_1]^K \to [C^*_r (G)\otimes V_2]^K.
\]
In this case, the two extensions have the same norm.
\end{proposition}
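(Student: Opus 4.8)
The plan is to base everything on a single identity comparing the $L^2$-norm and the Hilbert-module norm on the common dense subspace. Write $E_i = [C^*_r(G)\otimes V_i]^K$ for the two Hilbert $C^*_r(G)$-modules, and for $\xi \in [C_c^\infty(G)\otimes V_1]^K$ let $R_\xi\colon L^2(G) \to [L^2(G)\otimes V_1]^K$ be the map which on $C_c^\infty(G)$ is right convolution by the section $\xi$, that is $R_\xi h = \xi * h$ (under the identification of sections with $K$-equivariant functions this is ordinary convolution). Unwinding the definition of the $C^*_r(G)$-valued inner product together with the module identity $\langle \xi*h_1, \xi*h_2\rangle_{C^*_r(G)} = h_1^* * \langle\xi,\xi\rangle_{C^*_r(G)} * h_2$, and then evaluating both sides at the identity element of $G$, I expect to obtain
\[
\langle R_\xi h_1, R_\xi h_2\rangle_{L^2} = \langle h_1,\ \lambda\bigl(\langle\xi,\xi\rangle_{C^*_r(G)}\bigr) h_2\rangle_{L^2} \qquad (h_1, h_2 \in C_c^\infty(G)),
\]
where $\lambda$ is the left regular representation. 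Consequently $R_\xi$ is bounded, $R_\xi^* R_\xi = \lambda(\langle\xi,\xi\rangle_{C^*_r(G)})$, and — because $\lambda$ is isometric on $C^*_r(G)$ by the very definition of the reduced $C^*$-algebra — $\|R_\xi\| = \|\langle\xi,\xi\rangle_{C^*_r(G)}\|^{1/2} = \|\xi\|_{E_1}$, and likewise with $V_2$ in place of $V_1$. (Conceptually $R_\xi$ is the operator $\iota_\xi \otimes_\lambda 1$ arising from left multiplication $\iota_\xi\colon a\mapsto \xi a$ under a unitary identification $E_i\otimes_\lambda L^2(G)\cong [L^2(G)\otimes V_i]^K$, and the displayed formula is an instance of the fact that interior tensoring with a faithful representation is isometric. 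The computation is cleanest when $G$ is unimodular, which is the only case needed in the paper; the general case differs only by routine bookkeeping with the modular function.)

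With this in hand the two implications are short. The $G$-equivariance of $T$ says exactly that $T$ is a module map for the right $C_c^\infty(G)$-action, so $R_{T\xi} = \overline{T}\circ R_\xi$ whenever an $L^2$-extension $\overline T$ exists, while $R_{T\xi}$ is a well-defined bounded operator in any case. Hence, if $\overline T$ exists with $\|\overline T\|_{L^2} = c$, then for $\xi\in[C_c^\infty(G)\otimes V_1]^K$ one has $\|T\xi\|_{E_2} = \|R_{T\xi}\| = \|\overline T R_\xi\| \le c\|R_\xi\| = c\|\xi\|_{E_1}$; as $[C_c^\infty(G)\otimes V_1]^K$ is dense in $E_1$, the map $T$ extends to a bounded $\widetilde T\colon E_1\to E_2$, which is $C^*_r(G)$-linear by continuity, with $\|\widetilde T\| \le \|\overline T\|$. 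Conversely, suppose $\widetilde T\colon E_1\to E_2$ is bounded and adjointable with $\|\widetilde T\| = c$. Then $c^2 - \widetilde T^*\widetilde T \ge 0$ in the $C^*$-algebra of adjointable operators on $E_1$, which gives $\langle T\xi, T\xi\rangle_{C^*_r(G)} \le c^2 \langle\xi,\xi\rangle_{C^*_r(G)}$ for $\xi\in[C_c^\infty(G)\otimes V_1]^K$; reading this as an operator inequality in $C^*_r(G)\subseteq B(L^2(G))$ and pairing against $h\in C_c^\infty(G)$, the identity above yields $\|(T\xi)*h\|_{L^2}^2 \le c^2 \|\xi*h\|_{L^2}^2$. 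Letting $h = u_n$ run through an approximate identity in $C_c^\infty(G)$ — so that $(T\xi)*u_n \to T\xi$ and $\xi*u_n\to\xi$ in $L^2$ — gives $\|T\xi\|_{L^2}\le c\|\xi\|_{L^2}$, so $\overline T$ exists with $\|\overline T\|\le c = \|\widetilde T\|$. Combining the two, whenever either extension exists so does the other, and they have equal norm.

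Finally one must see that the module extension, when it exists, is automatically adjointable. A direct check from the definitions shows that the adjoint $T^*$ of a properly supported, adjointable, equivariant operator is again properly supported, adjointable and equivariant, and that its $L^2$-extension is the Hilbert-space adjoint of $\overline T$; so $T^*$ is $L^2$-bounded whenever $T$ is, and by the first implication above it extends to a bounded module map $\widetilde{T^*}\colon E_2\to E_1$. Evaluating the $C^*_r(G)$-valued inner products pointwise on $G$ and using the equivariance of $T^*$,
\[
\langle \widetilde T\xi, \eta\rangle_{C^*_r(G)}(g) = \langle T\xi, \eta g^{-1}\rangle_{L^2} = \langle \xi, (T^*\eta)g^{-1}\rangle_{L^2} = \langle \xi, \widetilde{T^*}\eta\rangle_{C^*_r(G)}(g)
\]
for all $\xi\in[C_c^\infty(G)\otimes V_1]^K$, $\eta\in[C_c^\infty(G)\otimes V_2]^K$ and $g\in G$, so $(\widetilde T)^* = \widetilde{T^*}$. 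The step I expect to need genuine care is the norm identity $\|R_\xi\| = \|\xi\|_{E_1}$ of the first paragraph — equivalently, the unitary identification $E_i\otimes_\lambda L^2(G)\cong[L^2(G)\otimes V_i]^K$ — where the Hilbert-module inner product, the convolution product, and (in the non-unimodular case) the modular function must be disentangled; everything after that is formal.
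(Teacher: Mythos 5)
Your proof is correct. Structurally it isolates the same fact that drives the paper's argument --- that the $C^*_r(G)$-norm of an element $\xi$ of the Hilbert module is exactly the $L^2$-operator norm of right convolution $R_\xi$ by $\xi$, because the left regular representation $\lambda$ is isometric on $C^*_r(G)$ --- but you make this explicit as the identity $R_\xi^*R_\xi = \lambda(\langle\xi,\xi\rangle_{C^*_r(G)})$, whereas the paper leaves it implicit inside the formula $\|Tf\|_{C^*_r(G)} = \sup\{\|(Tf)*h\|_{L^2}:\|h\|_{L^2}=1\}$. Your direction ``$L^2\Rightarrow$ module'' then becomes literally the same estimate as the paper's ($\|R_{T\xi}\| = \|\overline{T}R_\xi\|\le\|\overline T\|\,\|R_\xi\|$ is a repackaging of $\|(Tf)*h\|_{L^2} = \|T(f*h)\|_{L^2}\le\|T\|_{L^2\to L^2}\|f\|_{C^*_r}\|h\|_{L^2}$). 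The reverse direction is where you genuinely diverge: the paper writes $\|Th\|_{L^2}^2 = \lim_n\langle (T^*Tf_n)h, h\rangle_{L^2}$ for a norm-bounded smooth approximate unit $\{f_n\}$ and then applies Cauchy--Schwarz in $L^2$, while you use positivity of $c^2 - \widetilde T^*\widetilde T$ in the Hilbert-module sense to get $\langle T\xi,T\xi\rangle_{C^*_r}\le c^2\langle\xi,\xi\rangle_{C^*_r}$, apply $\lambda$, pair with $h$, and only then run an approximate identity in the $L^2$-limit. Both work; yours is more structural, and it also has the advantage of tying into the balanced-tensor-product picture (your $E_i\otimes_\lambda L^2(G)\cong[L^2(G)\otimes V_i]^K$ is exactly the $\pi = \lambda$ instance of Lemma~\ref{lem-balanced-kasparov-tensor-product}). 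One point in your favor worth noting: the proposition asserts that the module extension is not merely bounded but \emph{adjointable}, and the paper's proof never verifies this --- you supply the missing verification by running the first implication on $T^*$ (which is equivariant, properly supported, and $L^2$-bounded whenever $T$ is) and checking that $\widetilde{T^*}$ is the Hilbert-module adjoint of $\widetilde T$ by evaluating the inner products at each $g\in G$. The caveat you flag about non-unimodular $G$ is fair but harmless here, since the reductive groups to which the proposition is applied are unimodular.
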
 

\begin{proof} 
To simplify the notation let us consider only the case where $K =\{ e\}$ and the representations $V_1$ and $V_2$ are (necessarily) trivial, and of dimension one.  Apart from issues of notation, the general case is no different.

If $f\in C_c^\infty (G)$, then 
\[
\| Tf \|_{C^*_r (G)} = 
\sup \, \bigl \{ \, \| (Tf)h\| _{L^2(G)} : h\in C_c^\infty (G),\,\, \| h\|_{L^2 (G)}=1\,\bigr \} .
\]
But 
\begin{multline*}
\| (Tf)h\| _{L^2(G)} = \| T(fh)\| _{L^2(G)} 
\\
 \le \|T\|_{L^2 \to L^2} \|fh\|_{L^2(G)} \le 
\|T\|_{L^2 \to L^2} \|f\|_{C^*_r (G)}\|h \|_{L^2(G)},
\end{multline*}
which shows that  $\|T\|_{C^*_r \to C^*_r }\le\|T\|_{L^2 \to L^2} $.

In the opposite direction let $\{f_n\}$ be an approximate unit for $C^*_r(G)$ that consists of elements in $C^\infty _c(G)$.
If $k\in C_c^\infty(G)$, then $f_n k \to k$ in the $L^2$-norm, and so for any $h\in C_c^\infty (G)$, 
\begin{multline*}
\| Th \|_{L^2(G)}^2 =
\bigl \langle Th , Th \bigr \rangle_{L^2 (G)}  
=
\bigl \langle  h , T^*Th \bigr \rangle_{L^2 (G)}  
= 
\lim_{n\to \infty}\bigl \langle  f_n h , T^*Th \bigr \rangle_{L^2 (G)} 
\\
=
\lim_{n\to \infty}\bigl \langle  T^*T (f_n h) , h \bigr \rangle_{L^2 (G)}  
=
\lim_{n\to \infty}\bigl \langle  (T^*T f_n) h , h \bigr \rangle_{L^2 (G)}  .
\end{multline*}
So the Cauchy-Schwarz inequality gives 
\begin{multline*}
\| Th \|_{L^2(G)}^2 
\le \limsup \| (T^*T f_n) h\|  _{L^2(G)} \cdot \|  h\|  _{L^2(G)}
\\
\le \limsup \| (T^*T f_n)\|  _{C^*_r(G)}\cdot \|  h\|^2 _{L^2(G)}  
\le \|T\|^2 _{C^*_r \to C^*_r} \| h\|^2 _{L^2 (G)}.
\end{multline*}
This shows that $\|T\|_{L^2 \to L^2} \le \|T\|_{C^*_r \to C^*_r } $.
\end{proof} 

\subsection{Smoothing operators}
\label{subsec-smoothing-operators}
Let $M$ be a smooth manifold. If $k$ is a smooth section of the bundle $\Hom (E_1,  E_2)$ over $M\times M$ (where $E_2$ is pulled back to the product $M\times M$ along the left coordinate projection, and $E_1$ along the right), then the formula 
\[
(T s)(x) = \int _M   k(x,y)\cdot s(y) \, dy ,
\]
where the dot represents the contraction operation from $\Hom (E_1,E_2)\otimes E_1$ to $E_2$, defines a \emph{smoothing operator}
\[
T \colon C_c^\infty (M;E_1) \longrightarrow C^\infty (M;E_2)
\]
acting between sections of the bundles $E_1$ and $E_2$.  The operator $T$ is properly supported if and only if $k$ is properly supported, in which case $T$ is also automatically adjointable.  It is equivariant if and only if $k$ is a $G$-equivariant section for the diagonal action of $G$ on $M\times M$.

The purpose of this section is to describe the smoothing operators in the case where $M = K \backslash G$, and where $E_1$ and $E_2$ are the equivariant vector bundles induced from finite-dimensional unitary representations $V_1$ and $V_2$ of the compact group $K$.

The space of all smooth integral kernels may be described in this case as follows. Let    $K{\times} K$ act by left translation on $G{\times }G $ in the obvious way, and on the vector space  $\Hom (V_1,V_2)$ by $(k_1,k_2) \cdot T = k_1 Tk_2^{-1}$. There is a   unique isomorphism of $G$-vector spaces 
\begin{equation}
\label{eq-computational-form-of-integral-kernels}
 [  C^\infty (G{\times} G)\otimes \Hom (V_1,V_2) ]^{K\times K} \stackrel \cong \longrightarrow C^\infty (M{\times} M ; \Hom (E_1,E_2 ))
\end{equation}
for the diagonal action of $G$ such that 
\begin{equation}
\label{eq-fmla-for-integral-kernels}
\sum_j  F_j \otimes T_j   \mapsto h, 
\qquad h (Kg_1,Kg_2)\colon [v,g_1]\mapsto    \sum_j F_j(g_1,g_2)  [T_jv,g_1].
\end{equation} 
The action of $G$ on   \eqref{eq-computational-form-of-integral-kernels} is through the diagonal right-translation action on $G\times G$ and on $M\times M$. The map in \eqref{eq-computational-form-of-integral-kernels} determines an isomorphism 
\begin{equation}
\label{eq-computational-form-of-equivariant-integral-kernels}
 [  C^\infty (G )\otimes \Hom (V_1,V_2) ]^{K\times K} \stackrel \cong \longrightarrow C^\infty (M{\times} M ; \Hom (E_1,E_2 ))^G
\end{equation}
such that 
\begin{equation}
\label{eq-fmla-for-equivariant-integral-kernels}
\sum_j  f_j \otimes T_j   \mapsto h ; 
\qquad h (Kg_1,Kg_2)\colon [v,g_1]\mapsto    \sum_j f_j(g_2g_1^{-1})  [T_jv,g_1].
\end{equation} 
The  image of the restriction of \eqref{eq-computational-form-of-equivariant-integral-kernels} to $C_c^\infty (G)$ is the space of properly supported, $G$-equivariant smooth integral kernels.

Under the isomorphisms \eqref{eq-basic-uncompleted-module-isomorphism} and \eqref{eq-fmla-for-equivariant-integral-kernels}, the action of equivariant smoothing operators on the smooth compactly supported sections of $E_1$ is given by the natural map 
\begin{equation}
\label{eq-action-of-smoothing-ops-in-standard-form}
 [  C^\infty (G)\otimes \Hom (V_1,V_2) ]^{K\times K}  \times 
 [ C^\infty_c (G) \otimes V_1]^{K }
 \longrightarrow 
 [   C^\infty (G)\otimes V_2 ]^{K }
\end{equation}
characterized by 
\begin{equation}
\label{eq-fmla-for-action-of-smoothing-ops-in-standard-form}
\Bigl ( \sum_j   f_j \otimes T_j  ,
\sum _k  h_k\otimes v_k\Bigr ) \longmapsto 
\sum_{j,k}   (f_j {*} h_k) \otimes T_j v_k .
\end{equation}
This is under the assumption that the invariant measure on $M = K \backslash G$ is chosen so that the integral of any function on $M$ is equal to the Haar integral of its pullback to $G$. 

Of course, properly supported smoothing operators may composed, and their adjoints taken.  We omit the natural formulas for these operations under  the isomorphism \eqref{eq-fmla-for-equivariant-integral-kernels}.  It is clear from the explicit formula \eqref{eq-fmla-for-action-of-smoothing-ops-in-standard-form} that:

\begin{lemma} 
Every $G$-equivariant, properly supported smoothing operator 
\[
T \colon  [ C_c^\infty  (G)\otimes V_1]^K \longrightarrow   [ C_c^\infty  (G)\otimes V_2]^K 
\]
extends to bounded   operator   
$ [ L^2 (G)\otimes V_1]^K \to  [L^2  (G)\otimes V_2]^K$,
and to a bounded and adjointable operator 
 $[C_r^*  (G)\otimes V_1]^K\to [ C_r^*  (G)\otimes V_2]^K$.  \qed
\end{lemma}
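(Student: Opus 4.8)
The plan is to reduce everything to the explicit formula \eqref{eq-fmla-for-action-of-smoothing-ops-in-standard-form}, which expresses the action of an equivariant, properly supported smoothing operator $T$ (corresponding under \eqref{eq-computational-form-of-equivariant-integral-kernels} to $\sum_j f_j\otimes T_j$ with $f_j\in C_c^\infty(G)$ and $T_j\in\Hom(V_1,V_2)$) as the finite sum $\sum_k h_k\otimes v_k \mapsto \sum_{j,k} f_j*h_k\otimes T_jv_k$. Since everything is $K$-invariant and the $T_j$ are fixed linear maps of finite rank, the entire operator is built from left convolution by the finitely many elements $f_j\in C_c^\infty(G)$, tensored with bounded linear maps on finite-dimensional spaces. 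So it suffices to show that left convolution by a single $f\in C_c^\infty(G)$ is bounded and adjointable on $L^2(G)$ and on $C^*_r(G)$, and then assemble.

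First I would record that for $f\in C_c^\infty(G)$, left convolution $L_f\colon h\mapsto f*h$ is exactly the action of $f$ in the left regular representation, so $\|L_f\|_{L^2(G)\to L^2(G)} = \|f\|_{C^*_r(G)}$ by the very definition of the reduced $C^*$-norm; its adjoint is $L_{f^*}$, which is again of the same form. Likewise, under the standard identification of $[C^*_r(G)\otimes V_i]^K$ as a Hilbert $C^*$-module over $C^*_r(G)$, the map $a\mapsto f*a$ is left multiplication by $f\in C^*_r(G)$, hence a bounded adjointable module operator with adjoint left multiplication by $f^*$, of norm $\le\|f\|_{C^*_r(G)}$. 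Second, I would note that tensoring with a linear map $T_j\colon V_1\to V_2$ between finite-dimensional Hilbert spaces is trivially bounded and adjointable (adjoint $=1\otimes T_j^*$) on each of $[L^2(G)\otimes V_i]^K$ and $[C^*_r(G)\otimes V_i]^K$, with operator norm $\le\|T_j\|$, and that the $K$-invariant subspaces are closed and invariant under all these operations (the maps are $K$-equivariant since $f_j$ is bi-$K$-invariant up to the $\Hom(V_1,V_2)$-twist, as in \eqref{eq-fmla-for-equivariant-integral-kernels}). Third, I would simply add up: $T=\sum_j L_{f_j}\otimes T_j$ is a finite sum of operators each of which is bounded and adjointable on both completions, with adjoint $T^*=\sum_j L_{f_j^*}\otimes T_j^*$ (which is the smoothing operator attached to the kernel $\sum_j f_j^*\otimes T_j^*$, as expected), so $T$ itself extends as claimed.

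The only genuinely delicate point, and the one I would spell out, is the bookkeeping of the $K$-invariance: one must check that passing to the closed subspaces $[\,\cdot\,\otimes V_i]^K$ is compatible with the convolution/tensor decomposition of $T$, i.e. that the formula \eqref{eq-fmla-for-action-of-smoothing-ops-in-standard-form}, which is written on the full (non-invariant) tensor products, restricts correctly and that the $L^2$- and $C^*_r$-completions of the invariant subspaces coincide with the $K$-invariant parts of the completions of $L^2(G)\otimes V_i$ and $C^*_r(G)\otimes V_i$ (this is immediate from averaging over the compact group $K$, which is a norm-one projection commuting with the left $G$-action). Everything else is the routine observation that a finite sum of bounded adjointable operators is bounded and adjointable; there is no estimate here beyond the triangle inequality and the defining property $\|L_f\|=\|f\|_{C^*_r(G)}$. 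Hence the lemma follows, with the additional information that the adjoint of an equivariant properly supported smoothing operator is again one, given by the ``transposed'' kernel.
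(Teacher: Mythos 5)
Your proposal is correct and takes essentially the same route as the paper; the paper literally says only ``it is clear from the explicit formula \eqref{eq-fmla-for-action-of-smoothing-ops-in-standard-form}'' and then states the lemma with a \textup{\qed}. What you have written is the intended unpacking: left convolution by $f_j\in C_c^\infty(G)$ is the left regular representation, hence bounded on $L^2(G)$ with norm $\|f_j\|_{C^*_r(G)}$ and is left multiplication by $f_j$ on the Hilbert module $C^*_r(G)$, tensoring with the fixed finite-rank maps $T_j$ costs nothing, and a finite sum of bounded adjointable operators is bounded and adjointable. (One small wrinkle in your phrasing: the individual summands $L_{f_j}\otimes T_j$ need not preserve the $K$-invariant subspace by themselves; only the sum does, because it is the full $K\times K$-invariant kernel that is equivariant. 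But you then state the correct reason yourself, so this does not affect the argument.)
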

 
The operator-norm completion of the  properly supported, $G$-equivariant smoothing operators is isomorphic to the space 
\begin{equation}
\label{eq-norm-completion-f-smoothing-ops}
\bigl [ C^*_r (G)\otimes  \Hom (V_1,V_2)\bigr ]^{K\times K}
\subseteq \mathfrak{B} \bigl (  [  C^*_r(G)\otimes V_1 ]^K, [  C^*_r(G) \otimes V_2 ]^K\bigr )
\end{equation}
under the identifications  \eqref{eq-basic-uncompleted-module-isomorphism} and 
\eqref{eq-computational-form-of-equivariant-integral-kernels}-\eqref{eq-fmla-for-action-of-smoothing-ops-in-standard-form}.

\subsection{Pseudodifferential operators on proper homogeneous spaces}
\label{sec-psdos-on-proper-homogeneous-spaces}
We are interested in properly supported, $G$-equivariant, classical pseudodifferential operators on the homogeneous space $K\backslash G$. 
We shall use the following notation:

\begin{definition} Let $V_1$ and $V_2$ be finite-dimensional unitary representations of $K$ and let $m\in \Z$.
We shall denote by 
$\Psi^{m}_{G,K}(V_1,V_2)$
the space of $G$-equivariant, properly supported, classical pseudodifferential operators 
\[
A \colon [ C_c^\infty  (G)\otimes V_1]^K  \longrightarrow 
[ C_c^\infty  (G)\otimes V_2]^K 
\]
of order $m$.
\end{definition} 

Let us consider the case $m{=}0$. As we have seen,  the principal symbol of an operator in $\Psi^0_{G,K}(V_1,V_2)$ can be viewed as a smooth section over $\partial T^* M$, where $M=K \backslash G$, of the $G$-equivariant vector bundle whose fiber over each point of $\partial T^*_e M$ is the space $\Hom (V_1,V_2)$.  Of course it must be a $G$-equivariant section, and therefore the restriction to the basepoint $e\in K \backslash G$ determines the symbol.  This restriction is a smooth, $K$-equivariant function 
\[
a_0 \colon \partial (\mathfrak{g}/\mathfrak{k})^* \longrightarrow \Hom (V_1,V_2)
\]
(we are using the standard identification of $T_eM$ with $\mathfrak{g} / \mathfrak{k}$). 
Conversely, it is shown in  \cite{Stetkaer71} how to construct  an operator in $\Psi^0_{G,K}(V_1,V_2)$ with any given principal symbol function $a_0$, as above, by a simple averaging procedure (and we shall see the same averaging procedure below). 

\begin{definition}
We shall denote by  
\[
\Sigma^0_{G,K} (V_1,V_2)= C^\infty \bigl ( \partial (\mathfrak{g}/\mathfrak{k})^* , \Hom(V_1,V_2)\bigr )^K
\]
the space of principal symbols of operators in $\Psi^0_{G,K}(V_1,V_2)$.   
\end{definition} 

The discussion above provides a short exact sequence of vector spaces
\begin{equation}
\label{eq-pseudodifferential-extension-before-completion}
    0 \longrightarrow \Psi^{-1}_{G,K}(V_1,V_2) \longrightarrow \Psi^0_{G,K}(V_1,V_2) \longrightarrow \Sigma^0_{G,K} (V_1,V_2)  \longrightarrow 0
\end{equation} 
involving the space of properly supported, order $0$, classical  pseudodifferential operators acting between the spaces of smooth sections\footnote{The proper support condition implies that $A$ necessarily maps compactly supported sections to compactly supported sections. The proper support condition also allows one to extend  $A$ to an operator from all smooth sections to all smooth sections.} of $G$-equivariant complex vector bundles on the symmetric space $K \backslash G$.

\begin{proposition}
\label{prop-l-2-versus-s-star-boundedness}
  Every  properly supported, $G$-equivariant, order zero classical pseudodifferential operator 
\[
A \colon [C_c^\infty ( G)\otimes V_1]^K \longrightarrow  [C_c^\infty ( G)\otimes V_2]^K 
\]
extends to a bounded operator between $L^2$-spaces.
\end{proposition}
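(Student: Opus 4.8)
The plan is to bootstrap the global $L^2$-bound from H\"ormander's \emph{local} boundedness theorem (Theorem~\ref{thm-l-2-boundedness-of-order-zero-psdo}) using $G$-equivariance: equivariance forces the ``local model'' of $A$ near every point of $M := K\backslash G$ to be a $G$-translate of its local model near the base point $\bar e$, which is exactly the uniformity one needs to pass from a relatively compact piece of $M$ to all of $M$. Throughout I would work with the homogeneous bundles $E_i = K\backslash(V_i\times G)$, so that $A$ is a properly supported, $G$-equivariant, order-zero classical $\Psi$DO from $C_c^\infty(M;E_1)$ to $C^\infty(M;E_2)$, and the operator in the statement is its restriction to the $K$-invariant sections.

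First I would fix the local picture. Choose a relatively compact open neighbourhood $U$ of $\bar e$ and $\varphi\in C_c^\infty(M)$ with $\varphi\equiv 1$ on $\overline U$. Since $A$ is properly supported there is $\psi\in C_c^\infty(M)$ with $(1-\psi)A\varphi = 0$ and $\varphi A(1-\psi)=0$; in particular $A\varphi = \psi A\varphi$. The operator $B := \psi A\varphi$ is a product of the properly supported order-zero $\Psi$DO $A$ with two smooth multiplication operators, hence is itself a classical $\Psi$DO of order zero (Section~\ref{sec-psdos-on-r-n}), and it is compactly supported: any $\eta\in C_c^\infty(M)$ that is $\equiv 1$ on $\Supp\psi\cup\Supp\varphi$ serves as the cutoff in \eqref{eq-cutoff-function-phi}. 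By Theorem~\ref{thm-l-2-boundedness-of-order-zero-psdo}, applied entrywise in local frames of $E_1$ and $E_2$, $B$ is $L^2$-bounded; set $C_0 := \|B\|_{L^2\to L^2}<\infty$.

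Next I would globalize. Put a $G$-invariant Riemannian metric on $M$; since $M$ is homogeneous, a maximal separated subset of a $G$-orbit produces group elements $\{g_i\}_{i\in I}\subset G$ with $\{Ug_i\}_i$ covering $M$ and with the property that, for any fixed relatively compact $L\subseteq M$, the family $\{Lg_i\}_i$ has multiplicity bounded by a constant depending only on $L$. Let $N$ bound the multiplicity of $\{\Supp\psi\cdot g_i\}_i$, and pick a partition of unity $\{\chi_i\}$ with $0\le\chi_i\le 1$, $\Supp\chi_i\subseteq Ug_i$, $\sum_i\chi_i = 1$. Since $A$ is equivariant and the metric and measure are $G$-invariant, the translate $(\psi A\varphi)^{g_i} = \psi^{g_i}A\varphi^{g_i}$ has $L^2$-operator norm $C_0$ for every $i$, and $A\varphi^{g_i}=\psi^{g_i}A\varphi^{g_i}$, where $\varphi^{g_i}(m)=\varphi(mg_i^{-1})$, etc. For $s\in C_c^\infty(M;E_1)$ write $As = \sum_i A(\chi_i s)$; because $\varphi^{g_i}\equiv 1$ on $\overline Ug_i\supseteq\Supp\chi_i$ we have $\chi_i s = \varphi^{g_i}\chi_i s$, so
\[
A(\chi_i s) = \psi^{g_i}A\varphi^{g_i}(\chi_i s),\qquad \Supp\bigl(A(\chi_i s)\bigr)\subseteq\Supp\psi\cdot g_i,\qquad \|A(\chi_i s)\|_{L^2}\le C_0\|\chi_i s\|_{L^2}.
\]
An inner product $\langle A(\chi_i s),A(\chi_j s)\rangle_{L^2}$ vanishes unless $\Supp\psi\cdot g_i$ meets $\Supp\psi\cdot g_j$, which happens for at most $N$ indices $j$ for each $i$; hence, using $2|ab|\le a^2+b^2$ and then $\sum_i\chi_i^2\le\sum_i\chi_i = 1$,
\[
\|As\|_{L^2}^2 \le N\sum_i\|A(\chi_i s)\|_{L^2}^2 \le NC_0^2\sum_i\|\chi_i s\|_{L^2}^2 \le NC_0^2\|s\|_{L^2}^2 ,
\]
so $A$ extends to a bounded operator on $L^2$ with $\|A\|\le\sqrt N\,C_0$; restricting to $K$-invariants gives the proposition.

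The only ingredient that is not pure bookkeeping is the existence of the ``net'' $\{g_i\}$, i.e.\ a uniformly locally finite cover of the homogeneous space $M$ by $G$-translates of a fixed relatively compact set; this is where the structure of $G$ (equivalently, the geometry of the homogeneous Riemannian manifold $M$) enters, and it is a standard maximal-separated-set argument. Everything else amounts to invoking Theorem~\ref{thm-l-2-boundedness-of-order-zero-psdo} for the single local piece $\psi A\varphi$ and spreading that one estimate around using $G$-invariance of the measure and metric. (If one prefers, the almost-orthogonality step can be replaced by colouring $I$ into boundedly many classes on which the supports $\Supp\psi\cdot g_i$ are pairwise disjoint and summing over classes; this yields the same conclusion.)
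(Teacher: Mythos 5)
Your proof is correct, and it reaches the estimate by a genuinely different route than the paper does.

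The paper exploits a \emph{continuous} averaging identity: it fixes $\varphi\in C_c^\infty(K\backslash G)$ with $\int_G \varphi(xg)\,dg=1$, writes $A = \int_G A_g\,dg$ where $A_g = A\,g(\varphi)$ is a $G$-translate of the single compactly supported piece $A\varphi$, observes that all $A_g$ have the same $L^2$-norm $\|A_e\|$, and then extracts the bound $\|A\|\le \operatorname{vol}(S)^{3/2}\|A_e\|$ from two applications of Fubini plus Cauchy--Schwarz, where $S$ controls the support of $A_e$. You instead use a \emph{discrete} decomposition: a maximal $\varepsilon$-separated net $\{g_i\}$ for a $G$-invariant metric, a subordinate partition of unity $\{\chi_i\}$, and the identity $As=\sum_i \psi^{g_i}A\varphi^{g_i}(\chi_i s)$, followed by almost-orthogonality based on the bounded overlap $N$ of the supports $\Supp\psi\cdot g_i$. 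Both hinge on exactly the same two facts --- local $L^2$-boundedness of compactly supported order-zero operators (Theorem~\ref{thm-l-2-boundedness-of-order-zero-psdo}) and the uniformity that $G$-equivariance gives --- and produce constants $(\operatorname{vol}(S)^{3/2}\|A_e\|$ versus $\sqrt{N}\,C_0)$ of the same qualitative quality. The paper's continuous average is arguably leaner: it requires no Riemannian geometry, no net, no bounded-geometry input, and its averaging operator $T\mapsto\mathsf{Av}(\varphi T\varphi)$ is reused directly (Lemma~\ref{lem-continuity-of-averaging}) in the proof of Proposition~\ref{prop-order-zero-psdo-extension}. Your argument is more elementary in its analytic steps (no Fubini interchange to justify), at the cost of the bounded-geometry bookkeeping you flagged. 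One tiny point to tighten: the order of choices should be made explicit --- fix $U$ first, then choose the separation scale $\varepsilon$ small enough that $B(\bar e,\varepsilon)\subseteq U$, so that the translates $Ug_i$ actually cover; otherwise the construction is standard and the estimate is sound. Also, the phrase ``restriction to the $K$-invariant sections'' is a slight misnomer --- under the isomorphism \eqref{eq-basic-uncompleted-module-isomorphism}, $[C_c^\infty(G)\otimes V]^K$ \emph{is} $C_c^\infty(K\backslash G;E)$, so no restriction step is needed --- but this does not affect the argument.
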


\begin{proof} 
Let $\varphi$ be a smooth and compactly supported function on $K \backslash G$ such that 
\[
\int_G  \varphi (xg) \, dg = 1 \quad \forall x\in K\backslash G,
\]
and let 
\[
A_e {=}  A\varphi  \colon [C_c^\infty ( G)\otimes V_1]^K \longrightarrow  [C_c^\infty ( G)\otimes V_2]^K .
\]
This is a compactly supported,  order zero classical pseudodifferential operator on $M{=}K\backslash G$, and therefore it is bounded in the $L^2$-operator norm. If we set $A_g = g(A_e)$, then 
\[
(A f)(x) = \int_G (A_g f)(x) \, dg \quad \forall x \in K \backslash G,\quad \forall f\in [C_c^\infty ( G)\otimes V_1]^K  .
\]
The operators $A_g$  are unitarily equivalent to one another, and therefore they have the same $L^2$-operator norm:
\[
\| A_g \| = \| A_e\| \quad  \forall g\in G.
\]
Let  $C\subseteq K \backslash G$ be a compact subset such that the  support of $A_e$ is included in 
$C{\times } C$, and let 
\[
S = \{ \, g\in G : Kg \in C\, \} .
\]
This is a compact subset of $G$, and if  $f\in[C_c^\infty ( G)\otimes V_1]^K $, $g\in G$ and $x\in K \backslash G$, then $(A_gf)(x) = 0$ unless $x\in Cg$. In other words, $(A_gf)(x) = 0$ unless $g\in x^{-1}S$. 
So by the Cauchy-Schwarz inequality,
\[
\begin{aligned}
\| A f\|_{L^2}^2 
    & = \int _M \Bigl \| \int_G (A_gf)(x)\, dg \Bigr\|_{V_2}^2 \, dx \\
    & \le \int _M \Bigl ( \int_{x^{-1} S} 1 \, dg\Bigr ) \Bigl (   \int_{x^{-1}S} \bigl \|(A_gf)(x)\bigr \|_{V_2} ^2 \, dg\Bigr )   \, dx \\
    & = \operatorname{vol}(S)\cdot   \int _M \Bigl (\int_{x^{-1}S} \bigl \|(A_gf)(x)\bigr \|_{V_2} ^2 \, dg \Bigr )   \, dx  .
\end{aligned}
\]
But now, using Fubini's theorem, we can write 
\[
\int _M \Bigl (\int_{x^{-1}S} \bigl \|(A_gf)(x)\bigr \|_{V_2} ^2 \, dg \Bigr )   \, dx \\
     = \int _G \Bigl (\int_{Cg}  \bigl \|(A_gf)(x)\bigr \|_{V_2} ^2 \, dx  \Bigr ) \, dg .
\]
The inner integral on the right may be estimated by
\[
\int_{Cg}  \bigl \|(A_gf)(x)\bigr \|_{V_2} ^2 \, dx \le \| A_e\|^2\cdot \int_{Cg}  \bigl \|f(x)\bigr \|_{V_1} ^2 \, dx ,
\]
and so,  by Fubini  once again,
\[
\begin{aligned}
\| A f\|_{L^2}^2  
    & \le  \operatorname{vol}(S)\cdot  \| A_e\|^2 \cdot 
    \int _G \Bigl (\int_{Cg}   \|f(x)  \|_{V_1} ^2 \, dx  \Bigr ) \, dg \\
    & = \operatorname{vol}(S)\cdot \| A_e\|^2  \cdot 
    \int _M \Bigl (\int_{x^{-1}S}   \|f(x)  \|_{V_1} ^2 \, dg \Bigr )   \, dx \\
    & = \operatorname{vol}(S)^2\cdot \| A_e\|^2  \cdot 
    \int _M    \|f(x)  \|_{V_1} ^2     \, dx
    \\
    & = \operatorname{vol}(S)^2 \cdot \| A_e\|^2 \cdot  \| f\|_{L^2}^2,
\end{aligned}
\]
which proves that $A$ is $L^2$-bounded, as required.
\end{proof} 

We noted in Theorem~\ref{thm-negative-order-operators-compact} that compactly supported,  negative order operators  are compact operators.  The analogue of this result in the current context is as follows:

\begin{proposition}
\label{prop-order-minus-one-is-compact}
    The $L^2$-operator norm-closure of the properly supported, equivariant smoothing operators 
    \[
    T \colon [C_c^\infty ( G)\otimes V_1]^K  \longrightarrow [C_c^\infty ( G)\otimes V_2]^K 
    \]
    includes all the properly supported, equivariant classical pseudodifferential  operators of order $-1$ or less.
\end{proposition}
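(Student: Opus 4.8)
The plan is to imitate the localize-then-average argument from the proof of Proposition~\ref{prop-l-2-versus-s-star-boundedness}. First I would fix $\varphi\in C_c^\infty(K\backslash G)$ with $\int_G\varphi(xg)\,dg=1$ for all $x$, together with a function $\psi\in C_c^\infty(K\backslash G)$ equal to $1$ on a neighbourhood of $\Supp(\varphi)$ large enough that $\psi(A\varphi)\psi=A\varphi$ (possible since $A$ is properly supported), and then use the identity $A=\int_G g(A\varphi)\,dg$, with $g(A\varphi)=g\circ(A\varphi)\circ g^{-1}$, exactly as in that proof. The key observation is that $A\varphi$ is a \emph{compactly supported} classical pseudodifferential operator of order $-1$ or less, hence, by Theorem~\ref{thm-negative-order-operators-compact}, a compact operator from $[L^2(G)\otimes V_1]^K$ to $[L^2(G)\otimes V_2]^K$. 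So the scheme is: approximate $A\varphi$ in $L^2$-operator norm by compactly supported smoothing operators whose kernels all live in one fixed compact set, then re-average these over $G$ to obtain properly supported, $G$-equivariant smoothing operators converging to $A$.

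For the approximation step I would use that a compact operator between $[L^2(G)\otimes V_i]^K\cong L^2(K\backslash G;E_i)$ is an operator-norm limit of finite-rank operators $\sum_i s_{2,i}\otimes\langle s_{1,i},\,\cdot\,\rangle$, and that, by density of the smooth compactly supported sections, the $s_{j,i}$ may be taken smooth and compactly supported. Such an operator is precisely a smoothing operator in the sense of Section~\ref{subsec-smoothing-operators}, with smooth, compactly supported --- hence properly supported --- integral kernel $\sum_i s_{2,i}(x)\otimes\overline{s_{1,i}(y)}$, and it maps between sections of the induced bundles. Conjugating such an approximant by multiplication by $\psi$ (which preserves the class of smoothing operators and changes the operator norm by at most $\|\psi\|_\infty^2$) lets me arrange, for every $\delta>0$, a smoothing operator $R$ with $\|A\varphi-R\|_{L^2\to L^2}<\delta$ whose kernel is supported in the \emph{fixed} compact set $\Supp(\psi)\times\Supp(\psi)$. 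It is worth noting that one cannot shortcut this by approximating $A$ itself by finite-rank operators: $A$ is $G$-equivariant and $K\backslash G$ is non-compact, so $A$ is in general not compact, which is the whole reason for localizing by $\varphi$.

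The last step is to set $R^{\natural}:=\int_G g(R)\,dg$. This integral converges because the $G$-action on $(K\backslash G)\times(K\backslash G)$ is proper, and the resulting $G$-invariant, transversally compactly supported kernel is that of a properly supported, $G$-equivariant smoothing operator as described in Section~\ref{subsec-smoothing-operators}. By linearity and the averaging identity $A=\int_G g(A\varphi)\,dg$ one has $R^{\natural}-A=\int_G g(R-A\varphi)\,dg$; since $R-A\varphi$ has kernel supported in the fixed compact set $\Supp(\psi)\times\Supp(\psi)$ and $G$ acts by unitaries on $L^2$, the estimate established in the proof of Proposition~\ref{prop-l-2-versus-s-star-boundedness} applies verbatim to give
\[
\|R^{\natural}-A\|_{L^2\to L^2}\;\le\;\operatorname{vol}(S)^{3/2}\,\|R-A\varphi\|_{L^2\to L^2}\;<\;\operatorname{vol}(S)^{3/2}\,\delta ,
\]
where $S=\{\,g\in G:Kg\in\Supp(\psi)\,\}$ has finite volume. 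Letting $\delta\to0$ would then exhibit $A$ as an $L^2$-operator-norm limit of properly supported, $G$-equivariant smoothing operators, as required.

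The step I expect to be the main obstacle --- or at least the delicate point --- is reconciling the localization with the averaging over the non-compact group $G$: the smoothing approximants to $A\varphi$ must be confined to a compact region that does not depend on the accuracy $\delta$, for otherwise re-averaging them over $G$ could amplify the operator norm uncontrollably. The conjugation-by-$\psi$ trick, together with the uniform-support form of the averaging bound from Proposition~\ref{prop-l-2-versus-s-star-boundedness}, is what makes this go through.
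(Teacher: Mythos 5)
Your proof is correct, but it follows a genuinely different route than the paper's. The paper's argument is shorter and more algebraic: it first observes that an equivariant, properly supported classical operator of order below $-\dim(K\backslash G)$ has a \emph{continuous} Schwartz kernel (by inspection of \eqref{eq-fmla-for-psdo}), hence lies in the closure of the smoothing operators by density of $C_c^\infty$ in $C_c$; then for an operator $A$ of order $\le -1$ it passes to $V=V_1\oplus V_2$, notes that some power of $A^*A$ has order below $-\dim(K\backslash G)$, and exploits the $C^*$-algebraic fact that a self-adjoint element that is nilpotent modulo a closed ideal actually lies in the ideal, whence $A^*A$ and then $A$ itself (via $\|A\|^2=\|A^*A\|$ in the quotient) belong to the ideal. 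Your proof is more constructive: you localize by $\varphi$ to make $A\varphi$ compact (via Theorem~\ref{thm-negative-order-operators-compact}), approximate $A\varphi$ by smoothing operators whose kernels live in a fixed compact set $\Supp(\psi)\times\Supp(\psi)$, then re-average over $G$ and control the error by the $\operatorname{vol}(S)^{3/2}$ bound extracted from the proof of Proposition~\ref{prop-l-2-versus-s-star-boundedness}. Both arguments are sound. What the paper's method buys is brevity and the absence of any need to verify that the $G$-average of a compactly supported smoothing kernel is again a properly supported, smooth, equivariant kernel; what your method buys is that it stays entirely within the explicit localize-then-average machinery that the paper in any case deploys a few lines later in the proof of Proposition~\ref{prop-order-zero-psdo-extension} (via $\mathsf{Av}(\varphi\,\cdot\,\varphi)$ and Lemma~\ref{lem-continuity-of-averaging}), and it correctly identifies the delicate point, namely keeping the approximants' kernel supports uniform as $\delta\to 0$, which your $\psi$-compression trick handles.
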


\begin{proof} 
If $A$ is any pseudodifferential operator on $\R^n$ of order less than $-n$, then it follows from the integral formula \eqref{eq-fmla-for-psdo} that the Schwartz kernel of $A$ is a continuous function. It  follows that if $A$ is a properly supported and $G$-equivariant pseudodifferential operator, as in the statement of the proposition, and if the order of $A$ is less than $-\dim  (K\backslash G)$, then the Schwartz kernel of $A$ is an equivariant, continuous and properly supported  section of the bundle $\Hom( V_1,V_2)$ over $K\backslash G \times K\backslash G$.  So it corresponds to an element of 
\[
[  C_c  (G) \otimes \Hom (V_1,V_2)]^{K\times K} 
\]
under the isomorphism \eqref{eq-computational-form-of-equivariant-integral-kernels}, and $A$ is therefore in the operator-norm closure of the $G$-equivariant, properly supported smoothing operators.

Suppose now $A$ has order $-1$ or less. For $V= V_1\oplus V_2$ we can think of $A$ as an operator 
 \[
    A \colon [C_c^\infty ( G)\otimes V]^K  \longrightarrow [C_c^\infty ( G)\otimes V]^K 
    \]
that is zero on sections with values in $V_2$ and has range in the sections valued in $V_2$. The above shows that some power of $A^*A$ is in the operator-norm closure of the $G$-equivariant, properly supported smoothing operators.
But $A$ belongs to a $C^*$-algebra of operators on $[L^2( G)\otimes V]^K $ that includes the norm closure of the $G$-equivariant, properly supported smoothing operators as an ideal.  The above shows that the self-adjoint element $A^*A$ is nilpotent in the quotient $C^*$-algebra, and is therefore zero in the quotient $C^*$-algebra.
\end{proof}

\subsection{Pseudodifferential extension in the equivariant context}
\label{subsec-psdo-extension-in-equivariant-context}

\begin{definition} 
\label{def-three-c-star-categories}
Let $G$ be a  Lie  group, let $K$ be a  compact subgroup of $G$, and let $V_1$ and $V_2$ be finite-dimensional unitary representations of $K$.
\begin{enumerate}[\rm (i)]

\item Denote by  $\CC^*_{G,K}(V_1,V_2)$    the $L^2$-operator norm-closure of the vector space  $\Psi^{-1}_{G,K}(V_1,V_2)$, or equivalently the $L^2$-operator norm-closure of the space of properly supported, $G$-equivariant smoothing operators 
\[
[L^2 (G)\otimes V_1]^K\longrightarrow 
[L^2 (G)\otimes V_2]^K.
\]

\item Denote by   $\PSDO^*_{G,K}(V_1,V_2)$  the $L^2$-operator norm-closure of the space $\Psi^{0}_{G,K}(V_1,V_2)$.

\item Denote by 
\[
\Symb^*_{G,K} (V_1,V_2) = C\bigl (\partial (\mathfrak{g}/\mathfrak{k})^*, \Hom (V_1,V_2)\bigr )^K
\]
the closure in the supremum norm of the symbol space $\Sigma^0_{G,K} (V_1,V_2)$.  
\end{enumerate}
\end{definition}

\begin{lemma} 
Let $V_1$ and $V_2$ be finite-dimensional unitary representations of $K$ and let 
\[
A \colon [C_c^\infty ( G)\otimes V_1]^K  \longrightarrow 
[C_c^\infty ( G)\otimes V_2]^K 
\]
be a properly supported, $G$-equivariant, order zero, classical pseudodifferential operator. If $a_0$ is the principal symbol of $A$, then 
\[
\| A \|_{L^2\to L^2} \ge \sup_{\xi \ne 0} \bigl \| a_0 (e,\xi)\bigr \|.
\]
As a result, the principal symbol map extends to a bounded linear map 
\[
\PSDO^*_{G,K}( V_1,V_2) \longrightarrow \Symb^*_{G,K}( V_1,V_2).
\]
\end{lemma}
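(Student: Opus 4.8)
The plan is to deduce the inequality from the non-equivariant estimate of Theorem~\ref{thm-symbol-map-is-continuous} by cutting $A$ down to a compactly supported operator near the basepoint of $M = K\backslash G$. First I would choose a smooth, compactly supported function $\varphi$ on $M$ with $0\le \varphi \le 1$ and $\varphi\equiv 1$ on a neighborhood of the basepoint $e$; regarded as a $K$-invariant function on $G$, multiplication by $\varphi$ preserves each $[C_c^\infty(G)\otimes V_i]^K$ and is a contraction on each $[L^2(G)\otimes V_i]^K$. Set $B = \varphi A\varphi$. Under the isomorphism \eqref{eq-basic-uncompleted-module-isomorphism}, with the measure on $M$ normalized as in Section~\ref{subsec-smoothing-operators}, $B$ becomes a compactly supported, order zero classical pseudodifferential operator between sections of the induced bundles $E_1,E_2$ over $M$ --- no longer $G$-equivariant, but that plays no role. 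Since the principal symbol is multiplicative under composition and the symbol of multiplication by $\varphi$ is $\varphi$ itself, the principal symbol of $B$ at $(m,\xi)$ is $\varphi(m)^2 a_0(m,\xi)$; at $m=e$ this equals $a_0(e,\xi)$ because $\varphi(e)=1$.

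I would then apply Theorem~\ref{thm-symbol-map-is-continuous} in the form valid for operators acting on sections of Hermitian vector bundles, which is the setting of the reference cited there; alternatively one reduces to the scalar statement as written by pre- and post-composing $B$ with the order zero operators of multiplication by unit-length local sections of $E_1$ and $E_2$, chosen at a covector where $\xi\mapsto\|a_0(e,\xi)\|$ attains its maximum on the unit sphere. Either way this yields
\[
\|B\|_{L^2(M)\to L^2(M)} \ \ge\ \sup_{m,\ \xi\ne 0}\bigl\|\varphi(m)^2 a_0(m,\xi)\bigr\| \ \ge\ \sup_{\xi\ne 0}\bigl\|a_0(e,\xi)\bigr\|.
\]
On the other hand $\|B\|_{L^2\to L^2} \le \|A\|_{L^2\to L^2}$, since multiplication by $\varphi$ is a contraction and the relevant $L^2$-spaces $[L^2(G)\otimes V_i]^K$ are identified isometrically with $L^2(M;E_i)$ via \eqref{eq-basic-uncompleted-module-isomorphism}. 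Chaining the two inequalities gives $\|A\|_{L^2\to L^2}\ge \sup_{\xi\ne 0}\|a_0(e,\xi)\|$, the first assertion.

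For the final sentence: because $a_0$ is homogeneous of degree zero in $\xi$, the quantity $\sup_{\xi\ne 0}\|a_0(e,\xi)\|$ is exactly the supremum norm of $a_0$ as an element of $\Sigma^0_{G,K}(V_1,V_2)$. Hence the principal symbol map $\Psi^0_{G,K}(V_1,V_2)\to\Sigma^0_{G,K}(V_1,V_2)$ is norm-decreasing from the $L^2$-operator norm to the supremum norm, in particular bounded; a bounded linear map extends uniquely and continuously to the completions of its domain and codomain, which gives the asserted bounded map $\PSDO^*_{G,K}(V_1,V_2)\to\Symb^*_{G,K}(V_1,V_2)$, the target being by definition the supremum-norm completion of $\Sigma^0_{G,K}(V_1,V_2)$.

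The argument is largely formal; the one step requiring genuine care is the localization --- choosing $\varphi$ that is simultaneously a contraction on $L^2$ and identically $1$ near $e$, so that passing to the compactly supported operator $B$ neither increases the operator norm nor diminishes the symbol value at the basepoint. The remaining points, namely the measure normalization identifying $L^2(M;E_i)$ with $[L^2(G)\otimes V_i]^K$ and the passage between bundle-valued and scalar pseudodifferential operators needed to quote Theorem~\ref{thm-symbol-map-is-continuous}, are routine.
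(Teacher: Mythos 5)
Your proof is correct and follows the same route as the paper's: cut $A$ down to the compactly supported operator $\varphi A\varphi$ with $\varphi(e)=1$ and $0\le\varphi\le 1$, note that its symbol at $(e,\xi)$ is still $a_0(e,\xi)$, invoke Theorem~\ref{thm-symbol-map-is-continuous}, and observe $\|\varphi A\varphi\|\le\|A\|$. The paper's proof is the same argument stated more tersely; your additional care about the bundle-valued formulation of the cited theorem and the extension to completions is consistent with what the paper leaves implicit.
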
 

\begin{proof} 
Let $\varphi$ be a smooth and compactly supported function on $K \backslash G$ such that $1 \ge \varphi\ge 0$ and $\varphi(e) =1$.  Then $\varphi A \varphi$ is a compactly supported classical pseudodifferential operator. Moreover 
$ \| A\|\ge \| \varphi A \varphi \| $ and the principal symbols of $A$ and $\varphi A \varphi$ agree at $e\in K\backslash G$. It follows from Theorem~\ref{thm-symbol-map-is-continuous} that 
\[
\| A\| \ge \| \varphi A \varphi \| \ge \sup_{\xi \ne 0} \bigl \| a_0 (e,\xi)\bigr \| ,
\]
as required.
\end{proof} 

\begin{proposition}
\label{prop-order-zero-psdo-extension}
Let $V_1$ and $V_2$ be finite-dimensional unitary representations of $K$. By taking $L^2$-operator norm closures, we obtain from the exact sequence 
 \[
    0 \longrightarrow \Psi^{-1}_{G,K}(V_1,V_2) \longrightarrow \Psi^0_{G,K}(V_1,V_2) \longrightarrow \Sigma^0_{G,K} (V_1,V_2)  \longrightarrow 0
    \]
an exact sequence    
    \[
    0 \longrightarrow \CC^*_{G,K}(V_1,V_2) \longrightarrow \PSDO^*_{G,K}(V_1,V_2) \longrightarrow \Symb^*_{G,K} (V_1,V_2)  \longrightarrow 0 .
    \]
\end{proposition}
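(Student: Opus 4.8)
The plan is to reduce to a ``diagonal'' $C^*$-algebra and then combine the compactly supported pseudodifferential extension of C*-algebras recalled at the end of Section~\ref{sec-pseudodifferential-ops} with the $G$-averaging technique of Proposition~\ref{prop-l-2-versus-s-star-boundedness}. First I would set $V = V_1 \oplus V_2$; then $\Psi^0_{G,K}(V,V)$ is a $*$-algebra under composition and adjunction (both of which preserve order, equivariance and proper support), $\Psi^{-1}_{G,K}(V,V)$ is a $*$-ideal in it, and hence $\mathcal{A} := \PSDO^*_{G,K}(V,V)$ is a $C^*$-algebra containing $\mathcal{J} := \CC^*_{G,K}(V,V)$ as a closed two-sided $*$-ideal. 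The principal symbol map is multiplicative and $*$-preserving on the dense $*$-subalgebra $\Psi^0_{G,K}(V,V)$, so by the Lemma preceding this Proposition it extends to a $*$-homomorphism of $C^*$-algebras $\bar\sigma \colon \mathcal{A} \to \Symb^*_{G,K}(V,V)$. Exactness of the sought sequence for the pair $(V_1,V_2)$ then follows from exactness of the corresponding sequence for $(V,V)$ by compressing with the projections $P_1, P_2 \in \Psi^0_{G,K}(V,V)$ given by the identity operators on the subbundles induced from $V_1$ and $V_2$, since $P_2 \mathcal{A} P_1 = \PSDO^*_{G,K}(V_1,V_2)$, $P_2 \mathcal{J} P_1 = \CC^*_{G,K}(V_1,V_2)$, $P_2\,\Symb^*_{G,K}(V,V)\,P_1 = \Symb^*_{G,K}(V_1,V_2)$, and a corner of an exact sequence of $C^*$-algebras is exact. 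So from now on I work with $(V,V)$.

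Exactness at $\CC^*$ is immediate: $\mathcal{J}$ and $\mathcal{A}$ are completions, in the \emph{same} $L^2$-operator norm, of subspaces one of which is contained in the other, so $\mathcal{J} \hookrightarrow \mathcal{A}$ is an isometric inclusion. Surjectivity of $\bar\sigma$ is also soft: the image of a $*$-homomorphism of $C^*$-algebras is a $C^*$-subalgebra, hence norm-closed, and here it contains the dense subspace $\Sigma^0_{G,K}(V,V)$ of smooth symbols, so it is all of $\Symb^*_{G,K}(V,V)$. One inclusion of the middle exactness is easy too: $\bar\sigma$ vanishes on $\Psi^{-1}_{G,K}(V,V)$, hence by continuity on $\mathcal{J}$, so $\mathcal{J} \subseteq \ker \bar\sigma$.

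The substance is the reverse inclusion $\ker \bar\sigma \subseteq \mathcal{J}$, and the key device is a linear ``quantization'' $Q \colon \Sigma^0_{G,K}(V,V) \to \Psi^0_{G,K}(V,V)$ with $\sigma \circ Q = \mathrm{id}$ for which $\mathrm{dist}_{L^2}\bigl(Q(a), \mathcal{J}\bigr) \le C \|a\|_{\sup}$ with a constant $C$ \emph{independent of} $a$. To build $Q$, fix once and for all a cutoff $\varphi \in C_c^\infty(K\backslash G)$ with $\varphi \ge 0$, supported in a small neighbourhood of the basepoint, and $\int_G \varphi(xg)\, dg \equiv 1$, as in Proposition~\ref{prop-l-2-versus-s-star-boundedness}. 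Given $a$, extend it $G$-equivariantly to a degree-zero homogeneous symbol on $T^*(K\backslash G)\setminus(K\backslash G)$ and apply a standard compactly supported quantization whose Schwartz kernel lies within a fixed distance of the diagonal --- the averaging construction of \cite{Stetkaer71} --- to obtain an operator $B \in \Psi^0_c(K\backslash G)$ (acting on the relevant bundle) with principal symbol $\varphi a$ and Schwartz kernel supported in a \emph{fixed} compact set $R \subseteq K\backslash G \times K\backslash G$; then put $Q(a) := \int_G g(B)\, dg$. Properness of the $G$-action makes this a properly supported, $G$-equivariant operator, which is again classical pseudodifferential of order zero (this is the point of \cite{Stetkaer71}), with principal symbol $\int_G g(\varphi a)\, dg = a$. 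For the distance estimate, use the (bundle version of the) Seeley exact sequence to choose $T \in \mathfrak{K}$ with $\|B - T\|_{L^2} \le \|\varphi a\|_{\sup} + \eta \le \|\varphi\|_{\sup}\|a\|_{\sup} + \eta$; compressing $T$ by a fixed $\psi \in C_c^\infty(K\backslash G)$ with $0 \le \psi \le 1$ and $\psi \equiv 1$ near the projections of $R$ does not increase this norm and confines the kernel of $T$ to a fixed compact set, so $\int_G g(T)\, dg \in \mathcal{J}$ (approximate $T$ in operator norm by smoothing operators with kernels in a fixed compact set, whose $G$-averages are $G$-equivariant properly supported smoothing operators). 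The Cauchy--Schwarz and Fubini argument of Proposition~\ref{prop-l-2-versus-s-star-boundedness}, applied to $\int_G g(B - T)\, dg$, then gives $\bigl\| Q(a) - \int_G g(T)\, dg\bigr\|_{L^2} \le C_1 \|B - T\|_{L^2}$ with $C_1$ depending only on the fixed compact sets; letting $\eta \to 0$ yields $\mathrm{dist}_{L^2}(Q(a), \mathcal{J}) \le C\|a\|_{\sup}$ with $C := C_1 \|\varphi\|_{\sup}$. Finally, given $A \in \mathcal{A}$ with $\bar\sigma(A) = 0$, pick $A_n \in \Psi^0_{G,K}(V,V)$ with $A_n \to A$; then $\|\sigma(A_n)\|_{\sup} = \|\sigma(A_n) - \bar\sigma(A)\| \le \|A_n - A\|_{L^2} \to 0$, and since $A_n - Q(\sigma(A_n))$ has vanishing principal symbol it lies in $\Psi^{-1}_{G,K}(V,V) \subseteq \mathcal{J}$, so $\mathrm{dist}_{L^2}(A_n, \mathcal{J}) = \mathrm{dist}_{L^2}(Q(\sigma(A_n)), \mathcal{J}) \le C\|\sigma(A_n)\|_{\sup} \to 0$; hence $\mathrm{dist}_{L^2}(A, \mathcal{J}) = 0$ and $A \in \mathcal{J}$.

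The main obstacle is precisely the uniformity built into $Q$: one must arrange that all the operators $Q(a)$, and all the correcting compact operators $T$, have Schwartz kernels confined to \emph{one} fixed compact set, so that the constant $C_1$ from the averaging estimate of Proposition~\ref{prop-l-2-versus-s-star-boundedness} and the constant implicit in approximating by equivariant smoothing operators are independent of $a$; otherwise the concluding limiting argument --- in which $\|\sigma(A_n)\|_{\sup} \to 0$ while the proper supports of the $A_n$ are a priori uncontrolled --- collapses. Everything else is soft: exactness at $\CC^*$ is a tautology about closures, surjectivity is the closed-range property of $*$-homomorphisms of $C^*$-algebras, and the inclusion $\mathcal{J} \subseteq \ker\bar\sigma$ is continuity of $\bar\sigma$.
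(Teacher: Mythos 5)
Your proof is correct, and it reorganizes the argument around a genuinely different central device than the paper's. Where the paper fixes the given operator $T\in\ker\bar\sigma$, writes $T = \lim A_n$, and decomposes $T = \bigl(T - \mathsf{Av}(\varphi T\varphi)\bigr) + \mathsf{Av}(\varphi T\varphi)$ --- the first piece lies in $\CC^*_{G,K}$ because each $A_n - \mathsf{Av}(\varphi A_n\varphi)$ has vanishing symbol, hence lies in $\Psi^{-1}_{G,K}\subseteq \CC^*_{G,K}$ by Proposition~\ref{prop-order-minus-one-is-compact}, with the limit passing through by Lemma~\ref{lem-continuity-of-averaging}; the second piece lies in $\CC^*_{G,K}$ because $\varphi T\varphi$ has vanishing symbol, so is compact by Seeley's local theorem, and compactly supported compacts average into $\CC^*_{G,K}$ --- you instead construct a uniform linear quantization $Q$, a right inverse of the symbol map at the smooth level with $\mathrm{dist}_{L^2}\bigl(Q(a),\CC^*_{G,K}\bigr)\le C\|a\|_{\sup}$, and conclude by a standard approximate-lifting argument. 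Both proofs rest on the same three analytic inputs: the $G$-averaging construction, its operator-norm continuity (Lemma~\ref{lem-continuity-of-averaging}), and Seeley's compactly supported exact sequence. They deploy Seeley differently, though. The paper applies it once, to the single compressed operator $\varphi T\varphi$, so it never quantifies over symbols; you apply it inside the construction of $Q$, which is exactly what forces the uniformity you correctly flag as the main obstacle: confining all Schwartz kernels (of the quantizations, the correcting compacts, and the smoothing approximants) to one fixed compact set, so that a single averaging constant $C_1$ works for every symbol. Your route is a little more elaborate but also more modular, since $Q$ is a reusable tool; the paper's is more economical because it only ever averages the one operator $T$. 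The reduction to fixed supports you describe does carry the argument through, so this is a valid alternative proof.
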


To prove the proposition we shall use the following two lemmas.  The first may be proved using the method of the proof of Proposition~\ref{prop-l-2-versus-s-star-boundedness}; the second is straightforward.  Both proofs will be omitted.

\begin{lemma}
\label{lem-continuity-of-averaging}
    Let $\varphi$   be a continuous, compactly supported function  on $K \backslash G$.  If $V_1$ and $V_2$ are any finite-dimensional unitary representations of $K$, and if 
    \[
    T \colon [L^2 ( G)\otimes V_1]^K 
    \longrightarrow 
    [L^2 ( G)\otimes V_2]^K 
    \]
    is any bounded operator, then the averaging formula 
    \[
    \mathsf{Av}(\varphi T \varphi )s = \int _G g( \varphi T \varphi)s \, dg
    \]
    defines a bounded operator 
     \[
       \mathsf{Av}(\varphi T \varphi )\colon [L^2 ( G )\otimes V_1]^K
    \longrightarrow 
   [L^2 ( G )\otimes V_2]^K .
    \]
   Moreover the linear map $T \mapsto \mathsf{Av}(\varphi T \varphi )$ \textup{(}with $\varphi$  fixed\textup{)} is operator norm-continuous. \qed
\end{lemma}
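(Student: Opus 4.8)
The plan is to run essentially the argument used in the proof of Proposition~\ref{prop-l-2-versus-s-star-boundedness}; the point is that although $T$ is now merely bounded rather than pseudodifferential, cutting it down by the compactly supported function $\varphi$ on each side makes smoothness irrelevant. First I would fix a compact set $C\subseteq K\backslash G$ containing $\Supp(\varphi)$ and put $S=\{\,g\in G:Kg\in C\,\}$, which is compact---hence of finite Haar measure---because $K$ is compact. Multiplication by the pullback of $\varphi$ commutes with the left $K$-action, so it defines bounded operators $M_\varphi$ of norm $\le\|\varphi\|_\infty$ on each $[L^2(G)\otimes V_i]^K$; thus $\varphi T\varphi:=M_\varphi T M_\varphi$ is bounded with $\|\varphi T\varphi\|\le\|\varphi\|_\infty^2\|T\|$, its output is supported in $C$, and it sees only the restriction of its input to $C$. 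Conjugating by the unitary right-translation operators $R_g$---which also preserve the $K$-fixed subspaces---gives $g(\varphi T\varphi):=R_g(\varphi T\varphi)R_g^{-1}$, an operator of the same norm whose output is supported in $Cg$ and which depends only on the restriction of its input to $Cg$.

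Next I would give the integral a meaning. On the dense subspace of compactly supported elements $s$ of $[L^2(G)\otimes V_1]^K$, strong continuity of $g\mapsto R_g$ makes $g\mapsto g(\varphi T\varphi)s$ a continuous $L^2$-valued function, while the support properties above, together with compactness of $\Supp(s)$, show that it vanishes for $g$ outside a compact subset of $G$; hence it is Bochner integrable, and $\mathsf{Av}(\varphi T\varphi)s=\int_G g(\varphi T\varphi)s\,dg$ is a well-defined element of the closed subspace $[L^2(G)\otimes V_2]^K$.

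The substance of the proof is the norm estimate, carried out exactly as in Proposition~\ref{prop-l-2-versus-s-star-boundedness}. For $x\in K\backslash G$ the integrand $(g(\varphi T\varphi)s)(x)$ vanishes for a.e.\ $x$ unless $x\in Cg$, i.e.\ unless $g$ ranges over a translate of $S^{-1}$, a set of finite Haar measure depending only on $\varphi$. Cauchy--Schwarz in the $g$-variable followed by Tonelli on the resulting nonnegative integrand gives
\[
\|\mathsf{Av}(\varphi T\varphi)s\|_{L^2}^2\;\le\;c_\varphi\int_G\|g(\varphi T\varphi)s\|_{L^2}^2\,dg ,
\]
with $c_\varphi$ depending only on $\varphi$. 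Since $\|g(\varphi T\varphi)s\|_{L^2}=\|(\varphi T\varphi)R_g^{-1}s\|_{L^2}\le\|\varphi\|_\infty^2\|T\|\cdot\|s\|_{L^2(Cg)}$---using that $(\varphi T\varphi)R_g^{-1}s$ depends only on $R_g^{-1}s$ restricted to $C$, and that the measure on $K\backslash G$ is $G$-invariant---a further application of Fubini turns the right-hand side into $C_\varphi\|T\|^2\|s\|_{L^2}^2$ with $C_\varphi$ again depending only on $\varphi$. By density of the compactly supported sections, $\mathsf{Av}(\varphi T\varphi)$ extends uniquely to a bounded operator $[L^2(G)\otimes V_1]^K\to[L^2(G)\otimes V_2]^K$ with $\|\mathsf{Av}(\varphi T\varphi)\|\le\sqrt{C_\varphi}\,\|T\|$.

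Finally, this bound is linear in $T$ with a constant depending only on $\varphi$, and the assignment $T\mapsto\mathsf{Av}(\varphi T\varphi)$ is manifestly linear, so it is operator-norm bounded, hence operator-norm continuous---which is the last assertion. I do not expect a serious obstacle: the only care required is the justification of the two interchanges of integration (both legitimate---Tonelli applied to nonnegative integrands, after proper support has confined the $g$-variable to a set of finite measure) and of the change of variables $x\mapsto xg$ on $K\backslash G$, all of which goes through cleanly because $G$ is unimodular and $K\backslash G$ carries a $G$-invariant smooth measure, so that, as in Proposition~\ref{prop-l-2-versus-s-star-boundedness}, no modular functions intervene.
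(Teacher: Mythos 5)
Your proposal is correct and is precisely the argument the paper has in mind: the paper omits the proof and says it may be obtained by the method of Proposition~\ref{prop-l-2-versus-s-star-boundedness}, and you have carried that method out faithfully, with the uniform bound $\|g(\varphi T\varphi)\|\le\|\varphi\|_\infty^2\|T\|$ and the compact set $S=\{g:Kg\in\Supp\varphi\}$ playing the roles of $\|A_e\|$ and the corresponding $S$ there. The observation that the resulting bound is linear in $\|T\|$ gives the stated norm-continuity immediately, and the Bochner-integral justification on the dense compactly supported subspace is a clean way to make the averaging formula rigorous.
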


\begin{lemma}
\label{lem-averaged-operator-has-same-principal-symbol}
  If $V_1$ and $V_2$ are any finite-dimensional unitary representations of $K$, and if 
    \[
    A \colon [C_c^\infty  (G)\otimes   V_1]^K 
    \longrightarrow 
  [C_c^\infty  (G)\otimes   V_2]^K  
    \]
    is any equivariant, properly supported, order zero classical pseudodifferential  operator, then  
    $ \mathsf{Av}(\varphi T \varphi )$ is also an equivariant, properly supported, order zero classical pseudodifferential  operator. Moreover if $\mathsf{Av}( \varphi^2)=\operatorname{id}$, then   $ \mathsf{Av}(\varphi T \varphi )$ has  the same principal symbol as $A$. \qed
\end{lemma}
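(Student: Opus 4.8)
The strategy is to use the equivariance of $A$ to rewrite the averaging operation as an ordinary, compactly supported integral of \emph{localized} classical pseudodifferential operators, and then to appeal to the pseudodifferential calculus with a parameter. First I would unwind the definition. Write $\rho(g)$ for the right‑translation action of $g\in G$ on sections, and $\varphi_g$ for the compactly supported smooth function $x\mapsto \varphi(xg^{-1})$ on $M = K\backslash G$. Since $A$ is equivariant, $\rho(g)A\rho(g)^{-1}=A$, so that $g(\varphi A\varphi) = \rho(g)\,\varphi A\varphi\,\rho(g)^{-1} = \varphi_g\,A\,\varphi_g$ and therefore
\[
\mathsf{Av}(\varphi A\varphi) = \int_G \varphi_g\, A\, \varphi_g\, dg .
\]
Because the $G$‑action on $M$ is proper and $\varphi$ has compact support, for every compact $C\subseteq M$ the set $\{\, g\in G : (\Supp\varphi)g\cap C\neq\emptyset\,\}$ is relatively compact; hence as soon as the integrand is multiplied on either side by a compactly supported function (or applied to a compactly supported section), only $g$ in a compact subset $S\subseteq G$ contributes, and every integral appearing below is an integral over a compact set of a smooth family of sections or operators. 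Two of the three assertions are then immediate: equivariance of $\mathsf{Av}(\varphi A\varphi)$ follows from left‑invariance of Haar measure (substitute $g\mapsto hg$), and proper support follows because $\mathsf{Av}(\varphi A\varphi)s$ is supported in the relatively compact set $\bigcup_{g\in S}(\Supp\varphi)g$ whenever $s$ is supported in a fixed compact set, with a symmetric statement on the other side.

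It remains to see that $\mathsf{Av}(\varphi A\varphi)$ is a classical pseudodifferential operator of order $0$ and to compute its principal symbol; this is a local question. Fix $x_0\in M$, a chart $U$ about $x_0$, and $\psi\in C_c^\infty(U)$ with $\psi\equiv 1$ near $x_0$. Then
\[
\psi\, \mathsf{Av}(\varphi A\varphi)\, \psi \;=\; \int_S (\psi\varphi_g)\, A\, (\psi\varphi_g)\, dg ,
\]
with $S\subseteq G$ compact, and each $(\psi\varphi_g)A(\psi\varphi_g)$ is a classical order‑$0$ pseudodifferential operator supported in the one fixed compact set $\Supp\psi\times\Supp\psi\subseteq U\times U$. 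In the coordinates of $U$ let $a_g(x,\xi)$ be the complete symbol of $(\psi\varphi_g)A(\psi\varphi_g)$. Since $A$ is fixed and $g\mapsto\varphi_g$ is smooth with all $x$‑ and $g$‑derivatives bounded uniformly for $x$ in a compact set and $g\in S$, the composition formula shows that $\{a_g\}_{g\in S}$ is a smooth family in the classical symbol class $S^0(\R^n\times\R^n)$ satisfying the symbol estimates uniformly in $g\in S$. Consequently $a(x,\xi):=\int_S a_g(x,\xi)\,dg$ is again a classical symbol of order $0$, and, writing $\operatorname{Op}(b)$ for the pseudodifferential operator with complete symbol $b$, one has $\int_S \operatorname{Op}(a_g)\,dg = \operatorname{Op}(a)$ — the $dg$‑integral over the compact set $S$ commutes with the oscillatory integral \eqref{eq-fmla-for-psdo} by the uniformity of the estimates. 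Hence $\psi\,\mathsf{Av}(\varphi A\varphi)\,\psi = \operatorname{Op}(a)$ is a classical order‑$0$ pseudodifferential operator, and since $x_0$ was arbitrary, so is $\mathsf{Av}(\varphi A\varphi)$.

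For the principal symbol, recall that multiplying a pseudodifferential operator by a smooth scalar function multiplies its principal symbol by that function; so the principal symbol of $(\psi\varphi_g)A(\psi\varphi_g)$ is $\psi(x)^2\varphi_g(x)^2\,a_0(x,\xi)$, where $a_0$ is the principal symbol of $A$. Integrating over $S$ (outside which $\varphi_g(x)^2$ already vanishes for $x$ near $x_0$), the principal symbol of $\mathsf{Av}(\varphi A\varphi)$ near $x_0$ is
\[
\psi(x)^2\Bigl( \int_G \varphi(xg^{-1})^2\, dg \Bigr) a_0(x,\xi) \;=\; \psi(x)^2\cdot \mathsf{Av}(\varphi^2)(x)\cdot a_0(x,\xi) .
\]
When $\mathsf{Av}(\varphi^2)=\operatorname{id}$ this equals $\psi(x)^2 a_0(x,\xi)$, which coincides with $a_0(x,\xi)$ near $x_0$, where $\psi\equiv 1$; as $x_0$ was arbitrary, $\mathsf{Av}(\varphi A\varphi)$ and $A$ have the same principal symbol.

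The only step that is more than formal manipulation of the averaging integral is the symbol bookkeeping in the second paragraph: that $g\mapsto a_g$ is a smooth family obeying uniform (classical) symbol estimates over the compact parameter set $S$, so that the $dg$‑integral stays inside the classical symbol class and commutes with quantization. This is a routine instance of the pseudodifferential calculus with parameters, and — just as for Lemma~\ref{lem-continuity-of-averaging}, which is handled by the method of the proof of Proposition~\ref{prop-l-2-versus-s-star-boundedness} — everything else reduces to the bookkeeping of supports and to left‑invariance of Haar measure.
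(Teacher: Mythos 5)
Your proof is correct. The paper explicitly omits a proof of this lemma (it is declared ``straightforward'' just before it is stated), so there is no source argument to compare against. Your approach --- using equivariance of $A$ to rewrite $\mathsf{Av}(\varphi A\varphi)$ as a compactly supported integral $\int_S \varphi_g A \varphi_g\,dg$, localizing with a chart cutoff $\psi$, and then appealing to the parameter-dependent classical calculus over the compact set $S$ so that the $dg$-integral stays inside the classical symbol class and commutes with quantization --- is the natural way to supply the omitted details, and the principal-symbol computation via the factor $\int_G\varphi(xg^{-1})^2\,dg = \mathsf{Av}(\varphi^2)(x)$ is exactly right. The one point worth flagging is that the substitution in the equivariance step relies on the compatibility of the Haar measure with the direction of translation; since $G$ is reductive, hence unimodular, this causes no trouble here.
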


\begin{proof}[Proof of Proposition~\ref{prop-order-zero-psdo-extension}]
If we set $V = V_1\oplus V_2$, then we can regard the space $\PSDO^*_{G,K}(V_1,V_2)$ as a subspace of $\PSDO^*(V,V)$, as in the proof of Proposition~\ref{prop-order-minus-one-is-compact}. 
We can do the same for the $\CC^*$- and $\Symb^*$-spaces, with the result that it suffices to prove exactness of the sequence 
\[
    0 \longrightarrow \CC^*_{G,K}(V,V) \longrightarrow \PSDO^*_{G,K}(V,V) \longrightarrow \Symb_{G,K}^* (V,V)  \longrightarrow 0 ,
\]
which is a sequence of $C^*$-algebras and $*$-homomorphisms.

The morphism from $\CC^*_{G,K}(V,V)$ to $\PSDO^*_{G,K}(V,V)$ is simply an inclusion, so it is certainly injective, and the morphism from $\PSDO^*_{G,K}(V,V)$  to $\Symb_{G,K}^*(V,V)$ is certainly surjective, since it has dense range, and the images of $C^*$-algebra morphisms are always closed. The continuity of the symbol mapping implies that the composition
\[
\CC^*_{G,K}(V,V) \longrightarrow \PSDO^*_{G,K}(V,V) \longrightarrow \Symb_{G,K}^* (V,V) 
\]
is zero.  So it only remains to show that this part of the sequence is exact. 

Suppose then that $T\in \PSDO^*_{G,K}(V,V)$ and that the principal symbol of $T$ is zero.  Write $T$ as an $L^2$-operator norm limit of pseudodifferential operators $A_n\in \Psi^0_{G,K}(V,V)$.  Then choose a smooth compactly supported function $\varphi$ on $K \backslash G$ such that $\mathsf{Av}(\varphi^2) = \operatorname{id}$,  and consider the operators 
\[
\mathsf{Av}(\varphi A_n \varphi) \colon [L^2  (G)\otimes   V]^K  \longrightarrow  [L^2  (G)\otimes   V]^K  .
\]
It follows from Lemma~\ref{lem-averaged-operator-has-same-principal-symbol} that the operators $\mathsf{Av}(\varphi A_n \varphi)$ are elements of $\Psi^0_{G,K}(V,V)$, and furthermore that 
\[
\mathsf{Av}(\varphi A_n\varphi) - A_n \in \Psi^{-1}_{G,K}(V,V),
\]
since the  principal symbol of the difference is zero.  It therefore follows from  Proposition~\ref{prop-order-minus-one-is-compact} that 
\begin{equation}
\label{eq-a-n-minus-average}
\mathsf{Av}(\varphi A_n\varphi) - A_n  \in \CC^*_{G,K}(V,V) 
\end{equation}
for all $n$.  We therefore find  that 
\begin{equation}
\label{eq-t-minus-average}
\mathsf{Av}(\varphi T\varphi) - T  \in \CC^*_{G,K}(V,V) ,
\end{equation}
since by Lemma~\ref{lem-continuity-of-averaging} the operator in  \eqref{eq-t-minus-average} is the norm limit of the  operators in \eqref{eq-a-n-minus-average}.  So to complete the proof if suffices to show that 
\begin{equation}
\label{eq-averaged-operator-in-c-star}
\mathsf{Av}(\varphi T \varphi)\in \CC^*_{G,K}(V,V).
\end{equation}
For this we may invoke  known facts about compactly supported pseudodifferential operators, as follows.

The compactly supported operator $\varphi T\varphi$ lies in the operator norm-closure  of the algebra of compactly supported, order zero classical pseudodifferential operators, and has vanishing principal symbol.  It is therefore a norm-limit of compactly supported smoothing operators $B_n$, by Theorem~\ref{thm-negative-order-operators-compact}.   Choosing a smooth, compactly supported function $\psi$ with $\psi \varphi = \varphi$, we now have 
\[
\lim_{n\to \infty}\bigl \| \mathsf{Av}( \varphi T \varphi) - \mathsf{Av}( \psi B_n \psi) \bigr \| 
= \lim_{n\to \infty} \bigl \| \mathsf{Av}( \psi \varphi T \varphi\psi ) - \mathsf{Av}( \psi B_n \psi) \bigr \| 
 =  0,
\]
where the second equality is a consequence of Lemma~\ref{lem-continuity-of-averaging}. This proves \eqref{eq-averaged-operator-in-c-star} and therefore the proposition.
\end{proof}

\section{C*-Categories}
\label{subsec-category-of-psdos}

Recall that a \emph{Banach category} is a category\footnote{In the context of Banach categories and $C^*$-categories, it is generally convenient \emph{not} to require that categories have identity morphisms, and we shall not do so here.}  for which each morphism space $\Hom(V,W)$ is equipped with the structure of a Banach space in such a way that the composition law is bilinear, with $\|S   T\|\le \| S\|\| T\|$, and recall that a \emph{$C^*$-category} is a Banach category that is equipped with an isometric and conjugate-linear involution operation on morphism spaces,
\[
\ast: \Hom(V,W) \longrightarrow \Hom(W,V),
\]
 such that
\[
(S   T)^{*} = T^{*}  S^{*}\quad \text{and} \quad  (T^{*})^{*} = T,
\]
for all composable morphisms $S$ and $T$, and in addition 
\[
T^*T\ge 0  \quad \text{and} \quad \| T^*T \| = \| T\|^2 ,
\]
for all $T$.  The meaning of the inequality above is that  the spectrum of the element $T^*T$  in the Banach algebra $\Hom (V,V)$ is a subset of $[0,\infty)$.  

The reader who is unfamiliar with $C^*$-categories may wish to consult \cite{MitchenerCategories02}, which offers an elementary introduction to the topic.  

The same reader may also wish to keep in mind the following construction. If $A$ is a $C^*$-algebra, and if $\{\, p_\sigma :  \sigma \in \Sigma\,\}$ is a family of pairwise orthogonal projections in the multiplier algebra of $A$ such that  $\sum _{\sigma \in \Sigma} p_\sigma = 1$, with convergence in the strict topology \cite[3.12.17]{PedersenCstarBook}, then there is an associated $C^*$-category $\mathsf{C}_A$ with
\begin{enumerate}[\rm (i)]

\item   objects  $\sigma \in \Sigma$, and 

\item  morphism spaces   $\mathsf{C}_A(\sigma_1,\sigma_2) =p_{\sigma_2}Ap_{\sigma_1}$.

\end{enumerate}
This process can be reversed, to assemble a $C^*$-algebra from a $C^*$-category, or at least from one that is  small enough;  see \cite[Sec.\,3]{Joachim03} for details.  Most of the $C^*$-categorical concepts to be discussed below are adapted from $C^*$-algebra concepts using this construction, and are best understood that way.  In particular, the $K$-theory groups of $A$ and $\mathsf{C}_A$, to be discussed in Section~\ref{subsec-k-theory-for-smoothing-category}, are canonically isomorphic \cite[Cor.\,3.5]{Joachim03}.

\subsection{C*-categories from pseudodifferential operators}
\label{sec-c-star-categories-from-psdo}
Here are the $C^*$-categories  that we wish to study.

\begin{definition} 
Let $G$ be a  Lie group, and let $K$ be a compact subgroup of $G$. We define $C^*$-categories $\PSDO^{*}_{G,K}$, $\CC^*_{G,K}$ and $\Symb^*_{G,K}$ as follows.
 \begin{enumerate}[\rm (i)]

\item For each of $\PSDO^{*}_{G,K}$, $\CC^*_{G,K}$ and $\Symb^*_{G,K}$, the objects are the finite-dimensional unitary representations of $K$.

 \item For  $\PSDO^{*}_{G,K}$, the space of morphisms from $V_1$ to $V_2$  is $\PSDO^*_{G,K}(V_1,V_2)$; for  $\CC^*_{G,K}$ it   is $\CC^*_{G,K}(V_1,V_2)$; and for  $\Symb^*_{G,K}$ it   is $\Symb^*_{G,K}(V_1,V_2)$.  
 
 \end{enumerate}
For each $C^*$-category, composition  and adjoint are the evident operations on operators or symbols. 
\end{definition}

The $C^*$-category $\CC^{*}_{G,K}$ is not only a subcategory of $\PSDO^*_{G,K}$ but also an \emph{ideal}, in the sense that the composition of any composable pair of morphisms in $\PSDO^*_{G,K}$ that includes at least one morphism from $\CC^{*}_{G,K}$ is a morphism in $\CC^{*}_{G,K}$; see \cite[Def.\,4.2]{MitchenerCategories02}.  In fact according to Proposition~\ref{prop-order-zero-psdo-extension}, the category $\CC^*_{G,K}$  is the \emph{kernel}  of the \emph{principal symbol functor} from $\PSDO^*_{G,K}$ to $\Symb^*_{G,K}$ (by kernel we mean the subcategory of $\PSDO^*_{G,K}$ comprised of all objects of $\PSDO^*_{G,K}$ and all morphisms that are mapped to $0$ by the symbol functor). Moreover $\Symb^*_{G,K}$ is the \emph{quotient $C^*$-category} $\PSDO^*_{G,K} / \CC^{*}_{G,K}$  in the sense of \cite[Def.\,4.4]{MitchenerCategories02}. We shall express this by saying that there is  an \emph{extension of $C^*$-categories}
\[
0 \longrightarrow \CC^{*}_{G,K} \longrightarrow \PSDO^{*}_{G,K} \longrightarrow \Symb^{*}_{G,K} \longrightarrow 0.
\]

\subsection{K-Theory for the   C*-category generated by smoothing operators} 
\label{subsec-k-theory-for-smoothing-category}
We turn now to $K$-theory. The $C^*$-categories that we defined in the previous section are all additive, and for an additive $C^*$-category $\mathsf A$ with unit morphisms, probably the simplest way to define the $K_0$-group is as the Grothendieck group of equivalence classes of idempotent morphisms in $\mathsf A$.\footnote{Equivalently,   $K_0(\mathsf{A})$ is the Grothendieck group of isomorphism classes of objects in the \emph{idempotent completion}, or \emph{Karoubi envelope}, of $\mathsf A$, which is constructed in \cite[Thm.\,I.6.10]{KaroubiKTheory}.}  If $\mathsf A$ does not have unit morphisms, one may embed $\mathsf A$ as an ideal in a larger $C^*$-category $\mathsf B$ with unit morphisms, and then define $K_0(\mathsf {A})$ as the relative $K_0$ group for the function $\mathsf{B}\to \mathsf{B}/\mathsf{A}$; compare \cite[Sec.\,II.2]{KaroubiKTheory}. Then one may define $K_1$ using a suspension construction, in which a morphism in the suspended category is a path of morphisms in the original category that begins and ends at the zero morphism. See \cite[Sec.\,II.3]{KaroubiKTheory}.

All this is exactly analogous to how one may define $K$-theory for $C^*$-algebras.  For other approaches, which are analogous to various other approaches to $C^*$-algebra $K$-theory, see \cite{MitchenerKTheory01,Mitchener02,Joachim03}.

In this section, we shall prove that the $K$-theory groups of the $C^*$-category $\CC^*_{G,K}$ coincide with those  of $C^{*}_r(G)$.

\begin{definition}
\label{def-p-s}
    If $S \subseteq \widehat{K}$ is a finite subset, and if $H$ is any Hilbert space that is equipped with a unitary representation of $K$, then we shall denote by $P_S\colon H\to H$ the 
 operator of  convolution over $K$ with the function
\[
k\longmapsto  \sum_{\pi \in S} \frac{\dim(\pi)}{\mu(K)}  \chi_{\pi}(k^{-1}), 
\]
where $\chi_\pi$ is the character of $\pi$, so that $P_S$ is the orthogonal projection onto the direct sum of the $\pi$-isotypical components of $H$, for $\pi \in S$.  In addition, we shall  denote by  
\[
C^*_r(G;S)\subseteq \mathfrak{B}(L^2 (G))
\]
the compression $P_SC^*_r(G)P_S$, where we let $K$ act on $L^2(G)$ by restriction of the left-regular representation of $G$. This is a $C^*$-subalgebra of $C^*_r(G)$. 
\end{definition}

\begin{lemma}
\label{lem-c-star-algebra-k-theory-as-direct-limit}
    The inclusions of all $C^*_r (G;S)$ into $C^*_r (G)$ induce an isomorphism 
\[
\varinjlim_{S} K_{*}(C^*_r(G;S)) \stackrel \cong \longrightarrow K_{*}(C^*_r(G))
\]
in $K$-theory.
\end{lemma}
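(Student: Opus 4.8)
The plan is to realize $C^*_r(G)$ as a direct limit (more precisely, as the closure of an increasing union) of the subalgebras $C^*_r(G;S)$, and to invoke the continuity of $K$-theory for $C^*$-algebras under direct limits. The point is that the $P_S$ form an increasing net of projections in the multiplier algebra of $C^*_r(G)$ that converges strictly to the identity, so the compressions $P_S C^*_r(G) P_S$ exhaust $C^*_r(G)$ in a suitable sense.

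First I would set up the directed system. Order the finite subsets $S \subseteq \widehat K$ by inclusion; if $S_1 \subseteq S_2$ then $P_{S_1} = P_{S_1} P_{S_2} = P_{S_2} P_{S_1}$, so $C^*_r(G;S_1) = P_{S_1} C^*_r(G) P_{S_1} \subseteq P_{S_2} C^*_r(G) P_{S_2} = C^*_r(G;S_2)$, and we get a genuine directed system of $C^*$-algebras with inclusions as connecting maps. Hence $A_\infty := \varinjlim_S C^*_r(G;S)$ makes sense as the $C^*$-completion of the algebraic union $\bigcup_S C^*_r(G;S)$, and it maps into $C^*_r(G)$. The substantive claim is that this map $A_\infty \to C^*_r(G)$ is an isomorphism of $C^*$-algebras; once that is established, continuity of $K$-theory under direct limits (e.g.\ \cite[Thm.~6.3.2]{Blackadar} or the analogous statement in any standard reference) gives $\varinjlim_S K_*(C^*_r(G;S)) \cong K_*(A_\infty) \cong K_*(C^*_r(G))$, with the isomorphism induced precisely by the inclusions, which is what is asserted.

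The key step, then, is to show $\bigcup_S C^*_r(G;S)$ is dense in $C^*_r(G)$. This is where the representation theory of the compact group $K$ enters: since $L^2(G)$ carries the (restricted) left-regular representation of $K$, the projections $P_S$ increase to $1$ in the strong operator topology as $S \uparrow \widehat K$, by the Peter--Weyl theorem. For a fixed $f \in C_c^\infty(G)$, or even for $f$ in a dense subalgebra on which $K$ acts with the left-translation action in a "smooth" way, one has $P_S f P_S \to f$ in operator norm: indeed left-translation by $K$ is norm-continuous on $C_c(G) \subseteq C^*_r(G)$, so $P_S f \to f$ and $f P_S \to f$ in norm (averaging a norm-continuous $K$-orbit against the $P_S$ kernels), and then $P_S f P_S - f = P_S f P_S - P_S f + P_S f - f \to 0$. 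Since such $f$ are dense in $C^*_r(G)$ and each $P_S f P_S \in C^*_r(G;S)$, density follows. I would also note injectivity of $A_\infty \to C^*_r(G)$ is automatic for a directed union of $C^*$-subalgebras.

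The main obstacle I anticipate is the norm-continuity statement "$P_S f \to f$ in norm for $f$ in a suitable dense set" — strong convergence $P_S \to 1$ alone is not enough to force $P_S f P_S \to f$ in norm, so one genuinely needs to exploit that $K$ acts by (left) translation and that this action is \emph{norm}-continuous on $C_c(G) \subseteq C^*_r(G)$ with compactly-supported orbits. Concretely, $P_S f = \int_K (\text{kernel}_S)(k)\, (k\cdot f)\, dk$ is an average of the norm-continuous orbit $k \mapsto k\cdot f$ against an approximate identity on $K$ built from characters, and one checks this converges in norm to $f$ by a standard estimate; the subtlety is only in phrasing it cleanly. Everything else — the directed-system bookkeeping, strict convergence of the $P_S$, and the appeal to continuity of $K$-theory — is routine.
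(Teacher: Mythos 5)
Your overall strategy---realize $C^*_r(G)$ as the norm-closure of the increasing union $\bigcup_S C^*_r(G;S)$ and then invoke continuity of $C^*$-algebra $K$-theory under inductive limits---is exactly the paper's argument (the paper's proof is a two-line version of yours, citing the density and then R{\o}rdam et al.\ or Karoubi for continuity of $K$-theory). The bookkeeping with the directed system, and the reduction to showing $\bigcup_S C^*_r(G;S)$ is dense, are all as in the paper and are correct.

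There is, however, a small but genuine flaw in the way you justify the density. You argue that since the orbit map $k \mapsto \lambda_k f$ is \emph{norm-continuous} and $P_S$ is averaging this orbit against ``an approximate identity on $K$ built from characters,'' the limit $P_S f \to f$ follows. But the functions $e_S(k) = \sum_{\pi \in S} \frac{\dim\pi}{\mu(K)}\chi_\pi(k^{-1})$ are Dirichlet-type kernels, not F\'ej\'er-type: they are not positive, they do not concentrate at the identity, and their $L^1(K)$-norms are unbounded as $S$ grows (already on $K=\mathbb{T}$ they grow like $\log |S|$). So norm-continuity of the orbit map alone does \emph{not} give $e_S * f \to f$ --- for the circle this would falsely claim uniform convergence of the Fourier series of every continuous function. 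What saves the argument is that for $f \in C_c^\infty(G)$ the orbit map $k \mapsto \lambda_k f$ (and similarly the two-sided $(k_1,k_2)\mapsto \lambda_{k_1}f\rho_{k_2}$) is \emph{smooth} as a Banach-space-valued function on the compact Lie group, and Fourier series of smooth vector-valued functions on a compact Lie group converge uniformly (e.g.\ by Sobolev-type estimates: the Fourier coefficients decay faster than any power of the Casimir eigenvalue). Your sentence ``left-translation by $K$ is norm-continuous on $C_c(G)$, so $P_S f \to f$'' is therefore not a valid deduction as written; the correct statement is that one takes $f$ in the dense subalgebra $C_c^\infty(G)$ and uses the smoothness, not merely the norm-continuity, of the $K$-orbit. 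With that correction the argument is complete, and it supplies exactly the detail the paper's proof leaves tacit.
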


\begin{proof}
The union of $C^*$-subalgebras $\bigcup_{S} C^*_{r}(G;S)$ is a dense subalgebra of $C^*_r(G)$, so the conclusion follows from continuity of $K$-theory,   as in for example 
\cite[Thm.\ 6.3.2]{RordamEtAl} or \cite[Ex.\ II.6.15]{KaroubiKTheory}.
\end{proof}

A similar result holds for the $C^*$-category $\CC^*_{G,K}$. 

\begin{definition}
   If $S \subseteq \widehat{K}$ is any finite subset, then denote by $\CC^*_{G,K}(S)$ the full additive $C^*$-subcategory of $\CC^*_{G,K}$ on those objects $V$ whose $K$-isotypical decompositions only include representations from $S$.  
\end{definition}

\begin{lemma}
\label{lem-c-star-category-k-theory-as-direct-limit}
    The inclusions of the subcategories $\CC^*_{G,K}(S)$ into $\CC^*_{G,K}$ induce an isomorphism
    \[
    \varinjlim K_{*}(\CC^*_{G,K}(S)) \stackrel \cong \longrightarrow K_{*}(\CC^*_{G,K})
    \]
    in $K$-theory.
\end{lemma}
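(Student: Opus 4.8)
The plan is to mimic the proof of Lemma~\ref{lem-c-star-algebra-k-theory-as-direct-limit}: exhibit $\CC^*_{G,K}$ as a suitable ``limit'' of the subcategories $\CC^*_{G,K}(S)$ and then invoke continuity of $K$-theory. The subtlety compared to the $C^*$-algebra case is purely organizational: $K$-theory of a $C^*$-category is insensitive to passing to a (unitarily) equivalent category, and the natural thing to compare against is not a union of subalgebras but a union of full subcategories. So the first step is to recall (or observe) that $\CC^*_{G,K}$ is equivalent, as a $C^*$-category, to its ``skeleton-like'' full subcategory whose objects are the finite direct sums of irreducibles $\pi\in\widehat K$; more precisely, every object $V$ of $\CC^*_{G,K}$ lies in $\CC^*_{G,K}(S)$ for some finite $S$ (namely $S=\operatorname{Supp}(V)\subseteq\widehat K$), and the subcategories $\CC^*_{G,K}(S)$ form a directed system under inclusion, directed by inclusion of finite subsets of $\widehat K$, whose union (in the evident sense: same objects appearing in larger and larger $S$, with all morphism spaces already present) is all of $\CC^*_{G,K}$.

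The second step is to make precise the sense in which $\CC^*_{G,K} = \bigcup_S \CC^*_{G,K}(S)$ is a ``direct limit of $C^*$-categories'' and to cite the appropriate continuity statement. Here I would use one of the models of $C^*$-category $K$-theory referenced in the paper (\cite{MitchenerKTheory01,Mitchener02,Joachim03}), each of which is continuous with respect to directed colimits, exactly as operator $K$-theory is continuous for increasing unions of $C^*$-algebras. Concretely: every object of $\CC^*_{G,K}$, every morphism of $\CC^*_{G,K}$, and every relation (every idempotent, every path of idempotents) involves only finitely many isotypical components, hence lies in some $\CC^*_{G,K}(S)$; and any two such data can be accommodated in a common $\CC^*_{G,K}(S')$ with $S'\supseteq S_1\cup S_2$. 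This shows $K_*(\CC^*_{G,K})$ is generated by the images of the $K_*(\CC^*_{G,K}(S))$ and that any relation among generators already holds at some finite stage, which is exactly the statement that the canonical map $\varinjlim_S K_*(\CC^*_{G,K}(S))\to K_*(\CC^*_{G,K})$ is an isomorphism.

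Alternatively, and perhaps more cleanly for a $C^*$-categorical audience, one can reduce to the $C^*$-algebra statement already proved: fix, for each finite $S$, the object $V_S=\bigoplus_{\pi\in S}\pi$ (one copy of each), let $B_S=\CC^*_{G,K}(V_S,V_S)=[\,\compop(L^2(G)\otimes V_S)\,]^{?}$ — in any case a genuine $C^*$-algebra — and note that the inclusion of the one-object full subcategory on $V_S$ into $\CC^*_{G,K}(S)$ is an equivalence of $C^*$-categories (every object of $\CC^*_{G,K}(S)$ is a subobject, via an idempotent, of a finite direct sum of copies of $V_S$), hence induces an isomorphism on $K$-theory; so $K_*(\CC^*_{G,K}(S))\cong K_*(B_S)$, and the directed system $\{\CC^*_{G,K}(S)\}$ has the same $K$-theory as the directed system $\{B_S\}$ of $C^*$-algebras, whose colimit is a dense subalgebra of $\CC^*_{G,K}(V_{\widehat K},V_{\widehat K})$-type object — and again continuity of operator $K$-theory finishes it, just as in Lemma~\ref{lem-c-star-algebra-k-theory-as-direct-limit}.

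The main obstacle I anticipate is not analytic but foundational: pinning down the precise bookkeeping that ``$\CC^*_{G,K}$ is the directed union of the $\CC^*_{G,K}(S)$'' and that $K$-theory of $C^*$-categories is continuous for such unions — i.e.\ checking that whichever model of $C^*$-categorical $K$-theory one adopts genuinely commutes with filtered colimits, and that passing from a full subcategory to an equivalent smaller one (or to a one-object $C^*$-algebra via idempotent completion, as in \cite[Thm.\ I.6.10]{KaroubiKTheory}) does not change $K$-theory. Both facts are standard and are implicit in the references cited, so the write-up can be short; the temptation to over-formalize the colimit of categories is the only real trap.
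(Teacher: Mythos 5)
Your proposal is correct and follows essentially the same route as the paper, whose proof is simply the one sentence ``Clear because $\CC^*_{G,K}$ is the union of all the subcategories $\CC^*_{G,K}(S)$,'' i.e.\ continuity of $C^*$-categorical $K$-theory for directed unions. You correctly identify the one point that makes this even easier than Lemma~\ref{lem-c-star-algebra-k-theory-as-direct-limit}: since every object of $\CC^*_{G,K}$ has finite isotypical support and the $\CC^*_{G,K}(S)$ are \emph{full} subcategories, the union is literally all of $\CC^*_{G,K}$ (no density argument needed), so no further work is required once continuity is granted.
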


\begin{proof}
 Clear because $\CC^*_{G,K}$ is the union of all the subcategories $\CC^*_{G,K}(S)$.
\end{proof}

Now, the projection $P_S$ in Definition~\ref{def-p-s} is (the operator of convolution with) a smooth function on $K$, and a central projection in $C^*_r(K)$, and the inclusion morphisms induce vector space isomorphisms
\begin{equation}
    \label{eq-c-star-equals-smooth-equals-l-2}
P_S C^*_r (K) \stackrel\cong\longleftarrow P_S C^\infty (K) \stackrel \cong \longrightarrow P_SL^2(K).
\end{equation}
Let us write 
\[
V_S = P_S L^2(K);
\]
this is a finite-dimensional unitary representation of $K$. The inner product on $V_S$ is transported, via the isomorphisms in \eqref{eq-c-star-equals-smooth-equals-l-2}, to the inner product
\[
\langle f,h\rangle = (f^*h)(e)
\]
on $P_S C^*_r (K)$, and the correspondence  
\[
C^*_r(K)P_S \ni f \longmapsto \bigl [  h\mapsto \langle f^*, h\rangle \bigr] \in V_S^*
\]
identifies $C^*_r(K)P_S$ with the  dual of $V_S$, as unitary representations of $K$. From this we obtain  a $K$-bi-equivariant isomorphism
\[
P_SC^*_r (K)\otimes C^*_r(K)P_S \stackrel \cong \longrightarrow 
 V_S \otimes V_S^* \stackrel \cong \longrightarrow \End (V_S).
\]
From this we obtain, by rearranging the tensor factors, a linear isomorphism
\begin{equation}
    \label{eq-kgk-to-gkk-isomorphism}
\left[P_S C^*_r(K) \otimes C^*_r(G) \otimes C^*_r(K)P_S\right]^{K \times K} \stackrel \cong\longrightarrow  \left[ C^*_r(G) \otimes \mathrm{End}(V_S) \right]^{K \times K},
\end{equation}
where, on the left-hand side, the first factor of $K$ acts on the left of $P_S C^*_r (K)$ and on the left of $C^*_r(G)$, while the second factor of $K$ acts on the right  of $C^*_r(G)$ and on the right of $C^*_r (K)P_S$. 

The left-hand side in \eqref{eq-kgk-to-gkk-isomorphism} has a natural $*$-algebra structure, for which the  product is
\[
(f_1 \otimes f_2 \otimes f_3) \cdot (h_1 \otimes h_2 \otimes h_3) = f_1 \otimes (f_2  f_3  h_1  h_2) \otimes h_3
\]
and   the  $*$-operation  is
\[
\pushQED{\qed}
(f_1 \otimes f_2 \otimes f_3)^* =  (f_3^* \otimes f_2^* \otimes f_1^*)  .
\]

\begin{lemma}
\label{lem-c-star-algebra-as-endomorphism-algebra}
The formula 
\[
(f_1 \otimes f_2 \otimes f_3) \mapsto f_1  f_2   f_3
\]
defines an isomorphism of $*$-algebras
\[
\pushQED{\qed}
\left[P_S C^*_r(K) \otimes C^*_r(G) \otimes C^*_r(K)P_S\right]^{K \times K} \stackrel \cong   \longrightarrow  P_SC^*_r(G)P_S.
\qedhere
\popQED
\]
\end{lemma}

We  obtain from \eqref{eq-kgk-to-gkk-isomorphism} and Lemma~\ref{lem-c-star-algebra-as-endomorphism-algebra} an isomorphism of $*$-algebras
\begin{equation}
    \label{eq-star-algebra-isomorphism}
C^*_r(G;S) \stackrel \cong \longrightarrow  \left[ C^*_r(G) \otimes \mathrm{End}(V_S) \right]^{K \times K} ,
\end{equation} 
which is necessarily also an isomorphism of $C^*$-algebras.
Now, we noted in \eqref{eq-norm-completion-f-smoothing-ops} that the right-hand side of \eqref{eq-star-algebra-isomorphism} is the endomorphism $C^*$-algebra $\CC^*_{G,K}(V_S,V_S)$ in the $C^*$-category $\CC^*_{G,K}$.  So since each $C^*$-algebra is a $C^*$-category on one object, the isomorphism \eqref{eq-star-algebra-isomorphism} can be viewed as a functor between $C^*$-categories, and it therefore induces  a morphism of $K$-theory groups
\begin{equation}
    \label{eq-k-theory-morphism-for-each-s}
K_*(C^*_r(G;S)) \longrightarrow K_*(\CC^*_{G,K}(S)).
\end{equation}
This is indeed an isomorphism, because   $V_S$ is a finite projective generator for the category $\CC^*_{G,K}(S)$.

\begin{theorem}
\label{thm-k-theory-of-smoothing-op-category}
Let $G$ be a Lie group and let $K$ be a compact subgroup of $G$. There is a unique isomorphism of $K$-theory groups 
\[
  K_*(C_r^*(G)) \stackrel \cong \longrightarrow K_*(\CC^*_{G,K})
\]
such that, for every $S$, the  diagram 
\[
\xymatrix@C=40pt{
K_*(C^*_r(G;S)) \ar[r]^{\eqref{eq-k-theory-morphism-for-each-s}} \ar[d] & K_*(\CC^*_{G,K}(S))\ar[d]
\\
K_*(C^*_r(G))\ar[r]  & K_*(\CC^*_{G,K})
}
\]
is commutative  \textup{(}the vertical arrows are induced from inclusions\textup{)}.
\end{theorem}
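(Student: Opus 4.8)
The plan is to deduce the theorem from the two direct-limit descriptions of $K$-theory, Lemmas~\ref{lem-c-star-algebra-k-theory-as-direct-limit} and~\ref{lem-c-star-category-k-theory-as-direct-limit}, together with the assertions that each map~\eqref{eq-k-theory-morphism-for-each-s} is an isomorphism and that these maps are compatible as $S$ ranges over the finite subsets of $\widehat{K}$. Granting this, the maps~\eqref{eq-k-theory-morphism-for-each-s} pass to the direct limit to give an isomorphism $\varinjlim_S K_*(C^*_r(G;S))\xrightarrow{\cong}\varinjlim_S K_*(\CC^*_{G,K}(S))$, which by the two lemmas is the desired isomorphism $K_*(C^*_r(G))\xrightarrow{\cong}K_*(\CC^*_{G,K})$; the stated squares then commute by the universal property of the colimit, and uniqueness is automatic, since the inclusion-induced maps $K_*(C^*_r(G;S))\to K_*(C^*_r(G))$ are jointly surjective, so any map out of $K_*(C^*_r(G))$ compatible with all the squares is determined by its restrictions to the subgroups $K_*(C^*_r(G;S))$.

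The heart of the argument is that~\eqref{eq-k-theory-morphism-for-each-s} is an isomorphism for each $S$. By construction this map is the $C^*$-algebra isomorphism~\eqref{eq-star-algebra-isomorphism}, which identifies $C^*_r(G;S)=P_SC^*_r(G)P_S$ with the endomorphism $C^*$-algebra $\CC^*_{G,K}(V_S,V_S)$ of the single object $V_S$, followed by the $K$-theory map induced by viewing $\CC^*_{G,K}(V_S,V_S)$ as the endomorphisms of $V_S$ inside the $C^*$-category $\CC^*_{G,K}(S)$. The key geometric input is that $V_S$ \emph{generates} $\CC^*_{G,K}(S)$: because $V_S=P_SL^2(K)$ contains every irreducible appearing in $S$, any object $V$ of $\CC^*_{G,K}(S)$ is an orthogonal $K$-summand of $V_S^{\oplus n}$ for $n$ large, and the corresponding inclusion and projection, being $G$-equivariant bundle maps, are order-zero classical pseudodifferential operators, hence morphisms in the unital $C^*$-category $\PSDO^*_{G,K}(S)$ obtained by restricting $\PSDO^*_{G,K}$ to these objects, and likewise in its quotient $\Symb^*_{G,K}(S)$. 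Thus $V_S$ is a full object of $\PSDO^*_{G,K}(S)$ and of $\Symb^*_{G,K}(S)$, so by Morita invariance of $C^*$-category $K$-theory (see \cite{MitchenerKTheory01,Joachim03}; or reduce to the standard fact that a full corner induces a $K$-theory isomorphism, cf.\ \cite{RordamEtAl}) the inclusions $\PSDO^*_{G,K}(V_S,V_S)\hookrightarrow\PSDO^*_{G,K}(S)$ and $\Symb^*_{G,K}(V_S,V_S)\hookrightarrow\Symb^*_{G,K}(S)$ induce isomorphisms in $K$-theory. Applying the five lemma to the map between the six-term exact sequences of the two pseudodifferential extensions (Proposition~\ref{prop-order-zero-psdo-extension}, in the single-object case and in the category $\CC^*_{G,K}(S)$) then yields that $K_*(\CC^*_{G,K}(V_S,V_S))\to K_*(\CC^*_{G,K}(S))$ is an isomorphism, as needed.

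It remains to check compatibility with the direct systems: for $S\subseteq S'$ the square formed by~\eqref{eq-k-theory-morphism-for-each-s} for $S$ and for $S'$, the subalgebra inclusion $C^*_r(G;S)\hookrightarrow C^*_r(G;S')$, and the subcategory inclusion $\CC^*_{G,K}(S)\hookrightarrow\CC^*_{G,K}(S')$ commutes in $K$-theory. Here one uses that $V_S$ is an orthogonal summand of $V_{S'}$. Tracing the formulas of Lemmas~\ref{lem-novel-star-algebra-structure}--\ref{lem-c-star-algebra-as-endomorphism-algebra} and of~\eqref{eq-kgk-to-gkk-isomorphism} shows that~\eqref{eq-star-algebra-isomorphism} intertwines the subalgebra inclusion with the map $\CC^*_{G,K}(V_S,V_S)\to\CC^*_{G,K}(V_{S'},V_{S'})$ given by $R\mapsto\iota R\iota^{*}$ for an isometry $\iota\colon V_S\hookrightarrow V_{S'}$, and conjugation by an isometry whose range is a direct summand induces the identity on the $K$-theory of the ambient $C^*$-category $\CC^*_{G,K}(S')$, which gives the commutativity. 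The step I expect to be the main obstacle is making the Morita reduction of the second paragraph fully rigorous in the not-necessarily-unital $C^*$-category setting, in particular verifying that passing to the generator $V_S$ is a $K$-theory isomorphism for the \emph{ideal} $\CC^*_{G,K}(S)$ and not merely for $\PSDO^*_{G,K}(S)$. The cleanest way around this is to replace each $C^*$-category by the matrix $C^*$-algebra over a countable cofinal family of its objects and to invoke the facts that full corners of $C^*$-algebras induce $K$-theory isomorphisms and that a full corner restricts to a full corner on any ideal.
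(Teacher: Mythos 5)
Your proposal is correct and follows the same overall plan as the paper's proof: pass to the direct limit over finite $S\subset\widehat K$ using Lemmas~\ref{lem-c-star-algebra-k-theory-as-direct-limit} and~\ref{lem-c-star-category-k-theory-as-direct-limit}, reduce the problem to showing that each morphism~\eqref{eq-k-theory-morphism-for-each-s} is an isomorphism, and verify compatibility with the inclusions $S\subseteq S'$. The one place you diverge is in justifying the middle step. The paper's proof simply invokes that $V_S$ is a finite projective generator of $\CC^*_{G,K}(S)$, i.e.\ it appeals directly to Morita invariance of $K$-theory for the non-unital $C^*$-category, while you route through the \emph{unital} categories $\PSDO^*_{G,K}(S)$ and $\Symb^*_{G,K}(S)$ and a five-lemma argument on the pseudodifferential extensions. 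That detour is valid but heavier than necessary: the ``full-corner'' reduction you sketch in your last paragraph is the cleaner argument and is what the paper has in mind. Concretely, $\CC^*_{G,K}(V_S,V_S)$ is a full hereditary subalgebra of (a suitable matrix $C^*$-algebra built from) $\CC^*_{G,K}(S)$, because the isometries and projections exhibiting each object $V$ as a summand of $V_S^{\oplus n}$ lie in the multiplier category $\PSDO^*_{G,K}(S)$ and hence act on the ideal, and fullness follows from the explicit description $\CC^*_{G,K}(V_1,V_2)\cong[C^*_r(G)\otimes\Hom(V_1,V_2)]^{K\times K}$; full hereditary subalgebras induce $K$-theory isomorphisms, so no unitalization detour is needed. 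Your compatibility argument (that~\eqref{eq-star-algebra-isomorphism} intertwines $C^*_r(G;S)\hookrightarrow C^*_r(G;S')$ with conjugation by the isometry $V_S\hookrightarrow V_{S'}$, which acts as the identity on $K$-theory of the ambient category) is correct and is indeed the content of the paper's brief ``compatible with inclusions'' remark.
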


\begin{proof}
The morphisms \eqref{eq-k-theory-morphism-for-each-s} are compatible with inclusions $S_1\subseteq S_2$ of finite subsets of $\widehat K$.   So  the existence and uniqueness parts of the theorem both follow from Lemmas \ref{lem-c-star-algebra-k-theory-as-direct-limit} and \ref{lem-c-star-category-k-theory-as-direct-limit}.
\end{proof}

\begin{remark}
The arguments above also prove that  the $C^*$-category $\CC^*_{G,K}$ is equivalent to the additive completion (\cite{MitchenerKTheory01}, Def.2.12)  of $\mathsf{C}_A$ described at the beginning of Section~\ref{subsec-category-of-psdos} that is constructed from the $C^*$-algebra $A{=}C^*_r(G)$, and from the family of projection operators $P_\pi$ associated to the irreducible representations of $K$ in Definition~\ref{def-p-s}.
\end{remark}

\section{The Connes-Kasparov isomorphism}

We shall now specialize further,  and work with an almost-connected Lie group $G$ and maximal compact subgroup $K$.\footnote{For the existence and uniqueness, up to conjugacy, of maximal compact subgroups of almost-connected Lie groups, see \cite[Thm.\,XV.3.1]{HochschildStructureOfLieGroups}.}

\subsection{The deformation to the normal cone and its C*-algebra} 
The \emph{motion group} associated to $G$ and $K$ is the semidirect product group 
\[
G_0 = K \ltimes ( \mathfrak{g}/\mathfrak{k} )
\]
constructed from the adjoint action of $K$ on the quotient vector space $\mathfrak{g}/\mathfrak{k}$. The  motion group fits into a smooth, one-parameter family of groups $\{ G_t \} _{t\in \R}$ with 
\[
G_t = 
\begin{cases} 
G & t\ne 0 \\
K \ltimes ( \mathfrak{g}/\mathfrak{k} ) & t=0.
\end{cases}
\]
That is, there is a smooth manifold $\GG $ and a submersion  $\GG\to \R$ whose fibers are these groups, for which the fiberwise-defined group operations of multiplication, inverse, and inclusion of the group identity element,
\[
\GG  \times _{\R} \GG  
\longrightarrow \GG , 
\quad 
\GG  
\longrightarrow \GG 
\quad \text{and}\quad 
\mathbb{\R} 
\longrightarrow \GG 
\]
are smooth maps. This is an instance of the deformation to the normal cone construction; see \cite[Sec.\,2.4]{DebordSkandalis19} for a concise introduction to the topic.  

The smooth manifold structure on $\GG$ may be characterized as follows. First, the complement of $G_0$ is an open subset, and carries the standard smooth structure of $G\times \R^{\times}$.  Second, if $V$ is any finite-dimensional real vector space, and if $E\colon V \to G$ is any smooth map  such that 
\begin{enumerate}[\rm (i)]

\item $E(0) = e$, and 

\item the map 
\[
\begin{gathered}
K\times V \longrightarrow G
\\
(k,X) \longmapsto k \cdot E(X)
\end{gathered}
\]
is a diffeomorphism onto an open subset of $G$, 
\end{enumerate}
then the map  
\[
K \times V  \times \R \longrightarrow \pmb{G}
\]
defined by 
\[
(k,v,t) \longmapsto \begin{cases} k \cdot E(tv) \in G_t & t \ne 0 \\
(k,dE(v))\in G_0 & t=0
\end{cases}
\]
is a diffeomorphism onto an open subset of $\pmb{G}$.  Here $dE \colon V \to \mathfrak{g} / \mathfrak{k}$ is the composition of the derivative of the smooth map $E$ at $0\in V$ with the projection map from $\mathfrak{g} $ to the quotient vector space $\mathfrak{g}/\mathfrak{k}$.

Associated to $\GG $ is a continuous field of $C^*$-algebras $\{ C^*_r (G_t)\} _{t\in \R}$ (see \cite[Ch.\,10]{Dixmier1977} for background information on continuous fields).  It is constructed by choosing a smoothly varying family of Haar measures on the fiber groups $G_t$, and then decreeing that the continuous sections of the continuous field  be generated by the smooth, compactly supported functions on $\GG $: from each such function $f$ one obtains by restriction a family of functions $f_t$ on the  groups $G_t$, and it is proved in \cite[Lemma\,6.13]{Higson08} that the norms $\| f_t\|_{C^*_r (G_t)}$ vary continuously with $t$.

 \begin{definition} 
 We shall denote by $C^*_r (\GG )$ the $C^*$-algebra of continuous sections of $\{ C^*_r (G_t)\} $ over the closed interval $[0,1] \subseteq \R$.
 \end{definition}

The following difficult and important result will be essential to everything that follows: 

\begin{theorem}[Connes-Kasparov isomorphism]
\label{thm-dnc-formulation-of-connes-kasparov}
    Let $G$ be an almost-con\-nected Lie group with maximal compact subgroup $K$. The $C^*$-algebra morphisms $C^*_r(\GG ) \to C^*_r (G_t)$  given by evaluation of continuous sections at any $t\in [0,1]$ are isomorphisms in $K$-theory.
\end{theorem}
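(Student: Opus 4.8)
The plan is to isolate the single deep input --- the Connes--Kasparov isomorphism in its ``assembly map'' form --- and to obtain everything else from elementary properties of continuous fields of $C^*$-algebras together with homotopy invariance of $K$-theory. The first, easy, step handles the evaluation at $t=0$. Since $G_t=G$ literally for every $t\ne 0$ (the deformation to the normal cone only changes how $G$ sits inside $\GG$, not the group itself), the continuous field $\{C^*_r(G_t)\}$ restricts over $(0,1]$ to the constant field with fibre $C^*_r(G)$. Hence the kernel of $\mathrm{ev}_0\colon C^*_r(\GG)\to C^*_r(G_0)$ --- the sections vanishing at $t=0$ --- is the cone $C_0\bigl((0,1]\bigr)\otimes C^*_r(G)$, which is contractible and therefore $K$-acyclic; the six-term exact sequence then shows that $\mathrm{ev}_0$ is an isomorphism on $K$-theory. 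This settles the statement at $t=0$ and, more usefully, allows me to replace $C^*_r(G_0)$ by $C^*_r(\GG)$ from now on.

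Next I would reduce all the remaining evaluations to the single one at $t=1$. For $t\in(0,1]$, the family $\{\mathrm{ev}_s\}_{s\in[t,1]}$ is a continuous path of $*$-homomorphisms $C^*_r(\GG)\to C^*_r(G)$, because for a fixed continuous section $x$ the element $\mathrm{ev}_s(x)=x(s)$ depends norm-continuously on $s$ --- again using that the field is constant over $(0,1]$. Homotopy invariance of $K$-theory therefore gives $(\mathrm{ev}_t)_*=(\mathrm{ev}_1)_*$ for every $t\in(0,1]$, so the whole theorem collapses to the assertion that $\mathrm{ev}_1$ is a $K$-theory isomorphism; equivalently, since $(\mathrm{ev}_0)_*$ is now invertible, that the composite $(\mathrm{ev}_1)_*\circ(\mathrm{ev}_0)_*^{-1}\colon K_*(C^*_r(G_0))\to K_*(C^*_r(G))$ is an isomorphism.

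Finally I would identify that composite. By the construction of the deformation to the normal cone it is the Connes--Kasparov assembly (Dirac induction) map: under the Mackey--Fourier isomorphism $K_*(C^*_r(G_0))\cong R(K)$ for the motion group $G_0=K\ltimes(\mathfrak{g}/\mathfrak{k})$ it carries the class of an irreducible representation of $K$ to the Kasparov index of the Dirac-type operator on $G/K$ obtained by coupling the spinor operator to it; see \cite{Higson08} for this identification and \cite{ClareHigsonSongTang24,ClareHigsonSong24} for the comparison of the various formulations in the literature. That this map is an isomorphism for real reductive $G$ is exactly the theorem of Wassermann \cite{Wassermann87} and of Lafforgue \cite{Lafforgue02InventMath} (and, for all almost-connected groups, of Chabert--Echterhoff--Nest \cite{ChabertEchterhoffNest03}), so invoking it finishes the proof. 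The main obstacle is, unavoidably, this last step: it is the deep theorem, and a genuinely self-contained argument would have to reprove the Connes--Kasparov isomorphism. The actual work carried out here is only the bookkeeping --- the two reductions above --- that puts the deformation-to-the-normal-cone statement into a form where one of the known proofs applies verbatim.
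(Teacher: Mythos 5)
Your proposal is correct and follows essentially the same route as the paper, which in fact offers no proof of this statement at all: it simply labels the theorem ``Connes--Kasparov isomorphism,'' notes that the $t=0$ case is elementary, and cites Connes (Prop.~9, p.~141 of \emph{Noncommutative Geometry}) for the fact that the remaining assertion is equivalent to the Connes--Kasparov conjecture, referring to Wassermann, Lafforgue, and Chabert--Echterhoff--Nest for the proof of the latter. What you have done is spell out the three-step bookkeeping (cone ideal for $\mathrm{ev}_0$, homotopy of evaluations over $(0,1]$, identification of $(\mathrm{ev}_1)_*\circ(\mathrm{ev}_0)_*^{-1}$ with Dirac induction via the Mackey--Fourier isomorphism for the motion group) that the paper leaves implicit, and you cite the same deep input in the same way — which is the only honest option here, since the theorem literally \emph{is} the Connes--Kasparov isomorphism in its deformation-to-the-normal-cone formulation.
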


The case $t{=}0$ is elementary, but for all other $t{\in} [0,1]$ the theorem  is by no means trivial.  In fact Connes has pointed out in  \cite[Prop.~9,~p.141]{ConnesNCG} that assertion in the theorem is equivalent to the Connes-Kasparov isomorphism.

\subsection{K-theory for the  C*-category of pseudodifferential operators} 
\label{sec-k-theory-for-psdo-category}
In this section we shall reformulate Theorem~\ref{thm-dnc-formulation-of-connes-kasparov}, or equivalently the Connes-Kasparov isomorphism, as an assertion about the $K$-theory of the $C^*$-cat\-egory of pseudodifferential operators that we have constructed in this paper.

\begin{definition}
Let $K$ be a compact Lie group. We shall denote by $\Rep^*_K$ the $C^*$-category whose objects are the finite-dimen\-sional unitary representations of   $K$ and whose morphisms are the $K$-equivariant linear maps between representations.
\end{definition} 

\begin{definition} 
\label{def-connes-kasparov-functor}
Let $G$ be an almost-connected Lie group and let $K$ be a maximal compact subgroup of $G$. We shall denote by  
\[
\CoKa \colon \Rep^*_K \longrightarrow \PSDO^*_{G,K}
\]
the functor of $C^*$-categories that is the identity on objects, and   maps a morphism of representations 
$T\colon V_1\to V_2$ to the induced morphism 
\[
T \colon [L^2( G)\otimes V_1]^K \longrightarrow  [L^2( G)\otimes V_2]^K
\]
(which is a particularly simple example of an equivariant, properly supported, classical order $0$ pseudodifferential operator). 
\end{definition}

The name $\CoKa$ is a reference to Connes and Kasparov, for we shall prove that Theorem~\ref{thm-dnc-formulation-of-connes-kasparov} implies, and indeed is equivalent to, the following assertion: 

\begin{theorem}
\label{thm-k-theory-of-the-c-star-category}
Let  $G$ be an almost-connected group with maximal compact subgroup $K$. The functor $\CoKa\colon \Rep^*_K \to \PSDO^*_{G,K}$ induces an isomorphism in $K$-theory.
\end{theorem}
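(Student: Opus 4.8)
The plan is to deduce the theorem from the deformation-to-the-normal-cone formulation of Connes--Kasparov, Theorem~\ref{thm-dnc-formulation-of-connes-kasparov}, by interpolating the $C^*$-category $\PSDO^*_{G,K}$ through the family $\GG$ to the much simpler $C^*$-category attached to the motion group $G_0 = K\ltimes(\mathfrak{g}/\mathfrak{k})$. First I would carry out all of the constructions of Sections~\ref{sec-pseudodifferential-ops}--\ref{subsec-category-of-psdos}, but for the family of proper homogeneous spaces $K\backslash\GG\to[0,1]$ in place of the single space $K\backslash G$: this produces a $C^*$-category $\PSDO^*_{\GG,K}$ whose morphisms $V_1\to V_2$ are the operator-norm completions of the properly supported, $\GG$-equivariant, fibrewise classical order-zero pseudodifferential operators on the induced bundles over $K\backslash\GG$, together with its ideal $\CC^*_{\GG,K}$ of smoothing operators, its symbol quotient $\Symb^*_{\GG,K}$, and the resulting short exact sequence $0\to\CC^*_{\GG,K}\to\PSDO^*_{\GG,K}\to\Symb^*_{\GG,K}\to 0$ (the analogue of Proposition~\ref{prop-order-zero-psdo-extension}). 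Evaluation of sections at a parameter $t\in[0,1]$ gives functors $\mathrm{ev}_t\colon \PSDO^*_{\GG,K}\to\PSDO^*_{G_t,K}$, and likewise on the ideal and the quotient. The key structural point is that the principal symbol of an equivariant order-zero operator on $K\backslash G_t$ is a $K$-equivariant function on $\partial(\mathfrak{g}_t/\mathfrak{k})^* = \partial(\mathfrak{g}/\mathfrak{k})^*$, which is the same space for every $t$; hence $\Symb^*_{\GG,K} \cong C([0,1])\otimes\Symb^*_{G,K}$, and $\mathrm{ev}_t$ restricts on symbols to an evaluation $C([0,1])\otimes\Symb^*_{G,K}\to\Symb^*_{G,K}$, which is a homotopy equivalence and in particular an isomorphism in $K$-theory.

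Next I would identify the $K$-theory of the ideal. Repeating the argument of \S\ref{subsec-k-theory-for-smoothing-category} essentially verbatim with $\GG$ in place of $G$ --- the projections $P_S$ and the finite projective generators $V_S$ are unchanged, and the continuous field structure of $\{C^*_r(G_t)\}$ is exactly what is available --- gives an isomorphism $K_*(\CC^*_{\GG,K})\cong K_*(C^*_r(\GG))$, compatibly with evaluation, so that $\mathrm{ev}_t\colon\CC^*_{\GG,K}\to\CC^*_{G_t,K}$ is identified in $K$-theory with the evaluation map $C^*_r(\GG)\to C^*_r(G_t)$. By Theorem~\ref{thm-dnc-formulation-of-connes-kasparov} the latter is a $K$-theory isomorphism for every $t\in[0,1]$. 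Applying the five lemma to the six-term $K$-theory exact sequences of the two symbol extensions, intertwined by $\mathrm{ev}_t$, and using the previous paragraph, I conclude that $\mathrm{ev}_t\colon\PSDO^*_{\GG,K}\to\PSDO^*_{G_t,K}$ is an isomorphism in $K$-theory for every $t\in[0,1]$.

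Finally I would run the functor $\CoKa$ over the whole family: Definition~\ref{def-connes-kasparov-functor} produces $\CoKa_{\GG}\colon\Rep^*_K\to\PSDO^*_{\GG,K}$ sending a $K$-morphism $T\colon V_1\to V_2$ to the ($t$-constant) family of induced operators, and $\mathrm{ev}_t\circ\CoKa_{\GG} = \CoKa_{G_t}$. Since $\mathrm{ev}_0$ and $\mathrm{ev}_1$ are $K$-theory isomorphisms, $\CoKa = \CoKa_{G_1}$ is a $K$-theory isomorphism if and only if $\CoKa_{G_0}$ is. For the motion group $G_0$ one has $K\backslash G_0 = \mathfrak{g}/\mathfrak{k}$; equivariant operators are translation invariant, and the Fourier transform identifies $\PSDO^*_{G_0,K}(V_1,V_2)$ with $C\bigl(\overline{(\mathfrak{g}/\mathfrak{k})^*},\Hom(V_1,V_2)\bigr)^K$ (the elementary translation-invariant case of the Fourier isomorphism, corresponding to $t=0$ in Theorem~\ref{thm-dnc-formulation-of-connes-kasparov}). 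Choosing a $K$-invariant inner product, $\overline{(\mathfrak{g}/\mathfrak{k})^*}$ is $K$-equivariantly a closed ball, which contracts $K$-equivariantly onto its centre; hence evaluation at the origin is a homotopy equivalence of $C^*$-categories $\PSDO^*_{G_0,K}\to\Rep^*_K$, and its homotopy inverse --- the inclusion of constant functions --- is exactly $\CoKa_{G_0}$. Therefore $\CoKa_{G_0}$ is a homotopy equivalence of $C^*$-categories, in particular an isomorphism in $K$-theory, which completes the argument.

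The hard part will be the first paragraph: setting up a calculus of properly supported, equivariant, classical pseudodifferential operators uniformly over the \emph{degenerating} family $K\backslash\GG\to[0,1]$, whose fibre over $0$ is a flat vector space while the other fibres are curved symmetric spaces, and checking that the symbol exact sequence and the $C^*$-continuous-field identification of $K_*(\CC^*_{\GG,K})$ survive continuously in $t$. Once that groundwork is in place, the rest is bookkeeping with the five lemma and the $K$-equivariant contractibility of a ball.
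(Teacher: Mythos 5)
Your proposal follows the paper's own argument essentially step for step: pass to a $C^*$-category of pseudodifferential families over $\KK\backslash\GG$, use the argument of Section~\ref{subsec-k-theory-for-smoothing-category} applied to $\GG$ to identify $K_*(\CC^*_{\GG,\KK})$ with $K_*(C^*_r(\GG))$, invoke the Connes--Kasparov Theorem~\ref{thm-dnc-formulation-of-connes-kasparov} plus the five lemma on the symbol extensions to make each $\mathrm{ev}_t$ a $K$-theory isomorphism, and finish with the Fourier-transform computation for the motion group and $K$-equivariant contractibility of $\overline{(\mathfrak{g}/\mathfrak{k})^*}$. The only cosmetic difference is that the paper constrains families in $\PSDO^*_{\GG,\KK}$ to have a \emph{constant} principal symbol across $t$, so the symbol quotient is $\Symb^*_{G,K}$ itself rather than your $C([0,1])\otimes\Symb^*_{G,K}$; this avoids the small extra step of observing that evaluation on $C([0,1])\otimes\Symb^*_{G,K}$ is a homotopy equivalence, but both variants plug into the five-lemma diagram in the same way, and you are right that the genuinely delicate part is the construction of the family category itself, which the paper carries out in Section~\ref{subsec-construction-of-technical-c-star-category}.
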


The main idea of the proof is to reduce to the case in which  $G$   is replaced by its   motion group $G_0$. To make this reduction  we shall avail ourselves of the smooth family of groups $\GG=\{ G_t\}$ and the associated continuous field of $C^*$-algebras $\{ C^*_r (G_t)\}$.  We shall begin preparations for the proof  by considering not order zero pseudodifferential operators, but smoothing operators.  

\begin{definition}
Let $G$ be an almost-connected Lie group with maximal compact subgroup $K$.
We shall denote by $\CC^*_{\GG,\KK}$ the $C^*$-category  whose objects are the finite-dimensional unitary representations of $K$, and whose morphism spaces  $\CC^*_{\GG,\KK}(V_1,V_2)$  are norm closures of the spaces  of properly supported smooth families of equivariant smoothing operators 
\[
T_t \colon L^2 (K \backslash G_t, V_1)  \longrightarrow  L^2(K \backslash G_t, V_1)\qquad (t\in [0,1])
\]
(see below) in the norm $\| T\| = \sup\{\, \|T_t\|: t\in [0,1]\,\}$.
\end{definition}

In order to be more specific about the term \emph{properly supported smooth family}, we use the description of individual properly supported and equivariant smoothing operators as elements of the spaces 
\begin{equation}
    \label{eq-where-smoothing-operators-live}
\bigl [C_c^\infty (G_t)\otimes \Hom (V_1,V_2) \bigr ]^{K\times K}
\end{equation}
in Section~\ref{subsec-smoothing-operators}.  Our requirement on  properly supported, smooth families $\{ T_t\}$ of equivariant smoothing operators   is that there should be a single element in the space 
\[
\bigl [  C_c^\infty (\GG) \otimes \Hom (V_1,V_2)\bigr ]^{K\times K}
\]
that restricts at each $t\in [0,1]$ to the element in \eqref{eq-where-smoothing-operators-live} corresponding to $T_t$. 

With this definition, it is evident that there is an isomorphism of $C^*$-categories that is the identity on objects, and on morphism spaces takes the form of isomorphisms 
\[
\CC^*_{\GG,\KK}(V_1,V_2) \stackrel \cong \longrightarrow \bigl [   C^*_r (\GG) \otimes \Hom (V_1,V_2)\bigr ]^{K\times K}.
\]

\begin{theorem}
\label{thm-c-k-isomorphism-stated-using-c-star-categories}
The Connes-Kasparov isomorphism in Theorem~\textup{\ref{thm-dnc-formulation-of-connes-kasparov}} is equivalent to the assertion that for all $t\in [0,1]$ the functor of evaluation at $t$, 
\[
\CC^*_{\GG,\KK} \longrightarrow \CC^*_{G_t, K},
\]
induces an isomorphism in $K$-theory.  
\end{theorem}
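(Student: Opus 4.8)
The plan is to deduce Theorem~\ref{thm-c-k-isomorphism-stated-using-c-star-categories} from Theorem~\ref{thm-dnc-formulation-of-connes-kasparov} by running the argument of Section~\ref{subsec-k-theory-for-smoothing-category} uniformly in the deformation parameter. First I would observe that every construction in Section~\ref{subsec-k-theory-for-smoothing-category}---the projections $P_S$, the representations $V_S$, the $K\times K$-equivariant rearrangement isomorphism \eqref{eq-kgk-to-gkk-isomorphism}, and the $*$-algebra structures of Lemmas~\ref{lem-novel-star-algebra-structure} and \ref{lem-c-star-algebra-as-endomorphism-algebra}---is purely representation-theoretic in $K$ and does not see the ambient group at all. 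Hence, applying these constructions with $C^*_r(G)$ replaced by the $C^*$-algebra $C^*_r(\GG)$ of continuous sections of the field $\{C^*_r(G_t)\}$, we obtain for each finite $S\subseteq\widehat K$ a $C^*$-algebra isomorphism
\[
P_S C^*_r(\GG) P_S \stackrel\cong\longrightarrow \bigl[ C^*_r(\GG)\otimes \End(V_S)\bigr]^{K\times K} = \CC^*_{\GG,\KK}(V_S,V_S),
\]
exactly paralleling \eqref{eq-star-algebra-isomorphism}, together with the identification $\CC^*_{\GG,\KK}(V_1,V_2)\cong [C^*_r(\GG)\otimes\Hom(V_1,V_2)]^{K\times K}$ already noted just before the statement.

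Next I would set up the direct-limit picture for $\CC^*_{\GG,\KK}$ in complete analogy with Lemmas~\ref{lem-c-star-algebra-k-theory-as-direct-limit} and \ref{lem-c-star-category-k-theory-as-direct-limit}: let $\CC^*_{\GG,\KK}(S)$ be the full additive subcategory on objects whose $K$-isotypical components lie in $S$, note that $V_S$ is a finite projective generator of $\CC^*_{\GG,\KK}(S)$ so that $K_*(\CC^*_{\GG,\KK}(S))\cong K_*(P_S C^*_r(\GG)P_S)$, and pass to the limit over $S$ to get
\[
K_*(\CC^*_{\GG,\KK}) \cong \varinjlim_S K_*\bigl(P_S C^*_r(\GG) P_S\bigr),
\]
and likewise $K_*(\CC^*_{G_t,K})\cong \varinjlim_S K_*(P_S C^*_r(G_t) P_S)$ for each $t$. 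Under these identifications the evaluation-at-$t$ functor $\CC^*_{\GG,\KK}\to\CC^*_{G_t,K}$ induces, at each level $S$, precisely the map $K_*(P_S C^*_r(\GG)P_S)\to K_*(P_S C^*_r(G_t)P_S)$ coming from the evaluation homomorphism $C^*_r(\GG)\to C^*_r(G_t)$ (naturality of the construction \eqref{eq-star-algebra-isomorphism} in the $G$-variable, which one checks directly from the formulas). So it suffices to show that $P_S C^*_r(\GG) P_S \to P_S C^*_r(G_t)P_S$ is a $K$-theory isomorphism for every $S$ and every $t\in[0,1]$.

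The final step is to see that the Connes-Kasparov statement in Theorem~\ref{thm-dnc-formulation-of-connes-kasparov}, namely that $C^*_r(\GG)\to C^*_r(G_t)$ is a $K$-theory isomorphism, is equivalent to its compression by $P_S$ being a $K$-theory isomorphism for every finite $S$. One direction is immediate by taking the limit over $S$, using continuity of $K$-theory (the union of the $P_S C^*_r(\GG)P_S$ is dense in $C^*_r(\GG)$, and likewise for $G_t$). For the converse---if one wants the full ``equivalent to'' assertion---one notes that the $P_S$ form an approximate unit of central projections, so the evaluation map is $K$-theoretically the colimit of its compressions, and a colimit of isomorphisms is an isomorphism; conversely each compression is a retract-type summand built functorially from the whole, so an isomorphism on the whole restricts to one on each $P_S$-corner by the Morita-type identification $K_*(P_S C^*_r(\GG)P_S)\cong K_*(\CC^*_{\GG,\KK}(S))$ and the compatibility of these with the structure maps. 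I expect the main obstacle to be purely bookkeeping rather than conceptual: verifying that the isomorphism \eqref{eq-kgk-to-gkk-isomorphism} and Lemmas~\ref{lem-novel-star-algebra-structure}--\ref{lem-c-star-algebra-as-endomorphism-algebra} are genuinely natural in the group $C^*$-algebra, so that the square relating evaluation-at-$t$ on $\CC^*$-categories to compression-evaluation on $C^*$-algebras commutes on the nose, and that everything is compatible with the structure maps of both direct systems so the passage to the limit is legitimate. All of this is the content of the phrase ``the argument of Section~\ref{subsec-k-theory-for-smoothing-category} therefore shows,'' which is why we state the theorem with only a $\qed$.
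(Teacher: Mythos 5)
Your setup is right and matches the paper's intended argument: the Section~\ref{subsec-k-theory-for-smoothing-category} machinery ($P_S$, $V_S$, the isomorphism \eqref{eq-star-algebra-isomorphism}) is purely $K$-theoretic and applies verbatim with $C^*_r(G)$ replaced by $C^*_r(\GG)$, yielding natural isomorphisms $K_*(C^*_r(\GG))\cong K_*(\CC^*_{\GG,\KK})$ and $K_*(C^*_r(G_t))\cong K_*(\CC^*_{G_t,K})$, and the evaluation maps commute with these identifications. But your final paragraph goes astray, and the misstep is worth flagging.

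Once you have the two natural isomorphisms, the equivalence is immediate from the commutative square
\[
\xymatrix@C=40pt{
K_*(C^*_r(\GG)) \ar[r]^{\text{eval.\ at }t} & K_*(C^*_r(G_t)) \\
K_*(\CC^*_{\GG,\KK}) \ar[r]_{\text{eval.\ at }t} \ar[u]^{\cong} & K_*(\CC^*_{G_t,K}) \ar[u]_{\cong}
}
\]
and there is no need whatsoever to know anything about the individual maps $K_*(P_S C^*_r(\GG) P_S) \to K_*(P_S C^*_r(G_t) P_S)$. Instead you declared that ``it suffices to show'' each such $P_S$-level map is a $K$-theory isomorphism, and then tried to argue that this is \emph{equivalent} to the Connes--Kasparov statement. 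The direction you need (if you insist on this route), namely ``Connes--Kasparov at the level of $C^*_r(\GG)$ implies the $P_S$-level isomorphisms,'' is false in general: an isomorphism of direct limits need not restrict to an isomorphism on any individual term of the direct system, and the compressions $P_S C^*_r(\GG) P_S$ are hereditary subalgebras, not direct summands or retracts, so the ``retract-type summand'' reasoning does not apply. This extra equivalence is both unnecessary and, as stated, incorrect. (Moreover, even granting it, your explicit chain of implications would only give one direction of the claimed equivalence; the other direction would again need the commutative square above.) Drop the reduction to the $P_S$-level maps and conclude directly from the naturality of the two vertical isomorphisms.
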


\begin{proof}
    We  may repeat the arguments of Section~\ref{subsec-k-theory-for-smoothing-category} using the smooth family $\pmb{G}$ over $[0,1]$ in place of the single group $G$.  So for a finite subset $S\subseteq \widehat K$ we may define the $C^*$-algebra $C^*_r(\pmb{G};S)$, the $C^*$-category $\CC^*_{\pmb{G},\pmb{K}}(S)$, and the $K$-theory isomorphism
    \begin{equation*}
K_*(C^*_r(\pmb{G};S)) \stackrel\cong \longrightarrow K_*(\CC^*_{\pmb{G},\pmb{K}}(S)).
\end{equation*}
analogous to \eqref{eq-k-theory-morphism-for-each-s}.   These determine an isomorphism
\begin{equation}
    \label{eq-k-theory-for-smoothing-families}
K_*(C^*_r(\pmb{G})) \stackrel\cong \longrightarrow K_*(\CC^*_{\pmb{G},\pmb{K}})
\end{equation}
analogous to the isomorphism in Theorem~\ref{thm-k-theory-of-smoothing-op-category}.  Now, \eqref{eq-k-theory-for-smoothing-families} is compatible with evaluation at $t\in [0,1]$, in the sense that the diagram 
\[
\xymatrix@C=40pt{
K_*(C^*_r(\pmb{G})) \ar[r]^{\eqref{eq-k-theory-for-smoothing-families}}_{\cong}\ar[d]_{\text{eval.~at $t$}}&  K_*(\CC^*_{\pmb{G},\pmb{K}})\ar[d]^{\text{eval.~at $t$}}
\\
K_*(C^*_r(G_t)) \ar[r]_{\text{Thm\,\ref{thm-k-theory-of-smoothing-op-category}}}^{\cong}  & K_*(\CC^*_{G_t,K})
}
\]
commutes.  The theorem follows from this.
\end{proof}

The proof of Theorem~\ref{thm-k-theory-of-the-c-star-category} will  require a similar treatment of the order zero pseudodifferential operators: we shall need to organize families of order zero operators on the family of homogeneous spaces $K \backslash G_t$ into a $C^*$-category, $\PSDO^*_{\GG,\KK}$.  This is a bit more complicated, and to begin we shall simply assert the existence of a suitable category, and  explain how it may be used to prove Theorem~\ref{thm-k-theory-of-the-c-star-category}.  We shall actually construct $\PSDO^*_{\GG,\KK}$ in Section~\ref{subsec-construction-of-technical-c-star-category}.

 As usual, the objects of $\PSDO^*_{\GG,\KK}$ will be the finite-dimensional unitary representations of $K$.  The morphisms will be suitable continuous families of equivariant, properly supported, order zero classical pseudodifferential operators 
\begin{equation}
    \label{eq-family-of-operators-on-fibers-of-dnc}
A_t \colon  [ L^2 (G_t)\otimes  V_1]^K \longrightarrow  [ L^2 (G_t)\otimes  V_2]^K\qquad (t\in [0,1]),
\end{equation}
or norm-completions of such families.  Here are the properties of the category $\PSDO^*_{\GG,\KK}$ that we shall require for the argument: 

\begin{enumerate}[\rm (i)]

\item The category $\PSDO^*_{\GG,\KK}$ includes the category $\CC^*_{\GG,\KK}$  as an ideal.

\item The individual  operators in a morphism $\{ A_t : t\in [0,1]\}$  in each family all have the same principal symbol (observe that for all $t$, the tangent space of $K \backslash G_t$ identifies naturally with $\mathfrak{g} / \mathfrak{k}$, so that the principal symbol is, for all $t$, a continuous function on $\partial( \mathfrak{g} / \mathfrak{k})^*$).

\item The principal symbol functor (that associates to a family of operators the common principal symbol) fits into an extension of $C^*$-categories
\[
0 \longrightarrow \CC^*_{\GG,\KK} \longrightarrow \PSDO^*_{\GG,\KK}\longrightarrow  \Symb^*_{G,K} \longrightarrow 0
\]

\item If $T\colon V_1 \to V_2$ is a morphism of finite-dimensional unitary representations of $K$, then the family of induced morphisms 
\[
T_t \colon [ L^2 (G_t)\otimes  V_1]^K  \longrightarrow [ L^2 (G_t)\otimes  V_2]^K  \qquad (t \in [0,1])
\]
is a morphism in $\PSDO^*_{\GG,\KK}$. 

\end{enumerate}

\begin{theorem}
\label{thm-existence-of-the-technical-c-star-category} 
There exists a category $\PSDO^*_{\GG,\KK}$ with the properties \textup{(i)-(iv)} above.    
\end{theorem}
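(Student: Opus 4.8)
The strategy is to carry out the constructions of Section~\ref{sec-pseudodifferential-ops} uniformly in the deformation parameter, i.e. to develop a pseudodifferential calculus along the fibres of the submersion $\mathbb{M}=K\backslash\GG\to[0,1]$ (with $\mathbb{M}=K\backslash\GG$ the quotient by the left translation action of $K$, a smooth manifold with boundary whose fibre over $t$ is $M_t=K\backslash G_t$), equivariant for the fibrewise right actions of the groups $G_t$, and then to read off the desired $C^*$-category.  Concretely, I would declare a morphism $V_1\to V_2$ in $\PSDO^*_{\GG,\KK}$ to be a sup-norm limit over $t\in[0,1]$ of families $\{A_t\}$ in which each $A_t$ lies in $\Psi^0_{G_t,K}(V_1,V_2)$, all the $A_t$ share one and the same principal symbol $a_0\in\Sigma^0_{G,K}(V_1,V_2)$, and the family is \emph{smooth}: after cutting down by a fixed $\varphi\in C_c^\infty(\mathbb{M})$ and transferring to a chart of $\mathbb{M}$ built from the smooth structure on $\GG$, the complete symbols of the $\varphi A_t\varphi$ depend smoothly on $t$ together with the remaining variables.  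The morphism spaces so defined sit inside the $C^*$-category of uniformly bounded adjointable families $\{T_t\colon[L^2(G_t)\otimes V_1]^K\to[L^2(G_t)\otimes V_2]^K\}$ with the sup norm, and one must check they are closed subspaces stable under composition and adjoint and containing the unit families; granting this, $\PSDO^*_{\GG,\KK}$ is a $C^*$-category by construction.

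The local input that makes this work is the characterization of the smooth structure on $\GG$ recalled before Theorem~\ref{thm-dnc-formulation-of-connes-kasparov}: an exponential-type map $E$ gives a chart in which $\mathbb{M}$ near the section of base points looks like $(\mathfrak{g}/\mathfrak{k})\times[0,1]$, the fibrewise tangent spaces are all identified with $\mathfrak{g}/\mathfrak{k}$ (so that ``the family has a single principal symbol on $\partial(\mathfrak{g}/\mathfrak{k})^*$'' is meaningful, which is property~(ii)), and, crucially, in this chart the fibrewise operators $\varphi A_t\varphi$ are obtained from a single compactly supported H\"ormander operator by the explicit rescaling $v\mapsto E(tv)$, smooth up to and including $t=0$, where it degenerates to the translation-invariant (Fourier-multiplier) operator on $\mathfrak{g}/\mathfrak{k}$ underlying the motion group $G_0$.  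All operations on the calculus — composition, adjoint, and the asymptotic expansions of Section~\ref{sec-psdos-on-manifolds} — are then performed with $t$ as an extra smooth parameter, and since the sum of two operators with a common symbol again has that symbol, both ``smoothness of the family'' and ``common principal symbol'' are preserved by the linear and multiplicative structure.

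It then remains to verify (i)--(iv).  Property~(iv) is immediate: for a morphism of representations $T\colon V_1\to V_2$ the induced family $\{T_t\}$ is a bundle map with constant (parameter-independent) principal symbol $T$ and is manifestly a smooth family; in particular the identity families lie in $\PSDO^*_{\GG,\KK}$.  For~(iii), the principal-symbol functor $\{A_t\}\mapsto a_0$ is well defined and is multiplicative and $\ast$-preserving by the fibrewise facts of Section~\ref{sec-pseudodifferential-ops}; it is continuous because the symbol of $A_t$ is bounded in sup norm by $\|A_t\|_{L^2}$ fibrewise, hence by $\sup_t\|A_t\|$; and it is onto the dense subalgebra $\Sigma^0_{G,K}$ of $\Symb^*_{G,K}$ by the family version of Stetk\ae r's averaging procedure \cite{Stetkaer71} — quantize a given smooth symbol in the chart above, cut it down by a fixed $\varphi$ with $\mathsf{Av}(\varphi^2)=\operatorname{id}$ on every fibre, and average over $G_t$, the averaged family being smooth by the family version of Lemma~\ref{lem-continuity-of-averaging} and the continuity of the Haar measures defining $C^*_r(\GG)$.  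To identify the kernel of the symbol functor with $\CC^*_{\GG,\KK}$, I would first note that composing a smoothing family with an order zero family gives a smoothing family (the fibrewise computation of Section~\ref{subsec-smoothing-operators}, carried along with $t$), so $\CC^*_{\GG,\KK}$ is a closed ideal in $\PSDO^*_{\GG,\KK}$ — this is property~(i) — and then run the family analogues of Propositions~\ref{prop-order-minus-one-is-compact} and~\ref{prop-order-zero-psdo-extension}: a smooth family of operators of order below $-\dim(K\backslash G)$ has jointly continuous kernels and so lies in $\CC^*_{\GG,\KK}$ after mollification; the order $-1$ case reduces to this by the $C^*$-algebra nilpotency argument in the quotient $\PSDO^*_{\GG,\KK}(V,V)/\CC^*_{\GG,\KK}(V,V)$ with $V=V_1\oplus V_2$; and the averaging argument of Proposition~\ref{prop-order-zero-psdo-extension} then shows that any family with vanishing symbol lies in $\CC^*_{\GG,\KK}$.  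Property~(ii) holds by construction.  The construction with all details is what is carried out in Section~\ref{subsec-construction-of-technical-c-star-category}.

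\textbf{Main obstacle.}  I expect the real difficulty to be the behaviour at the endpoint $t=0$: one has to ensure that the asymptotic expansions defining the fibrewise calculus, the $L^2$-boundedness estimate for order zero operators, and the compactness estimate for order $-1$ operators all hold with constants that stay bounded as $t\to0$, i.e. that the whole calculus extends smoothly across the degenerate fibre.  Because everything localizes to the chart in which the $t$-dependence is the explicit rescaling $v\mapsto E(tv)$ coming from the deformation-to-the-normal-cone structure, this should be true, but it is technical.  A more conceptual alternative would be to realize $\PSDO^*_{\GG,\KK}$ as the order zero part of the pseudodifferential calculus of the deformation (adiabatic) groupoid $\bigsqcup_{t\in[0,1]}(K\backslash G_t)\rtimes G_t$ and to invoke the known symbol exact sequence for Lie-groupoid pseudodifferential operators, at the cost of importing heavier machinery.
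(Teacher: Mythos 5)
Your plan is in the right spirit and would work, but it takes a genuinely different route from the paper at the key definitional step. Where you require that the \emph{complete symbols} of $\varphi A_t\varphi$ in a deformation chart depend smoothly on $t$, the paper (Definition~\ref{def-continuous-family}) instead requires that for every local trivialization of the submersion $\KK\backslash\GG\to\R$ and every compactly supported cutoff, the conjugated family $\Phi_w^*A_w\Phi_w^{*,-1}$ be \emph{$L^2$-operator-norm continuous} in the parameter. It then invokes the Atiyah–Singer result (Theorem~\ref{thm-norm-continuity-of-psdos-under-conjgation-by-families-of-diffeos}) to the effect that conjugation of a fixed compactly supported order zero classical operator by a smooth family of bundle isomorphisms yields a norm-continuous operator family — so the continuity requirement is checkable in a single covering family of charts and is closed under partitions of unity. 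With that as input, the paper's definition of $\PSDO^*_{\GG,\KK}$ (properly supported families sharing a common principal symbol, continuous in the above sense, closed in the sup norm) makes the finiteness of $\sup_t\|A_t\|$ automatic and dispatches properties (i)--(iv) with very little further work.

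The trade-offs are as follows. Your ``smooth complete symbols'' condition is closer in spirit to the van Erp–Yuncken zoom-action viewpoint, which the paper separately discusses in a side remark, and it is more explicit about what happens across the degenerate fibre $t=0$ via the rescaling $v\mapsto E(tv)$. But, as you yourself flag in the ``Main obstacle'' paragraph, this forces you to re-verify that asymptotic expansions, $L^2$-boundedness, and compactness estimates hold uniformly down to $t=0$; none of this is needed in the paper's formulation, because norm-continuity of the conjugated family on the compact parameter interval $[0,1]$ already delivers the required uniform bounds, and the closure of the kernel / the ideal property of $\CC^*_{\GG,\KK}$ follow from the fibrewise Proposition~\ref{prop-order-zero-psdo-extension} applied pointwise together with the continuous-field structure on $\{C^*_r(G_t)\}$. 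In short, the paper buys technical economy by adopting a weaker, purely operator-theoretic continuity requirement backed by an existing result, whereas your version buys conceptual transparency (an explicit family calculus, as in the groupoid approach) at the cost of redoing the uniformity estimates. Both are viable; the statement of Theorem~\ref{thm-existence-of-the-technical-c-star-category} only calls for a category with properties (i)--(iv), and either construction provides one.

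One small point worth making explicit: your ``smooth'' families are a \emph{dense} subset of the paper's ``continuous'' ones but not equal to them; since the morphism spaces are defined as sup-norm closures, the two constructions yield the same $C^*$-category, so there is no conflict — but a careful write-up along your lines should state this to justify that the resulting category coincides with the one built in the paper.
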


Taking this for granted, for now, we shall proceed to study the $K$-theory of the category $\PSDO^*_{\GG,\KK}$. 

\begin{theorem} 
\label{thm-c-k-isomorphism-in-psdo-form}
The Connes-Kasparov isomorphism in Theorem~\textup{\ref{thm-dnc-formulation-of-connes-kasparov}} is equivalent to the assertion that for all $t\in [0,1]$ the functor of evaluation at $t$, 
\[
\PSDO^*_{\GG,\KK} \longrightarrow \PSDO^*_{G_t, K},
\]
induces an isomorphism in $K$-theory.  
\end{theorem}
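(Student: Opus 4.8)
The plan is to deduce the theorem from Theorem~\ref{thm-c-k-isomorphism-stated-using-c-star-categories} by a five-lemma argument applied to the pseudodifferential symbol extension. The crucial structural observation is that the symbol category does not depend on the deformation parameter: for every $t\in[0,1]$ the tangent space of $K\backslash G_t$ at the basepoint is canonically $\mathfrak{g}/\mathfrak{k}$ (for $t=0$ this uses that $\mathfrak{g}_0/\mathfrak{k}\cong\mathfrak{g}/\mathfrak{k}$ as $K$-representations), so the quotient $\Symb^*_{G_t,K}=C(\partial(\mathfrak{g}/\mathfrak{k})^*,\Hom(V_1,V_2))^K$ in Proposition~\ref{prop-order-zero-psdo-extension} is literally the same object $\Symb^*_{G,K}$ for all $t$, and likewise for the family category by property~(ii) of $\PSDO^*_{\GG,\KK}$. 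Consequently, for each $t\in[0,1]$ the evaluation functors assemble into a commuting diagram of short exact sequences of $C^*$-categories
\[
\xymatrix{
0 \ar[r] & \CC^*_{\GG,\KK} \ar[r]\ar[d] & \PSDO^*_{\GG,\KK} \ar[r]\ar[d] & \Symb^*_{G,K} \ar[r]\ar@{=}[d] & 0\\
0 \ar[r] & \CC^*_{G_t,K} \ar[r] & \PSDO^*_{G_t,K} \ar[r] & \Symb^*_{G,K} \ar[r] & 0,
}
\]
in which the top row is property~(iii) of $\PSDO^*_{\GG,\KK}$, the bottom row is Proposition~\ref{prop-order-zero-psdo-extension}, the left-hand vertical arrow is the evaluation functor on smoothing operators that appears in Theorem~\ref{thm-c-k-isomorphism-stated-using-c-star-categories}, the middle vertical arrow is the evaluation functor of the present theorem (this square commutes by property~(iv) together with the way morphisms are defined), and the right-hand vertical arrow is the identity functor.

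The next step is to pass to $K$-theory. An extension of $C^*$-categories gives rise to a natural cyclic six-term exact sequence in $K$-theory, exactly as for $C^*$-algebras; this is available through the $K$-theoretic frameworks cited above for $C^*$-categories \cite{MitchenerKTheory01,Mitchener02,Joachim03}, and naturality with respect to morphisms of extensions is built into the construction. Applying this to the diagram above produces a morphism of six-term exact sequences in which every term involving the symbol category is mapped by the identity, and every term involving $\CC^*$ is mapped by the $K$-theory map induced by evaluation at $t$ on smoothing operators.

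The conclusion now follows from the five lemma, read in both directions. If the Connes-Kasparov isomorphism of Theorem~\ref{thm-dnc-formulation-of-connes-kasparov} holds, then by Theorem~\ref{thm-c-k-isomorphism-stated-using-c-star-categories} the left-hand vertical map is an isomorphism in $K$-theory for every $t$; since the $\Symb^*_{G,K}$-columns are identities, the five lemma applied to the segments
$K_1(\Symb^*_{G,K})\to K_0(\CC^*)\to K_0(\PSDO^*)\to K_0(\Symb^*_{G,K})\to K_1(\CC^*)$
and its $K_1$-analogue forces the middle vertical map $K_*(\PSDO^*_{\GG,\KK})\to K_*(\PSDO^*_{G_t,K})$ to be an isomorphism. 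Conversely, assuming the evaluation functor on $\PSDO^*$-categories induces an isomorphism in $K$-theory for every $t$, the same five-lemma argument — now using the hypothesis on the $\PSDO^*$-columns and the identity on the $\Symb^*_{G,K}$-columns, applied to the segments
$K_1(\PSDO^*)\to K_1(\Symb^*_{G,K})\to K_0(\CC^*)\to K_0(\PSDO^*)\to K_0(\Symb^*_{G,K})$
and its twin — shows that the evaluation functor $\CC^*_{\GG,\KK}\to\CC^*_{G_t,K}$ induces an isomorphism in $K$-theory for every $t$, whence the Connes-Kasparov isomorphism follows from Theorem~\ref{thm-c-k-isomorphism-stated-using-c-star-categories}.

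I expect essentially all of the genuine mathematical content to have been absorbed already into the existence statement for the family category $\PSDO^*_{\GG,\KK}$ with properties (i)--(iv) (Theorem~\ref{thm-existence-of-the-technical-c-star-category}, proved in Section~\ref{subsec-construction-of-technical-c-star-category}) and into the smoothing-operator reformulation of Connes--Kasparov; granting those, the argument above is purely formal. The one point that deserves to be stated with care, rather than being a real obstacle, is the existence and naturality of the six-term exact sequence for extensions of $C^*$-categories, since it is precisely the compatibility of the boundary maps with the evaluation functors that legitimizes the five-lemma step.
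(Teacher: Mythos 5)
Your proof is essentially the paper's own proof: the paper also deduces the theorem from Theorem~\ref{thm-c-k-isomorphism-stated-using-c-star-categories} via the five lemma applied to the map of six-term exact sequences coming from the commuting diagram of symbol extensions, with the identity on the $\Symb^*_{G,K}$-column. You have merely spelled out more explicitly the two directions of the five-lemma argument and the point about naturality of the boundary maps, which the paper leaves implicit.
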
 

\begin{proof}
This follows from Theorem~\ref{thm-c-k-isomorphism-stated-using-c-star-categories} and the five lemma, applied to the diagram of $K$-theory six-term exact sequences associated to the diagram 
\[
\xymatrix{
0 \ar[r]&  \CC^*_{\GG,\KK} \ar[r] \ar[d] & \PSDO^*_{\GG,\KK}\ar[r]\ar[d] &  \Symb^*_{G,K} \ar[r]\ar@{=}[d] &  0
\\
0 \ar[r]& \CC^*_{G_t,K} \ar[r]& \PSDO^*_{G_t,K} \ar[r]&  \Symb^*_{G,K} \ar[r]&  0
}
\]
where the vertical maps are given by evaluation at $t\in [0,1]$.
\end{proof}

Let us  now  examine the category $\PSDO^*_{G_0,K}$. 

\begin{theorem}
\label{thm-k-theory-of-the-motion-group-c-star-category}
Let $G_0$ be the  motion group associated to an almost-connected Lie group $G$ with maximal compact subgroup $K$.  The functor $\CoKa\colon \Rep^*_K \to \PSDO^*_{G_0,K}$ induces an isomorphism in $K$-theory.
\end{theorem}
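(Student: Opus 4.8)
The plan is to compute $\PSDO^*_{G_0,K}$ essentially explicitly, by Fourier analysis on the abelian normal subgroup $\mathfrak g/\mathfrak k$ of the motion group $G_0=K\ltimes(\mathfrak g/\mathfrak k)$, and then to observe that the resulting $C^*$-category is carried by $\CoKa$ onto its ``constant part'' in a way that is a homotopy equivalence, because the spherical compactification $\overline{(\mathfrak g/\mathfrak k)^*}$ is $K$-equivariantly contractible. This runs parallel to, but is much easier than, the real rank one Fourier isomorphism (Theorem~\ref{thm-fourier-isomorphism-in-real-rank-one}).

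The key step is the following Fourier isomorphism. For $G_0$ the space $K\backslash G_0$ is $\mathfrak g/\mathfrak k$ (with $\mathfrak g/\mathfrak k$ acting by translations and $K$ linearly), the induced bundle from a representation $V$ is the trivial bundle, $[L^2(G_0)\otimes V]^K\cong L^2((\mathfrak g/\mathfrak k)^*,V)$ after the Euclidean Fourier transform, and a $G_0$-equivariant operator is exactly a translation-invariant, $K$-equivariant one, hence becomes multiplication by an $\Hom(V_1,V_2)$-valued function of $\xi\in(\mathfrak g/\mathfrak k)^*$. One checks that the Fourier transform induces an isomorphism of $C^*$-categories
\[
\PSDO^*_{G_0,K}(V_1,V_2)\stackrel\cong\longrightarrow C\bigl(\overline{(\mathfrak g/\mathfrak k)^*},\,\Hom(V_1,V_2)\bigr)^K ,
\]
intertwining composition with pointwise composition and adjoint with pointwise adjoint. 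The ingredients are: (a) a $G_0$-equivariant, properly supported smoothing operator is convolution by some $\kappa\in C_c^\infty(\mathfrak g/\mathfrak k,\Hom(V_1,V_2))^K$, whose Fourier transforms are dense in $C_0((\mathfrak g/\mathfrak k)^*,\Hom(V_1,V_2))^K$, so that $\CC^*_{G_0,K}(V_1,V_2)$ maps onto exactly the functions vanishing on the sphere at infinity (consistent with $C^*_r(G_0)\cong C_0((\mathfrak g/\mathfrak k)^*)\rtimes K$ and with the computation of $K_*(\CC^*_{G_0,K})$ in Section~\ref{subsec-k-theory-for-smoothing-category}); (b) a translation-invariant classical order zero symbol is a bounded function of $\xi$ alone that extends continuously to $\overline{(\mathfrak g/\mathfrak k)^*}$, with boundary value its principal symbol, so the symbol quotient $\Symb^*_{G_0,K}(V_1,V_2)=C(\partial(\mathfrak g/\mathfrak k)^*,\Hom(V_1,V_2))^K$ is restriction to that sphere; (c) Stetkaer's averaging construction \cite{Stetkaer71} produces, for every continuous $K$-equivariant symbol, an honest properly supported equivariant order zero classical $\Psi$DO, so the symbol map of Proposition~\ref{prop-order-zero-psdo-extension} is surjective onto $\Symb^*_{G_0,K}$. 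Combining (a)--(c): every smooth $K$-equivariant function on $\overline{(\mathfrak g/\mathfrak k)^*}$ is the symbol of such an operator modulo a $C_0$-error, hence lies in the image; since the Fourier map is isometric (the $L^2$ multiplier norm equals the supremum of the fibrewise operator norms of the symbol), injective, and has dense image, it is the asserted isomorphism. I expect step (b)--(c), i.e.\ this Fourier isomorphism, to be the main obstacle; the delicate points are the bookkeeping of the two ways $K$ enters (linear action on $(\mathfrak g/\mathfrak k)^*$ versus conjugation on $\Hom$), and the fact that translation-invariant order zero operators, while never compact, are nonetheless $L^2$-multipliers with the expected norm.

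Granting this, $\CoKa\colon\Rep^*_K\to\PSDO^*_{G_0,K}$ becomes the functor sending an intertwiner $T\colon V_1\to V_2$ to the constant function $T$, i.e.\ pullback along $\overline{(\mathfrak g/\mathfrak k)^*}\to\{0\}$. Choosing a $K$-invariant norm on $\mathfrak g/\mathfrak k$ so that $K$ acts orthogonally on the closed-ball model of $\overline{(\mathfrak g/\mathfrak k)^*}$, the radial contraction $r_s(\xi)=s\xi$ ($s\in[0,1]$) is $K$-equivariant, equals the identity at $s=1$ and the constant map to the $K$-fixed centre $0$ at $s=0$, and $s\mapsto(a\mapsto a\circ r_s)$ is norm-continuous. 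Thus precomposition with $r_s$ is a homotopy of functors $\PSDO^*_{G_0,K}\to\PSDO^*_{G_0,K}$ from the identity to $\CoKa\circ\mathrm{ev}_0$, where $\mathrm{ev}_0$ denotes evaluation at $0$; and $\mathrm{ev}_0\circ\CoKa$ is literally the identity functor on $\Rep^*_K$. By homotopy invariance of $K$-theory for $C^*$-categories (as in \cite{MitchenerKTheory01}), $\CoKa$ and $\mathrm{ev}_0$ induce mutually inverse isomorphisms in $K$-theory, proving the theorem.
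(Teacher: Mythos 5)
Your proposal is correct and follows essentially the same route as the paper: the Fourier isomorphism identifying $\PSDO^*_{G_0,K}(V_1,V_2)$ with $C\bigl(\overline{(\mathfrak g/\mathfrak k)^*},\Hom(V_1,V_2)\bigr)^K$, followed by the observation that $\CoKa$ is the inclusion of constant functions with one-sided inverse $\mathrm{ev}_0$, and that $\CoKa\circ\mathrm{ev}_0$ is homotopic to the identity via the radial ($K$-equivariant) contraction of the compactification. You spell out a few of the intermediate verifications (isometry of the Fourier map, denseness, the explicit $r_s$-homotopy) that the paper treats tersely, but the strategy and the key ingredients are identical.
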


\begin{proof} 
In the case of the motion group, the homogeneous space $K\backslash G_0$ identifies with the vector space $\mathfrak{g} / \mathfrak{k}$, and the $G_0$-equivariant operators 
  \[
    T \colon [C_c^\infty (  G_0 )\otimes  V_1]^K \longrightarrow [C_c^\infty (  G_0 )\otimes  V_2]^K
 \]
are precisely the operators 
\[
 T \colon C_c^\infty (\mathfrak{g}/\mathfrak{k})\otimes  V_1 \longrightarrow C_c^\infty (\mathfrak{g}/\mathfrak{k}) \otimes  V_2
\]
that are both $K$-equivariant and translation-invariant. 

Being translation-invariant, the operators $T$ above may be analyzed using the Fourier transform. We find that the properly supported, $G_0$-equi\-variant, order $0$ classical pseudodifferential operators $T$ as above are precisely the operators of convolution by $\Hom (V_1,V_2)$-valued, compactly supported distributions on $\mathfrak{g}/\mathfrak{k}$ whose Fourier transforms are $K$-equivariant, order zero classical symbol functions on $(\mathfrak{g}/\mathfrak{k})^*$ with values in $\Hom (V_1,V_2)$. 

Now, every order zero, classical symbol function extends by continuity to the compactification  of $(\mathfrak{g}/\mathfrak{k})^*$ in Definition~\ref{def-spherical-compactification}, and these extended functions separate the points of the compactification.
This yields an isomorphism of $C^*$-categories from  $\PSDO^*_{G_0,K}$ to the $C^*$-category whose objects are the finite-dimen\-sional unitary representations of $K$, and whose morphisms are elements of the spaces 
\[
C \bigl (\overline {(\mathfrak{g} / \mathfrak{k})^*} , \Hom (V_1,V_2)\bigr )^K .
\]
  The functor $\CoKa$ corresponds to the inclusion 
\[
\Hom (V_1,V_2)^K \longrightarrow C \bigl (\overline {(\mathfrak{g} / \mathfrak{k})^*} , \Hom (V_1,V_2)\bigr )^K 
\]
as constant functions.  It has a left-inverse 
\[
C \bigl (\overline {(\mathfrak{g} / \mathfrak{k})^*} , \Hom (V_1,V_2)\bigr )^K\longrightarrow  \Hom (V_1,V_2)^K 
\]
given by evaluation of functions at $0$.  The composition of the two functors in the opposite order is the  functor 
\[
F\colon C \bigl (\overline {(\mathfrak{g} / \mathfrak{k})^*} , \Hom (V_1,V_2)\bigr )^K  \longrightarrow C \bigl (\overline {(\mathfrak{g} / \mathfrak{k})^*} , \Hom (V_1,V_2)\bigr )^K 
\]
that replaces each function by the constant function with the same value at $0$.  Because the compactification $\overline {(\mathfrak{g} / \mathfrak{k})^*}$ is $K$-equivariantly contractible, the composition is homotopic through a family of functors $F_t$ to the identity functor (the continuity condition on the family is that for every morphism $a$, $F_t(a)$ is a norm-continuous function of $t$).  It therefore follows from the homotopy-invariance of  $K$-theory that $F$ induces the identity map on $K$-theory.  Thus $\CoKa$ induces an isomorphism on $K$-theory, as required.
\end{proof}

With this, we can complete the proof of Theorem~\ref{thm-k-theory-of-the-c-star-category}.  By the  assumptions (i) and (iii) above, the    functor 
\begin{equation}
    \label{eq-eval-at-zero-functor}
  \PSDO^*_{\GG,\KK} \longrightarrow 
 \PSDO^*_{G_0,K}  
\end{equation}
of evaluation at $t{=}0$ is surjective on Hom-spaces. Its kernel---the ideal in $\PSDO^*_{\GG,\KK}$ of comprised of all morphisms that map to a zero morphism---consists of those  operator families that define morphisms in $\PSDO^*_{\GG,\KK}$ and that vanish at $t{=}0$.  In particular, the principal symbols of such families vanish, and so by (iii) the families are morphisms in $\CC^*_{\GG,\KK}$.  Therefore the kernel is precisely the ideal in  $\CC^*_{\GG,\KK}$ whose morphisms are the  families that vanish at $t{=}0$.  This ideal is contractible in the sense of  \cite[Sec.\,3]{MitchenerKTheory01} and so has vanishing $K$-theory \cite[Prop.\,3.23]{MitchenerKTheory01}. Therefore, by the $K$-theory long-exact sequence associated to an extension of $C^*$-categories \cite[Cor.\,3.20]{MitchenerKTheory01}, the functor \eqref{eq-eval-at-zero-functor} induces an isomorphism in $K$-theory.

Next, we may define a  functor 
\begin{equation}
    \label{eq-families-ck-functor}
\pmb{\CoKa}\colon \Rep^*_K \longrightarrow \PSDO^*_{\GG,\KK} 
\end{equation}
exactly as we defined the original version in Definition~\ref{def-connes-kasparov-functor}, but using families, and using property (iv) of the category $\PSDO^* _{\GG,\KK}$ listed above. The composition 
\begin{equation}
    \label{eq-composition-with-eval-at-0}
\xymatrix@C=40pt{
 \Rep^*_K   \ar[r]^{\pmb{\CoKa}} &    \PSDO^*_{\GG,\KK} 
\ar[r]^-{\eqref{eq-eval-at-zero-functor}} &  K_* (\PSDO^*_{G_0,K} ) 
}
\end{equation}
is precisely the morphism that was proved in Theorem~\ref{thm-k-theory-of-the-motion-group-c-star-category} to be an isomorphism at the $K$-theory level.  So  the functor \eqref{eq-families-ck-functor} induces an isomorphism on $K$-theory, too.

So far we have not used the Connes-Kasparov isomorphism in Theorem~\ref{thm-dnc-formulation-of-connes-kasparov}.  But if we now use it in the equivalent form presented in Theorem~\ref{thm-c-k-isomorphism-in-psdo-form}, then we may conclude that both morphisms in the composition
\begin{equation}
    \label{eq-composition-with-eval-at-t}
\xymatrix@C=40pt{
K_*(\Rep^*_K  ) \ar[r]^{\pmb{\CoKa}} &   K_*(\PSDO^*_{\GG,\KK} ) 
\ar[r]^{\text{eval.\ at $t$}} &  K_* (\PSDO^*_{G_t,K} ) 
}
\end{equation}
are isomorphisms for all $t \in [0,1]$.  But when $t \neq 0$, the composition \eqref{eq-composition-with-eval-at-t} is precisely the morphism in the statement of Theorem~\ref{thm-k-theory-of-the-c-star-category}, and so the proof of Theorem~\ref{thm-k-theory-of-the-c-star-category} is complete. 

In the reverse direction, if we assume Theorem~\ref{thm-k-theory-of-the-c-star-category}, which is to say if we assume that the composition in \eqref{eq-composition-with-eval-at-t} is an isomorphism when $t \neq 0$, then since we already proved, without recourse to the Connes-Kasparov isomorphism, that \eqref{eq-families-ck-functor} induces an isomorphism on $K$-theory, it follows that evaluation at $t$ induces an isomorphism
\[
 K_*(\PSDO^*_{\GG,\KK} ) \stackrel \cong \longrightarrow  K_* (\PSDO^*_{G_t,K} ) 
\]
for all $t\in [0,1]$ (the case $t = 0$ is covered by Theorem~\ref{thm-k-theory-of-the-motion-group-c-star-category}), and according to Theorem~\ref{thm-c-k-isomorphism-in-psdo-form} this implies the Connes-Kasparov isomorphism.

\subsection{Construction of a C*-category from families of pseudodifferential operators}
\label{subsec-construction-of-technical-c-star-category}

In this section we shall construct the category $\PSDO_{\GG,\KK}$ that is used in the proof of Theorem~\ref{thm-k-theory-of-the-c-star-category}. 

The main issue is to decide when a family of operators \eqref{eq-family-of-operators-on-fibers-of-dnc} acting on the fibers of the smooth family of homogeneous spaces  $\pmb K \backslash \pmb G$ should be deemed to be continuous.  There are a number of equivalent ways of expressing the continuity condition, and  we shall choose the one that seems to us to be the quickest (what seems to us to be the most conceptual approach will be outlined in the next section).  The approach that we shall follow is based on the fact, first documented in \cite{AtiyahSingerFour}, that if one conjugates a compactly supported, order zero pseudodifferential operator  by a smooth family of diffeomorphisms, then one obtains an $L^2$-operator norm-continuous family of bounded Hilbert space operators: 

\begin{theorem}[See for instance {\cite[Prop.~1.3]{AtiyahSingerFour}}]
\label{thm-norm-continuity-of-psdos-under-conjgation-by-families-of-diffeos}
Let $E_1$ and $E_2$  be  smooth vector bundles over  smooth manifolds $M_1$ and $M_2$, respectively. Let $V$ be an open subset of some Euclidean space and let 
\[
\Psi_v \colon E_1 \stackrel \cong \longrightarrow E_2\qquad (v\in V)
\]
be a smooth family of vector bundle isomorphisms  that covers  a smooth family of diffeomorphisms from $M_1$ to $M_2$.  If $A$ is a compactly supported, classical pseudodifferential operator of order zero on $M_2$, acting on the sections of $E_2$, then the operators 
\[
\Psi_v  ^* A \Psi_v^{*,-1} \colon L^2 (M_1; E_1) \longrightarrow L^2 (M_1; E_1) \qquad (v \in V)
\]
vary norm-continuously with $v$.
\end{theorem}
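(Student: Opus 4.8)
The plan is to reduce the assertion to a purely local statement on Euclidean space, split off the smoothing part of $A$, and control the remaining near-diagonal part by combining H\"ormander's change-of-variables formula for symbols with a quantitative form of the $L^2$-boundedness theorem. First I would fix $u_0\in U$; it suffices to prove norm-continuity of $u\mapsto \Phi_u^* A\Phi_u^{*,-1}$ at $u_0$. Setting $\Psi_u = \Phi_u\circ \Phi_{u_0}^{-1}$, a smooth family of vector bundle automorphisms of $E_2$ with $\Psi_{u_0}=\mathrm{id}$, one has $\Phi_u^* A\Phi_u^{*,-1} = \Phi_{u_0}^*\bigl(\Psi_u^* A\Psi_u^{*,-1}\bigr)\Phi_{u_0}^{*,-1}$, and conjugation by the \emph{fixed} isomorphism $\Phi_{u_0}$ preserves norm-continuity; so I may assume $M_1=M_2=M$, $E_1=E_2=E$, and $\Phi_{u_0}=\mathrm{id}$, so that the underlying diffeomorphism $\varphi_{u_0}$ is the identity. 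Since $A$ is compactly supported, and since for $u$ near $u_0$ the diffeomorphisms $\varphi_u$ carry a fixed compact neighbourhood of $\Supp(A)$ into another fixed compact set, a smooth partition of unity subordinate to finitely many coordinate charts trivialising $E$ then reduces matters to $M=\R^n$, $E=\R^n\times\C^N$, $\varphi A\varphi = A$ for some $\varphi\in C_c^\infty(\R^n)$, and $\Phi_u$ a smooth family of $\mathrm{GL}_N(\C)$-valued functions covering a smooth family $\varphi_u$ of diffeomorphisms of $\R^n$ with $\varphi_{u_0}=\mathrm{id}$, $\Phi_{u_0}=\mathrm{id}$.

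Next I would write $A = A' + R$, where $R$ is a matrix-valued smoothing operator with Schwartz kernel in $C_c^\infty(\R^n\times\R^n)$ and $A'$ has Schwartz kernel supported in an arbitrarily small neighbourhood of the diagonal. The conjugate $\Phi_u^* R\Phi_u^{*,-1}$ has Schwartz kernel $\Phi_u(x)^{-1} k_R(\varphi_u(x),\varphi_u(y))\Phi_u(y)\,|\det D\varphi_u(y)|$, a smooth function of $(x,y,u)$ supported in a fixed compact set for $u$ near $u_0$; since its $L^2$-operator norm is dominated by the supremum norm of the kernel times the volume of its support, it varies continuously in $u$. So only $A'$ remains.

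For the near-diagonal part, H\"ormander's invariance of pseudodifferential operators under diffeomorphisms \cite[Thm.~18.1.17]{HormanderVolIII} shows that $\Phi_u^* A'\Phi_u^{*,-1}$ is again a compactly supported classical pseudodifferential operator of order zero; fixing the left (Kohn--Nirenberg) quantisation, I would write it as $\mathrm{Op}(a_u)+R_u$, where $R_u$ is smoothing with kernel continuous in $u$ (handled as in the previous step) and $a_u\in S^0(\R^n\times\R^n)$ is produced from the symbol of $A'$ and the jets of $\varphi_u$, $\Phi_u$ by H\"ormander's oscillatory-integral formula. At $u=u_0$ this reduces to $a_{u_0}=a$, the symbol of $A'$, since $\varphi_{u_0}$, $\Phi_{u_0}$ are trivial; and as the jets of $\varphi_u$, $\Phi_u$ vary $C^\infty$-continuously in $u$, one gets $a_u\to a$ in every seminorm of $S^0$ as $u\to u_0$. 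Invoking the quantitative form of the $L^2$-boundedness theorem --- the Calder\'on--Vaillancourt estimate, refining Theorem~\ref{thm-l-2-boundedness-of-order-zero-psdo}, which bounds the $L^2$-operator norm of an order-zero pseudodifferential operator by a fixed finite number of $S^0$-seminorms of its symbol --- yields $\|\mathrm{Op}(a_u)-\mathrm{Op}(a)\|_{L^2\to L^2}\to 0$, hence $\|\Phi_u^* A'\Phi_u^{*,-1}-A'\|_{L^2\to L^2}\to 0$, which together with the smoothing part completes the argument.

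I expect the main obstacle to be the third step: one must check that H\"ormander's change-of-variables construction genuinely produces a complete symbol depending continuously on the parameter in the Fr\'echet topology of $S^0$ --- that is, that the continuous dependence on $\varphi_u$, $\Phi_u$ can be carried through the stationary-phase/Kuranishi argument --- and then pair this with the finite-seminorm operator-norm bound. Both ingredients are standard, but the bookkeeping (uniform support control, and tracking how many symbol derivatives are needed) is where the care lies. One could alternatively sidestep the explicit symbol formula by representing $\Phi_u^* A'\Phi_u^{*,-1}$ via a parameter-dependent amplitude $b_u(x,y,\xi)$ and using the $L^2$-boundedness of amplitude operators, but the shape of the argument is unchanged.
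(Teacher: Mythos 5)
The paper does not prove this theorem; the bracketed ``See for instance \cite[Prop.~1.3]{AtiyahSingerFour}'' is the whole of the paper's treatment. Your argument is correct and is essentially the standard one (and essentially the Atiyah--Singer argument): localize using the compact support of $A$ and a partition of unity, split off a smoothing part handled by kernel continuity, and treat the near-diagonal part via a Kuranishi-type change of variables, yielding an amplitude depending continuously on $u$ in $S^0$-seminorms, which is then fed into the quantitative $L^2$-boundedness estimate. As you note at the end, the amplitude route is cleaner than passing through the asymptotic expansion for the left symbol, since it avoids having to track a $u$-dependent smoothing remainder $R_u$ through the expansion. The one loose end worth tightening is in your localization step: after inserting a partition of unity $\sum_j \psi_j$ you get terms $\psi_j\,(\Phi_u^*A\Phi_u^{*,-1})\,\psi_k$, and when $j\ne k$ with $\Supp\psi_j\cap\Supp\psi_k=\emptyset$ these are not localized to a single chart. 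They are, however, smoothing operators (because $A$ is pseudo-local) with kernels depending continuously on $u$, so they should simply be folded into the smoothing-kernel argument of your step on $R$ rather than the Euclidean-space argument. With that small addition, the proof is complete.
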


\begin{definition} 
\label{def-continuous-family}
Let $\pi\colon  \pmb M \to W$ be a submersion of smooth manifolds with fibers $M_w$ ($w\in W)$ and let $\pmb E $ be a smooth vector bundle over $\pmb M$, restricting to bundles $E_w$ over the fibers $M_w$.  We shall say that a family of classical, pseudodifferential operators 
\[
A_w \colon C_c^\infty  (M_w;E_w) \longrightarrow C^\infty (M_w; E_w)\qquad (w\in W) 
\]
of order zero is \emph{continuous} if  
\begin{enumerate}[\rm (a)]

\item for every pair of open sets  $\pmb U\subseteq \pmb M$ and $V\subseteq W$, 

\item for every smooth, compactly supported function $\varphi $ on $\pmb{U}$,

\item for every commuting diagram
\[
\xymatrix{
L\times V \ar[r]^-{\Phi}\ar[d]_{\pi_V} & \pmb U\ar[d]^{\pi\vert_{\pmb U}}
\\
V \ar[r]_{\text{incl.}} & W
}
\]
in which $L$ is a smooth manifold, $\Phi$ is a diffeomorphism and $\pi_V$ is the projection onto the factor $V$, and    

\item for every vector bundle isomorphism
\[
\Psi \colon  L{\times} V {\times} \C^k \stackrel \cong \longrightarrow \pmb  E \vert _{\pmb U} 
\]
that covers $\Phi$, 
\end{enumerate}
the family of compactly supported, order zero, classical pseudodifferential  operators 
\[
\Psi^*_v  \varphi_v A_v \varphi_v \Psi_v^{*,-1} \colon C_c^\infty (L;\C^k ) \longrightarrow  C_c^\infty (L;\C^k )\qquad (v\in V)
\]
extends to a norm-continuous family of operators between $L^2$-spaces. Here 
\[
\Psi_v^*\colon C_c^\infty ( M_v\cap \pmb U; E) \stackrel \cong \longrightarrow 
C_c^\infty (L; \C^n)
\]
is the isomorphism of linear spaces induced from $\Psi$, and $\varphi_v$ is the restriction of $\varphi$ to $M_v\cap \pmb U$.
\end{definition} 

It follows from Theorem~\ref{thm-norm-continuity-of-psdos-under-conjgation-by-families-of-diffeos} that the continuity condition in the definition above  can be checked using a single covering family of local trivializations in the sense of (a)-(d) above, and that continuous families may be constructed using such covering families and partitions of unity. 

We are now almost ready to define the category $\PSDO^*_{\GG,\KK}$.

\begin{definition} 
We shall say that a family of equivariant operators 
\[
A _t \colon [C_c^\infty (  G_t )\otimes  V_1]^K  \longrightarrow [C_c^\infty (  G_t )\otimes  V_2]^K \qquad (t \in \R) 
\]
is a \emph{properly supported family} if all of the individual supports of the operators $A_t$, in $K \backslash G_t \times K\backslash G_t$, are included in a closed subset of $\KK\backslash \GG \times_\R\KK\backslash \GG$ for which the two coordinate projections to $\KK\backslash \GG$ are proper maps.  
\end{definition} 

\begin{definition}
\label{def-c-star-category-of-families}
    We shall denote by $\PSDO^*_{\GG,\KK}$ the $C^*$-category whose objects are the finite-dimensional unitary representations of $K$, and whose morphism spaces are the  norm-closures of the collections of all restrictions to $[0,1]$ of properly supported  families 
    \[
    A_t \colon  [C_c^\infty (  G_t )\otimes  V_1]^K  \longrightarrow [C_c^\infty (  G_t )\otimes  V_2]^K \qquad (t \in \R) 
    \]
    of equivariant, order zero, classical pseudodifferential operators that share the same principal symbol; that are smooth in the sense their compositions on the left or on the right with any smooth family of equivariant smoothing operators are again smooth families of smoothing operators; and that are norm-continuous in the sense of Definition~\ref{def-continuous-family}.  The norm here is 
    \[
    \| \{ A_t\} \| = \sup \, \{ \|A_t \|_{L^2\to L^2} : t \in [0,1]\,\}.
    \]
\end{definition}

\begin{remark} 
It follows from Proposition~\ref{prop-l-2-versus-s-star-boundedness} and the continuity condition that the norm above  is finite.
\end{remark} 

The category  $\CC^*_{\GG,\KK}$ is included in $\PSDO^*_{\GG,\KK}$  as a subcategory because if $\{ A_t\}$ is a smooth family of properly supported, equivariant smoothing operators on the fibers $G_t$ of $\pmb G$, with integral kernels $k_t(g_t,h_t)$ that vary smoothly with  $t$, $g_t$ and $h_t$, then any of the local trivializations constructed using the data (a)-(d) of Definition~\ref{def-continuous-family} is a smooth family of compactly supported smoothing operators, and so certainly varies norm-continuously with $t$.    Moreover it is included as an ideal by our smoothness assumption in Definition~\ref{def-c-star-category-of-families}, and so $\CC^*_{\GG,\KK}$ has property (i) in Section~\ref{sec-k-theory-for-psdo-category}.  

Property (ii) is built into Definition~\ref{def-c-star-category-of-families}. Surjectivity of the principal symbol functor (on morphism spaces), may be proved using the averaging construction  in \cite{Stetkaer71} and in Section~\ref{sec-psdos-on-proper-homogeneous-spaces}.  The fact that $\CC^*_{\GG,\KK}$ is the kernel of the principal symbol functor is proved similarly, by a variation of the argument in Section~\ref{subsec-psdo-extension-in-equivariant-context}.  Finally,
property (iv) is clear.

\subsection{Comments  on the van Erp-Yuncken theory and the Mackey bijection}
Erik van Erp and Robert Yuncken have presented in \cite{VanErpYuncken19} a new account of the theory of classical pseudodifferential operators that is particularly well adapted to the general perspectives of this paper. We have not used their approach here because some aspects of their theory, related to $L^2$-operator norm estimates, are not yet well-documented in the literature. But their work is so relevant to our purposes that we want to at least mention it here.

Central to the van Erp-Yuncken approach is the \emph{zoom action}
\[
\alpha_\lambda \colon \GG \longrightarrow \GG\qquad (\lambda > 0)
\]
of the group $\R^+=\{ \lambda \in \R : \lambda > 0\}$ of  positive real numbers  on the deformation space $\GG$ by diffeomorphisms. This is defined by the formula 
\[
\begin{cases}
G_t\ni g \stackrel {\alpha _\lambda} \longmapsto g\in G_{\lambda t} & t {\ne } 0
\\
G_0\ni (k,X) \stackrel {\alpha _\lambda} \longmapsto (k,\lambda ^{-1} X)\in G_{ 0} &  t{=}0 .
\end{cases}
\]
Van Erp and Yuncken do not consider the equivariant context that we have studied in this paper, but if they were to have done so, then they would have \emph{defined} equivariant order zero pseudodifferential operators as those operators on the $t{=}1$ fiber of the family of homogeneous spaces $\KK\backslash \GG$ that extend to smooth families of operators that remain fixed, modulo smooth families of smoothing operators, under the zoom action.  With this, the category $\PSDO^*_{\GG,\KK}$ that we defined above would have been built into their theory from the start---as the category whose morphism sets are generated by families that are invariant under the zoom action, modulo smooth families of smoothing operators. 

Another topic that we must at least mention is the Mackey bijection for real reductive groups \cite{Higson08,Afgoustidis19,Afgoustidis21}.  This involves the same deformation space $\pmb{G}$ that appears here, and it is also  intimately connected to the Connes-Kasparov isomorphism. Not only that, but the Mackey bijection is also  closely related to the concept of tempiric representation that will figure prominently in the coming sections. So it is natural to wonder if there might be a direct connection between pseudodifferential operators and the Mackey bijection.  Unfortunately, for the time being at least, we can shed no light on this question.

\section{A Fourier Transform for Pseudodifferential Operators}
\label{sec-fourier-iso-thm}

 The purpose of this section is to analyze the $C^*$-category $\PSDO^*_{G,K}$   
from the perspective of  tempered representation theory,. We shall construct a Fourier transform functor from $\PSDO^*_{G,K}$ to a new $C^*$-category, defined purely in terms of tempered representation theory, and   we shall prove that it is an equivalence of categories for real reductive groups $G$ of real rank one (the story for higher-real rank groups is more complicated).
 
\emph{From now on, and for the rest of the paper, we shall denote by  $G$ a real reductive group.} For definiteness we shall follow the definition of real reductive group  given by Vogan in his monograph \cite{VoganGreenBook}. As usual, we  denote by $K$ a maximal compact subgroup of $G$. We shall say that a unitary representation $\pi$  of $G$ is \emph{tempered} if the formula 
\[
\pi (f)v = \int _G f(g)\pi(g)v\, dg \qquad (f\in C_c^\infty (G),\,\, v\in H_\pi)
\]
determines a $C^*$-algebra representation of the reduced group $C^*$-algebra $C^*_r (G)$ as bounded operators on $H_\pi$, or equivalently if and only if $\pi$ is weakly contained in the regular representation of $G$.   It is shown in  \cite{CowlingHaagerupHowe} that a  unitary representation $\pi$ is tempered in this sense if and only if its $K$-finite matrix coefficient functions are tempered in the sense of Harish-Chandra. 

\subsection{Multiplicity spaces} 
Harish-Chandra showed in \cite[Thm.\,6]{HarishChandra53} that if a unitary representation $\pi$ is irreducible, then it is   \emph{admissible} in the sense that it includes each irreducible representation of $K$ with at most finite multiplicity.  Let $\pi$ be an admissible  unitary  representation  of a real reductive group $G$ on a Hilbert space $H_\pi$, and denote by $H^\infty _\pi$ the space of smooth vectors in $H_\pi$ (see for instance \cite[Ch.\ III]{KnappOverview}).  If $\Lambda$ is a compactly supported distribution on $G$, then there is unique linear   operator 
\[
\pi(\Lambda) \colon H^\infty _\pi \longrightarrow H^\infty _\pi 
\]
such that 
\[
\langle w_1 ,\pi(\Lambda) w_2\rangle = \int_G \Lambda (g) \langle w_1, \pi(g) w_2\rangle\, dg  \qquad \forall w_1\in H_\pi ,\,\, \forall w_2 \in H^\infty _\pi.
\]
Let $\mathcal{E}'(G)$ denote the space of compactly supported distributions on $G$. From the above, if $A \in [ \mathcal{E}'(G) \otimes \Hom(V_1,V_2)]^{K\times K}$, say $A = \sum_k \Lambda_k \otimes T_k$, then we obtain an operator 
\begin{equation}
    \label{eq-action-of-compactly-supported-dist-on-mult-space}
\pi(A) \colon [ H^\infty _\pi\otimes V_1]^K \longrightarrow [ H^\infty _\pi \otimes V_2] ^K
\end{equation}
by means of the formula 
\[
\pi(A) \colon  w\otimes v \longmapsto \sum_k \pi(\Lambda_k)w\otimes T_k v .
\]
The operator \eqref{eq-action-of-compactly-supported-dist-on-mult-space} may also be characterized by the formula 
\begin{multline}
    \label{eq-action-of-compactly-supported-dist-on-mult-space2}
\pi(A) (\pi(f) F) = \pi(Af)F
\\
\forall f \in [C_c^\infty(G)\otimes \End (V_1)]^{K\times K},\,\, \forall F \in [C_c^\infty (G) \otimes V_1]^K .
\end{multline}
This uses the action in \eqref{eq-action-of-smoothing-ops-in-standard-form}, plus the evident convolution action 
\begin{multline*}
[ \mathcal{E}'(G) \otimes \Hom(V_1,V_2)]^{K\times K} \times  [C_c^\infty(G)\otimes \End (V_1)]^{K\times K} 
\\
\longrightarrow  [C_c^\infty(G)\otimes \Hom (V_1,V_2)]^{K\times K}.
\end{multline*}

\begin{definition}
We shall refer to the spaces $[H^\infty _\pi \otimes V]^K$ as the \emph{multiplicity spaces} of $\pi$ (because when the $K$-representation $V$ is irreducible, the dimension of the multiplicity space is the multiplicity with which $V^*$ occurs in $H_\pi$).  Note that the multiplicity spaces of an admissible representation are finite-dimensional.
\end{definition}

Of course, all of the above applies  when $A$ is a properly supported, $G$-equivariant, order zero  classical pseudodifferential operator,
\begin{equation}
    \label{eq-psdo-for-discussion-of-fourier-transform}
A \colon [C_c^\infty (G) \otimes V_1]^K \longrightarrow [C_c^\infty (G) \otimes V_2]^K .
\end{equation} 
But we shall also need to extend the construction in \eqref{eq-action-of-compactly-supported-dist-on-mult-space} to $L^2$-operator-norm limits  order zero pseudodifferential operators.  To prepare for this, let us make the following observation:

\begin{lemma}
If $\pi$ is an admissible  representation, then   the inclusion $H^\infty _\pi\hookrightarrow  H_\pi$  induces an isomorphism of finite-dimensional vector spaces
 \[
 \bigl [ H_\pi^\infty  \otimes V\bigr ] ^K\stackrel{\cong}\longrightarrow \bigl [ H_\pi \otimes V\bigr ] ^K .
 \]
 \end{lemma}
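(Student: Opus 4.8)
The plan is to reduce the lemma to the single claim that each $K$-isotypic subspace of $H_\pi$ already lies in the space of smooth vectors $H^\infty_\pi$, and then to read off both the isomorphism assertion and the finite-dimensionality from the $K$-isotypic decomposition of $H_\pi$ together with admissibility.

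First I would dispose of the formal parts. Since $V$ is finite-dimensional no completions are needed: $H^\infty_\pi \otimes V \cong (H^\infty_\pi)^{\oplus \dim V}$ sits inside $H_\pi \otimes V \cong H_\pi^{\oplus \dim V}$, and the map in question is simply the restriction of this inclusion to $K$-invariants, so it is injective, and we may regard $[H^\infty_\pi \otimes V]^K$ as a subspace of $[H_\pi \otimes V]^K$. Decomposing $H_\pi$ into its $K$-isotypic components $H_\pi(\tau)$ ($\tau \in \widehat{K}$) and tensoring with the finite-dimensional $V$ gives $[H_\pi \otimes V]^K = \bigoplus_\tau [H_\pi(\tau)\otimes V]^K$; the summand indexed by $\tau$ vanishes unless $\tau^*$ occurs as a constituent of $V$, so the sum is finite, and since $\pi$ is admissible each $H_\pi(\tau)$ is finite-dimensional, so $[H_\pi\otimes V]^K$ is finite-dimensional. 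Everything will then follow once we show $H_\pi(\tau) \subseteq H^\infty_\pi$ for each $\tau$.

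The heart of the matter is therefore this inclusion. Let $P_\tau$ denote the $\tau$-isotypic orthogonal projection on $H_\pi$, which by Definition~\ref{def-p-s} (with $S = \{\tau\}$) is convolution over $K$ against a smooth function. I would argue $H_\pi(\tau) = P_\tau H_\pi \subseteq H^\infty_\pi$ in two steps. Step one: $P_\tau$ carries $H^\infty_\pi$ into itself; indeed, viewing the relevant function on $K$ as a compactly supported distribution $\Lambda$ on $G$ (supported on the compact set $K$), the restriction of $P_\tau$ to $H^\infty_\pi$ agrees with the operator $\pi(\Lambda)\colon H^\infty_\pi \to H^\infty_\pi$ recalled earlier in this section. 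Step two: $P_\tau H^\infty_\pi$ is dense in $P_\tau H_\pi$, because $H^\infty_\pi$ is dense in $H_\pi$ (see \cite[Ch.\ III]{KnappOverview}) and $P_\tau$ is bounded, so $P_\tau H_\pi = P_\tau \overline{H^\infty_\pi} \subseteq \overline{P_\tau H^\infty_\pi} \subseteq \overline{P_\tau H_\pi} = P_\tau H_\pi$. By admissibility $P_\tau H_\pi = H_\pi(\tau)$ is finite-dimensional, so a dense subspace of it is all of it; hence $H_\pi(\tau) = P_\tau H^\infty_\pi \subseteq H^\infty_\pi$.

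Given this inclusion, the lemma follows at once: for each of the finitely many $\tau$ with $\tau^*$ occurring in $V$ we have $H_\pi(\tau)\otimes V \subseteq H^\infty_\pi \otimes V$, hence $[H_\pi(\tau)\otimes V]^K \subseteq [H^\infty_\pi \otimes V]^K$; summing over these $\tau$ gives $[H_\pi\otimes V]^K \subseteq [H^\infty_\pi \otimes V]^K$, and combined with the reverse inclusion the two subspaces coincide, so the natural map is an isomorphism of finite-dimensional vector spaces. I expect the only substantive point to be the passage from density of $P_\tau H^\infty_\pi$ in $H_\pi(\tau)$ to equality; this is not hard, but it is precisely where admissibility is essential, the rest of the argument being formal.
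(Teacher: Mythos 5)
Your proof is correct and rests on the same underlying fact as the paper's, namely that the $K$-finite vectors of $H_\pi$ and of $H^\infty_\pi$ coincide. The difference is one of self-containment: the paper disposes of the lemma in one line by citing Knapp (Prop.\ VIII.8.5) for the statement that $K$-finite vectors of an admissible representation are smooth, whereas you supply a direct proof of that fact in the only cases needed. Your argument — that $P_\tau$ is $\pi(\Lambda)$ for a compactly supported distribution $\Lambda$ on $G$ (a measure carried by $K$), hence preserves $H^\infty_\pi$; that $P_\tau H^\infty_\pi$ is dense in $H_\pi(\tau)$ by boundedness of $P_\tau$ and density of smooth vectors; and that admissibility makes $H_\pi(\tau)$ finite-dimensional so the dense subspace is everything — is a clean and elementary way to recover the Knapp fact. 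The formal reduction at the start (finitely many $\tau$ contribute because $V$ is finite-dimensional, and $[H_\pi \otimes V]^K$ decomposes over isotypic components) is also correct, and matches the paper's implicit reliance on the fact that the multiplicity spaces depend only on $K$-finite vectors. In short: same key idea, with the citation to Knapp replaced by a short proof; the trade-off is a slightly longer argument in exchange for keeping the exposition self-contained.
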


 \begin{proof}
The two multiplicity spaces appearing in the statement of the lemma depend only on the underlying subspaces of $K$-finite vectors  in $H_\pi^\infty$ and $H_\pi$, and  these spaces coincide with one another (see for instance \cite[Prop.\ VIII.8.5]{KnappOverview}).
 \end{proof}

In the following lemma we shall use  the (completed) balanced tensor product from Hilbert $C^*$-module theory. This was introduced by Kasparov \cite[Sec.~2.8]{Kasparov80}, but see \cite{Lance95} for an exposition of the topic.

\begin{lemma}
\label{lem-balanced-kasparov-tensor-product}
    Let $\pi$ be an admissible tempered unitary representation of $G$ and let $V$ be a finite-dimensional unitary representation of $K$.  There is a unique isomorphism of Hilbert spaces
    \[
\bigl [ C_r^*(G) \otimes V\bigr ] ^K \otimes_{C^*_r(G)} H_\pi  \stackrel \cong  \longrightarrow \bigl [ H_\pi \otimes V\bigr ] ^K
\]
such that 
\[
\bigl (  \sum _j f_j \otimes v_j \bigr )  \otimes \xi \longmapsto  \sum _j \pi(f_j)\xi \otimes v_j .
\]
\end{lemma}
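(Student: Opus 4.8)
The plan is to construct the map directly on the algebraic level first and then check it extends to an isometric isomorphism of Hilbert spaces. First I would observe that the formula in the statement makes sense: if $\sum_j f_j \otimes v_j \in [C_c^\infty(G)\otimes V]^K$ and $\xi \in H_\pi$, then $\sum_j \pi(f_j)\xi \otimes v_j$ lands in $[H_\pi \otimes V]^K$ because $K$-invariance of $\sum_j f_j\otimes v_j$ (for the left-translation action on the $C_c^\infty(G)$-factor) translates, after applying $\pi$, into $K$-invariance of the image under the $K$-action on $H_\pi$ coming from $\pi|_K$. The nontrivial point is that the map respects the balancing relation: one must check that $\bigl(\sum_j f_j\otimes v_j\bigr)* h \otimes \xi$ and $\sum_j f_j\otimes v_j \otimes \pi(h)\xi$ have the same image, for $h \in C^*_r(G)$; this follows because $\pi$ is a $*$-representation of $C^*_r(G)$, so $\pi(f_j * h) = \pi(f_j)\pi(h)$, together with the description of the module action in \eqref{eq-fmla-for-action-of-smoothing-ops-in-standard-form}. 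So we obtain a well-defined linear map on the algebraic balanced tensor product $[C_c^\infty(G)\otimes V]^K \odot_{C_c^\infty(G)} H_\pi^\infty$.

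Next I would verify that the map is isometric for the Hilbert $C^*$-module inner product on the source. The inner product on the balanced tensor product is determined by $\langle a\otimes \xi, b\otimes \eta\rangle = \langle \xi, \pi(\langle a,b\rangle_{C^*_r(G)})\eta\rangle_{H_\pi}$. Using the formula for the $C^*_r(G)$-valued inner product on $[C^*_r(G)\otimes V]^K$ recorded just after \eqref{eq-basic-uncompleted-module-isomorphism}, namely $\langle \sum_j v_j\otimes f_j, \sum_k w_k\otimes h_k\rangle = \sum_{j,k}\langle v_j,w_k\rangle_V\,(f_j^* * h_k)$, one computes that the inner product of the images in $[H_\pi\otimes V]^K$ (taken inside $H_\pi \otimes V$ with the tensor-product inner product) equals $\sum_{j,k}\langle v_j,w_k\rangle_V\,\langle \pi(f_j)\xi, \pi(h_k)\eta\rangle_{H_\pi}$, and since $\langle \pi(f_j)\xi, \pi(h_k)\eta\rangle = \langle \xi, \pi(f_j^* * h_k)\eta\rangle$, the two expressions match. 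Hence the map descends to an isometric embedding of the Hilbert-space completion $[C^*_r(G)\otimes V]^K \otimes_{C^*_r(G)} H_\pi$ into $[H_\pi\otimes V]^K$.

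It remains to check surjectivity (equivalently, density of the range, which combined with isometry gives surjectivity). Here I would use the fact that $\pi$ is tempered, so $\pi$ is a nondegenerate representation of $C^*_r(G)$ and $\pi(C_c^\infty(G))H_\pi^\infty$ is dense in $H_\pi$; applying the projection $P_S$ onto a suitable finite set of $K$-types containing those of $V$, one sees that $\pi(f)\xi$ for $f\in[C_c^\infty(G)\otimes \End(V)]^{K\times K}$-type elements already span a dense subspace of the (finite-dimensional, by admissibility) isotypical pieces relevant to $[H_\pi\otimes V]^K$. In fact, since $[H_\pi\otimes V]^K$ is finite-dimensional, it suffices to produce enough elements in the range; taking $f$ to run over an approximate unit as in the proof of Proposition~\ref{prop-l-2-versus-s-star-boundedness} and using \eqref{eq-action-of-compactly-supported-dist-on-mult-space2} shows every vector of $[H_\pi\otimes V]^K$ is of the form $\pi(f)F$ for appropriate $F$, hence in the range. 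Uniqueness of the isomorphism is automatic once existence is established, since the algebraic balanced tensor product is dense.

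The main obstacle I anticipate is the bookkeeping in the isometry computation: one must carefully track three different inner products (the $V$-inner product, the $C^*_r(G)$-valued inner product on the module, and the Hilbert-space inner product on $H_\pi$), confirm that the $K$-invariance is compatible with passing between $[\,\cdot\,]^K$ and the ambient tensor products, and handle the subtlety that $\pi(f_j)$ a priori only acts on smooth vectors, which is why the preliminary lemma identifying $[H_\pi^\infty\otimes V]^K$ with $[H_\pi\otimes V]^K$ is invoked. None of this is deep, but it is where the proof has to be written with care.
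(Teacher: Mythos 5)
Your proposal is correct and takes essentially the same route as the paper, which simply asserts that the map is well-defined on the algebraic tensor product, has dense range, and is isometric by the formula (4.4) in Lance for the inner product on the balanced tensor product; you are just filling in the details of those three claims. The only small quibble is that your reference to \eqref{eq-action-of-compactly-supported-dist-on-mult-space2} in the surjectivity step is slightly off target (that formula concerns the action of $\pi(A)$, not density of the range); the cleanest version of the density argument is that nondegeneracy of $\pi$ on $C^*_r(G)$ makes vectors $\pi(f)\xi\otimes v$ dense in $H_\pi\otimes V$, so averaging over $K$ shows their $K$-invariant projections are dense in $[H_\pi\otimes V]^K$, and these are exactly in the range of the map.
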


\begin{proof} 
The map in the statement of the lemma is well-defined on the algebraic tensor product. The image of the algebraic tensor product is dense in  $ [ H_\pi \otimes V] ^K$ (and since $ [ H_\pi \otimes V] ^K$ is finite-dimensional, the image is in fact the full target space).  In addition, it follows from the formula for the inner product on Kasparov's balanced tensor product (see \cite[(4.4)]{Lance95}) that the map is  well-defined on the balanced tensor product Hilbert space and isometric.
\end{proof}

The following definition extends the construction of $\pi(A)$ in \eqref{eq-action-of-compactly-supported-dist-on-mult-space}. 

\begin{definition}
 Let $\pi$ be an admissible tempered unitary representation of $G$, viewed as a representation of the $C^*$-algebra $C^*_r (G)$, and let 
\[
T \colon [C_c^\infty (G)\otimes V_1]^K  \longrightarrow [C_c^\infty (G)\otimes V_2]^K 
\]
be a  $G$-equivariant, properly supported, adjointable and $L^2$-bounded operator. We shall denote by 
\[
 \pi(T)  \colon 
 \bigl [ H_\pi \otimes V_1\bigr ] ^K
 \longrightarrow 
\bigl [ H_\pi \otimes V_2\bigr ] ^K
\]
the  tensor product operator
\[
 \pi(T)= T\otimes \mathrm{id}_{H_\pi} \colon 
 [C_r^*   (G)\otimes V_1]^K\otimes _{C^*_r(G)} H_\pi
 \longrightarrow 
  [C_r^*   (G)\otimes V_2]^K \otimes _{C^*_r(G)} H_\pi ,
\]
which we view an operator between multiplicity spaces using the isomorphisms in Lemma~\ref{lem-balanced-kasparov-tensor-product}. 
\end{definition}

The construction just described can be dressed up in $C^*$-category clothes, as follows. 

\begin{definition} 
We shall denote by $\Fin^*$ the $C^*$-category of finite-dimen\-sional Hilbert spaces.
\end{definition}

If $\pi$ is any admissible tempered unitary representation of $G$, then the multiplicity space construction determines a $C^*$-functor
\begin{equation}
    \label{eq-def-of-mult-pi}
\mult_\pi\colon \PSDO^*_{G,K}\longrightarrow \Fin ^*
\end{equation}
by means of the formula 
\[
\bigl [ A\colon [C^*_r (G)\otimes V_1]^K
\to [C^*_r(G)\otimes V_2]^K\bigr ] 
\stackrel{\mult_\pi}\longmapsto 
\bigl [ \pi(A)\colon [H_\pi \otimes V_1]^K
\to [H_\pi \otimes V_2]^K\bigr ] .
\]
We shall use this $C^*$-categorical perspective on the  multiplicity space construction  in Section~\ref{sec-vogan-and-tempiric-reps}. But for now, roughly speaking, we may viewing the family of all multiplicity functors as a Fourier transform for the category $\PSDO^*_{G,K}$.  We shall refine this idea in   the remainder of Section~\ref{sec-fourier-iso-thm}.

 \subsection{Multiplicity spaces for principal series representations}
 \label{subsec-multiplicity-spaces-for-principal-series}
When $\pi$ is a principal series representation, the constructions in the previous section may be made more explicit  by using an explicit model for the representation $\pi$.  The purpose of this section is to explain this for the unitary minimal principal series  (essentially the same picture emerges for all the unitary principal series representations, but in this paper we are mostly concerned with groups that are of real rank one, for which there are no higher principal series representations beyond the minimal principal series).

Let $G$ be a real reductive group and let $G=KAN$ be an Iwasawa decomposition; in addition let   $M$ be the centralizer of $A$ in $K$, so that $P=MAN$ is a minimal parabolic subgroup of $G$.  See \cite[Chs.~VI,VII]{KnappBeyond} or \cite{VoganGreenBook}.  Denote by  $\rho\in \mathfrak{a}^*$  the half-sum of the positive restricted roots associated to $N$, as in \cite{KnappBeyond} or \cite{VoganGreenBook}.   If $\sigma$ is an irreducible   unitary representation of the compact group $M$ on a finite-dimensional Hilbert space $L_\sigma$, and if $\nu \in \mathfrak{a}^*$, then the completion of the space 
\begin{multline}
\label{eq-smooth-vectors-in-principal-series}
H^\infty _{\sigma,i\nu} = \bigl \{ \,  f\colon G \stackrel{C^\infty}\to  L_\sigma \, : \, 
f(gman) = e^{-(\rho + i \nu) (\log (a)) } \sigma(m)^{-1} f(g)
\\
\forall g\in G, m\in M , a\in A, n\in N
\,\}
\end{multline}
in the norm 
\begin{equation} 
\label{eq-l-2-norm-on-k}
\| f\| ^2 = \int_K \| f(k) \|\, dk
\end{equation}
is the Hilbert space $H_{\sigma,  i\nu}$ of the unitary minimal principal series representation 
\[
\pi_{\sigma,  i \nu} = \operatorname{Ind}_P^G \, \sigma {\otimes}\! \exp( i \nu)
\]
(the action of $G$ is by left translation). As the notation suggests, $H^\infty _{\sigma, i \nu}$ is in fact the space of smooth vectors for the representation in the Hilbert space $H_{\sigma, i \nu}$.

Now let $V$ be any finite-dimensional unitary representation of $K$ and  consider  the linear map
\begin{equation}
\label{eq-multiplicity-space-as-function-space}
H_{\sigma,\nu} ^\infty\otimes V \ni  f\otimes v \longmapsto \bigl [ g\mapsto v \otimes f(g)\bigr ] \in C^\infty (G)\otimes  V\otimes L_\sigma  .
\end{equation}
It is a morphism of $C^\infty_c(G)$-modules for the left convolution action of $G$ on $C^\infty(G)$.
Keeping in mind that the multiplicity space may be computed using the smooth vectors $H^\infty _{\sigma, i \nu}\subseteq H_{\sigma, i \nu}$, we find that \eqref{eq-multiplicity-space-as-function-space} restricts to an isomorphism
\begin{multline}
\label{eq-multiplicity-space-for-principal-series}
[ H _{\sigma,i\nu}\otimes V]^K \stackrel\cong\longrightarrow   \bigl \{ \,  f\colon G \stackrel{C^\infty}\to V\otimes  L_\sigma \, : \, 
f(kgman) = e^{-(\rho + i \nu) (\log (a) }\cdot 
\\ (k\otimes \sigma(m)^{-1}) f(g)\quad 
\forall g\in G, m\in M , a\in A, n\in N
\,\bigl \}.
\end{multline}
We shall  express this more succinctly as an isomorphism 
\begin{equation}
\label{eq-multiplicity-space-as-function-space-2}
[ H _{\sigma,i\nu}\otimes V]^K \stackrel{\cong}\longrightarrow   [C^\infty( G)\otimes   V\otimes  L_\sigma ]^{K\times P} ,
\end{equation}
where the $P$-action is given by the prescription in \eqref{eq-multiplicity-space-for-principal-series}: 
\[
(p\cdot f)(g)= 
e^{(\rho + i \nu) (\log (a)) }\cdot 
\\ (\mathrm{id}_V\otimes \sigma(m) ) f(gp)\quad \forall p=man.
\]

Now let us return to our   pseudodifferential operator $A$ in \eqref{eq-psdo-for-discussion-of-fourier-transform}.  Because $A$ is properly supported, it
extends to an operator 
\[
A \colon [C^\infty (G) \otimes V_1]^K \longrightarrow [C^\infty (G)\otimes  V_2]^K,
\]
which we may tensor with the identity operator on $L_\sigma $ to obtain 
\[
A{\otimes}\operatorname{id}_{L_\sigma}  \colon [C^\infty (G) \otimes V_1\otimes L_\sigma  ]^K\longrightarrow [C^\infty (G) \otimes V_2\otimes L_\sigma ]^K
\]
(we give $L_\sigma $ the trivial action of $K$).

\begin{lemma} 
\label{lem-action-of-psdo-on-principal-series-mult-space}
The diagram 
\[
\xymatrix{
[H_{\sigma,i\nu} \otimes V_1]^K \ar[r]^{\pi_{\sigma, i\nu}(A)} \ar[d]_{\eqref{eq-multiplicity-space-as-function-space-2}} & [H_{\sigma,i\nu} \otimes V_2]^K \ar[d]^{\eqref{eq-multiplicity-space-as-function-space-2}}
\\
[C^\infty (G) \otimes V_1\otimes L_\sigma ]^{K\times P} \ar[r]_{A\otimes \mathrm{id}} & [C^\infty (G) \otimes V_2\otimes L_\sigma ]^{K\times P}
}
\]
is commutative.
\end{lemma}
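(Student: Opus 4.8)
The plan is to verify the square by reducing $\pi_{\sigma,i\nu}(A)$ to a concrete formula and then unwinding both composites in the explicit function-space models. Since $A$ is $G$-equivariant and properly supported on $K\backslash G$, its reduced Schwartz kernel (obtained via \eqref{eq-computational-form-of-equivariant-integral-kernels}) is a \emph{compactly supported} $\Hom(V_1,V_2)$-valued distribution on $G$; thus $A$ may be written as a finite sum $A=\sum_k\Lambda_k\otimes T_k$ with $\Lambda_k\in\mathcal{E}'(G)$ and $T_k\in\Hom(V_1,V_2)$, and $\pi_{\sigma,i\nu}(A)$ is then given on the multiplicity space $[H_{\sigma,i\nu}\otimes V_1]^K=[H^\infty_{\sigma,i\nu}\otimes V_1]^K$ by the formula \eqref{eq-action-of-compactly-supported-dist-on-mult-space}, namely $\eta\otimes v\mapsto\sum_k\pi_{\sigma,i\nu}(\Lambda_k)\eta\otimes T_k v$. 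That the Kasparov--tensor--product definition of $\pi_{\sigma,i\nu}(A)$ agrees with \eqref{eq-action-of-compactly-supported-dist-on-mult-space} is the point asserted in the text; a self-contained argument for it follows from Lemma~\ref{lem-balanced-kasparov-tensor-product} together with the characterisation \eqref{eq-action-of-compactly-supported-dist-on-mult-space2}.

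I would then evaluate both ways around the square on a simple tensor $\eta\otimes v\in[H^\infty_{\sigma,i\nu}\otimes V_1]^K$. Going along the top and then down: since $\pi_{\sigma,i\nu}$ acts by left translation in the model \eqref{eq-smooth-vectors-in-principal-series}, one has $(\pi_{\sigma,i\nu}(\Lambda_k)\eta)(g)=\langle\Lambda_k,\ g_0\mapsto\eta(g_0^{-1}g)\rangle$, i.e.\ left convolution $\Lambda_k*\eta$; applying \eqref{eq-multiplicity-space-as-function-space-2} yields the function $g\mapsto\sum_k T_k v\otimes(\Lambda_k*\eta)(g)$. Going down and then along the bottom: \eqref{eq-multiplicity-space-as-function-space-2} sends $\eta\otimes v$ to $g\mapsto v\otimes\eta(g)$, while $A\otimes\mathrm{id}_{L_\sigma}=\sum_k\Lambda_k\otimes T_k\otimes\mathrm{id}_{L_\sigma}$, the $k$-th term left-convolving the $C^\infty(G)$-slot by $\Lambda_k$ and applying $T_k$ on the $V_1$-slot with $L_\sigma$ passive; applied to $g\mapsto v\otimes\eta(g)$ this again produces $g\mapsto\sum_k T_k v\otimes(\Lambda_k*\eta)(g)$. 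The two agree, and since such simple tensors span the finite-dimensional multiplicity space, the diagram commutes.

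I do not expect a deep obstacle: the content of the lemma is precisely that the multiplicity-space construction and the explicit induced-representation model are the same Fourier transform, so the proof is essentially a bookkeeping exercise. The point requiring genuine care is keeping the three tensor factors $C^\infty(G)$, $V_i$, $L_\sigma$ and the $K$- and $P$-equivariances straight, so that (i) the map \eqref{eq-multiplicity-space-as-function-space-2} really does intertwine left convolution by $\Lambda_k$ on $H^\infty_{\sigma,i\nu}$ with left convolution on the $C^\infty(G)$-slot, and (ii) the operator in the bottom row is literally $A$ acting through the $C^\infty(G)$- and $V_i$-slots. Here it is cleanest to treat left convolution by the \emph{distribution} $\Lambda_k$ directly as an operator on the full space $C^\infty(G)\otimes V_i\otimes L_\sigma$ — which visibly commutes with the $K$-action and hence restricts to $K$-invariants — rather than splitting individual left-translates of $\eta$ into $K$-fixed pieces. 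If instead one preferred to avoid the distribution form of $A$ and argue purely with Kasparov tensor products, the main work would shift to identifying \eqref{eq-multiplicity-space-as-function-space-2} with an internal tensor product over $C^*_r(G)$, and that repackaging is where most of the effort would go.
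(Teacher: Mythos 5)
Your proof is correct; it differs from the paper's mainly in explicitness. The paper's own argument is two lines: it says commutativity follows from the characterizing identity $\pi(A)\bigl(\pi(f)F\bigr)=\pi(Af)F$ in \eqref{eq-action-of-compactly-supported-dist-on-mult-space2} together with the $C^\infty_c(G)$-linearity of \eqref{eq-multiplicity-space-as-function-space}, and leaves it to the reader to assemble these into the desired intertwining. You instead pass directly to the reduced Schwartz-kernel decomposition $A=\sum_k\Lambda_k\otimes T_k$ (legitimate because $\Hom(V_1,V_2)$ is finite-dimensional and $A$ is properly supported and $G$-equivariant) and unwind both composites on simple tensors $\eta\otimes v$, obtaining $g\mapsto\sum_k T_kv\otimes(\Lambda_k*\eta)(g)$ either way. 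The mathematical content coincides: your verification that the vertical map turns $\pi_{\sigma,i\nu}(\Lambda_k)$ into left convolution by $\Lambda_k$ on the $C^\infty(G)$ tensor slot is precisely the concrete form of the $C^\infty_c(G)$-linearity the paper cites, and the distributional formula \eqref{eq-action-of-compactly-supported-dist-on-mult-space} you use for $\pi(A)$ is what \eqref{eq-action-of-compactly-supported-dist-on-mult-space2} was abstracted from. Your route buys a self-contained computation and an explicit check that everything descends to the $K\times P$-invariants; the paper's buys brevity by leaning on the module structure and needs no decomposition of $A$. Your aside about reconciling the Kasparov tensor-product definition of $\pi(A)$ with the distributional one is well placed --- the paper glosses over that identification in the pseudodifferential case.
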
 

 \begin{proof}
This is a consequence of \eqref{eq-action-of-compactly-supported-dist-on-mult-space2} and the fact that the morphism \eqref{eq-multiplicity-space-as-function-space} is $C^\infty_c(G)$-linear.
 \end{proof}

\subsection{Limit formula for order zero pseudodifferential operators}
\label{subsec-limit-formula-for-psdos}
Let us continue with the same notation as the previous section.
Since $G=KP$, there are isomorphisms 
\begin{equation}
    \label{eq-frobenius-reciprocity-iso}
\xymatrix{
[H_{\sigma,i\nu}\otimes V_1]^K \ar[r]_-{\cong}^-{\eqref{eq-multiplicity-space-as-function-space-2}}&  [C^\infty (G) \otimes V_1\otimes L_\sigma ]^{K\times P} \ar[r]_-{\cong} & [V_1\otimes L_\sigma ]^M ,
}
\end{equation}
in which the second arrow is evaluation at $e\in G$.
 Using them, if $A$ is a pseudodifferential operator as in \eqref{eq-psdo-for-discussion-of-fourier-transform}, then we may regard $\pi_{\sigma, i \nu}(A)$ as an operator 
\[
\pi_{\sigma, i \nu}(A) \colon 
[V_1\otimes L_\sigma ]^M
\longrightarrow 
[V_2\otimes L_\sigma ]^M,
\]
and we may think of the \emph{family} of operators $\pi_{\sigma, i \nu}(A)$, as $\nu$ varies,  as a function
\begin{equation}
    \label{eq-family-of-maps-on-multiplicity-spaces}
\pi_\sigma (A) \colon \mathfrak{a} \longrightarrow \Hom \bigl ( [V_1\otimes L_\sigma ]^M,[V_2\otimes L_\sigma ]^M \bigr ).
\end{equation}
We wish to determine the behavior of this function at infinity in $\mathfrak {a}$.

We shall use the following  identification, coming from the Iwasawa decomposition $G=KAN$: 
\begin{equation}
\label{eq-tangent-vector-identification}
\mathfrak{a}\oplus \mathfrak{n} \stackrel \cong \longrightarrow 
\mathfrak{g} / \mathfrak{k} .
\end{equation}
This has the following geometric consequence: if we denote by $e\in K\backslash G$ the basepoint of the symmetric space associated to $G$, then the tangent space of the symmetric space   at $e$ identifies naturally with $\mathfrak{g} / \mathfrak{k}$, and hence, by \eqref{eq-tangent-vector-identification}, with $\mathfrak{a}\oplus \mathfrak{n}$.  By duality we obtain an isomorphism 
\begin{equation}
\label{eq-cotangent-vector-identification}
(\mathfrak{g} / \mathfrak{k})^*  
\stackrel{\cong}\longrightarrow \mathfrak{a}^* \oplus \mathfrak{n}^*  ,
\end{equation}
which we shall use  to identify each element $\nu \in \mathfrak{a}^*$ with a cotangent vector at $e\in K\backslash G$. 

Returning to the operator $A$ in \eqref{eq-psdo-for-discussion-of-fourier-transform}, its principal symbol, restricted to the cotangent fiber over the basepoint $e$ is a $K$-equivariant, 
$0$-homogeneous smooth function 
\[
a_0 (e,\underbar{\phantom{i}}\,) \colon  (\mathfrak{g}/\mathfrak{k})^* \setminus \{ 0\} \longrightarrow \Hom (V_1,V_2)
\]
If we further restrict to $\mathfrak{a}^*$, as above, then we obtain a $0$-homogeneous smooth function 
\[
a_0 (e,\underbar{\phantom{i}}\,) \colon  \mathfrak{a}^* \setminus \{ 0\} \longrightarrow \Hom (V_1,V_2)^M
\]
since  $\mathfrak{a}\subseteq \mathfrak{g}/\mathfrak{k}$ is centralized by $M$.  Tensoring with the identity operator on $L_\sigma $, we obtain from this a map
\begin{equation}
    \label{eq-part-of-symbol-function}
a_0 (e,\underbar{\phantom{i}}\,) \colon  \mathfrak{a}^* \setminus \{ 0\} \longrightarrow \Hom ([V_1\otimes L_\sigma ]^M,[V_2\otimes L_\sigma ]^M)
\end{equation}
(which in general characterizes part but not all of the principal symbol).

We shall now apply the compactification operation introduced in Definition~\ref{def-spherical-compactification} to   the real vector space $\mathfrak{a}^*$ to obtain from the above a continuous function 
\begin{equation}
    \label{eq-part-of-symbol-function-2}
a_0 (e,\underbar{\phantom{i}}\,) \colon  \partial \mathfrak{a}^* \longrightarrow \Hom ([V_1\otimes L_\sigma ]^M,[V_2\otimes L_\sigma ]^M),
\end{equation}
where $\partial \mathfrak{a}^*$ denotes the boundary of $\mathfrak{a}^*$ in $\overline{\mathfrak{a}^*}$.  

\begin{theorem}
\label{thm-limit-fmla-for-psdo}
Let $G= KAN$ be an Iwasawa decomposition of the real reductive group $G$, let $P=MAN$ be the corresponding minimal parabolic subgroup, and let $\sigma$ be an irreducible unitary representation of the compact group $M$ on a finite-dimensional space $L_\sigma $. Let $V_1$ and $V_2$ be  finite-dimensional unitary representations of $K$, and let $A$ be a properly supported, $G$-equivariant, order zero classical pseudodifferential operator, as in \eqref{eq-psdo-for-discussion-of-fourier-transform}.  The function in  \eqref{eq-family-of-maps-on-multiplicity-spaces}   extends to a continuous function
\[
\pi_\sigma (A) \colon \overline{\mathfrak{a}^*}\longrightarrow \Hom  ([V_1\otimes L_\sigma ]^M,[V_2\otimes L_\sigma ]^M)
\]
whose restriction to  the boundary $\partial \mathfrak{a}^* $ is the principal symbol function \eqref{eq-part-of-symbol-function-2}.
\end{theorem}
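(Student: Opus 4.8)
The plan is to make the operators $\pi_{\sigma,i\nu}(A)$, in the model of Section~\ref{subsec-limit-formula-for-psdos}, completely explicit, and then to recognise their behaviour as $\nu\to\infty$ as a direct instance of the limit formula \eqref{eq-limit-formula-for-principal-symbol} that characterises the principal symbol. Concretely, first I would combine Lemma~\ref{lem-action-of-psdo-on-principal-series-mult-space} with the identification $[H_{\sigma,i\nu}\otimes V]^K\cong[V\otimes L_\sigma]^M$ coming from \eqref{eq-multiplicity-space-as-function-space-2} followed by evaluation at $e$, to check that a vector $\phi=\sum_j v_j\otimes w_j\in[V_1\otimes L_\sigma]^M$ is sent by $\pi_{\sigma,i\nu}(A)$ to $\sum_j (A s_{\nu,j})(e)\otimes w_j$, where, writing $g=\kappa(g)\exp(H(g))n(g)$ for the Iwasawa decomposition of $G=KAN$,
\[
s_{\nu,j}(g)=e^{-(\rho+i\nu)(H(g))}\,\kappa(g)v_j
\]
is a smooth (generally not compactly supported) section of the bundle $K\backslash(V_1\times G)$. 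The decisive move is to isolate the oscillatory factor: setting $\ell_\nu(g):=-\nu(H(g))$, a real-valued smooth function descending to $K\backslash G$, one has $s_{\nu,j}=e^{i\ell_\nu}s_{0,j}$ with $s_{0,j}(g):=e^{-\rho(H(g))}\kappa(g)v_j$ independent of $\nu$, so that $\pi_{\sigma,i\nu}(A)\phi=\sum_j\bigl(A\,e^{i\ell_\nu}s_{0,j}\bigr)(e)\otimes w_j$.

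Next I would record the elementary properties of $\ell_\nu$ that make the limit formula applicable. Since the Iwasawa projection $H\colon G\to\mathfrak a$ is the composite of $G\cong K\times A\times N\to A$ with $\log\colon A\to\mathfrak a$, it is a submersion, hence so is the map $K\backslash G\to\mathfrak a$ it induces; consequently $d\ell_\nu$ is nowhere zero on $K\backslash G$ whenever $\nu\neq0$. Moreover $\nu\mapsto\ell_\nu$ is linear, so $\ell_{t\mu}=t\,\ell_\mu$; the family $\{\ell_\mu:\|\mu\|=1\}$ is bounded (indeed compact) in $C^\infty(K\backslash G)$ because $H$ is fixed; $\ell_\mu(e)=0$ because $H(e)=0$; the differential $(d\ell_\mu)(e)$ is, by the construction of \eqref{eq-tangent-vector-identification}--\eqref{eq-cotangent-vector-identification} together with the computation that $dH(e)$ is the projection $\mathfrak g/\mathfrak k\to\mathfrak a$, the cotangent vector at $e$ associated with $\mu\in\mathfrak a^*$; and $s_{0,j}(e)=v_j$.

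Then comes the conclusion. Because $A$ is properly supported, for a cutoff $\varphi$ with $\varphi(e)=1$ there is $\psi\in C_c^\infty(K\backslash G)$ with $\varphi A=\varphi A\psi$; since $\psi$ commutes with the multiplication operator $e^{i\ell_\nu}$, one may replace $s_{0,j}$ by the compactly supported section $\psi s_{0,j}$, still equal to $v_j$ at $e$, without changing $\bigl(A\,e^{i\ell_\nu}s_{0,j}\bigr)(e)$. Specialising to $\nu=t\mu$ with $\|\mu\|=1$ and $t\to+\infty$, the limit formula \eqref{eq-limit-formula-for-principal-symbol}---valid for the compactly supported section $\psi s_{0,j}$, applicable since $\ell_\mu$ has no critical points, and uniform over the bounded family $\{\ell_\mu:\|\mu\|=1\}$---yields $\bigl(A\,e^{it\ell_\mu}\psi s_{0,j}\bigr)(e)\to a_0(e,\mu)v_j$ uniformly in $\mu$. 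Summing over $j$ and tensoring with $L_\sigma$, this says exactly that $\pi_{\sigma,it\mu}(A)\to a_0(e,\mu)\otimes\mathrm{id}_{L_\sigma}$ uniformly over the unit sphere of $\mathfrak a^*$, i.e.\ over $\partial\mathfrak a^*$, to the function \eqref{eq-part-of-symbol-function-2}. Since on the interior $\mathfrak a^*$ the map $\nu\mapsto\pi_\sigma(A)(\nu)$ is continuous (the operations $\nu\mapsto e^{i\ell_\nu}\psi s_{0,j}\in C_c^\infty$, $A\colon C_c^\infty\to C^\infty$, and evaluation at $e$ are all continuous), combining this with the uniform ray-convergence just obtained and with the continuity of $\mu\mapsto a_0(e,\mu)$ gives a continuous extension of $\pi_\sigma(A)$ to $\overline{\mathfrak a^*}$ whose boundary values are \eqref{eq-part-of-symbol-function-2}, as asserted in Theorem~\ref{thm-limit-fmla-for-psdo}.

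The conceptual content is all in the first two steps: once one sees that $\pi_{\sigma,i\nu}(A)$ is $A$ applied to the oscillatory sections $e^{i\ell_\nu}s_{0,j}$ and evaluated at the basepoint, the theorem \emph{is} the principal-symbol limit formula. I expect the main work to be the bookkeeping of the last step---the truncation to compact support using proper support of $A$, and the verification that the hypotheses of \eqref{eq-limit-formula-for-principal-symbol} (bounded family of phases, no critical points, uniformity of the limit) are met, so that the pointwise limits along rays assemble into a genuinely continuous extension over $\overline{\mathfrak a^*}$---together with checking that the sign conventions in \eqref{eq-cotangent-vector-identification} and in the parametrisation of $\pi_{\sigma,i\nu}$ are aligned so that the boundary value is exactly \eqref{eq-part-of-symbol-function-2} rather than its antipodal reflection.
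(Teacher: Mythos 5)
Your proposal is correct and follows essentially the same route as the paper's proof: rewrite $\pi_{\sigma,i\nu}(A)u$ as $A$ applied to the oscillatory section $e^{i\ell_\nu}s_0$ evaluated at the basepoint, truncate to compact support using proper support of $A$, and read off the boundary behaviour from the limit formula \eqref{eq-limit-formula-for-principal-symbol} for the principal symbol. You are somewhat more explicit than the paper about verifying the hypotheses (no critical points of $\ell_\mu$, boundedness of $\{\ell_\mu : \|\mu\|=1\}$, uniformity of the ray limit) and about the sign check that aligns $(d\ell_\mu)(e)$ with the identification \eqref{eq-cotangent-vector-identification}, which is a genuine point worth pinning down since the paper's proof leaves it implicit.
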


\begin{proof} 
We shall use Lemma~\ref{lem-action-of-psdo-on-principal-series-mult-space}. The functions in the version of the multiplicity space given there,  namely the space 
\[
C^\infty  (G, (V_1\otimes L_\sigma) )^{K\times P}  \cong [C^\infty (G)\otimes  V_1\otimes L_\sigma ]^{K\times P},
\]
have the form 
\[
f_{\nu,u}\colon kan\longmapsto e^{-(i\nu+\rho) (\log (a))} (k\otimes \mathrm{id}) u ,
\]
where $u=f(e) \in [V_1\otimes L_\sigma ]^M $.  The single value
\[
(Af_{\nu,u})(e) \in [C^\infty (G) \otimes V_2\otimes L_\sigma ]^{K\times P}
\]
determines $Af_{\nu,u}$, and it is a continuous function of $\nu$, for $u$ fixed,  by virtue of the continuity of pseudodifferential operators. So the function in \eqref{eq-family-of-maps-on-multiplicity-spaces} is continuous.

Since $A$ is properly supported, there is a smooth, compactly supported function $\varphi$ on $K \backslash G$  such that $\varphi (e)=1$ and 
\[
(Af_{\nu,u})(e) = (Af_{\nu,e}\varphi) (e)\qquad \forall \nu\in \mathfrak{a}^*,\,\, \forall u\in [V_1\otimes L_\sigma ]^M.
\]
If we write 
\[
(\varphi f_{\nu,u})(kan) =  e^{-i\nu (\log (a))} \cdot s(kan)  ,
\]
where 
\[
s(kan) = e^{-\rho (\log(a))} \varphi (an) \cdot (k\otimes \mathrm{id}) u , 
\]
then we may write 
\[
(Af_{\nu,u})(e) = e^{i \nu (\log(a))} (Ae^{-i \nu (\log(a))} s)(e).
\]
With this, the theorem becomes an immediate consequence of the formula \eqref{eq-limit-formula-for-principal-symbol} for the symbol of a classical pseudodifferential operator (applied to operators of order $0$).
\end{proof}

\subsection{Fourier transform isomorphism for the reduced group C*-algebra} 
\label{subsec-fourier-transform-of-c-star-algebra}
In this section we shall
give an extremely rapid  review  of the description of the reduced $C^*$-algebra of a real reductive group $G$, up to $*$-isomorphism, that may be obtained from results in tempered representation theory due to Harish-Chandra and others; for full details, see \cite{ClareCrispHigson16}.

For further brevity, \emph{we shall confine our attention to  real reductive groups $G$ of real rank one}, since our focus for the rest of  the paper will be on these groups. The real rank one condition means that the real vector space $\mathfrak{a}$ in the Iwasawa decomposition has dimension one, or equivalently that every one-dimensional subspace of the vector space $\mathfrak{s}$ in the Cartan decomposition $\mathfrak{g} = \mathfrak{k} \oplus \mathfrak{s}$ is maximal abelian.

Recall that $M$ is the centralizer of $\mathfrak{a}$ in $K$. For a fixed $\sigma\in \widehat M$, and under restriction of functions from $G$ to $K$,   the representation Hilbert spaces $H_{\sigma, i \nu}$ in the unitary minimal principal series, defined in \eqref{eq-smooth-vectors-in-principal-series} and \eqref{eq-l-2-norm-on-k},  all become isomorphic to the single Hilbert space 
\[
H_\sigma = \bigl \{ \, f \colon  K \stackrel{L^2}\to L_\sigma \,:\, f(km)= \sigma(m)^{-1} f(k)\,\,\,
\forall k\in K, \forall m\in M \, \bigr \}  .
\]
The family of principal series representations $\pi_{\sigma, i\nu}$ combine to give a single $C^*$-algebra morphism 
\[
\pi_\sigma \colon C^*_r (G)\longrightarrow C_0(\mathfrak{a}^*, \mathfrak{K} (H_\sigma)  ),
\]
for which composition with evaluation at $\nu\in \mathfrak{a}^*$ gives $\pi_{\sigma, i \nu}$ (that is, the tempered unitary representation $\pi_{\sigma, i\nu}$ viewed as a representation of $C^*_r(G)$). The range of $\pi_\sigma$ is the $C^*$-subalgebra of invariant elements for a finite group $W_\sigma$ of automorphisms of $\mathfrak{a}^*$, acting as intertwining operators on the bundle over $\mathfrak{a}^*$ with fibers $H_{\sigma,i\nu}$.  Assembling all the $\pi_\sigma$, together with the discrete series for $G$ (if they exist for $G$), one obtains a $C^*$-algebra \emph{isomorphism} 
\begin{equation}
\label{eq-fourier-transform-c-star-algebra-morphism}
C^*_r(G)  \stackrel \cong \longrightarrow \bigoplus _{[P,\sigma]}
C_0\bigl  (\mathfrak{a}^*_P, \mathfrak{K}(H_\sigma  )\bigr ) ^{W_\sigma}.
\end{equation}
The index set is the set of equivalence classes of pairs $(P,\sigma)$ consisting of 
\begin{enumerate}[\rm (i)]

\item a standard cuspidal parabolic subgroup $P$, which for real rank-one groups is either  the minimal parabolic subgroup or $G$ itself, and 

\item representations $\sigma$ which, again for real rank-one groups, are either irreducible representations of $M$, when $P$ is the minimal parabolic subgroup, or discrete series representations of $G$, when $P{=}G$.  
\end{enumerate}
There are no nontrivial  equivalences    when $P{=}G$, but when $P$ is the minimal parabolic, $[P,\sigma_1] = [P,\sigma_2]$ if and only if $\sigma_1$ and $\sigma_2$ lie in the same orbit under the action on $\widehat M$ of the restricted Weyl group $W(\mathfrak{g},
\mathfrak{a})$. The spaces $\mathfrak{a}_P$ in \eqref{eq-fourier-transform-c-star-algebra-morphism} are as follows:  $\mathfrak{a}_P {=}\mathfrak{a}$ from the Iwasawa decomposition, when $P$ is the minimal parabolic subgroup, and $\mathfrak{a}_P{=}\{ 0\}$, when $P{=}G$.  When $P$ is the minimal parabolic, the intertwining group $W_\sigma$ is the subgroup of $W(\mathfrak{g},
\mathfrak{a})$ that fixes the equivalence class of $\sigma$. 
When $P{=}G$, the intertwining group  $W_\sigma$ is trivial.  Once again, see \cite{ClareCrispHigson16} for a full account (which   applies to all real reductive groups).

\begin{theorem} 
\label{thm-uniform-admissibility}
If $V$ is a finite-dimensional unitary representation of $K$, then $[V\otimes H_\sigma]^K=0$,  for all but finitely many of the Hilbert spaces $H_\sigma$ in the direct sum decomposition \eqref{eq-fourier-transform-c-star-algebra-morphism}.
\end{theorem}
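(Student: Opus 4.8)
The plan is to split the index set of \eqref{eq-fourier-transform-c-star-algebra-morphism} into its two types of terms and dispatch each with a standard finiteness argument. First, since $V$ is finite-dimensional and unitary there is a $K$-equivariant isomorphism $V\otimes H_\sigma\cong\operatorname{Hom}(V^*,H_\sigma)$, so
\[
[V\otimes H_\sigma]^K\cong\operatorname{Hom}_K(V^*,H_\sigma),
\]
which is nonzero precisely when some irreducible $K$-constituent of $V^*$ occurs in $H_\sigma$. Since $V^*$ has only finitely many irreducible constituents, it therefore suffices to show that for each fixed $\tau\in\widehat K$ only finitely many of the Hilbert spaces $H_\sigma$ in \eqref{eq-fourier-transform-c-star-algebra-morphism} contain $\tau$.

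Consider first the terms in which $P$ is the minimal parabolic subgroup; here $\sigma$ runs over $\widehat M$ and $H_\sigma$ is $K$-equivariantly the induced representation $\operatorname{Ind}_M^K L_\sigma$. By Frobenius reciprocity, $\operatorname{Hom}_K(\tau,\operatorname{Ind}_M^K L_\sigma)\cong\operatorname{Hom}_M(\operatorname{Res}_M\tau,L_\sigma)$, so $\tau$ occurs in $H_\sigma$ exactly when $L_\sigma$ occurs in the restriction $\operatorname{Res}_M\tau$. As $\tau$ is finite-dimensional, $\operatorname{Res}_M\tau$ has only finitely many irreducible $M$-constituents, so only finitely many $\sigma\in\widehat M$ can contribute.

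Consider next the terms with $P=G$, where $\sigma$ runs over the discrete series representations of $G$ (if $G$ has no discrete series, there is nothing to prove here). For this case I would invoke Harish-Chandra's classification of the discrete series together with Blattner's formula, proved by Hecht and Schmid: a discrete series representation is determined by its lowest $K$-type (its Blattner parameter), and every $K$-type occurring in it has highest weight obtained from the Blattner parameter by adding a nonnegative integral combination of the noncompact positive roots. With $\tau$ fixed, the set of dominant weights $\mu$ for which the highest weight of $\tau$ minus $\mu$ is such a combination is finite; since the Blattner parameter determines the discrete series, only finitely many discrete series representations contain $\tau$. Combining this with the previous case and the reduction above proves the theorem.

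The only step requiring genuine input from representation theory is the discrete series case; the rest is formal. If one prefers to sidestep Blattner's formula, an equivalent route is to use that the discrete series with a fixed infinitesimal character form a finite set, together with a standard Casimir estimate bounding the infinitesimal character of an admissible representation in terms of the highest weight of any of its $K$-types. Either way the argument is brief once these facts are granted.
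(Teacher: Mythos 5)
The paper gives no written proof of this theorem; the sentence preceding the statement simply credits it to ``Harish-Chandra's classification of the discrete series, plus Frobenius reciprocity.'' Your proof fills in exactly this argument---Frobenius reciprocity for the minimal principal series terms, and Harish-Chandra's classification (via the Blattner/lowest-$K$-type bound or, equivalently, a Casimir estimate) for the discrete series terms---so it is correct and is the same route the authors intend.
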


\begin{proof}
The following proof is for the real rank one case of concern to us here. Harish-Chandra proved in  \cite{HarishChandra66}  that  each irreducible representation of $K $ occurs   in at most  finitely many of the discrete series  representations  of $G$.  As for the principal series, by Frobenius reciprocity, an irreducible representation $\theta$ of $K$ occurs in $H_\sigma$ if and only if $\sigma$ is a  constituent of $\theta\vert_{M}$.  So $\theta$ occurs in at most finitely many  $H_\sigma$, as required.
\end{proof}

It follows that if $V_1$ and $V_2$ are finite-dimensional unitary representations  of $K$, then the isomorphism in \eqref{eq-fourier-transform-c-star-algebra-morphism}   induces an isomorphism
\begin{multline}
\label{eq-fourier-transform-c-star-category-morphism}
[ C_r^*(G) \otimes \operatorname{Hom}(V_1,V_2) ]^{K \times K}\stackrel \cong \longrightarrow 
\\
\bigoplus _{[P,\sigma]}
C_0\Bigl  (\mathfrak{a}^*_P, \mathfrak{K}\bigl ([V_1\otimes H_\sigma]^K,[V_2\otimes H_\sigma]^K\bigr )\Bigr ) ^{W_\sigma},
\end{multline}
in which the direct sum on the right-hand side is finite.

\subsection{Definition of the Fourier transform for order zero operators}
If we define a $C^*$-category with the usual objects---the finite-dimensional unitary representations of $K$---and with morphism spaces given by the  right-hand side  of \eqref{eq-fourier-transform-c-star-category-morphism}, then in view of  \eqref{eq-norm-completion-f-smoothing-ops}, the isomorphism  \eqref{eq-fourier-transform-c-star-category-morphism} can be viewed as a \emph{Fourier transform isomorphism}   from $\CC_{G,K}^*$ to this $C^*$-category.  Our goal in this section is to obtain a similar isomorphism for  $\PSDO^*_{G,K}$, at least when $G$ has real rank one.

The following theorem, which defines our Fourier transform functor for the category $\PSDO^*_{G,K}$, is an immediate consequence of the limit formula in Theorem~\ref{thm-limit-fmla-for-psdo}.

\begin{theorem}
\label{thm-unique-extension-of-fourier-transform}
There is a unique extension of the Fourier transform functor 
\[
\CC^*_{G,K}(V_1,V_2) \stackrel \cong \longrightarrow \bigoplus _{[P,\sigma]}
C_0\bigl  (\mathfrak{a}^*_P, \mathfrak{K}([V_1\otimes H_\sigma]^K,[V_2\otimes H_\sigma]^K)\bigr ) ^{W_\sigma}
\]
in \eqref{eq-fourier-transform-c-star-category-morphism} to a faithful functor 
\[
\PSDO^*_{G,K} (V_1, V_2) \longrightarrow \bigoplus_{[P,\sigma]} C \bigl (\overline{\mathfrak{a}^*_P}, \Compact ([V_1\otimes H_\sigma]^K,[V_2\otimes H_\sigma]^K )\bigr )^{W_\sigma} 
\]
\textup{(}as  above, we regard the right-hand sides in the displays as morphism spaces in   $C^*$-categories with objects the finite-dimensional unitary representations of $K$\textup{)}. \qed
\end{theorem}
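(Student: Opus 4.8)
The plan is to build the functor on the dense $*$-subcategory with morphism spaces $\Psi^0_{G,K}(V_1,V_2)$, check that it is norm-decreasing so that it passes to the completion $\PSDO^*_{G,K}$, and then read off uniqueness and faithfulness from the two short exact sequences already at hand: the pseudodifferential extension of Proposition~\ref{prop-order-zero-psdo-extension} and the Fourier transform isomorphism \eqref{eq-fourier-transform-c-star-category-morphism} for $\CC^*_{G,K}$. For the construction, given $A\in\Psi^0_{G,K}(V_1,V_2)$ I set $\widehat A=\bigl(\pi_\sigma(A)\bigr)_{[P,\sigma]}$, where $\pi_\sigma(A)$ is the family $\nu\mapsto\pi_{\sigma,i\nu}(A)$ of operators on multiplicity spaces, identified with $\Hom([V_1\otimes L_\sigma]^M,[V_2\otimes L_\sigma]^M)$ when $P$ is the minimal parabolic and with a single operator on $[V_1\otimes H_\sigma]^K$ when $P=G$. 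Theorem~\ref{thm-limit-fmla-for-psdo} is exactly the assertion that $\pi_\sigma(A)$ extends to a continuous function on $\overline{\mathfrak a^*_P}$ whose boundary restriction is the relevant restriction of the principal symbol of $A$, and Theorem~\ref{thm-uniform-admissibility} ensures that only finitely many $[P,\sigma]$ contribute, so $\widehat A$ lands in the stated target. Since the order-zero operators form a $*$-subcategory of $\PSDO^*_{G,K}$ and $\pi(A_1A_2)=\pi(A_1)\pi(A_2)$, $\pi(A^*)=\pi(A)^*$ for every admissible tempered $\pi$ (immediate from Lemma~\ref{lem-action-of-psdo-on-principal-series-mult-space} together with the description $\pi(A)=A\otimes_{C^*_r(G)}\mathrm{id}_{H_\pi}$), the assignment $A\mapsto\widehat A$ is a $*$-functor there. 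The one thing needing an estimate is boundedness: by Proposition~\ref{prop-l-2-versus-s-star-boundedness}, $A$ is an adjointable operator on the Hilbert $C^*_r(G)$-module $[C^*_r(G)\otimes V_1]^K$ with norm equal to its $L^2$-operator norm, and $\pi_{\sigma,i\nu}(A)=A\otimes_{C^*_r(G)}\mathrm{id}_{H_{\sigma,i\nu}}$, so the interior tensor product does not increase norms and $\|\pi_{\sigma,i\nu}(A)\|\le\|A\|_{L^2\to L^2}$ for all $\sigma,\nu$; the normalized intertwiners between $\pi_{\sigma,i\nu}$ and $\pi_{\sigma,iw\nu}$ conjugate these operators into one another, so $\widehat A$ is genuinely $W_\sigma$-invariant with $\|\widehat A\|\le\|A\|_{L^2\to L^2}$. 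Hence the functor extends by continuity to a functor of $C^*$-categories on $\PSDO^*_{G,K}$, and it agrees with \eqref{eq-fourier-transform-c-star-category-morphism} on $\CC^*_{G,K}$ because the two already agree on the dense subspace $\Psi^{-1}_{G,K}(V_1,V_2)\subseteq\CC^*_{G,K}(V_1,V_2)$.

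For uniqueness, observe that any functor $F$ of $C^*$-categories on $\PSDO^*_{G,K}$ is automatically contractive, hence continuous on morphism spaces, and that for $T\in\PSDO^*_{G,K}(V_1,V_2)$ and $C\in\CC^*_{G,K}(V_1,V_1)$ the ideal property gives $TC\in\CC^*_{G,K}(V_1,V_2)$, so once $F|_{\CC^*_{G,K}}=\mathrm{FT}$ is imposed the value $F(T)\cdot\mathrm{FT}(C)=\mathrm{FT}(TC)$ is forced. Since \eqref{eq-fourier-transform-c-star-category-morphism} maps $\CC^*_{G,K}(V_1,V_1)$ \emph{onto} $\bigoplus_{[P,\sigma]}C_0(\mathfrak a^*_P,\Compact([V_1\otimes H_\sigma]^K))^{W_\sigma}$, one may take $\mathrm{FT}(C)$ of the form $\varphi\cdot\mathrm{id}$ with $\varphi\in C_c(\mathfrak a^*_P)$ a $W_\sigma$-invariant scalar function, and then vanishing of $(F(T)-F'(T))(\nu)\varphi(\nu)$ at every interior $\nu$, together with continuity, forces $F(T)=F'(T)$ for any two such extensions.

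For faithfulness — the step where the real rank one hypothesis is essential — I would compare the short exact sequence of Proposition~\ref{prop-order-zero-psdo-extension} with the direct sum over $[P,\sigma]$ of the boundary-restriction sequences
\[
0\longrightarrow C_0(\mathfrak a^*_P,\Compact)^{W_\sigma}\longrightarrow C(\overline{\mathfrak a^*_P},\Compact)^{W_\sigma}\longrightarrow C(\partial\mathfrak a^*_P,\Compact)^{W_\sigma}\longrightarrow 0
\]
(exact because averaging over the finite group $W_\sigma$ preserves exactness; here $\Compact$ abbreviates $\Compact([V_1\otimes H_\sigma]^K,[V_2\otimes H_\sigma]^K)$, and for $P=G$ this is the trivial sequence $0\to\Compact\to\Compact\to 0\to 0$), with left vertical the isomorphism \eqref{eq-fourier-transform-c-star-category-morphism}, middle vertical the functor just built, and right vertical the map sending $a_0\in\Symb^*_{G,K}(V_1,V_2)=C(\partial(\mathfrak g/\mathfrak k)^*,\Hom(V_1,V_2))^K$ to $\bigl(a_0(e,\cdot)|_{\partial\mathfrak a^*_P}\otimes\mathrm{id}_{L_\sigma}\bigr)_{[P,\sigma]}$. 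The right square commutes by Theorem~\ref{thm-limit-fmla-for-psdo} and the left square by construction. The right vertical is injective precisely because $G$ has real rank one: then every $K$-orbit on $\partial(\mathfrak g/\mathfrak k)^*$ meets $\partial\mathfrak a^*$, so a $K$-equivariant continuous function on $\partial(\mathfrak g/\mathfrak k)^*$ is determined by its restriction to $\partial\mathfrak a^*$ (valued in $\Hom(V_1,V_2)^M$), and Schur's lemma for the compact group $M$ then gives an injection $\Hom(V_1,V_2)^M\hookrightarrow\prod_{\sigma\in\widehat M}\Hom([V_1\otimes L_\sigma]^M,[V_2\otimes L_\sigma]^M)$. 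A diagram chase now finishes: if $\widehat T=0$ then the right vertical kills the principal symbol of $T$, hence that symbol vanishes, hence $T\in\CC^*_{G,K}(V_1,V_2)$ by exactness of the top row, hence $T=0$ by injectivity of the left vertical. The genuine analytic content lives already in the previously proved Theorem~\ref{thm-limit-fmla-for-psdo}; past that, the two points that need care are the boundedness estimate, which routes through $\pi_\sigma(A)=A\otimes_{C^*_r(G)}\mathrm{id}$ and Proposition~\ref{prop-l-2-versus-s-star-boundedness}, and the injectivity of the right vertical, which genuinely fails in higher rank (the sphere $\partial(\mathfrak g/\mathfrak k)^*$ then has $K$-orbits invisible to the $\partial\mathfrak a^*_P$) and is exactly why the clean statement is confined to real rank one.
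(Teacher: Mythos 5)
Your construction, norm estimate via the interior tensor product $\pi(T)=T\otimes_{C^*_r(G)}\mathrm{id}_{H_\pi}$, and uniqueness argument all match what the paper intends when it calls the theorem an ``immediate consequence'' of Theorem~\ref{thm-limit-fmla-for-psdo}; in particular, the observation that $\CC^*$-functoriality forces continuity, and that values on the interior are forced by $F(T)\cdot\mathrm{FT}(C)=\mathrm{FT}(TC)$ together with density of $\mathfrak a^*_P$ in $\overline{\mathfrak a^*_P}$, is exactly the right way to see uniqueness.

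Where you diverge from the paper is in the faithfulness step, and here your logic is actually the reverse of the paper's. The truly ``immediate'' argument is: if $\widehat T=0$ then $\widehat{TC}=\widehat T\cdot\widehat C=0$ for every $C\in\CC^*_{G,K}(V_1,V_1)$, so $TC=0$ by the isomorphism \eqref{eq-fourier-transform-c-star-category-morphism} on $\CC^*_{G,K}$, and then $T=0$ because $\CC^*_{G,K}(V_1,V_1)$ acts nondegenerately on $[L^2(G)\otimes V_1]^K$ (it contains a $K$-bi-invariant approximate identity of $C^*_r(G)$ tensored with $\mathrm{id}_{V_1}$). This argument needs neither the diagram chase nor the real rank one hypothesis. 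The paper's proof of Theorem~\ref{thm-fourier-isomorphism-in-real-rank-one} then \emph{derives} the injectivity of the boundary symbol map from the already-established faithfulness of the middle vertical, and only invokes real rank one to prove surjectivity by a dimension count. You instead prove injectivity of the boundary map directly (using transitivity of $K$ on $\partial(\mathfrak g/\mathfrak k)^*$) and feed it back into a diagram chase to get faithfulness. That chase is valid, and it does correctly foreshadow the obstruction in higher rank, but your parenthetical claim that faithfulness is ``the step where the real rank one hypothesis is essential'' is misleading: the hypothesis is needed for the functor to land in the stated target at all (the limit formula in Theorem~\ref{thm-limit-fmla-for-psdo} only controls the boundary behavior along $\partial\mathfrak a^*$ for the minimal parabolic, not for the compactifications $\overline{\mathfrak a^*_P}$ attached to higher parabolics) and for surjectivity in Theorem~\ref{thm-fourier-isomorphism-in-real-rank-one}, whereas faithfulness holds whenever the extension is defined.
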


To emphasize the obvious,   the domain of the extended Fourier transform functor in the theorem is constructed  using only pseudodifferential operator theory on $K \backslash G$, while the range is constructed  using only the tempered representation theory of $G$.

\begin{theorem}
\label{thm-fourier-isomorphism-in-real-rank-one}
  If $G$ has real rank one, then the morphisms in Theorem~\textup{\ref{thm-unique-extension-of-fourier-transform}} are isomorphisms.  
\end{theorem}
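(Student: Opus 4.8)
The plan is to derive Theorem~\ref{thm-fourier-isomorphism-in-real-rank-one} from the pseudodifferential extension of $C^*$-categories in Proposition~\ref{prop-order-zero-psdo-extension}, together with the Fourier transform isomorphism \eqref{eq-fourier-transform-c-star-category-morphism} for $\CC^*_{G,K}$ and the limit formula of Theorem~\ref{thm-limit-fmla-for-psdo}. Fix finite-dimensional unitary representations $V_1,V_2$ of $K$. The extended Fourier transform functor of Theorem~\ref{thm-unique-extension-of-fourier-transform} produces a commuting diagram with exact rows
\[
\xymatrix@C=20pt{
0 \ar[r] & \CC^*_{G,K}(V_1,V_2) \ar[r]\ar[d]^-{\cong} & \PSDO^*_{G,K}(V_1,V_2) \ar[r]\ar[d] & \Symb^*_{G,K}(V_1,V_2) \ar[r]\ar[d]^-{h} & 0\\
0 \ar[r] & A' \ar[r] & B' \ar[r] & C' \ar[r] & 0,
}
\]
where $A'=\bigoplus_{[P,\sigma]}C_0(\mathfrak{a}^*_P,\Compact(\dots))^{W_\sigma}$, $B'=\bigoplus_{[P,\sigma]}C(\overline{\mathfrak{a}^*_P},\Compact(\dots))^{W_\sigma}$, and $C'=B'/A'$. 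Since each $W_\sigma$ is finite, taking invariants commutes with the quotient, so $C'\cong\bigoplus_{[P,\sigma]}C(\partial\mathfrak{a}^*_P,\Compact(\dots))^{W_\sigma}$. The left vertical map is the isomorphism \eqref{eq-fourier-transform-c-star-category-morphism}, and the middle vertical map is injective by Theorem~\ref{thm-unique-extension-of-fourier-transform}; hence, by a diagram chase (the five lemma), it suffices to prove that the induced map $h$ on symbol spaces is an isomorphism.

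The next step is to make both sides of $h$ explicit, using the real rank one hypothesis in an essential way. Since $\dim\mathfrak{a}=1$, the compact group $K$ acts transitively on the unit sphere of $\mathfrak{g}/\mathfrak{k}\cong\mathfrak{s}$, with stabilizer $M$ at a fixed unit covector $H_0\in\mathfrak{a}^*$; thus $\partial(\mathfrak{g}/\mathfrak{k})^*\cong K/M$ as $K$-spaces, and evaluation at $H_0$ identifies $\Symb^*_{G,K}(V_1,V_2)=C(\partial(\mathfrak{g}/\mathfrak{k})^*,\Hom(V_1,V_2))^K$ with the \emph{finite-dimensional} space $\Hom(V_1,V_2)^M$. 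On the representation-theoretic side, the only cuspidal parabolic contributing to $C'$ is the minimal one (for $P=G$ one has $\partial\mathfrak{a}^*_P=\emptyset$), $\partial\mathfrak{a}^*$ is the two-point set $\{H_0,-H_0\}$, and $W=W(\mathfrak{g},\mathfrak{a})\cong\Z/2$ acts on it by the flip. By Theorem~\ref{thm-limit-fmla-for-psdo} and the identification $[V\otimes H_\sigma]^K\cong[V\otimes L_\sigma]^M$, the map $h$ sends a symbol $a_0$ to the family, indexed by $\sigma\in\widehat M$, of its boundary values $a_0(e,\pm H_0)\otimes\mathrm{id}_{L_\sigma}$ restricted to the appropriate multiplicity spaces.

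Injectivity of $h$ is then immediate: a $K$-equivariant symbol is determined by its single value $a_0(e,H_0)\in\Hom(V_1,V_2)^M$, and the assignment $T\mapsto\bigl((T\otimes\mathrm{id}_{L_\sigma})|_{[V_1\otimes L_\sigma]^M}\bigr)_{\sigma\in\widehat M}$ is an isomorphism from $\Hom(V_1,V_2)^M$ onto $\bigoplus_{\sigma}\Hom([V_1\otimes L_\sigma]^M,[V_2\otimes L_\sigma]^M)$, being simply the $M$-isotypic block decomposition of $\Hom(V_1,V_2)^M$. It therefore remains to match dimensions. By Frobenius reciprocity $\dim[V\otimes H_\sigma]^K=a_{\sigma^*}(V)$, the multiplicity of $\sigma^*$ in $V$, so the $[\sigma]$-summand of $C'$ has dimension $a_{\sigma^*}(V_1)a_{\sigma^*}(V_2)$ when $w\sigma\cong\sigma$ and $2\,a_{\sigma^*}(V_1)a_{\sigma^*}(V_2)$ when $w\sigma\not\cong\sigma$, the orbit $\{\sigma,w\sigma\}$ being counted once. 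Since conjugation by a representative $w$ of the nontrivial Weyl element gives $a_\tau(V)=a_{w\tau}(V)$ for every $K$-representation $V$, summing over $[\sigma]\in\widehat M/W$ gives $\sum_{\tau\in\widehat M}a_\tau(V_1)a_\tau(V_2)=\dim\Hom(V_1,V_2)^M=\dim\Symb^*_{G,K}(V_1,V_2)$. Hence $h$ is an isomorphism, and the theorem follows.

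The step I expect to require the most care is the bookkeeping around $C'$: checking that invariants commute with passage to boundary values, correctly handling the flip action of $W$ on $\partial\mathfrak{a}^*$ and the $\sigma\leftrightarrow\sigma^*$ reindexing in the dimension count, and noting that $h$ automatically takes values in the $W_\sigma$-invariant subspace (it does, being the quotient map of the bottom row). A direct proof of surjectivity would instead force one to identify the limits as $\nu\to\infty$ of the normalized Knapp--Stein intertwining operators that enter the definition of the $W_\sigma$-action; the dimension count above is attractive precisely because it avoids that analytic point.
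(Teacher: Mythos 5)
Your proof is correct and follows essentially the same route as the paper: reduce via the commuting diagram of extensions to showing the induced map on symbol algebras is an isomorphism, use transitivity of $K$ on $\partial(\mathfrak{g}/\mathfrak{k})^*$ to identify $\Symb^*_{G,K}(V_1,V_2)$ with $\Hom(V_1,V_2)^M$, then match dimensions via the $M$-isotypic decomposition and the case analysis on $W_\sigma$. The only cosmetic differences are that you make the injectivity of the boundary map explicit through the block decomposition (the paper simply inherits it from Theorem~\ref{thm-unique-extension-of-fourier-transform}) and that, like the paper, you implicitly set aside the degenerate rank-one case where $G$ is a product of a vector group and a compact group.
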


\begin{proof} Let us write 
\[
\Compact _{\sigma, V_1,V_2} =\Compact  ([H_\sigma \otimes V_1]^K,[H_\sigma \otimes V_2]^K),
\]
and then write 
\[
A_{P,\sigma} =  C  (\overline{\mathfrak{a}^*_P},  \Compact _{\sigma, V_1,V_2}  )  )^{W_\sigma} 
\quad \text{and} \quad 
J_{P,\sigma} =  C_0  ({\mathfrak{a}^*_P}, \Compact _{\sigma, V_1,V_2}  )^{W_\sigma}  .
\]
We need to show that the middle vertical map in the diagram
\begin{equation*}
\xymatrix@C=20pt{
0 \ar[r] & \CC^*_{G,K} (V_1,V_2)  \ar[r]\ar[d] & \PSDO^*_{G,K} (V_1,V_2) \ar[d] \ar[r] & \Symb^{*}_{G,K}(V_1, V_2)  \ar[r]\ar[d] & 0
\\
    0 \ar[r] & \bigoplus_{[P,\sigma]}   J_{P,\sigma}  \ar[r] &  \bigoplus_{[P,\sigma]} A_{P,\sigma}
\ar[r] &  \bigoplus_{[P,\sigma]} A_{P,\sigma}/J_{P,\sigma}  \ar[r] & 0
}
\end{equation*}
is an isomorphism.    According to the isomorphism in \eqref{eq-fourier-transform-c-star-category-morphism}, the left-hand vertical map is an isomorphism, and according to Theorem~\ref{thm-unique-extension-of-fourier-transform} the middle vertical map, and hence also right right vertical map, are injective. We shall prove that the right vertical map is an isomorphism by showing that the dimensions of its range and target spaces are finite and equal; this will also prove that the middle vertical map is an isomorphism.

We shall not consider the elementary and degenerate case where $G$ is the Cartesian product of a (one-dimensional) vector group and a compact group.  In the other cases where $G$ has real rank one, the action of $K$ on $\partial (\mathfrak{g}/\mathfrak{k})^*$ is transitive. This is because \begin{enumerate}[\rm (i)]
    \item any two maximal abelian subspaces of the space $\mathfrak{s}$ in the Cartan decomposition $\mathfrak{g} = \mathfrak{k}\oplus \mathfrak{s}$ are conjugate under $K$ (see for instance \cite[Thm.\ 6.51]{KnappBeyond}), and, as already noted, under the real rank one hypothesis, any one-dimensional subspace is maximal abelian, and 

    \item in the nondegenerate case, where $\mathfrak{a}$ is not central in $\mathfrak{g}$, there is at least one restricted root, and hence there is an element $w$ in the normalizer of $\mathfrak{a}$ in $K$ that acts as multiplication by $-1$ on $\mathfrak{a}$ (see for instance \cite[Thm.\ 6.57]{KnappBeyond}). So both of the unit vectors in $\mathfrak{a}$ lie in the same $K$-orbit in $\mathfrak{g}$, and therefore, thanks to (i), \emph{all} unit vectors in $\mathfrak{g}$  lie in the same $K$-orbit. 
\end{enumerate} 
Note that outside of the elementary and degenerate case, $W(\mathfrak{g},\mathfrak{a})=\Z_2$, generated by the element $w$ in (ii).

The isotropy group for the action of $K$ on $\partial (\mathfrak{g}/\mathfrak{k})^*$ is $M$, so that 
\[
\Symb^{*}_{G,K}(V_1, V_2)  = C(\partial (\mathfrak{g}/\mathfrak{k})^*, \Hom (V_1,V_2))^K \cong \Hom (V_1,V_2)^M. 
\]
On the other hand 
\begin{equation*}
\bigoplus _{[P,\sigma]} A_{P,\sigma}/J_{P,\sigma} \cong \bigoplus _{[P,\sigma]} C (\partial{\mathfrak{a}^*_P},   \Compact _{\sigma, V_1,V_2}  )^{W_\sigma}  
\cong 
 \bigoplus _{\sigma \in \widehat M} 
\Compact _{\sigma, V_1,V_2}.
\end{equation*}
The second isomorphism may be explained as follows.
\begin{enumerate}[\rm (i)]

\item When $P{=}G$, the space $\partial{\mathfrak{a}^*_P}$ is the empty set, and the  direct summand indexed by any $[P,\sigma]$ is zero.

\item When $P$ is the minimal parabolic in $G$, and when $W_\sigma$ is the two-element group $W(\mathfrak{g},\mathfrak{a})$, that group acts freely and transitively on $\partial{\mathfrak{a}^*_P}$, but it fixes the equivalence class of $\sigma$. The direct summand indexed by $[P,\sigma]$ is  $ \Compact _{\sigma, V_1,V_2}$. 

\item When $P$ is the minimal parabolic in $G$, and when $W_\sigma$ is   trivial, $\partial{\mathfrak{a}^*_P}$ is again a two-point space,  and the  summand indexed by $[P,\sigma]$ is the  sum of two copies of  $\Compact _{\sigma, V_1,V_2}$.  But if $w$ is the generator $w\in W(\mathfrak{g},\mathfrak{a})$, then  $[P,\sigma]= [P,w(\sigma)]$ and $ \Compact _{\sigma, V_1,V_2}\cong  \Compact _{w(\sigma), V_1,V_2}$.

\end{enumerate}
It follows from the isomorphism and   \eqref{eq-frobenius-reciprocity-iso} that 
\begin{equation*}
\sum_{[P,\sigma]} \dim_{\C} \bigl (  A_{P,
\sigma}/J_{P,\sigma} \bigr )
= 
\sum_{\sigma \in \widehat M}
\dim \bigl ( [ L_{\sigma} \otimes V_1  ]^M \bigr )\cdot \dim \bigl ( [ L_{\sigma} \otimes V_2  ]^M \bigr ) .
\end{equation*}
But by taking  the $M$-isotypical decompositions of $V_1$ and $V_2$, we find that 
\[
\dim \bigl ( \Hom(V_1,V_2)^M \bigr ) =\sum_{\sigma \in \widehat M}
\dim \bigl ( [ L_{\sigma} \otimes V_1  ]^M \bigr )\cdot \dim \bigl ( [ L_{\sigma} \otimes V_2  ]^M \bigr ).  
\]
This proves the theorem. 
\end{proof}

\section{Tempiric Representations and Vogan's Theorem}
\label{sec-vogan-and-tempiric-reps} 

In this final section we shall connect the  $K$-theory and   Fourier theory of the $C^*$-category $\PSDO^*_{G,K}$  to  David Vogan's theorem on   the minimal $K$-types of tempiric representations.

\subsection{Tempiric representations from the C*-category point of view} 

\begin{definition}
    An irreducible  unitary representation of a real reductive group is \emph{tempiric} if it is tempered, irreducible,  and has real infinitesimal character.  
\end{definition}

See \cite[Sec.\ 2]{Vogan00} for a discussion from first principles of the concept of real infinitesimal character. For our present purposes, we define the tempiric representations of $G$ to be   precisely those that occur as summands in representations with continuous parameter $0\in \mathfrak{a}^*_P$ in the description \eqref{eq-fourier-transform-c-star-algebra-morphism} of $C^*_r(G)$; see \cite[Cor.\ 3.5]{Vogan00}.

\begin{definition} 
Let $G$ be a real reductive group.  
We shall denote by  $R(G)_{\tempiric}$ the free abelian group on the set of unitary equivalence classes of tempiric representations of $G$.
\end{definition}

\begin{definition} 
\label{def-rep-tempiric-category}
Define the $C^*$-category $\Rep^*_{G,K,\tempiric }$ as follows: 
\begin{enumerate}[\rm (i)]

\item The objects are the finite-dimensional unitary representations of $K$.

\item The morphisms are  families of morphisms between multiplicity spaces, 
\[
 T_\pi  \colon [H_\pi \otimes V_1 ]^K \longrightarrow [H_\pi \otimes V_2 ]^K \qquad ([\pi]\in \widehat G _{\tempiric}),
\]
indexed by representatives of the unitary  equivalence classes of tempiric representations.
\end{enumerate}
\end{definition}

Let $\pi$ be a tempiric representation. We define a  functor 
\[
\proj_\pi \colon \Rep^*_{G,K,\tempiric } 
\longrightarrow \Fin^*
\]
by mapping each object $V$ of $\Rep^*_{G,K,\tempiric }$, that is, each finite-dimensional unitary representation of $K$, to the finite-dimensional Hilbert space $[H_\pi \otimes V]^K$, and   each morphism, as in part (ii) of Definition~\ref{def-rep-tempiric-category}, to its $\pi$-component. Then, passing  to  $K_0$-groups, we obtain  a morphism
\[
 \proj_{\pi,0} \colon K_0\bigl ( \Rep^*_{G,K,\tempiric }\bigr )  \longrightarrow  \Z.
\]
since $K_0(\Fin^*) = \Z$.

\begin{lemma} 
\label{lem-tempiric-rep-ring-as-k-theory-group}
The $K$-theory morphisms $\proj_{\pi,0}$, indexed by a set of representatives of the unitary equivalence classes of tempiric representations of $G$, induce an isomorphism of abelian groups
\[
K_0 ( \Rep^*_{G,K,\tempiric } )  \ni c   \longmapsto \sum _{\pi} \proj_{\pi,0}(c) [\pi] \in R(G)_{\tempiric } .
\]
In addition, $K_1  ( \Rep^*_{G,K,\tempiric }  ) = 0$.
\end{lemma}

\begin{proof}
It follows from Theorem~\ref{thm-uniform-admissibility} (because each representation $\pi_{\sigma,0}$  on $H_\sigma$ decomposes into at most finitely many tempiric summands) that if $V$ is any finite-dimensional unitary representation  of $K$, then   $[H_\pi \otimes V ]^K=0$ for all but  finitely many tempiric $\pi$ (up to unitary equivalence, of course).   It follows that if $c\in K_0  ( \Rep^*_{G,K,\tempiric }  ) $, then $\proj_{\pi,0}(c) = 0$ for all but finitely many $\pi$.  So the map in the statement of the theorem is well-defined.

Fix for a moment a single tempiric representation $\pi$, and consider the additive $C^*$-category whose objects are finite-dimensional unitary representations  of $K$ and whose morphisms are linear maps $[H_\pi \otimes V_1 ]^K \longrightarrow [H_\pi \otimes V_2 ]^K$. The  natural functor (defined like $\proj_\pi$ above) from this category to $\Fin^*$ is an isomoprhism on $K$-theory. Now the category $\Rep^*_{G,K,\tempiric }$ is the direct sum of these categories in the sense of \cite[Def.\,3.8]{MitchenerCategories02}, over all $\pi$, and $K$-theory commutes with direct sums.
\end{proof} 

\begin{definition} We shall denote by   
\[
\mult \colon \PSDO_{G,K}^* \longrightarrow  \Rep^*_{G,K,\tempiric }
\]
the unique functor such that for every tempiric $\pi$, there is a commuting  diagram 
\[
\xymatrix{
\PSDO_{G,K}^* \ar[r]^-{\mult}\ar[d]_{\mult_\pi} & \Rep^*_{G,K,\tempiric }\ar[d]^-{\proj_\pi}  
\\ \Fin^* \ar@{=}[r] &\Fin^* .
}
\]
\end{definition}

\begin{lemma}
If $G$ is any real reductive group, then the functor 
\[
\mult \colon \PSDO^*_{G,K} \longrightarrow \Rep^*_{G,K,\tempiric }
\]
is surjective on $\Hom$-spaces \textup{(}it is the identity on objects\textup{)}. 
\end{lemma}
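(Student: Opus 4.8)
Since $\mult$ is the identity on objects, to say it is surjective is to say that $\mult\colon \PSDO^*_{G,K}(V_1,V_2)\to \Rep^*_{G,K,\tempiric}(V_1,V_2)$ is surjective for every pair of finite-dimensional unitary representations $V_1,V_2$ of $K$. Because the morphism space $\CC^*_{G,K}(V_1,V_2)$ sits inside $\PSDO^*_{G,K}(V_1,V_2)$ (it is even an ideal), it is enough to prove that already the restriction of $\mult$ to $\CC^*_{G,K}(V_1,V_2)$ is surjective. The plan is to carry this out on the Fourier-transform side, using the isomorphism \eqref{eq-fourier-transform-c-star-category-morphism}.

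By \eqref{eq-fourier-transform-c-star-category-morphism} together with Theorem~\ref{thm-uniform-admissibility}, we may identify $\CC^*_{G,K}(V_1,V_2)$ with the \emph{finite} direct sum $\bigoplus_{[P,\sigma]} C_0(\mathfrak{a}^*_P,\Compact([V_1\otimes H_\sigma]^K,[V_2\otimes H_\sigma]^K))^{W_\sigma}$. Each tempiric representation $\pi$ occurs as a summand of $\pi_{\sigma,0}$ for exactly one associate class $[P,\sigma]$, since distinct summands of \eqref{eq-fourier-transform-c-star-algebra-morphism} have no irreducible constituents in common; and, directly from the way the isomorphism \eqref{eq-fourier-transform-c-star-algebra-morphism} is assembled from the representations $\pi_{\sigma,i\nu}$ (see \cite{ClareCrispHigson16}), the component $\mult_\pi$ of $\mult$ becomes, under the above identification, the composite of evaluation at $0\in\mathfrak{a}^*_P$ with the projection of $\Compact([V_1\otimes H_{\sigma,0}]^K,[V_2\otimes H_{\sigma,0}]^K)$ onto the block of operators supported on the $\pi$-isotypic part of $H_{\sigma,0}$.

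Now, given a morphism $(T_\pi)$ of $\Rep^*_{G,K,\tempiric}(V_1,V_2)$ --- automatically a finite family by Theorem~\ref{thm-uniform-admissibility} --- I would construct a preimage in $\CC^*_{G,K}(V_1,V_2)$ one associate class at a time, using two facts for a fixed $[P,\sigma]$. First, since $0\in\mathfrak{a}^*_P$ is fixed by $W_\sigma$, evaluation at $0$ maps $C_0(\mathfrak{a}^*_P,\Compact(\cdots))^{W_\sigma}$ \emph{onto} the $W_\sigma$-invariant subspace of $\Compact([V_1\otimes H_{\sigma,0}]^K,[V_2\otimes H_{\sigma,0}]^K)$: given a $W_\sigma$-invariant value, take any continuous, compactly supported section attaining it at $0$ and average over $W_\sigma$. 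Second, by the Knapp--Stein description of $\pi_{\sigma,0}$ --- equivalently Harish-Chandra's commuting algebra theorem --- the intertwining operators attached to $w\in W_\sigma$ are $G$-automorphisms of $H_{\sigma,0}$, and $H_{\sigma,0}$ decomposes $W_\sigma\times G$-equivariantly with $W_\sigma$ acting only on the multiplicity factors and trivially on each irreducible $G$-constituent; hence the $W_\sigma$-invariant endomorphisms of the multiplicity space at $0$ are exactly the block-diagonal ones, and this invariant space identifies canonically with $\bigoplus_{\pi\hookrightarrow\pi_{\sigma,0}}\Hom([V_1\otimes H_\pi]^K,[V_2\otimes H_\pi]^K)$, compatibly with the projections of the previous paragraph. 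Combining the two facts, for every relevant $\pi$ the prescribed $T_\pi$ is the value at $0$ of a suitable section over $\mathfrak{a}^*_P$; summing these over the finitely many classes $[P,\sigma]$ produces an element of $\CC^*_{G,K}(V_1,V_2)$ mapping under $\mult$ to $(T_\pi)$.

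The only step that is not purely formal --- and so the main obstacle --- is the second fact above: matching the $W_\sigma$-invariance built into the Fourier picture \eqref{eq-fourier-transform-c-star-algebra-morphism} with the $W_\sigma\times G$-module structure of $\pi_{\sigma,0}$, so as to see that $W_\sigma$-invariant endomorphisms of the multiplicity space at $\nu=0$ are precisely the block-diagonal ones. This draws on the reducibility theory of the principal series at real infinitesimal character (the commuting algebra theorem and $R$-groups). When $G$ has real rank one the group $W_\sigma$ is trivial or $\Z/2$, $\pi_{\sigma,0}$ is multiplicity-free, and the needed structure is entirely elementary; so for the rank-one groups that are the focus of the paper the argument requires no input beyond \eqref{eq-fourier-transform-c-star-algebra-morphism} itself.
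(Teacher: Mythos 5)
Your proof takes the same route as the paper's, but fills in a step that the paper compresses to ``this evaluation functor is evidently surjective.'' Like the paper, you reduce to the ideal $\CC^*_{G,K}$ and pass to the Fourier side via \eqref{eq-fourier-transform-c-star-category-morphism}, where $\mult$ becomes evaluation at $0\in\mathfrak{a}^*_P$ in each summand. Where you add value is in making explicit exactly what this ``evident'' surjectivity rests on: one needs to know that the image of evaluation at $0$, namely the $W_\sigma$-invariant part of $\Compact\bigl([V_1\otimes H_\sigma]^K,[V_2\otimes H_\sigma]^K\bigr)$, coincides with the block-diagonal operators indexed by the irreducible constituents $\pi\hookrightarrow\pi_{\sigma,0}$ --- which is exactly the morphism space of $\Rep^*_{G,K,\tempiric}$. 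You correctly flag this as the one non-formal ingredient and correctly attribute it to the commuting-algebra/$R$-group theory (multiplicity-freeness of $\pi_{\sigma,0}$, and $W_\sigma$ acting through the intertwiners on the isotypic decomposition). Your first fact, that averaging shows evaluation at the fixed point $0$ surjects onto the $W_\sigma$-invariants, is exactly right.

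One small economy worth noticing: the matching of $W_\sigma$-invariants with block-diagonal operators can be extracted directly from the $C^*$-algebra isomorphism \eqref{eq-fourier-transform-c-star-algebra-morphism} itself, without separately invoking Knapp--Stein. Since \eqref{eq-fourier-transform-c-star-algebra-morphism} identifies the image of $C^*_r(G)$ in $C_0(\mathfrak{a}^*_P,\Compact(H_\sigma))$ with the $W_\sigma$-invariants, composing with evaluation at $0$ gives $\pi_{\sigma,0}(C^*_r(G)) = \Compact(H_\sigma)^{W_\sigma}$; on the other hand $\pi_{\sigma,0}=\bigoplus_\pi\pi$ is a finite sum of pairwise inequivalent irreducibles, so by the surjectivity theorem for $C^*$-algebra representations its image is $\bigoplus_\pi\Compact(H_\pi)$. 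Equating the two gives the block-diagonal description with no further representation-theoretic input (beyond what is already packaged in the Clare--Crisp--Higson theorem). Either way, your argument is correct and your identification of the real content behind the paper's ``evidently'' is a genuine improvement in exposition.
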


\begin{proof} 
In fact the restriction of the functor to the ideal $\CC^*_{G,K}$, namely the functor 
\[
\mult \colon \CC^*_{G,K} \longrightarrow \Rep^*_{G,K,\tempiric }, 
\]
is already surjective.  This is a consequence of the general isomorphism theorem for $C^*_r(G)$ that is proved in \cite{ClareCrispHigson16}, and outlined in Section~\ref{subsec-fourier-transform-of-c-star-algebra} for groups of real rank one (which are the only groups for which we shall use this lemma below). In the notation used in the isomorphism 
\begin{multline*}
[ C_r^*(G) \otimes \operatorname{Hom}(V_1,V_2) ]^{K \times K}\stackrel \cong \longrightarrow 
\\
\bigoplus _{[P,\sigma]}
C_0\Bigl  (\mathfrak{a}^*_P, \mathfrak{K}\bigl ([V_1\otimes H_\sigma]^K,[V_2\otimes H_\sigma]^K\bigr )\Bigr ) ^{W_\sigma}
\end{multline*}
from \eqref{eq-fourier-transform-c-star-category-morphism}, the functor $\mult$ above corresponds to evaluation in each summand on the right-hand side at the point $0\in \mathfrak {a}^*_P$. But this evaluation functor is evidently surjective.
\end{proof} 

We have, then, an extension of $C^*$-categories
\[
0\longrightarrow 
\mathsf{Kernel}^*_{G,K} \longrightarrow 
\PSDO^*_{G,K} \longrightarrow \Rep^*_{G,K,\tempiric }
\longrightarrow 0 .
\]
The main result of this section is as  follows:

\begin{theorem}
    \label{thm-homotopy-equivalence}
If $G$ is a real reductive group of real rank one, then the kernel of the functor 
\[
\mult \colon \PSDO^*_{G,K} \longrightarrow \Rep^*_{G,K,\tempiric }
\]
is a contractible ideal in $\PSDO^*_{G,K}$. 
\end{theorem}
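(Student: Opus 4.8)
The plan is to use the Fourier transform isomorphism of Theorem~\ref{thm-fourier-isomorphism-in-real-rank-one} to identify $\PSDO^*_{G,K}$ concretely, and then to recognize the kernel of $\mult$ as a $C^*$-category of functions vanishing at a point, which is contractible. First I would recall that, via the faithful functor of Theorem~\ref{thm-unique-extension-of-fourier-transform} (which is an isomorphism in real rank one by Theorem~\ref{thm-fourier-isomorphism-in-real-rank-one}), the morphism space $\PSDO^*_{G,K}(V_1,V_2)$ is identified with
\[
\bigoplus_{[P,\sigma]} C\bigl(\overline{\mathfrak{a}^*_P}, \Compact([V_1\otimes H_\sigma]^K,[V_2\otimes H_\sigma]^K)\bigr)^{W_\sigma}.
\]
Under this identification the functor $\mult$ corresponds, in each summand, to evaluation at the point $0\in\mathfrak{a}^*_P$ (this is exactly how the preceding lemma identified the restriction of $\mult$ to $\CC^*_{G,K}$, and the extension to $\PSDO^*_{G,K}$ is compatible since both Fourier pictures are built from the same principal-series data). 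Indeed the multiplicity space $[H_\sigma\otimes V]^K$ is precisely the space on which $\pi_{\sigma,0}$ acts in its multiplicity decomposition, and by Lemma~\ref{lem-action-of-psdo-on-principal-series-mult-space} together with the limit formula of Theorem~\ref{thm-limit-fmla-for-psdo}, the operator $\pi_{\sigma,i\nu}(A)$ is exactly the value at $\nu$ of the function attached to $A$ by the Fourier transform.

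Therefore the kernel $\mathsf{Kernel}^*_{G,K}$ is identified with the sub-$C^*$-category whose morphism spaces are
\[
\bigoplus_{[P,\sigma]} \bigl\{\, f\in C\bigl(\overline{\mathfrak{a}^*_P}, \Compact_{\sigma,V_1,V_2}\bigr)^{W_\sigma} : f(0)=0 \,\bigr\}.
\]
Since $W_\sigma$ fixes $0\in\mathfrak{a}^*_P$, this is just the $C^*$-category of $W_\sigma$-equivariant $\Compact$-valued continuous functions on $\overline{\mathfrak{a}^*_P}$ that vanish at the origin. To show this ideal is contractible, I would exhibit an explicit homotopy of functors $\{\Theta_t\}_{t\in[0,1]}$ from the identity functor to the zero functor, given on a morphism $f$ by $(\Theta_t f)(v) = f(t\cdot v)$ for $v\in\overline{\mathfrak{a}^*_P}$, where $t\cdot v$ denotes the scaling action of $[0,1]$ on $\overline{\mathfrak{a}^*_P}$ extending the linear scaling of $\mathfrak{a}^*_P$ (well-defined on the compactification, and $W_\sigma$-equivariant since $W_\sigma$ acts linearly). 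At $t=1$ this is the identity; at $t=0$ it sends $f$ to the constant function with value $f(0)=0$, i.e.\ the zero morphism. One checks $\Theta_t$ is a $C^*$-functor for each $t$ (it is multiplicative and $*$-preserving because precomposition with the scaling is), that it maps $\mathsf{Kernel}^*_{G,K}$ into itself (scaling fixes $0$), and that $t\mapsto \Theta_t(f)$ is norm-continuous for each fixed morphism $f$ (uniform continuity of $f$ on the compact space $\overline{\mathfrak{a}^*_P}$). This is exactly the notion of contractibility used earlier in the paper in the proof of Theorem~\ref{thm-k-theory-of-the-motion-group-c-star-category}.

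The main obstacle I anticipate is verifying cleanly that the scaling action $[0,1]\times\overline{\mathfrak{a}^*_P}\to\overline{\mathfrak{a}^*_P}$ is well-defined and continuous up to and including the sphere at infinity, and that the resulting family $\Theta_t$ is genuinely continuous in the morphism-wise sense required for homotopy invariance of $K$-theory of $C^*$-categories. The scaling $v\mapsto tv$ for $t>0$ is a homeomorphism of $\overline{\mathfrak{a}^*_P}$ fixing $\partial\mathfrak{a}^*_P$ pointwise, while at $t=0$ it collapses everything to the origin; continuity at $t=0$ is where care is needed, but it follows from the explicit model of $\overline{\mathfrak{a}^*_P}$ as a closed ball in Definition~\ref{def-spherical-compactification}: in the ball coordinates $v\mapsto (1+\|v\|)^{-1}v$, the map $(t,v)$ behaves controllably, and for a morphism $f$ with $f(0)=0$ the composite $(\Theta_t f)$ converges uniformly to $0$ as $t\to 0$ by uniform continuity of $f$ near the center of the ball. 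Everything else—multiplicativity of $\Theta_t$, the $*$-property, stability of the kernel—is formal. Granting this, contractibility of $\mathsf{Kernel}^*_{G,K}$ follows, completing the proof.
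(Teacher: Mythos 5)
Your overall strategy coincides with the paper's: identify $\PSDO^*_{G,K}$ with the Fourier-side category via Theorem~\ref{thm-fourier-isomorphism-in-real-rank-one}, recognize $\mult$ as evaluation at $0\in\mathfrak{a}^*_P$ (summand by summand), so that the kernel is the direct sum of the categories of $W_\sigma$-equivariant $\Compact$-valued functions on $\overline{\mathfrak{a}^*_P}$ vanishing at $0$, and then contract by scaling. All of that is right, and you correctly note that $W_\sigma$-equivariance can be handled uniformly because $W_\sigma$ fixes $0$.

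The gap is in the contraction itself, and your own "obstacle" paragraph contains the germ of the contradiction. You define $\Theta_t$ by precomposition with the map on $\overline{\mathfrak{a}^*_P}$ that extends the \emph{linear} scaling $v\mapsto tv$ of $\mathfrak{a}^*_P$. For each $t>0$ that extension is a homeomorphism of $\overline{\mathfrak{a}^*_P}$ that is the \emph{identity} on $\partial\mathfrak{a}^*_P$ (as you observe). Consequently, for every $t>0$ and every morphism $f$ in the kernel,
\[
\bigl(\Theta_t f\bigr)\big\vert_{\partial\mathfrak{a}^*_P}=f\big\vert_{\partial\mathfrak{a}^*_P},
\qquad\text{hence}\qquad
\|\Theta_t f\|_\infty \;\ge\; \sup_{\partial\mathfrak{a}^*_P}\|f\|.
\]
In fact, since $v\mapsto tv$ is a bijection of $\mathfrak{a}^*_P$ for $t>0$, one has $\|\Theta_t f\|_\infty=\|f\|_\infty$ for all $t>0$: each $\Theta_t$ ($t>0$) is an isometric $*$-automorphism, and such a family cannot converge in norm to the zero functor. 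Vanishing at $0$ does not help, because the offending supremum sits on the boundary sphere, not near the center of the ball; your appeal to uniform continuity of $f$ "near the center of the ball" controls nothing at $\partial\mathfrak{a}^*_P$. The linear scaling also fails to give a jointly continuous map $[0,1]\times\overline{\mathfrak{a}^*_P}\to\overline{\mathfrak{a}^*_P}$ at $\{0\}\times\partial\mathfrak{a}^*_P$ (take $v_n\to\infty$, $t_n=1/\|v_n\|$).

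The fix, which is what the paper actually does, is to scale in the compactified (ball/interval) coordinates rather than in the linear coordinates. Fix the homeomorphism $\phi\colon\overline{\mathfrak{a}^*_P}\to B$ onto the closed unit ball from Definition~\ref{def-spherical-compactification} (or, in the one-dimensional case relevant here, $\overline{\mathfrak{a}^*_P}\cong[-1,1]$), and set $\Theta_t(f)=f\circ\phi^{-1}\circ (t\,\cdot)\circ\phi$. Now $\phi^{-1}(t\phi(v))$ moves boundary points \emph{into the interior} as soon as $t<1$, and $\|\Theta_t f\|_\infty=\sup_{\|w\|\le t}\|f(\phi^{-1}(w))\|\to \|f(0)\|=0$ as $t\to 0$ for $f$ in the kernel. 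This family is a homotopy of $*$-functors (precomposition by maps fixing $0$), is $W_\sigma$-equivariant since $W_\sigma$ acts linearly and orthogonally (so commutes with the radial contraction), and is norm-continuous in $t$ by uniform continuity of $f$ on the compact ball. Note also that the discrete series summands ($\mathfrak{a}^*_P=0$) contribute nothing to the kernel, as the paper records separately. With this replacement of the homotopy, your argument is the paper's argument.
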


Once again, \cite[Sec.\,3]{MitchenerKTheory01}  for the notion of contractibility, which implies vanishing of $K$-theory. We are using the term \emph{kernel} in the sense described in Section~\ref{sec-c-star-categories-from-psdo}. 

\begin{proof}
We shall use Theorem~\ref{thm-unique-extension-of-fourier-transform}.   The kernel is a direct sum, indexed by the set of all associate classes $[P,\sigma]$,  of $C^*$-categories  with morphism spaces 
\[
 C_0 \bigl (\overline{\mathfrak{a}^*_P}\setminus \{ 0\}, \Compact ([V_1\otimes H_\sigma]^K,[V_2\otimes H_\sigma]^K )\bigr )^{W_\sigma} 
\]
(in every case the objects are the finite-dimensional unitary representations of $K$), and it suffices to show that each is contractible.  There are three cases to consider:

\smallskip

\noindent \emph{Case 1: $\mathfrak{a}^*_P=0$.}  This is the discrete series case, and  the   
$C^*$-category direct summand associated above to $[P,\sigma]$  has only  zero morphisms.

\smallskip

\noindent \emph{Case 2: $\dim \mathfrak{a}^*_P=1$ and $W_\sigma$ is trivial.} Fix a homeomorphism $[-1,1]\cong \overline{\mathfrak{a}_P^*}$ mapping $0$ to $0$. The functors 
\begin{multline*}
F_t \colon  C_0 \bigl ([-1,1]{\setminus} \{ 0\}, \Compact ([V_1\otimes H_\sigma]^K,[V_2\otimes H_\sigma]^K )\bigr ) 
\\
\longrightarrow  C_0 \bigl ([-1,1]{\setminus} \{ 0\}, \Compact ([V_1\otimes H_\sigma]^K,[V_2\otimes H_\sigma]^K )\bigr )  \qquad (t\in [0,1])
\end{multline*}
defined by $F_t(f) (\nu) = f(t \nu)$ 
(the functors  are the identity on objects) constitute a homotopy from the identity functor at $t{=}1$ to the zero functor at $t{=}0$.
\smallskip

\noindent \emph{Case 3: $\dim \mathfrak{a}^*_P=1$ and $W_\sigma$ is nontrivial.} In this case, if $\mathfrak{a}^*_{P,+}\subseteq \mathfrak{a}^*_{P}$ is a half-line fundamental domain for the action of $W_\sigma {\cong} \Z_2$, then our $C^*$-category is isomorphic to the $C^*$-category with morphism spaces 
\[
 C_0 \bigl (\overline{\mathfrak{a}^*_{P,+}}\setminus \{ 0\}, \Compact ([V_1\otimes H_\sigma]^K,[V_2\otimes H_\sigma]^K )\bigr )  
\]
(where the overbar indicates the one-point compactification of the half-line $\mathfrak{a}^*_{P,+}$) and the same formula as in the previous case defines a homotopy from the identity functor to the zero functor.
\end{proof}

\begin{corollary} 
\label{cor-k-theory-and-tempirics}
If $G$ is a real reductive group of real rank one, then 
the functor 
\[
\mult \colon \PSDO^*_{G,K} \longrightarrow \Rep^*_{G,K,\tempiric }
\]
induces an isomorphism in $K$-theory: 
\[
\pushQED{\qed}
K_*\bigl (\PSDO^*_{G,K}\bigr ) \stackrel\cong \longrightarrow K_* \bigl (  \Rep^*_{G,K,\tempiric }  \bigr  ) .
\qedhere
\popQED
\]
\end{corollary}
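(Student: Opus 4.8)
The plan is to deduce the corollary formally from Theorem~\ref{thm-homotopy-equivalence} together with the short exact sequence of $C^*$-categories
\[
0\longrightarrow \mathsf{Kernel}^*_{G,K} \longrightarrow \PSDO^*_{G,K} \stackrel{\mult}\longrightarrow \Rep^*_{G,K,\tempiric }\longrightarrow 0
\]
that was established immediately before the statement. First I would recall that $K$-theory for $C^*$-categories, in any of the equivalent formulations mentioned in Section~\ref{subsec-k-theory-for-smoothing-category} (see \cite{MitchenerKTheory01,Mitchener02,Joachim03}), is homotopy invariant in the same sense as $C^*$-algebra $K$-theory: a contractible $C^*$-category—one for which the identity functor is homotopic through $C^*$-functors to the zero functor, in the sense of the continuity condition used in the proof of Theorem~\ref{thm-k-theory-of-the-motion-group-c-star-category}—has $K_0 = K_1 = 0$. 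Applying this to the ideal $\mathsf{Kernel}^*_{G,K}$, which is contractible by Theorem~\ref{thm-homotopy-equivalence}, gives $K_*(\mathsf{Kernel}^*_{G,K}) = 0$.

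Next I would invoke the six-term exact sequence in $K$-theory associated to the short exact sequence of $C^*$-categories above (the analogue, for ideals of $C^*$-categories, of the standard six-term sequence for an ideal in a $C^*$-algebra; this is part of the same machinery). Since the two $K$-theory groups of $\mathsf{Kernel}^*_{G,K}$ vanish, exactness forces the map
\[
\mult_* \colon K_*(\PSDO^*_{G,K}) \longrightarrow K_*(\Rep^*_{G,K,\tempiric })
\]
to be an isomorphism in both degrees, which is exactly the assertion of the corollary.

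The proof is essentially formal, so there is no serious obstacle; the only point requiring a little care is to make sure the $K$-theory formalism one cites genuinely supplies both homotopy invariance and a six-term exact sequence for ideals in the $C^*$-categorical setting, rather than only for $C^*$-algebras. As in the rest of the paper one may either quote this from \cite{MitchenerKTheory01,Mitchener02,Joachim03} directly, or reduce to the $C^*$-algebra case: writing $\mathsf{Kernel}^*_{G,K}(V,V)$, $\PSDO^*_{G,K}(V,V)$, and $\Rep^*_{G,K,\tempiric}(V,V)$ for $V$ a sufficiently large representation of $K$ (built from a finite subset $S\subseteq\widehat K$, as in Section~\ref{subsec-k-theory-for-smoothing-category}), one gets a short exact sequence of ordinary $C^*$-algebras with contractible ideal, and a direct-limit argument over $S$ of the kind used in Lemmas~\ref{lem-c-star-algebra-k-theory-as-direct-limit} and \ref{lem-c-star-category-k-theory-as-direct-limit} then yields the statement for the categories.
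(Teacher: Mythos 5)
Your proposal is correct and follows essentially the same route as the paper: the authors observe in the sentence immediately preceding the corollary that the contractible kernel has vanishing $K$-theory, and then treat the corollary as an immediate consequence via the six-term exact sequence for the ideal $\mathsf{Kernel}^*_{G,K}\subseteq\PSDO^*_{G,K}$. Your extra care about verifying that the $C^*$-categorical $K$-theory formalism supplies homotopy invariance and the six-term sequence—or reducing to $C^*$-algebras by taking endomorphism algebras of generators $V_S$ and passing to the direct limit over finite $S\subseteq\widehat K$—is a reasonable elaboration of what the paper leaves implicit, and matches the machinery already set up in Section~\ref{subsec-k-theory-for-smoothing-category}.
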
 

Putting together the corollary and Lemma~\ref{lem-tempiric-rep-ring-as-k-theory-group} we obtain in degree zero an isomorphism
\begin{equation}
\label{eq-k-theory-and-tempirics}
\mult \colon K_0( \PSDO^*_{G,K})  \stackrel \cong \longrightarrow R(G)_{\tempiric } 
\end{equation}
for real reductive groups with real rank one.  We should emphasize that we have obtained this from the Fourier isomorphism  for the $C^*$-category $\PSDO_{G,K}$ in Theorem~\ref{thm-fourier-isomorphism-in-real-rank-one}, rather than from the Connes-Kasparov isomorphism that we considered earlier the paper. But by combining \eqref{eq-k-theory-and-tempirics} with the Connes-Kasparov isomorphism we obtain a third isomorphism that is expressible in simple, representation-theoretic terms (we are using the standard identification of $K_0(\Rep^*_K) $ with $R(K)$): 

\begin{lemma}
\label{lem-composition-of-ck-and-mult}
Let $G$ be a real reductive group of real rank one. 
The composite isomorphism
\[
\xymatrix@C=30pt{
R(K)  \ar[r]^-{\CoKa}_-{\cong} &
K_0(\PSDO^*_{G,K})  \ar[r]^-{\mult}_-{\cong} & R(G)_{\tempiric }
}
\]
maps the $K$-theory class of an irreducible unitary representation $\tau\in \widehat K$ to the element 
\[
\sum_\pi \operatorname{mult} (\tau^* ,\pi)\cdot [\pi]  \in  R(G)_{\tempiric }.
\]
Here $\operatorname{mult}(\tau^*, \pi)\in \Z$ is the multiplicity with which $\tau^*$ occurs in \textup{(}the restriction to $K$ of\textup{)} $\pi$.  
\end{lemma}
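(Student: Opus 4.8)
The plan is to identify the composite functor $\mult\circ\CoKa\colon\Rep^*_K\to\Rep^*_{G,K,\tempiric}$ explicitly and then read off its effect on $K_0$ by means of the isomorphism $\proj$ of Lemma~\ref{lem-tempiric-rep-ring-as-k-theory-group}. Since $\proj$ is assembled from the homomorphisms $\proj_{\pi,*}$ over the tempiric representations $\pi$, and since $\proj_\pi\circ\mult$ is exactly the functor $\mult_\pi$ of \eqref{eq-def-of-mult-pi}, it suffices to understand, for each tempiric $\pi$, the composite $\mult_\pi\circ\CoKa\colon\Rep^*_K\to\Fin^*$.

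First I would unwind this functor at the level of morphisms. On objects it is $V\mapsto[H_\pi\otimes V]^K$. On a morphism $T\colon V_1\to V_2$ of finite-dimensional unitary $K$-representations, $\CoKa(T)$ is the operator $\mathrm{id}\otimes T$ on $K$-invariant sections, and $\pi(\CoKa(T))$ is by definition the operator $(\mathrm{id}_{C^*_r(G)}\otimes T)\otimes_{C^*_r(G)}\mathrm{id}_{H_\pi}$ transported through the isomorphism of Lemma~\ref{lem-balanced-kasparov-tensor-product}. Chasing the defining formula $(\sum_j f_j\otimes v_j)\otimes\xi\mapsto\sum_j\pi(f_j)\xi\otimes v_j$ of that isomorphism shows that $\pi(\CoKa(T))$ is simply the restriction to multiplicity spaces of $\mathrm{id}_{H_\pi}\otimes T$. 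Hence $\mult_\pi\circ\CoKa$ is the evident functor $\Rep^*_K\to\Fin^*$ sending $V$ to $[H_\pi\otimes V]^K$ and $T$ to $(\mathrm{id}_{H_\pi}\otimes T)|_{[H_\pi\otimes V_1]^K}$.

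Next I would pass to $K$-theory. Under the identification $K_0(\Rep^*_K)\cong R(K)$ that makes $\CoKa$ agree with the isomorphism of Theorem~\ref{thm-k-theory-of-the-c-star-category}, the class of an irreducible $\tau\in\widehat{K}$ is represented by the identity morphism $\mathrm{id}_\tau$. Applying the additive functor $\mult_\pi\circ\CoKa$ sends $[\mathrm{id}_\tau]$ to $[\mathrm{id}_{[H_\pi\otimes\tau]^K}]\in K_0(\Fin^*)=\Z$, that is, to $\dim_\C[H_\pi\otimes\tau]^K$. Decomposing $H_\pi$ into its $K$-isotypical components, and using that $[\rho\otimes\tau]^K$ is one-dimensional precisely when $\rho$ is contragredient to $\tau$ and is zero otherwise, this dimension equals the multiplicity $\mult(\tau,\pi)$. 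Summing over $\pi$ and invoking Theorem~\ref{thm-uniform-admissibility} to see that only finitely many terms are nonzero, the composite $R(K)\to R(G)_{\tempiric}$ carries $[\tau]$ to $\sum_\pi\mult(\tau,\pi)[\pi]$; since the classes $[\tau]$, $\tau\in\widehat{K}$, generate $R(K)$, this determines the homomorphism. That the composite is moreover an isomorphism is already contained in Theorem~\ref{thm-k-theory-of-the-c-star-category}, Corollary~\ref{cor-k-theory-and-tempirics} and Lemma~\ref{lem-tempiric-rep-ring-as-k-theory-group}, so nothing new is needed there (and only this last input uses the real rank one hypothesis; the computation of the formula itself is valid for any real reductive $G$).

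The one genuinely delicate point — more careful bookkeeping than a real obstacle — is tracking $\pi(\CoKa(T))$ through the Kasparov balanced tensor product of Lemma~\ref{lem-balanced-kasparov-tensor-product}, and keeping consistent conventions for the contragredients that appear both in the identification $K_0(\Rep^*_K)\cong R(K)$ and in the equality $\dim_\C[H_\pi\otimes\tau]^K=\mult(\tau,\pi)$. Once those conventions are fixed at the outset and used uniformly, the three displayed steps combine without further work.
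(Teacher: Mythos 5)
Your proof is correct, and since the authors supply none (the lemma is stamped with a \qed), there is no alternative route to compare it against; what you have written out is exactly the ``readily computed'' calculation that the introduction gestures at, carried through the three identifications $\CoKa$, Lemma~\ref{lem-balanced-kasparov-tensor-product}, and $\proj$ in the only natural way.

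The contragredient point you flag at the end, though, is more than a matter of fixing conventions consistently --- it is a genuine, if minor, discrepancy between your (correct) computation and the lemma as stated. Your chain of identifications yields $\dim_\C[H_\pi\otimes\tau]^K=\dim\Hom_K(\tau^*,H_\pi)$, the multiplicity of the \emph{contragredient} $\tau^*$ in $\pi|_K$; this is exactly what the authors themselves observe in the parenthetical remark accompanying the definition of multiplicity spaces. So with $\CoKa$ and $\proj$ set up literally as in the paper, the composite sends $[\tau]$ to $\sum_\pi\mult(\tau^*,\pi)[\pi]$, not to $\sum_\pi\mult(\tau,\pi)[\pi]$ with $\mult$ defined as in the lemma. (The degenerate case $G=K$ makes this transparent: there the tempiric representations are the irreducible representations of $K$ and the composite is, by your computation, the duality involution $[\tau]\mapsto[\tau^*]$, whereas the stated formula would give the identity.) The difference is harmless for the paper's purposes --- $\tau\mapsto\tau^*$ is an involutive automorphism of $R(K)$, so the composite is an isomorphism, and is lower-triangular with ones on the diagonal with respect to a suitable ordering, if and only if the stated formula defines one --- but you should resolve the tension rather than defer it to ``conventions fixed at the outset'': either carry the dual through so that your conclusion reads $\sum_\pi\mult(\tau^*,\pi)[\pi]$, or read $\mult(\tau,\pi)$ throughout as an abbreviation for $\dim_\C[H_\pi\otimes\tau]^K$, which is what the authors evidently intend.
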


\begin{proof} 
The formula in the lemma is a consequence of the formula
\[
\mult_\pi\circ \CoKa \colon \bigl ( V\stackrel {\mathrm{id}} \longrightarrow V\bigr )  \longmapsto \bigl ( [H_\pi \otimes V]^K \stackrel {\mathrm{id}}\longrightarrow [H_\pi \otimes V]^K  \bigr )  
\]
involving the composition of the functors $\mult_\pi$ and $\CoKa$. When $V$ is irreducible, the dimension of $[H_\pi \otimes V]^K $ is the multiplicity with which $V^*$ occurs in $H_\pi$.
\end{proof}
\begin{remark}
    The isomorphism in the lemma above is taken much further in \cite{BraddHigsonYuncken24}, again using the Connes-Kasparov isomorphism. That paper follows a different argument that does not involve pseudodifferential operators, but the present work was part of the motivation for it.
\end{remark}

\subsection{Vogan's theorem}
The results in the previous section may be compared with the following celebrated theorem of David Vogan (see Section~1 for references).  The theorem uses the notion of \emph{minimal $K$-type} of a representation, for which we refer the reader to \cite{VoganGreenBook} (the precise definition is not important here).

\begin{theorem}[Vogan]
\label{thm-vogan-min-k-type-bijection}
Every tempiric representation of $G$ has a unique minimal $K$-type, and every $K$-type occurs as a minimal $K$-type  in a unique tempiric representation of $G$, up to unitary equivalence; moreover it occurs there with multiplicity one.
\end{theorem}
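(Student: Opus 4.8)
The plan is to follow Vogan's analysis of lowest $K$-types, organised around the Knapp--Zuckerman classification of the irreducible tempered representations together with Frobenius reciprocity for the compact group $K$. The first step is to parametrise the tempiric representations. Every irreducible tempered representation of $G$ is an irreducible summand of a normalised induced representation $\operatorname{Ind}_P^G(\delta \otimes e^{i\nu})$, where $P = MAN$ is a cuspidal parabolic subgroup, $\delta$ is a discrete series or limit-of-discrete-series representation of $M$, and $\nu \in \mathfrak{a}^*$; computing the infinitesimal character of the induced representation shows that the \emph{real} infinitesimal character hypothesis forces $\nu = 0$ and forces $\delta$ to have real infinitesimal character on $M$. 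The irreducible summands of $\operatorname{Ind}_P^G(\delta)$ are indexed by the irreducible representations of the $R$-group $R_\delta$, which in the real rank one case is either trivial or $\Z/2$. When $P = G$ the summand is the discrete series $\delta$ itself, whose unique, multiplicity-one lowest $K$-type is given by Blattner's formula; so the content is concentrated in the principal-series summands.

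The second step is the branching computation. Restriction to $K$ and Frobenius reciprocity give, for every $\tau \in \widehat K$,
\[
\dim\Hom_K\bigl(\tau,\, \operatorname{Ind}_P^G(\delta \otimes e^{i\nu})|_K\bigr) \;=\; \dim \Hom_{K \cap M}\bigl(\tau|_{K\cap M},\,\delta|_{K\cap M}\bigr),
\]
independently of $\nu$; in the minimal-parabolic case, where $M \subseteq K$, the $K$-types occurring in the base principal series built from $\sigma \in \widehat M$ are exactly the $\tau$ with $\tau|_M \supseteq \sigma$. Introducing Vogan's norm on $\widehat K$ (roughly $\|\tau\| = |\mu_\tau + 2\rho_c|$, with $\mu_\tau$ the highest weight of $\tau$), the minimal $K$-types of a representation are the ones that occur and minimise this norm. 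Using the explicit lowest $(K\cap M)$-type of $\delta$ and Vogan's bottom-layer argument one then shows: each summand of $\operatorname{Ind}_P^G(\delta)$ has exactly one minimal $K$-type; it occurs with multiplicity one; and, in the $R_\delta = \Z/2$ case, the two minimal $K$-types lie in the two different summands, the separation being detected by the long Knapp--Stein intertwining operator at $\nu = 0$. Conversely, given $\tau \in \widehat K$ one selects a suitable extreme component $\delta$ of $\tau|_{K\cap M}$ for which $\tau$ is Vogan-minimal among the $K$-types containing $\delta$; the corresponding tempiric summand then has $\tau$ as its multiplicity-one minimal $K$-type, and one checks this construction inverts the previous assignment, yielding the stated bijection.

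The main obstacle is the middle part of the second step: identifying the minimal $K$-types and, in the split case, deciding \emph{which} summand each of them occupies. This is precisely where Vogan's machinery does the real work --- the lowest $K$-type algorithm, and the comparison of the Vogan norm with the action of the long intertwining operator on $K$-isotypic subspaces, which in real rank one can be made explicit through the Harish-Chandra $c$-function (Kunze--Stein, Knapp). I would ultimately appeal to \cite[Thm.~1.2]{VoganBranching07} and the relevant chapters of \cite{VoganGreenBook} for this rather than reprove it. A secondary point needing care is well-definedness in the surjectivity argument: different extreme components of $\tau|_{K\cap M}$ must be shown to produce the same tempiric representation, which again comes down to the structure of $R_\delta$ and of the intertwining operators.
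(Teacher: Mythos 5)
The paper does not give a proof of this theorem: it is stated as Vogan's result and the reader is referred to the references cited in the introduction, namely Vogan's algebraic classification in \cite{VoganGreenBook} and, for the explicit statement, \cite[Thm.~1.2]{VoganBranching07}. So there is nothing in the paper to compare your argument against line by line.

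That said, your sketch is a reasonable outline of how Vogan's proof goes, and in the end you also appeal to \cite{VoganBranching07} and \cite{VoganGreenBook} for the hard middle step, so in spirit you are in the same place as the paper. A few cautions are worth noting. First, Theorem~\ref{thm-vogan-min-k-type-bijection} is stated for an arbitrary real reductive group $G$, yet your proposal slips into real-rank-one assumptions in two places (``in the real rank one case \ldots either trivial or $\Z/2$''; ``which in real rank one can be made explicit through the Harish-Chandra $c$-function''). For general $G$ the $R$-group $R_\delta$ can be an arbitrary elementary abelian $2$-group, and the analysis of which summand houses which lowest $K$-type is correspondingly more involved; it cannot be reduced to an analysis of a single Knapp--Stein operator. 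Second, the Frobenius reciprocity identity you write is correct only after one is careful about the role of $N$ and the normalisation by $e^{\rho}$, and in the non-minimal cuspidal parabolic case $M$ is noncompact, so ``$\delta|_{K\cap M}$'' must be handled as the restriction of a discrete or limit-of-discrete-series representation, which is itself nontrivial. Third, ``$\nu=0$'' is forced for representations with real infinitesimal character only up to the action of the Weyl group, and one should say explicitly that tempiric representations are exactly the irreducible summands of the unitarily induced representations at parameter $\nu = 0$, as the paper does in the discussion preceding Definition~\ref{def-rep-tempiric-category} citing \cite[Cor.~3.5]{Vogan00}. None of these points is fatal --- they are exactly the points Vogan's lowest-$K$-type machinery is built to handle --- but as written your proposal is a sketch with citations rather than a self-contained proof, which is consistent with how the paper treats the theorem.
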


Vogan's theorem readily implies that the composition in Lemma~\ref{lem-composition-of-ck-and-mult} is an isomorphism, since if we write the composition  in matrix form,  using Vogan's theorem to index both rows and columns by elements of $\widehat K$, then the matrix is lower triangular with diagonal entries $1$, and is hence invertible.  

\begin{remark}
Naturally, it would be very interesting to understand better the distance between Lemma~\ref{lem-composition-of-ck-and-mult} and Theorem \ref{thm-vogan-min-k-type-bijection}, and whether other techniques from $K$-theory could bridge between them.  For instance, in \cite{Lafforgue02ICM}, Lafforgue used Kasparov's dual-Dirac method in operator $K$-theory \cite{Kasparov88} to recover Harish-Chandra's classification of the discrete series from the Connes-Kasparov method, and it is natural to wonder whether a similar technique could be used here. 
\end{remark}

\begin{remark}
It appears to be an interesting  problem to formulate some version of our Fourier isomorphism theorem  for $\PSDO^*_{G,K}$ beyond real rank one. Our isomorphism theorem (or rather the verbatim extension of it beyond real rank one) fails for the product $SL(2,\R){\times} SL(2,\R)$, basically because tensor products of pseudodifferential operators are not necessarily pseudodifferential, even when one of the factors is a smoothing operator.  This can be likely  
remedied by using variants of pseudodifferential operators that are adapted to products, as in \cite{Rodino75}.  But it is a  challenge to go  further.
\end{remark}

\bibliographystyle{alpha}
\bibliography{Refs}

\begin{thebibliography}{CHST24}

\bibitem[Afg19]{Afgoustidis19}
Alexandre Afgoustidis.
\newblock On the analogy between real reductive groups and {C}artan motion
  groups: a proof of the {C}onnes-{K}asparov isomorphism.
\newblock {\em J. Funct. Anal.}, 277(7):2237--2258, 2019.

\bibitem[Afg21]{Afgoustidis21}
Alexandre Afgoustidis.
\newblock On the analogy between real reductive groups and {C}artan motion
  groups: the {M}ackey-{H}igson bijection.
\newblock {\em Camb. J. Math.}, 9(3):551--575, 2021.

\bibitem[AS71]{AtiyahSingerFour}
Michael~F. Atiyah and Isidore~M. Singer.
\newblock The index of elliptic operators. {IV}.
\newblock {\em Ann. of Math. (2)}, 93:119--138, 1971.

\bibitem[Ati67]{Atiyah67}
Michael~F. Atiyah.
\newblock {\em {$K$}-theory}.
\newblock W. A. Benjamin, Inc., New York-Amsterdam, 1967.
\newblock Lecture notes by D. W. Anderson.

\bibitem[BCH94]{BaumConnesHigson93}
Paul Baum, Alain Connes, and Nigel Higson.
\newblock Classifying space for proper actions and {$K$}-theory of group
  {$C^\ast$}-algebras.
\newblock In {\em {$C^\ast$}-algebras: 1943--1993 ({S}an {A}ntonio, {TX},
  1993)}, volume 167 of {\em Contemp. Math.}, pages 240--291. Amer. Math. Soc.,
  Providence, RI, 1994.

\bibitem[BHY24]{BraddHigsonYuncken24}
Jacob Bradd, Nigel Higson, and Robert Yuncken.
\newblock Operator {$K$}-theory and tempiric representations.
\newblock Preprint, arXiv:2412.18924, 2024.

\bibitem[CCH16]{ClareCrispHigson16}
Pierre Clare, Tyrone Crisp, and Nigel Higson.
\newblock Parabolic induction and restriction via {$C^*$}-algebras and
  {H}ilbert {$C^*$}-modules.
\newblock {\em Compos. Math.}, 152(6):1286--1318, 2016.

\bibitem[CEN03]{ChabertEchterhoffNest03}
J\'{e}r\^{o}me Chabert, Siegfried Echterhoff, and Ryszard Nest.
\newblock The {C}onnes-{K}asparov conjecture for almost connected groups and
  for linear {$p$}-adic groups.
\newblock {\em Publ. Math. Inst. Hautes \'{E}tudes Sci.}, (97):239--278, 2003.

\bibitem[CHH88]{CowlingHaagerupHowe}
Michael Cowling, Uffe Haagerup, and Roger Howe.
\newblock Almost {$L^2$} matrix coefficients.
\newblock {\em J. Reine Angew. Math.}, 387:97--110, 1988.

\bibitem[CHS24]{ClareHigsonSong24}
Pierre Clare, Nigel Higson, and Yanli Song.
\newblock On the {C}onnes-{K}asparov isomorphism, {II}.
\newblock {\em Jpn. J. Math.}, 19(1):111--141, 2024.

\bibitem[CHST24]{ClareHigsonSongTang24}
Pierre Clare, Nigel Higson, Yanli Song, and Xiang Tang.
\newblock On the {C}onnes-{K}asparov isomorphism, {I}.
\newblock {\em Jpn. J. Math.}, 19(1):67--109, 2024.

\bibitem[Con94]{ConnesNCG}
Alain Connes.
\newblock {\em Noncommutative {G}eometry}.
\newblock Academic Press, San Diego, 1994.

\bibitem[Dix77]{Dixmier1977}
Jacques Dixmier.
\newblock {\em {C*}-algebras}.
\newblock North-Holland Mathematical Library, Vol. 15. North-Holland Publishing
  Co., Amsterdam-New York-Oxford, 1977.
\newblock Translated from the French by Francis Jellett.

\bibitem[DS19]{DebordSkandalis19}
Claire Debord and Georges Skandalis.
\newblock Lie groupoids, pseudodifferential calculus, and index theory.
\newblock In {\em Advances in noncommutative geometry---on the occasion of
  {A}lain {C}onnes' 70th birthday}, pages 245--289. Springer, 2019.

\bibitem[HC53]{HarishChandra53}
Harish-Chandra.
\newblock Representations of a semisimple {L}ie group on a {B}anach space. {I}.
\newblock {\em Trans. Amer. Math. Soc.}, 75:185--243, 1953.

\bibitem[HC66]{HarishChandra66}
Harish-Chandra.
\newblock Discrete series for semisimple {L}ie groups. {II}. {E}xplicit
  determination of the characters.
\newblock {\em Acta Math.}, 116:1--111, 1966.

\bibitem[Hig08]{Higson08}
Nigel Higson.
\newblock The {M}ackey analogy and {K}-theory.
\newblock In {\em Group representations, ergodic theory, and mathematical
  physics: a tribute to George W. Mackey}, volume 449 of {\em Contemporary
  Mathematics}, pages 149--172. Amer. Math. Soc, Providence, RI, 2008.

\bibitem[Hig22]{HigsonRTNCG22}
Nigel Higson.
\newblock {L}ecture series on index theory to the {AIM} {R}esearch {C}ommunity
  on {R}epresentation {T}heory \& {N}oncommutative {G}eometry.
\newblock Available at https://sites.google.com/view/rtncg/mini-mini-mini,
  February-March 2022.

\bibitem[Hoc65]{HochschildStructureOfLieGroups}
Gerhard Hochschild.
\newblock {\em The structure of {L}ie groups}.
\newblock Holden-Day, Inc., San Francisco-London-Amsterdam, 1965.

\bibitem[H{\"o}r65]{Hormander65}
Lars H{\"o}rmander.
\newblock Pseudo-differential operators.
\newblock {\em Comm. Pure Appl. Math.}, 18:501--517, 1965.

\bibitem[H{\"{o}}r83]{HormanderVolI}
Lars H{\"{o}}rmander.
\newblock {\em The analysis of linear partial differential operators. {I}},
  volume 256 of {\em Grundlehren der mathematischen Wissenschaften [Fundamental
  Principles of Mathematical Sciences]}.
\newblock Springer-Verlag, Berlin, 1983.
\newblock Distribution theory and Fourier analysis.

\bibitem[H{\"{o}}r85]{HormanderVolIII}
Lars H{\"{o}}rmander.
\newblock {\em The analysis of linear partial differential operators. {III}},
  volume 274 of {\em Grundlehren der mathematischen Wissenschaften [Fundamental
  Principles of Mathematical Sciences]}.
\newblock Springer-Verlag, Berlin, 1985.
\newblock Pseudodifferential operators.

\bibitem[Joa03]{Joachim03}
Michael Joachim.
\newblock {$K$}-homology of {$C^\ast$}-categories and symmetric spectra
  representing {$K$}-homology.
\newblock {\em Math. Ann.}, 327(4):641--670, 2003.

\bibitem[Kar78]{KaroubiKTheory}
Max Karoubi.
\newblock {\em {$K$}-theory}.
\newblock Grundlehren der Mathematischen Wissenschaften, Band 226.
  Springer-Verlag, Berlin-New York, 1978.
\newblock An introduction.

\bibitem[Kas80]{Kasparov80}
Gennadi Kasparov.
\newblock The operator {$K$}-functor and extensions of {$C^{\ast} $}-algebras.
\newblock {\em Izv. Akad. Nauk SSSR Ser. Mat.}, 44(3):571--636, 719, 1980.

\bibitem[Kas83]{Kasparov83}
Gennadi Kasparov.
\newblock Index of invariant elliptic operators, {$K$}-theory and
  representations of {L}ie groups.
\newblock {\em Dokl. Akad. Nauk SSSR}, 268(3):533--537, 1983.

\bibitem[Kas84]{KasparovICM83}
Gennadi Kasparov.
\newblock Operator {$K$}-theory and its applications: elliptic operators, group
  representations, higher signatures, {$C^\ast$}-extensions.
\newblock In {\em Proceedings of the {I}nternational {C}ongress of
  {M}athematicians, {V}ol. 1, 2 ({W}arsaw, 1983)}, pages 987--1000. PWN,
  Warsaw, 1984.

\bibitem[Kas88]{Kasparov88}
Gennadi Kasparov.
\newblock Equivariant {$KK$}-theory and the {N}ovikov conjecture.
\newblock {\em Invent. Math.}, 91(1):147--201, 1988.

\bibitem[KN65]{KohnNirenberg65}
Joseph~J. Kohn and Louis Nirenberg.
\newblock An algebra of pseudo-differential operators.
\newblock {\em Comm. Pure Appl. Math.}, 18:269--305, 1965.

\bibitem[Kna86]{KnappOverview}
Anthony~W. Knapp.
\newblock {\em Representation theory of semisimple groups}, volume~36 of {\em
  Princeton Mathematical Series}.
\newblock Princeton University Press, Princeton, NJ, 1986.
\newblock An overview based on examples.

\bibitem[Kna02]{KnappBeyond}
Anthony~W. Knapp.
\newblock {\em Lie groups beyond an introduction}, volume 140 of {\em Progress
  in Mathematics}.
\newblock Birkh\"{a}user Boston, Inc., Boston, MA, second edition, 2002.

\bibitem[Laf02a]{Lafforgue02ICM}
Vincent Lafforgue.
\newblock Banach {$KK$}-theory and the {B}aum-{C}onnes conjecture.
\newblock In {\em Proceedings of the {I}nternational {C}ongress of
  {M}athematicians, {V}ol. {II} ({B}eijing, 2002)}, pages 795--812. Higher Ed.
  Press, Beijing, 2002.

\bibitem[Laf02b]{Lafforgue02InventMath}
Vincent Lafforgue.
\newblock {$K$}-th\'{e}orie bivariante pour les alg\`ebres de {B}anach et
  conjecture de {B}aum-{C}onnes.
\newblock {\em Invent. Math.}, 149(1):1--95, 2002.

\bibitem[Lan95]{Lance95}
E.~Christopher Lance.
\newblock {\em Hilbert {$C^*$}-modules, A toolkit for operator algebraists},
  volume 210 of {\em London Mathematical Society Lecture Note Series}.
\newblock Cambridge University Press, Cambridge, 1995.

\bibitem[Mit01]{MitchenerKTheory01}
Paul~D. Mitchener.
\newblock Symmetric {$K$}-theory spectra of {$C^\star$}-categories.
\newblock {\em $K$-Theory}, 24(2):157--201, 2001.

\bibitem[Mit02a]{MitchenerCategories02}
Paul~D. Mitchener.
\newblock {$C^*$}-categories.
\newblock {\em Proc. London Math. Soc. (3)}, 84(2):375--404, 2002.

\bibitem[Mit02b]{Mitchener02}
Paul~D. Mitchener.
\newblock {$KK$}-theory of {$C^*$}-categories and the analytic assembly map.
\newblock {\em $K$-Theory}, 26(4):307--344, 2002.

\bibitem[Ped79]{PedersenCstarBook}
Gert~K. Pedersen.
\newblock {\em {$C^{\ast} $}-algebras and their automorphism groups}, volume~14
  of {\em London Mathematical Society Monographs}.
\newblock Academic Press, Inc. [Harcourt Brace Jovanovich, Publishers],
  London-New York, 1979.

\bibitem[Ren80]{RenaultGroupoidApproach80}
Jean Renault.
\newblock {\em A groupoid approach to {$C^{\ast} $}-algebras}, volume 793 of
  {\em Lecture Notes in Mathematics}.
\newblock Springer, Berlin, 1980.

\bibitem[RLL00]{RordamEtAl}
Mikael R{\o}rdam, Flemming Larsen, and Niels~Jakob Laustsen.
\newblock {\em An introduction to {$K$}-theory for {$C^*$}-algebras}, volume~49
  of {\em London Mathematical Society Student Texts}.
\newblock Cambridge University Press, Cambridge, 2000.

\bibitem[Rod75]{Rodino75}
Luigi Rodino.
\newblock A class of pseudo differential operators on the product of two
  manifolds and applications.
\newblock {\em Ann. Scuola Norm. Sup. Pisa Cl. Sci. (4)}, 2(2):287--302, 1975.

\bibitem[Ros84]{Rosenberg84}
Jonathan Rosenberg.
\newblock Group {$C^{\ast} $}-algebras and topological invariants.
\newblock In {\em Operator algebras and group representations, {V}ol. {II}
  ({N}eptun, 1980)}, volume~18 of {\em Monogr. Stud. Math.}, pages 95--115.
  Pitman, Boston, MA, 1984.

\bibitem[See65]{Seeley65}
Robert~T. Seeley.
\newblock Integro-differential operators on vector bundles.
\newblock {\em Trans. Amer. Math. Soc.}, 117:167--204, 1965.

\bibitem[Ste71]{Stetkaer71}
Henrik Stetkaer.
\newblock Invariant pseudo-differential operators.
\newblock {\em Math. Scand.}, 28:105--123, 1971.

\bibitem[vEY19]{VanErpYuncken19}
Erik van Erp and Robert Yuncken.
\newblock A groupoid approach to pseudodifferential calculi.
\newblock {\em J. Reine Angew. Math.}, 756:151--182, 2019.

\bibitem[Vog81]{VoganGreenBook}
David~A. Vogan, Jr.
\newblock {\em Representations of real reductive {L}ie groups}, volume~15 of
  {\em Progress in Mathematics}.
\newblock Birkh\"{a}user, Boston, Mass., 1981.

\bibitem[Vog00]{Vogan00}
David~A. Vogan, Jr.
\newblock A {L}anglands classification for unitary representations.
\newblock In {\em Analysis on homogeneous spaces and representation theory of
  {L}ie groups, {O}kayama--{K}yoto (1997)}, volume~26 of {\em Adv. Stud. Pure
  Math.}, pages 299--324. Math. Soc. Japan, Tokyo, 2000.

\bibitem[Vog07]{VoganBranching07}
David~A. Vogan, Jr.
\newblock Branching to a maximal compact subgroup.
\newblock In {\em Harmonic analysis, group representations, automorphic forms
  and invariant theory}, volume~12 of {\em Lect. Notes Ser. Inst. Math. Sci.
  Natl. Univ. Singap.}, pages 321--401. World Sci. Publ., Hackensack, NJ, 2007.

\bibitem[Was87]{Wassermann87}
Antony Wassermann.
\newblock Une d\'{e}monstration de la conjecture de {C}onnes-{K}asparov pour
  les groupes de {L}ie lin\'{e}aires connexes r\'{e}ductifs.
\newblock {\em C. R. Acad. Sci. Paris S\'{e}r. I Math.}, 304(18):559--562,
  1987.

\bibitem[Zim90]{Zimmer90}
Robert~J. Zimmer.
\newblock {\em Essential results of functional analysis}.
\newblock Chicago Lectures in Mathematics. University of Chicago Press,
  Chicago, IL, 1990.

\end{thebibliography}

\end{document}